\def\DefineSymbol#1#2{\newcommand{#1}{{\mathrm {#2}}}}
\def\DefineCategory#1#2{\newcommand{#1}{{\mathrm {#2}}}}
\theoremstyle{plain}
	\newtheorem{theorem}{Theorem}[section]
	\newtheorem{lemma}[theorem]{Lemma}
	\newtheorem{proposition}[theorem]{Proposition}
	\newtheorem{corollary}[theorem]{Corollary}
\theoremstyle{definition}
	\newtheorem{definition}[theorem]{Definition}
	\newtheorem{lemma and definition}[theorem]{Lemma and Definition}
	\newtheorem{example}[theorem]{Example}
	\newtheorem{construction}[theorem]{Construction}
\theoremstyle{remark}
	\newtheorem{remark}[theorem]{Remark}
	\numberwithin{equation}{section}
\DefineSymbol{\pr}{pr}
\DefineSymbol{\id}{id}
\DefineSymbol{\const}{const}
\DefineSymbol{\op}{op}
\DefineSymbol{\diag}{diag}
\DefineSymbol{\proet}{pro\acute{e}t}
\DefineSymbol{\cond}{cond}
\DefineSymbol{\conti}{conti}
\DefineSymbol{\Cond}{Cond}
\DefineSymbol{\dCond}{dCond}
\DefineSymbol{\disc}{disc}
\DefineSymbol{\Tot}{Tot}
\DefineSymbol{\adic}{adic}
\DefineSymbol{\nc}{nc}
\DefineSymbol{\cpt}{cpt}
\DeclareMathOperator{\Hom}{Hom}
\DeclareMathOperator{\Ext}{Ext}
\DeclareMathOperator{\Fun}{Fun}
\DeclareMathOperator{\cofib}{cofib}
\DeclareMathOperator{\Spa}{Spa}
\DeclareMathOperator{\End}{End}
\DeclareMathOperator{\intHom}{\underline{Hom}}
\newcommand{\lra}{\longrightarrow}
\newcommand{\bs}{\blacksquare}
\newcommand{\hotimes}{\mathbin{\hat{\otimes}}}
\DefineCategory{\Set}{Set}
\DefineCategory{\Ab}{Ab}
\DefineCategory{\Ring}{Ring}
\DefineCategory{\Mod}{Mod}
\DefineCategory{\Alg}{Alg}
\DefineCategory{\Ch}{Ch}
\DefineCategory{\Mon}{Mon}
\DefineCategory{\CMon}{CMon}
\DefineCategory{\PCoh}{PCoh}
\DefineCategory{\Perf}{Perf}
\DefineCategory{\FP}{FP}
\DefineCategory{\cAff}{cAff}
\DefineCategory{\AnRing}{AnRing}
\DefineCategory{\Ani}{Ani}
\DefineCategory{\CAlg}{CAlg}
\newcommand{\Cat}{\mathcal{C}at}
\newcommand{\Cbb}{\mathbb{C}}
\newcommand{\Ebb}{\mathbb{E}}
\newcommand{\Lbb}{\mathbb{L}}
\newcommand{\Nbb}{\mathbb{N}}
\newcommand{\Zbb}{\mathbb{Z}}
\newcommand{\Acal}{\mathcal{A}}
\newcommand{\Bcal}{\mathcal{B}}
\newcommand{\Ccal}{\mathcal{C}}
\newcommand{\Dcal}{\mathcal{D}}
\newcommand{\Ecal}{\mathcal{E}}
\newcommand{\Mcal}{\mathcal{M}}
\newcommand{\Ncal}{\mathcal{N}}
\newcommand{\Ocal}{\mathcal{O}}
\newcommand{\Scal}{\mathcal{S}}
\newcommand{\mfrak}{\mathfrak{m}}
\newcommand{\nfrak}{\mathfrak{n}}
\renewcommand{\hat}{\widehat}
\renewcommand{\bar}{\overline}
\begin{document}

\title[faithfully flat descent]{Faithfully flat descent of quasi-coherent complexes on rigid analytic varieties via condensed mathematics}
\author{Yutaro Mikami}
\date{\today}
\address{Graduate School of Mathematical Sciences, University of Tokyo, 3-8-1 Komaba, Meguro-ku, Tokyo 153-8914, Japan}
\email{y-mikmi@g.ecc.u-tokyo.ac.jp}
\subjclass{primary 14G22, secondary 11G25}
\maketitle
\begin{abstract}
Faithfully flat descent of pseudo-coherent complexes in rigid geometry was proved by Mathew. 
In this paper, we generalize the result of Mathew to solid quasi-coherent complexes on rigid analytic varieties, which have been introduced by Clausen and Scholze by means of condensed mathematics.
\end{abstract}

\setcounter{tocdepth}{1}

\tableofcontents

%%%%%%%%%%%%%%%%%%%%%%%%%%%%%%%%%%%%%%%%%% introduction %%%%%%%%%%%%%%%%%%%%%%%%%%%%%%%%%%%%%%%%%%

\section*{Introduction}
We begin with the statement of faithfully flat descent in commutative algebra.
Let $A\to B$ be a faithfully flat map of (commutative unital) rings.
Faithfully flat descent states that an $A$-module can be described as a $B$-module with a \textit{descent datum}.
Recently Lurie and Mathew have generalized the theory of faithfully flat descent to complexes of modules in \cite{SAG} and \cite{Mat16} by using higher algebra.
This generalization is important even if we are interested only in static objects (i.e., not higher categorical objects).
For example, it is used in \cite{BS19,CS19} whose main theorems are concerned with static objects.

On the other hand, there is an analogue of the classical theory of faithfully flat descent in rigid geometry.
Let $K$ be a complete non-archimedean field, and let $A\to B$ be a map of affinoid $K$-algebras which is faithfully flat as a map of ordinary rings.
Then a coherent $A$-module can be described as a coherent $B$-module $N$ with an isomorphism $\tau\colon B \otimes_A N \cong N \otimes_A B$ of $B\otimes_A B$-modules which satisfies the cocycle condition (such $\tau$ is called a \textit{descent datum}); see \cite{BG98, Con03} for more details.
This result has been generalized by Mathew in \cite{Mat22} as follows:

\begin{theorem}[{\cite[Theorem 1.4]{Mat22}}]\label{thm:ff1}
 Let $K$ and $A \to B$ be  as above.
 Let $B^{n/A}$ denote the $(n+1)$-fold completed tensor product of $B$ over $A$.
Then we have an equivalence of $\infty$-categories
$$\PCoh(A) \overset{\sim}{\lra} \varprojlim_{[n] \in \Delta} \PCoh(B^{n/A}),$$
where $\PCoh(A)$ is the category of pseudo-coherent complexes of $A$-modules and $\Delta$ is the simplex category.
\end{theorem}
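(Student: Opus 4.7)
The plan is to adapt the $\infty$-categorical faithfully flat descent framework of Lurie and Mathew to the affinoid setting, exploiting two features of affinoid $K$-algebras: they are Noetherian, and the completed tensor product $B \hotimes_A (-)$ is exact on coherent modules whenever $A \to B$ is faithfully flat as a map of ordinary rings. First one must check that the cosimplicial diagram $[n] \mapsto \PCoh(B^{n/A})$ makes sense: for pseudo-coherent input, the derived completed tensor product should agree with the ordinary completed tensor product, which is well-defined and exact thanks to Noetherianness. With this in hand there is a canonical comparison functor $\PCoh(A) \to \varprojlim_{[n] \in \Delta} \PCoh(B^{n/A})$ sending $M$ to the cosimplicial object $B^{\bullet/A} \hotimes_A M$, and the task is to prove that it is an equivalence.

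For fully faithfulness, given $M \in \PCoh(A)$ one must show that the natural map $M \to \Tot(B^{\bullet/A} \hotimes_A M)$ is an equivalence. Pseudo-coherent complexes over a Noetherian ring have bounded-above finitely generated cohomology, so by filtering $M$ by its truncations and passing to a convergent Bousfield--Kan type spectral sequence, the problem reduces to the case where $M$ is a single coherent module in degree zero, which is the classical faithfully flat descent of coherent sheaves in rigid geometry \cite{BG98, Con03}. Convergence of the spectral sequence relies on the cobar complex having uniformly bounded cohomological amplitude; this already hints at the descendability argument appearing in the next step.

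The main obstacle is essential surjectivity. Given a compatible family $(N_\bullet)$ of pseudo-coherent complexes equipped with descent data, we must realize it as the base change of some $M \in \PCoh(A)$. The natural candidate is $M := \Tot(N_\bullet)$, and the real content is to show that this totalization is pseudo-coherent and that its base change along $A \to B^{n/A}$ recovers $N_n$. I would prove this by establishing a descendability statement in the sense of \cite[\S3]{Mat16} for the completed augmented cobar complex $A \to B^{\bullet/A}$: faithful flatness together with Noetherianness should yield a bounded cobar resolution, with a uniform bound on length. Such a uniform bound ensures that $\Tot$ commutes with truncation and preserves finite generation of cohomology, so that $M$ is pseudo-coherent. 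The descent isomorphism $B^{n/A} \hotimes_A M \simeq N_n$ then follows formally by applying the already-established fully faithful direction in a relative setting over each $B^{n/A}$.

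The genuinely new ingredient over the purely algebraic case is the verification of descendability for \emph{completed} tensor products, and this is where I expect the technical heart of the argument to lie. It should rest on Artin--Rees-type behavior of completion on coherent modules over Noetherian affinoid algebras, together with a careful comparison between the derived and naive completed tensor products on the pseudo-coherent locus. Once this is in place, the rest of the proof is formal $\infty$-categorical descent.
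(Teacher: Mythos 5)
Your outline follows Mathew's original route in \cite{Mat22} (direct descendability of the completed cobar complex plus classical coherent descent), which is genuinely different from this paper's route: here Theorem \ref{thm:ff1} is recovered in Section 5 from the solid descent theorem (Theorem \ref{main theorem}) by identifying $\PCoh(A)$ with the nuclear pseudo-coherent objects of $\Dcal((A,A^{\circ})_{\bs})$ via the condensification functor and proving descent separately for nuclearity, pseudo-coherence and compactness (Lemma \ref{andmain}, Theorem \ref{somedes}). Your fully faithfulness step is essentially fine: since $-\otimes_A B^{n/A}$ is exact and finite modules over affinoid algebras are already complete, the unit map $M \to \Tot(B^{\bullet/A}\hotimes_A M)$ reduces by Postnikov towers (which tensoring and $\Tot$ both commute with, by flatness and the fact that $\Tot$ is a limit) to the exactness of the Amitsur complex for a single coherent module, i.e.\ classical descent.

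The genuine gap is in the source you propose for descendability, which is exactly where essential surjectivity lives. For a compatible family $(N_\bullet)$ the candidate $M=\Tot(N_\bullet)$ need not be bounded above, let alone pseudo-coherent, and $-\hotimes_A B^{n/A}$ need not commute with this totalization, unless the augmented cobar complex is descendable with a \emph{uniform} index; ``faithful flatness together with Noetherianness'' plus ``Artin--Rees-type behavior of completion'' does not produce this. Faithfully flat maps of Noetherian rings are in general not descendable --- the input that actually works is finite presentation, via \cite[Corollary 3.33]{Mat16} --- and for affinoid $K$-algebras the only known mechanism is to pass to formal models: by the Bosch--G\"ortz/Raynaud--Gruson flattening theorem (Lemma \ref{flattening}) one finds, only after replacing $\Spa(A,A^{\circ})$ by a rational cover, a faithfully flat map $A'\to B'$ of admissible $\Ocal_K$-algebras with generic fiber $A\to B$; descendability with index bounded by $2$ then comes from the projective-dimension-one statement for $B'/\pi$ over $A'/\pi$ (cf.\ Proposition \ref{bdd}, using \cite{RG71}) together with $\pi$-adic completeness, and one must finally glue over the rational cover using analytic descent (Theorem \ref{andes}). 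Your sketch omits the flattening/formal-model step, the reduction modulo $\pi$, and the gluing over the rational cover, and these constitute the actual technical heart of any proof along these lines; the Artin--Rees heuristic does not replace them.
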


Unlike commutative algebra, in rigid geometry we only treat objects which satisfy some finiteness conditions, that is coherent modules and pseudo-coherent complexes.
This is because of issues of topological algebraic objects such as topological rings and topological modules.

Recently Clausen and Scholze have developed a new approach to treat such objects in \cite{CM,AG}, which is called \textit{condensed mathematics}. 
In condensed mathematics, topological spaces are generalized to \textit{condensed sets}, which are sheaves of sets on the site of profinite sets with the topology given by finite jointly surjective families of maps.
Similarly, topological groups, rings, etc.\  are generalized, respectively, to \textit{condensed groups, rings, etc}.\  
One of the advantages of using condensed objects is that the category of condensed abelian groups becomes an abelian category as opposed to the category of topological abelian groups.

In this context, complete affinoid pairs are generalized to \textit{analytic rings}. 
Roughly speaking, an analytic ring is a pair of a condensed ring $\Acal$ and some additional data $\Mcal$, which is called the \textit{functor of measures} of $(\Acal, \Mcal)$.
For an analytic ring $(\Acal, \Mcal)$, we can define a symmetric monoidal stable $\infty$-category $\Dcal(\Acal, \Mcal)$.
For a complete affinoid pair $(A,A^+)$, we can define an analytic ring $(A,A^+)_{\bs}$ associated to $(A,A^+)$, and the objects in $\Dcal((A, A^+)_{\bs})$ are called solid quasi-coherent complexes over $(A, A^+)$. 

The following is a basic property of solid quasi-coherent complexes.

\begin{theorem}[{\cite[Theorem 4.1]{And21}}]\label{thm:analytic descent}
Let $X$ be an analytic affinoid adic space and let $U$ denote an arbitrary open affinoid subspace of $X$.
Then the functor $U \mapsto \Dcal((\Ocal_X(U), \Ocal_X^+(U))_{\bs})$ defines a sheaf of $\infty$-categories on $X$.
\end{theorem}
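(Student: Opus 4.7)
The plan is to reduce the sheaf condition on $X$ to descent along basic Laurent coverings, and then to exploit the idempotent $\Ebb_\infty$-algebra structure that rational localizations acquire in the solid world. Since rational subsets form a basis for the topology of $X$, and since every rational covering can be refined by a Laurent covering---built by iterating two-element splits $X = X(f) \cup X(f^{-1})$---it suffices to verify the sheaf condition for a single such two-element Laurent covering. This reduces the problem to checking that the {\v C}ech diagram of solid module $\infty$-categories associated with $\{X(f), X(f^{-1})\}$ exhibits $\Dcal((A,A^+)_{\bs})$ as a limit.

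The central algebraic input is that for any rational localization $(A, A^+) \to (A', A^{\prime+})$, the target $(A', A^{\prime+})_{\bs}$, viewed as an $\Ebb_\infty$-algebra in $\Dcal((A,A^+)_{\bs})$, is idempotent: the multiplication map
$$(A', A^{\prime+})_{\bs} \otimes_{(A,A^+)_{\bs}} (A', A^{\prime+})_{\bs} \lra (A', A^{\prime+})_{\bs}$$
is an equivalence. This is the solid analogue of the classical fact that rational localizations are epimorphisms of Huber pairs, and it identifies the base-change functor $M \mapsto M \otimes_{(A,A^+)_{\bs}} (A',A^{\prime+})_{\bs}$ with a smashing localization of $\Dcal((A,A^+)_{\bs})$. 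Granted this, descent along a two-element Laurent cover reduces to a general categorical statement: for a pair of idempotent $\Ebb_\infty$-algebras $E_1, E_2$ in a presentable symmetric monoidal stable $\infty$-category, if the joint base-change functor is conservative, then modules glue along the {\v C}ech diagram obtained from $E_1, E_2, E_1 \otimes E_2$. The required conservativity in our setting is the solid incarnation of Tate acyclicity for Laurent coverings, saying that $(A,A^+)_{\bs}$ is recovered as the equalizer of $(A\langle f\rangle)_{\bs} \times (A\langle f^{-1}\rangle)_{\bs} \rightrightarrows (A\langle f,f^{-1}\rangle)_{\bs}$.

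The main obstacle, I expect, will be establishing this idempotence directly in the solid framework. Unlike the scheme-theoretic case, one cannot simply invoke classical localization identities: one must compute solid tensor products of the completed Huber pairs in question and verify that the natural candidate for the solidification of $A\langle f/g\rangle$ is preserved under further solid base change. This rests on a careful analysis of how the functor of measures attached to $(A, A^+)_{\bs}$ behaves under rational localization, and in particular on exhibiting a well-behaved basis of measures for $A\langle f/g\rangle$ relative to $A$. The remaining ingredients---passing from arbitrary rational coverings to Laurent ones, and invoking the categorical descent lemma for pairs of idempotent algebras---are formal and follow the pattern of Zariski descent in derived algebraic geometry.
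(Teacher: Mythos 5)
First, a point of bookkeeping: this paper does not prove the statement you were asked about. Theorem \ref{thm:analytic descent} is quoted verbatim from Andreychev (\cite[Theorem 4.1]{And21}) and is used here as a black box (it reappears as Theorem \ref{andes}); there is no proof in this source to compare against. Measured instead against the actual proof in \cite{And21} (and the underlying arguments of Clausen--Scholze), your outline follows essentially the same route: reduce via Huber's refinement argument to simple Laurent coverings $\{X(f), X(1/f)\}$, show that solid rational localizations are idempotent $\Ebb_\infty$-algebras (so that base change is a smashing localization), establish conservativity of the joint localization functor via the fundamental exact sequence $0 \to A \to A\langle f\rangle \oplus A\langle f^{-1}\rangle \to A\langle f, f^{-1}\rangle \to 0$ in solid modules, and conclude by the formal gluing lemma for a pair of idempotent algebras with jointly conservative localizations. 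That categorical skeleton is correct as you state it (idempotency plus joint conservativity does yield the Mayer--Vietoris square for module categories, and for a two-element cover this is equivalent to the full limit over $\Delta$).

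The gap is that the two inputs you defer are exactly where all of the content lives, and you give no indication of how to obtain them. In \cite{And21} the idempotency of $(A\langle f/g\rangle, \ldots)_{\bs}$ over $(A,A^+)_{\bs}$ and the exactness of the Laurent--Tate sequence are not formal consequences of ``epimorphisms of Huber pairs''; they rest on explicit computations in the solid category --- the identification of $\Zbb[[T]]_{\bs}$ and $\Zbb((T))_{\bs}$, the description of $A\langle f/g\rangle_{\bs}$ as a quotient of $A_{\bs}[T]$ by $(gT-f)$ together with the verification that this quotient is already solid and that the relevant Tor-terms vanish, and the steadiness of $\Zbb_{\bs}\to\Zbb[T]_{\bs}$ to propagate these identities under base change. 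Your paragraph acknowledging ``a careful analysis of how the functor of measures behaves under rational localization'' names the difficulty without resolving it, and notably the theorem holds with no sheafyness hypothesis on $(A,A^+)$ precisely because of these computations, a subtlety your sketch does not engage with. As submitted, this is an accurate roadmap of the argument in the cited reference rather than a proof.
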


In \cite{Mat22}, it is expected that faithfully flat descent in rigid geometry can be formulated in this framework.
In this paper, we will carry it out.
The following is the main theorem in this paper.

\begin{theorem}\label{thm:main}
 Let $K$ be a complete non-archimedean field, and let $A \to B$ be a faithfully flat map of affinoid $K$-algebras.
 Let $B^{n/A}$ denote the $(n+1)$-fold completed tensor product of $B$ over $A$.
Then we have an equivalence of $\infty$-categories
$$\Dcal((A,A^{\circ})_{\bs}) \overset{\sim}{\lra} \varprojlim_{[n] \in \Delta} \Dcal((B^{n/A},(B^{n/A})^{\circ})_{\bs}). $$
\end{theorem}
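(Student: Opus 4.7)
The plan is to apply the $\infty$-categorical Barr-Beck-Lurie comonadicity theorem to the base-change adjunction
$$f^* \dashv f_* \colon \Dcal((A,A^{\circ})_{\bs}) \rightleftarrows \Dcal((B,B^{\circ})_{\bs})$$
arising from the map of analytic rings induced by $A \to B$. Once $f^*$ is shown to be comonadic, the standard cobar presentation of its source as a totalization of comodule categories, combined with the identification of iterated solid tensor products with the completed tensor products $B^{n/A}$, will give exactly the limit appearing in the statement.

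In the Clausen-Scholze formalism the solid base change $f^* = - \otimes_{A}^{\bs} B$ is symmetric monoidal, colimit-preserving, and admits the restriction of scalars $f_*$ as its right adjoint. Being a left adjoint between stable presentable $\infty$-categories, $f^*$ automatically preserves the $f^*$-split cosimplicial limits required by the Barr-Beck-Lurie hypothesis, so the substantive content reduces to proving that $f^*$ is conservative. In parallel I would establish the identification $B \otimes_{A}^{\bs} \cdots \otimes_{A}^{\bs} B \simeq B^{n/A}$ as analytic rings, using the Noetherian structure of affinoid $K$-algebras together with the comparison between solid and topological tensor products underlying \cite{And21}.

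The core step is conservativity of $f^*$, which I would establish by showing that $A \to B$ is \emph{descendable} in Mathew's sense inside $\Dcal((A,A^{\circ})_{\bs})$, that is, that $A$ lies in the thick $\otimes$-ideal generated by $B$. This is a quantitative strengthening of the cobar resolution $A \simeq \Tot(B^{\bullet/A})$ already granted, at the level of pseudo-coherent complexes, by Theorem~\ref{thm:ff1}; the task is to extract from Mathew's argument an explicit finite-step splitting witnessing the thick-ideal containment, and to verify that this splitting remains valid under the fully faithful embedding $\PCoh(A) \hookrightarrow \Dcal((A,A^{\circ})_{\bs})$. Once descendability holds, tensoring with any $M \in \Dcal((A,A^{\circ})_{\bs})$ preserves the cobar totalization, yielding $M \simeq \Tot(M \otimes_{A}^{\bs} B^{\bullet/A})$ and hence both the conservativity of $f^*$ and the full descent statement.

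The principal obstacle is exactly this descendability step: extracting a quantitative thick-$\otimes$-ideal statement from Mathew's pseudo-coherent descent and lifting it across the embedding into the solid quasi-coherent category. The comparison between solid and completed tensor products, while technically involved, is a more routine application of the Clausen-Scholze machinery once the descendability is secured.
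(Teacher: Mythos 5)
Your overall strategy---comonadic descent via conservativity plus a descendability statement---is in the right spirit, but it contains a gap that is precisely the obstruction this paper is organized around. Mathew's descendability formalism concerns an algebra object $B$ \emph{inside} a fixed stable homotopy theory $\Ccal$, and what it yields is an equivalence of $\Dcal(A)$ with $\varprojlim \Mod_{B^{\bullet/A}}(\Ccal)$. In the solid setting the base-change functor $-\otimes_{(A,A^{\circ})_{\bs}}^{\Lbb}(B,B^{\circ})_{\bs}$ is \emph{not} given by tensoring with the algebra object $\underline{B}\in\Dcal((A,A^{\circ})_{\bs})$, and $\Dcal((B,B^{\circ})_{\bs})$ is a proper full subcategory of $\Mod_{\underline{B}}(\Dcal((A,A^{\circ})_{\bs}))$ (the functor of measures genuinely changes: $(B,B^{\circ})_{\bs}$-completeness is stronger than $\underline{B}$-module structure plus $(A,A^{\circ})_{\bs}$-completeness). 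So even if you succeeded in showing that $\underline{A}$ lies in the thick $\otimes$-ideal generated by $\underline{B}$---say by transporting a splitting from Theorem \ref{thm:ff1} through the condensification functor---Mathew's theorem would identify $\Dcal((A,A^{\circ})_{\bs})$ with a limit of the \emph{wrong} categories. The paper's way around this is the compact object $N_{B/A}$ with $-\otimes_{A_{\bs}}^{\Lbb}B_{\bs}\simeq R\intHom_{\underline{A}}(N_{B/A},-)$ (Theorem \ref{limit2}), which simultaneously gives limit-preservation of the base change and lets one formulate a ``dual'' descendability: a section of $\varinjlim_{[m]\in\Delta_{s,\le 2}^{\op}}N_{B/A}^{\otimes(m+1)}\to\underline{A}$ (Theorem \ref{rig dual des}), proved not by citing Theorem \ref{thm:ff1} but by passing to formal models (Lemma \ref{flattening}), then to small complete adic rings, and reducing mod $\pi$ to fppf descent for discrete rings, using the uniform boundedness of $N_{B/A}$ (Proposition \ref{bdd}) and adic completeness to lift the splitting back.

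A secondary but real error: a left adjoint between stable presentable $\infty$-categories does \emph{not} automatically preserve totalizations of $f^{*}$-split cosimplicial objects; that preservation is one of the two substantive hypotheses of the Barr--Beck--Lurie criterion (it is condition (2) of Lemma \ref{general des}), and it fails in general for non-descendable faithfully flat maps. In this paper it is earned, not assumed: the base change is exhibited as a right adjoint $R\intHom_{\underline{A}}(N_{B/A},-)$ and hence commutes with all small limits. You would need to supply an argument of comparable strength; as written, your proposal reduces everything to conservativity and then derives conservativity from a descendability statement whose formulation does not match the target categories.
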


This theorem is a generalization of Theorem \ref{thm:ff1}. 
In the proof, we will consider formal models and prove faithfully flat descent for them.
We explain two difficulties appearing in the proof.

First, we briefly review a ``descendable property'' introduced by Mathew in \cite{Mat16}.
Let $f \colon A \to B$ be a map of discrete rings. % and let $\Dcal(A)$ be the derived $\infty$-category of discrete $A$-modules.
%We define the full $\infty$-subcategory $\Ecal \subset \Dcal(A)$ to be the smallest full $\infty$-subcategory with the following properties:
%\begin{enumerate}
%\item
%$\Ecal$ contains $B$.
%\item
%%$\Ecal$ is closed under finite colimits, finite limits, and retracts.
%\item
%For any $M\in \Ecal$ and $N \in \Dcal(A)$, $M\otimes_A^{\Lbb} N$ belongs to $\Ecal$.
%\end{enumerate}
We say that the map $f$ is \textit{descendable} if the pro-object $\{\Tot_{n}(B^{\bullet/A})\}_n$ of $\Dcal(A)$ is a constant pro-object which converges to $A$, where $B^{\bullet/A}$ is the \v{C}ech nerve of $A \to B$ and $\Tot_{n}(B^{\bullet/A})$ is the limit of the $n$-truncation of $B^{\bullet/A}$ (Definition \ref{descendable def}).
In \cite[Definition 3.18]{Mat16}, Mathew shows that finitely presented faithfully flat maps of discrete rings are descendable (\cite[Corollary 3.33]{Mat16}).
He also shows that for a descendable map $f \colon A\to B$ of discrete rings, there is an equivalence of $\infty$-categories
$$\Dcal(A) \overset{\sim}{\lra} \varprojlim_{[n] \in \Delta} \Dcal((B^{n/A})),$$
where $B^{n/A}$ is the $(n+1)$-fold derived tensor product of $B$ over $A$.
The key point of the proof of the latter claim is that the scalar extension functor $\Dcal(A) \to \Dcal(B)$ is given by the tensor product $- \otimes_A^{\Lbb} B$ in $\Dcal(A)$.

Now, we consider a map of affinoid $K$-algebras.
Let $K$ be a complete non-archimedean field and $\pi$ be a pseudo-uniformizer of $K$.
Let $A \to B$ be a faithfully flat map of admissible $\Ocal_K$-algebras (i.e., $\pi$-torsion-free topologically finitely presented $\Ocal_K$-algebras).
In this setting, the functor $- \otimes_{A_{\bs}}^{\Lbb} B_{\bs}$ is not equal to the functor $-\otimes_{A_{\bs}}^{\Lbb} \underline{B}$, where $\underline{B}$ is the object of $\Dcal(A_{\bs})$ associated to the topological $A$-module $B$.
Therefore, we cannot apply the argument in \cite{Mat16} naively.
However, we can prove the following theorem.

\begin{theorem}\label{nba}
There exists an object $N_{B/A} \in \Dcal(\underline{B})$ satisfying the following conditions:
\begin{itemize}
\item
The object $N_{B/A}$ is $A_{\bs}$-complete and compact as an object of $\Dcal(A_{\bs})$.
\item
We have an equivalence of functors from $\Dcal(A_{\bs})$ to $\Dcal(B_{\bs})$
\begin{align}
-\otimes_{A_{\bs}}^{\Lbb} B_{\bs} \overset{\sim}{\lra} R\intHom_{\underline{A}}(N_{B/A}, -).
\label{eq1}
\end{align}
\end{itemize}
In particular, the functor
$$- \otimes_{A_{\bs}}^{\Lbb} B_{\bs} \colon \Dcal(A_{\bs}) \to \Dcal(B_{\bs})$$
commutes with small limits.
Moreover we can construct $N_{B/A}$ and the equivalence (\ref{eq1}) functorially (for the precise statement, see Remark \ref{functoriality}).
\end{theorem}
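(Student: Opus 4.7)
Since $B$ is topologically of finite presentation over $A$, I would begin by choosing a presentation $R := A\langle T_1,\ldots,T_n\rangle \twoheadrightarrow B$ of admissible $\Ocal_K$-algebras with finitely generated kernel $J \subseteq R$, factoring $A \to B$ as a Tate-algebra step followed by a closed immersion. Constructing $N_{B/A}$ then reduces, by transitivity, to two subproblems: building $N_{R/A}$ for $A \to R$, and building $N_{B/R}$ for $R \twoheadrightarrow B$. Once both are available, I would set $N_{B/A} := N_{R/A} \otimes_{\underline R}^{\Lbb} N_{B/R}$ with its natural $\underline B$-module structure; the equivalence for $A \to B$ then follows from the two partial equivalences by Hom--tensor adjunction, while compactness and $A_\bs$-completeness of $N_{B/A}$ are inherited from the corresponding properties of its factors.

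For the closed-immersion step, I would take $N_{B/R}$ to be the Koszul complex $K_R(f_1,\ldots,f_m)$ on a generating set of $J$ (possibly after first reducing to a regular-sequence situation). As a bounded complex of finitely generated free $R$-modules it is perfect over $R$, hence compact and $R_\bs$-complete in $\Dcal(R_\bs)$. Self-duality of $K_R(f_\bullet)$, combined with the fact that in the appropriate derived sense it represents $B$ over $R$, yields the equivalence $- \otimes_{R_\bs}^{\Lbb} \underline B \simeq R\intHom_{\underline R}(K_R(f_\bullet), -)$ up to shift and twist by the relevant determinant line.

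The Tate-algebra step is the heart of the argument; iterating in the variables reduces it to the one-variable case $A \to R = A\langle T\rangle$. For an $A_\bs$-complete solid $A$-module $M$, the completed base change is the null-sequence module $M\langle T\rangle = \{\sum_i m_i T^i : m_i \to 0\}$, and the task is to realize this as $R\intHom_{\underline A}(N, M)$ for a compact, $A_\bs$-complete object $N$ with an $\underline R$-action. Following the solid-mathematics philosophy, I would construct $N_{R/A}$ from a compact projective $A_\bs$-module associated to the profinite-set structure on $\Nbb \cup \{\infty\}$ (equivalently, a Laurent-series-modulo-polynomial variant), equipped with the $\underline R$-action by shift of indices, and verify the null-sequence identity by direct computation on a family of generators of $\Dcal(A_\bs)$. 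The main obstacle lies precisely here: producing the correct compact $N$, matching its condensed/profinite structure with the Banach structure of $A\langle T\rangle$ so that the internal Hom reproduces the completed tensor exactly, and then extending the resulting equivalence from compact generators to all of $\Dcal(A_\bs)$ compatibly with the $\underline B$-action and functorially in the map $A \to B$, constitutes the essential technical content.
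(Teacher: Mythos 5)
Your overall architecture --- factor $A\to B$ through $R=A\langle T_1,\dots,T_n\rangle$, build $N_{R/A}$ and $N_{B/R}$ separately, and set $N_{B/A}=N_{R/A}\otimes_{\underline R}^{\Lbb}N_{B/R}$ so that the two internal-Hom equivalences compose --- is exactly the paper's (Lemma \ref{rp} together with the final paragraph of the proof of Theorem \ref{key1}, transported to the admissible setting in Theorem \ref{limit2}). Your plan for the Tate-algebra step, namely a compact object of the shape $\prod_{\Nbb}\underline A$ (the dual of $\underline{\Zbb[T_1,\dots,T_n]}$ base-changed along $\Zbb_\bs\to A_\bs$) with the identity checked on the generators $\Zbb_\bs[S]\cong\prod_J\underline\Zbb$, is in substance the paper's Lemma \ref{poly} combined with Lemma \ref{eq}; you correctly flag that the computation $R\intHom_{\underline\Zbb}(\prod_J\underline\Zbb,\underline\Zbb)\simeq\bigoplus_J\underline\Zbb$, the identification $\prod_J\underline\Zbb\otimes_{\Zbb_\bs}^{\Lbb}A_\bs\simeq\prod_J\underline A$, and the $\Zbb[T_1,\dots,T_n]_\bs$-completeness of the target are the technical content, but you defer all of it.

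The genuine gap is in the closed-immersion step. The Koszul complex $K_R(f_1,\dots,f_m)$ on generators of $J=\ker(R\twoheadrightarrow B)$ resolves $B$ only when the $f_i$ form a Koszul-regular sequence, and there is no general reduction to that case; for an arbitrary finitely generated ideal of the coherent but non-regular ring $R$ the quotient need not even be a perfect $R$-module, and then no compact dualizing object $N_{B/R}$ can exist at all, since $R\intHom_{\underline R}(N_{B/R},-)$ would commute with small limits while $-\otimes_{R_\bs}^{\Lbb}B_\bs$ need not. What rescues the step --- and what your proposal never invokes --- is the flatness of $B$ over $A$: the paper's Lemma \ref{rp} shows that flatness forces $H^i(B\otimes_R^{\Lbb}R/\mfrak)=0$ for $i\notin[-n,0]$ at every maximal ideal $\mfrak$ (using that $A/\pi$ is Jacobson, so $\nfrak=\mfrak\cap A$ is maximal and $A/\nfrak[T_1,\dots,T_n]$ has global dimension $n$), and the local criterion of Lemma \ref{perfect} then upgrades pseudo-coherence (available since $R$ is coherent) to perfectness. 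With $B$ perfect over $R$ one simply takes $N_{B/R}=R\intHom_{\underline R}(\underline B,\underline R)$, represented by a bounded complex of finite projective $\underline R$-modules, for which compactness, completeness, and the double-duality $\underline B\simeq R\intHom_{\underline R}(N_{B/R},\underline R)$ are immediate; this is what you should substitute for the Koszul complex. With that repair, the rest of your plan proceeds as in the paper.
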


By this theorem, we can make use of $N_{B/A}$ instead of $\underline{B}$ to prove faithfully flat descent.

Next, we want to reduce faithfully flat descent for admissible $\Ocal_K$-algebras to fppf descent for discrete rings by taking a quotient by an ideal of definition.
However, for an extremally disconnected set $S$, the map $A_{\bs}[S]\to \displaystyle \varprojlim_{n \geq 1} (A/\pi^n)_{\bs}[S]$ is not necessarily an equivalence, so we cannot reduce the problem to fppf descent for discrete rings naively.
To resolve this problem, we will introduce the notion of small complete adic rings. 
Roughly speaking, a small complete adic ring is a complete adic ring whose reduction is integral over some finitely generated $\Zbb$-algebra.
For a small complete adic ring, the map above becomes an equivalence.
By using a limit argument, we can reduce the problem to the case of small complete adic rings.
Then we can reduce the problem to the case over discrete rings by taking a quotient by an ideal of definition.

To complete the proof, it is necessary to prove fppf descent for discrete rings in the context of condensed mathematics.

\begin{theorem}\label{thm:main2}
Let $A \to B$ be a finitely presented faithfully flat map of discrete rings.
Let $B^{n/A}$ denote the $(n+1)$-fold tensor product of $B$ over $A$.
Then we have an equivalence of $\infty$-categories 
$$\Dcal(A_{\bs}) \overset{\sim}{\lra} \varprojlim_{[n] \in \Delta} \Dcal((B^{n/A})_{\bs}). $$
\end{theorem}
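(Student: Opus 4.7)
The plan is to lift Mathew's descendability of $A \to B$ from $\Dcal(A)$ to $\Dcal(A_{\bs})$ and then invoke his abstract descent machinery in the solid setting. By \cite[Corollary 3.33]{Mat16}, the finitely presented faithfully flat morphism $A \to B$ of discrete rings is descendable in $\Dcal(A)$, in the sense that the pro-object $\{\Tot_n(B^{\bullet/A})\}_n$ of $\Dcal(A)$ is constant with limit $A$.

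Next I would transfer descendability along the symmetric monoidal colimit-preserving functor $\iota \colon \Dcal(A) \to \Dcal(A_{\bs})$ that views a discrete complex as a solid $\underline{A}$-module complex with the discrete condensed structure. The key compatibility is symmetric monoidality, which can be verified on the compact generators $A_{\bs}[S^{\disc}] \cong \bigoplus_S \underline{A}$ (for sets $S$) using the defining identity $A_{\bs}[S^{\disc}] \otimes_{A_{\bs}}^{\Lbb} A_{\bs}[T^{\disc}] = A_{\bs}[(S \times T)^{\disc}]$, which recovers the classical tensor product of free discrete modules. Since $B^{n/A}$ is again a discrete $A$-algebra, the \v{C}ech conerve of $\underline{A} \to \underline{B}$ formed in $\Dcal(A_{\bs})$ is the image under $\iota$ of the classical \v{C}ech conerve of $A \to B$; hence $\underline{B}$ is descendable as a commutative algebra object of $\Dcal(A_{\bs})$.

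Applying Mathew's abstract descent theorem \cite[Proposition 3.22]{Mat16} in the presentable symmetric monoidal stable $\infty$-category $\Dcal(A_{\bs})$ then yields
$$\Dcal(A_{\bs}) \overset{\sim}{\lra} \varprojlim_{[n] \in \Delta} \Mod_{\underline{B}^{n/\underline{A}}}\bigl(\Dcal(A_{\bs})\bigr),$$
and it remains to identify $\Mod_{\underline{B}^{n/\underline{A}}}\bigl(\Dcal(A_{\bs})\bigr) \simeq \Dcal((B^{n/A})_{\bs})$. Because $B^{n/A}$ is discrete and $\underline{B}^{n/\underline{A}} = \underline{B^{n/A}}$ by the monoidality of $\iota$, a solid $\underline{A}$-module equipped with a compatible $\underline{B^{n/A}}$-action is precisely a solid $B^{n/A}$-module. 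The main obstacle, and the step requiring genuine care, is verifying the symmetric monoidality of $\iota$ on all of $\Dcal(A)$—equivalently, that the derived solid tensor product of discrete $A$-modules recovers the classical derived tensor product—so that descendability transfers cleanly; once this compatibility is established, the remainder of the argument is a formal consequence of Mathew's framework.
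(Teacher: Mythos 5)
There is a genuine gap, and it sits exactly at the step you treat as automatic. Mathew's machinery, applied to the commutative algebra object $\underline{B}$ of $\Dcal(A_{\bs})$, yields descent for the categories $\Mod_{\underline{B}^{n/\underline{A}}}(\Dcal(A_{\bs}))$, i.e.\ for $\underline{B^{n/A}}$-module objects in $A_{\bs}$-complete condensed modules. But the theorem is about $\Dcal((B^{n/A})_{\bs})$, which is a \emph{strictly smaller} full subcategory: $B_{\bs}$-completeness is a stronger condition than ``$A_{\bs}$-complete with a $\underline{B}$-action'' unless $A\to B$ is integral (Proposition \ref{integral}). Concretely, for the finitely presented faithfully flat map $A\to A[T]$ one has $A[T]_{\bs}[S]=\prod_J\underline{A[T]}$ by Theorem \ref{dir prod}, whereas $A_{\bs}[S]\otimes_{\underline{A}}^{\Lbb}\underline{A[T]}=(\prod_J\underline{A})\otimes_{\underline{A}}\underline{A[T]}=\bigoplus_{n\geq 0}(\prod_J\underline{A})\,T^n$; these differ (sequences of polynomials of unbounded degree versus polynomials with $\prod_J A$-coefficients). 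Equivalently, the scalar extension functor appearing in the cosimplicial diagram of the theorem is $-\otimes_{A_{\bs}}^{\Lbb}B_{\bs}$, which is \emph{not} the monad $-\otimes_{\underline{A}}^{\Lbb}\underline{B}$ governed by your descendable algebra $\underline{B}$. So your argument proves descent for the wrong tower of categories, and there is no formal reason the limits over $\Delta$ of the two towers agree. (By contrast, the monoidality of the discretization functor, which you flag as the delicate point, is comparatively harmless and is essentially \cite[Lemma 5.7]{And21} together with Proposition \ref{prop:dp}.)

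This failure is precisely the difficulty the paper isolates in the introduction, and its proof is built around circumventing it: one shows (Theorem \ref{key1}, Corollary \ref{limit}) that $-\otimes_{A_{\bs}}^{\Lbb}B_{\bs}\simeq R\intHom_{\underline{A}}(N_{B/A},-)$ for a compact $A_{\bs}$-complete object $N_{B/A}$ (an avatar of $R\intHom_{\underline{A}}(\underline{B},\underline{A})$, not of $\underline{B}$ itself). Mathew's descendability of $A\to B$ does enter, but dually: it is used in Theorem \ref{colimit} to show that $\varinjlim_{[m]\in\Delta_s^{\op}}N_{B/A}^{\otimes(m+1)}\simeq\underline{A}$, from which conservativity of $-\otimes_{A_{\bs}}^{\Lbb}B_{\bs}$ follows (Corollary \ref{cons}); combined with steadiness (Proposition \ref{steady2}) and limit-preservation, the descent statement is then deduced from Lemma \ref{general des} rather than from \cite[Proposition 3.22]{Mat16} directly. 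To salvage your approach you would need either to prove that the inclusion $\Dcal((B^{n/A})_{\bs})\hookrightarrow\Mod_{\underline{B^{n/A}}}(\Dcal(A_{\bs}))$ induces an equivalence on the $\Delta$-limits (not at all obvious), or to replace $\underline{B}$ by an object whose associated monad really is $-\otimes_{A_{\bs}}^{\Lbb}B_{\bs}$ --- which is what $N_{B/A}$ accomplishes.
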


In this setting, we can also define $N_{B/A}$ as in Theorem \ref{nba}.
In order to prove Theorem \ref{thm:main2}, we will prove a descendable property of $N_{B/A}$ (Theorem \ref{colimit}) based on ideas of Mathew in \cite{Mat16}.
It is also necessary to prove uniform boundedness of $N_{B/A}$ (Proposition \ref{bdd}) for reducing faithfully flat descent for admissible $\Ocal_K$-algebras to fppf descent for discrete rings (see the proof of Theorem \ref{rig dual des}).

\begin{remark}
While writing this paper, the author was informed that Mann proves faithfully flat descent for discrete rings by a slightly different approach in his thesis \cite{Mann22}. 
In this paper, it will be proved by using $N_{B/A}$.
On the other hand, Mann considers a stable monoidal $\infty$-category $\Ecal(A_{\bs})$ instead of $\Dcal(A_{\bs})$.
By using it instead of $N_{B/A}$, Mann defines the notion of descendable maps of analytic rings and constructs a general theory of them to prove faithfully flat descent.
\end{remark}

\subsection*{Outline of the paper}
This paper is organized as follows. 
In Section 1, we begin with the definition of analytic rings, then recall the construction of analytic rings from affinoid pairs. For more details, see \cite{CM,AG,And21}.
In Section 2, we prove fppf descent for discrete rings in the context of condensed mathematics.
In Section 3, we introduce the notion of adic completeness in the context of condensed mathematics, then introduce the notion of small adic rings. 
In Section 4, we prove Theorem \ref{thm:main}.
Finally, in Section 5, we recover Theorem \ref{thm:ff1} from Theorem \ref{thm:main} by using the method used in \cite[Section 5]{And21}.

\subsection*{Convention}
\begin{itemize}
\item
All rings, including condensed ones, are assumed unital and commutative.
\item
For an $\infty$-category $\Ccal$, $0$-truncated objects of $\Ccal$ are called \textit{discrete objects} in \cite{HTT}.
However, this term conflicts with the term ``discrete" in the topological sense, so we use the term \textit{static object} to refer to an \textit{$0$-truncated object}.
\item 
In contrast to \cite{And21, Mann22}, we use the term \textit{ring} to refer to an \textit{ordinary ring} (not an animated ring). 
\item
All complete adic rings are assumed Hausdorff.
\item
We use the term ``extremally disconnected set" to refer to an \textit{extremally disconnected compact Hausdorff space}. 
%\item
%We use the term \textit{non-archimedean local field} to refer to a \textit{complete discrete valuation field with finite residue field}. 
\item
We use the terms \textit{f-adic ring} and \textit{affinoid pair} rather than \textit{Huber ring} and \textit{Huber pair}.
\item
We denote the simplex category by $\Delta$, which is the full subcategory of the category of totally ordered sets consisting of the totally ordered sets $[n]=\{0,\ldots,n\}$ for all $n \geq 0$. 
We also denote the subcategory of $\Delta$ with the same objects but where the morphisms are given by injective maps by $\Delta_s$.
Moreover, for every $m \geq 0$ we denote the full subcategory of $\Delta_s$ consisting of $[n]$ for all $0 \leq n \leq m$ by $\Delta_{s,\leq m}$.
\item
For an f-adic ring $A$, we denote the ring of power-bounded elements of $A$ by $A^{\circ}$.
\item
For a topological ring $A$, we denote the discrete ring whose underlying ring is the underlying ring of $A$ by $A_{\disc}$. 
\end{itemize}
\subsection*{Acknowledgements}
The author is grateful to Yoichi Mieda for his support during the studies of the author. In addition, the author is grateful to Ko Aoki for his answers in questions about higher algebra, and to Lucas Mann for his comments on this paper.
Finally, the author would like to the referee for several corrections.

%%%%%%%%%%%%%%%%%%%%%%%%%%%%%%%%%%%%%%%%%%%  section 1 %%%%%%%%%%%%%%%%%%%%%%%%%%%%%%%%%%%%%%%%%%

\section{Review of analytic rings associated to complete affinoid pairs}
In this section, we recall some results of \cite{CM,AG,And21, Mann22}, which will be needed later.
Most of the propositions in this section are stated without proofs. For complete proofs, see \cite{CM,AG,And21, Mann22}.
%We use terminologies and notations of \cite{Mann22}.

\subsection{Analytic rings}
We begin with the definition of condensed sets, condensed abelian groups, condensed rings, etc.

\begin{definition}[{\cite[Definition 1.2]{CM}}]\label{defn:cond}
Let $ \ast_{\proet}$ denote the pro-\'etale site of the point $\ast$, i.e., the category of profinite sets with covers given by finite families of jointly surjective maps.
A \textit{condensed set} is a sheaf of sets on $ \ast_{\proet}$. Similarly, a condensed ring, group, etc.\  is a sheaf of rings, groups, etc.\  on $ \ast_{\proet}$ respectively.
We denote the category of condensed sets by $\Cond$.
\end{definition}

\begin{remark}
This definition has minor set-theoretic issues. For the correct definition, see \cite[Appendix to Lecture II]{CM} or \cite[\S 2.1]{Mann22}. 
Clausen-Scholze's solution to these issues is different from the one adopted in \cite{HTT}.
We often use the results of \cite{HTT}, and they are justified by the methods used in \cite[\S 2.9]{Mann22}.
In this paper, we will ignore this kind of problems.
\end{remark}

\begin{example}[{\cite[Proposition 2.15]{CM}}]
Let $X$ be a $T_1$ space (i.e., a topological space all of whose points are closed).
Then the presheaf $\underline{X}$ defined by sending a profinite set $S$ to the set of continuous maps from $S$ to $X$ becomes a condensed set.
Here, the $T_1$ condition is related to set-theoretical issues.
Similarly, for a Hausdorff topological abelian group or ring $A$, we can define a condensed abelian group or ring $\underline{A}$.
\end{example}

\begin{remark}
We can identify the category of condensed sets, rings, groups, etc.\  with the category of functors 
$$\{\mbox{extremally disconnected sets}\}^{\op} \to \{\mbox{sets, rings, groups, etc.}\}$$
sending finite disjoint unions to finite products, respectively (see \cite[Lecture II]{CM}).
%Therefore, we will often regard a condensed set, ring, group, etc.\  as a functor $$\{\mbox{extremally disconnected sets}\}^{\op} \to \{\mbox{sets, rings, groups, etc.}\}$$ sending finite disjoint unions to finite products, respectively.
However, it is not yet known that the $\infty$-category of sheaves of spaces on $ \ast_{\proet}$ is equivalent to the $\infty$-category of functors 
$$\{\mbox{extremally disconnected sets}\}^{\op} \to \Scal$$
sending finite disjoint unions to finite products, where $\Scal$ is the $\infty$-category of spaces. 
It is claimed in \cite[Warning 2.2.2]{BH19} that the former category is not hypercomplete but the latter category is hypercomplete.
In such a case, we will use the latter as condensed objects.
\end{remark}

For a condensed ring $\Acal$, we denote the category of condensed $\Acal$-modules by $\Mod_{\Acal}^{\cond}$, and the derived $\infty$-category of condensed $\Acal$-modules by $\Dcal(\Acal)$.

\begin{theorem}[{\cite[Theorem 1.10]{CM}}]\label{gro}
The category $\Mod_{\Acal}^{\cond}$ is an abelian category which satisfies Grothendieck'{}s axioms (AB3), (AB4), (AB5), (AB6), (AB3*), and (AB4*). Furthermore, it is generated by compact projective objects.
\end{theorem}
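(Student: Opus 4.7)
The plan is to realize $\Mod_{\Acal}^{\cond}$ as the category of sheaves of $\Acal$-modules on $\ast_{\proet}$ and then to leverage the very special geometry of this site: namely, that extremally disconnected sets form a basis consisting of projective objects in the category of profinite sets. First I would verify the by-now-standard observation that a sheaf on $\ast_{\proet}$ is determined by its restriction to the subcategory of extremally disconnected sets, where moreover the sheaf condition is equivalent to the preservation of finite products. This rests on the fact that every cover of an extremally disconnected set splits, so the \v{C}ech-nerve descent condition collapses to the finite-product-preservation condition.

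Under this identification, the abelian structure on $\Mod_{\Acal}^{\cond}$ together with axioms (AB3), (AB4), (AB5), and (AB3*) are inherited from the ambient Grothendieck topos of sheaves. The substantive observation, which pays for the remaining axioms, is that both arbitrary small products and small filtered colimits of condensed $\Acal$-modules are computed pointwise on extremally disconnected sets. This is because finite products of sets commute with both products and filtered colimits, so the pointwise product (resp.\ pointwise filtered colimit) of finite-product-preserving functors is again finite-product-preserving, no sheafification needed. With this in hand, (AB4*) and (AB6) reduce to the corresponding statements in the module category over the ordinary ring $\Acal(S)$ for each extremally disconnected $S$, which are classical facts about modules over a ring.

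For the compact projective generators, I would take the free modules $\Acal[S]$ on (the Yoneda image of) each extremally disconnected set $S$. By adjunction one has $\Hom_{\Mod_{\Acal}^{\cond}}(\Acal[S], M) \cong M(S)$, and the evaluation functor $M \mapsto M(S)$ is both exact and colimit-preserving by the pointwise description above; this simultaneously witnesses compactness and projectivity of $\Acal[S]$. Generation then follows because for any $M$ the tautological map $\bigoplus_{S} \bigoplus_{s \in M(S)} \Acal[S] \to M$ is an epimorphism: surjectivity of a map of sheaves in this topos may be checked on extremally disconnected sets, where it becomes obvious.

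The main obstacle, or rather the only non-formal input, is the foundational identification of sheaves on $\ast_{\proet}$ with product-preserving presheaves on extremally disconnected sets and the resulting pointwise description of products and filtered colimits; this also requires addressing the set-theoretic subtleties about working with a cofinal small subcategory, as in \cite[Appendix to Lecture II]{CM}. Once this framework is in place, every axiom in the statement reduces to a known statement in ordinary commutative algebra, and the theorem assembles from these pieces.
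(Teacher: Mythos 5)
Your proposal is correct and follows essentially the same route as the source the paper cites for this statement ([CM, Theorems 1.10 and 2.2]; the paper itself records the result without proof): identify condensed modules with finite-product-preserving functors on extremally disconnected sets, observe that limits and colimits are then computed pointwise so that evaluation at each such $S$ is exact and colimit-preserving, and deduce the Grothendieck axioms from the corresponding facts in $\Ab$ together with compact projectivity of the generators $\Acal[S]$ via $\Hom_{\Acal}(\Acal[S],M)\cong M(S)$. No gaps beyond the set-theoretic care you already flag.
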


Note that, for an extremally disconnected set $S$,  the functor $\Mod_{\Acal}^{\cond} \to \Ab ;\; M \mapsto M(S)$ is exact. 
Let $\Acal[S]$ denote the sheafification of $T \mapsto \Acal(T)^{\oplus \underline{S}(T)}$. 
Then $\Hom_{\Acal}(\Acal[S], M)$ is isomorphic to $M(S)$ for every condensed $\Acal$-module $M$. 
Therefore, $\Acal[S]$ is a compact projective condensed $\Acal$-module for any extremally disconnected set $S$, and such modules form a family of compact projective generators (see \cite[Lecture II]{CM}).
Moreover $\Mod_{\Acal}^{\cond}$ has a tensor product and an internal Hom as usual (for details, see \cite[Proposition 2.1.11]{Mann22}).
We denote the latter by $\underline{\Hom}_{\Acal}(-,-)$. 
Note that $\underline{\Hom}_{\Acal}(M,N)(S)$ is equal to $\Hom_{\Acal}(M \otimes_{\Acal} \Acal[S], N)$ for each extremally disconnected set $S$.
In addition, for an object $M \in \Dcal(\Acal)$, we denote $R\Gamma(S,M) \in \Dcal(\Ab)$ by $M(S)$.

Next, we define (static) analytic rings.
\begin{definition}[{\cite[Definitions 7.1, 7.4]{CM}}]\label{def:analytic ring}
An \textit{uncompleted pre-analytic ring} $(\Acal,\Mcal)$ is a condensed ring $\Acal$ equipped with a functor 
$$\{\mathrm{extremally\ disconnected\ sets}\}\to \Mod^{\cond}_{\Acal};\; S\mapsto \Mcal[S]$$
that sends finite disjoint unions to finite products, 
and a natural transformation $\Phi_{(\Acal,\Mcal)}\colon \underline{S} \to \Mcal[S]$ of functors from the category of extremally disconnected sets to $\Cond.$
For an uncompleted pre-analytic ring $(\Acal, \Mcal)$, we call $\Acal$ the \textit{underlying condensed ring} of $(\Acal, \Mcal)$ and $\Mcal$ the \textit{functor of measures} of $(\Acal, \Mcal)$.
 An uncompleted pre-analytic ring $(\Acal, \Mcal)$ is called an \textit{uncompleted analytic ring} if, for every complex
$$C\colon \cdots \rightarrow C_i\rightarrow \cdots \rightarrow C_1\rightarrow C_0\rightarrow 0 $$
of $\Acal$-modules such that each $C_i$ is a direct sum of objects of the form $\Mcal[T]$ for various extremally disconnected sets $T$, 
the map $$R\intHom_{\Acal}(\Mcal[S], C) \to R\intHom_{\Acal}({\Acal}[S], C)$$ is an equivalence for all extremally disconnected sets $S$. 
An uncompleted analytic ring $(\Acal,\Mcal)$ is called an \textit{analytic ring} if the map $\Acal \to \Mcal[\ast]$ is an isomorphism.
\end{definition}

\begin{remark}
The above notation of analytic rings is used in \cite[Definition 12.1]{AG} and \cite[Definition 2.3.1]{Mann22}.
In \cite[Definition 7.1]{CM}, an analytic ring is denoted by $\Acal$ which is a condensed ring $\underline{\Acal}$ with a functor $$\{\mathrm{extremally\ disconnected\ sets}\}\to \Mod^{\cond}_{\underline{\Acal}};\; S\mapsto \Acal[S]$$ as in Definition \ref{def:analytic ring}.
We will use both notations, which will cause no confusion.
\end{remark}

\begin{remark}
Our terminology follows \cite{Mann22}. 
In \cite{CM}, uncompleted pre-analytic rings, uncompleted analytic rings, and analytic rings are called pre-analytic rings, analytic rings, and normalized analytic rings, respectively.
\end{remark}

\begin{definition}
Let $(\Acal, \Mcal)$ be an uncompleted analytic ring.
\begin{enumerate}
\item
We define $\Mod_{(\Acal,\Mcal)}^{\cond} \subset \Mod_{\Acal}^{\cond}$ to be the full subcategory of all $\Acal$-modules $M$ such that for all extremally disconnected sets $S$, the map
$$\Hom_{\Acal}(\Mcal[S], M) \to \Hom_{\Acal}(\Acal[S], M) \cong M(S)$$
is an isomorphism.
An object of $\Mod_{\Acal}^{\cond}$ is said to be \textit{$(\Acal,\Mcal)$-complete} if it lies in $\Mod_{(\Acal,\Mcal)}^{\cond}$.
\item
We define $\Dcal(\Acal,\Mcal) \subset \Dcal(\Acal)$ to be the full $\infty$-subcategory of all complexes of  $\Acal$-modules $M$ such that for all extremally disconnected sets $S$, the map
$$R\Hom_{\Acal}(\Mcal[S], M) \to R\Hom_{\Acal}(\Acal[S], M)$$ 
is an equivalence.
An object of $\Dcal(\Acal)$ is said to be \textit{$(\Acal,\Mcal)$-complete} if it lies in $\Dcal(\Acal,\Mcal)$.
\end{enumerate}
\end{definition}

\begin{proposition}[{\cite[Proposition 7.5]{CM}}, {\cite[Lemma 5.27]{And21}}]
Let $(\Acal,\Mcal)$ be an uncompleted analytic ring.
\begin{enumerate}
\item
\begin{itemize}
\item
The full subcategory 
$\Mod_{(\Acal,\Mcal)}^{\cond} \subset \Mod_{\Acal}^{\cond}$
is an abelian category stable under limits, colimits, and extensions.
The objects of the form $\Mcal[S]$, where $S$ is an extremally disconnected set, form a family of compact projective generators.
\item
The inclusion functor
$\Mod_{(\Acal,\Mcal)}^{\cond} \to \Mod_{\Acal}^{\cond} $
admits a left adjoint
$$\Mod_{\Acal}^{\cond} \to \Mod_{(\Acal,\Mcal)}^{\cond} ;\; M \mapsto M\otimes_{\Acal} (\Acal, \Mcal),$$
which is the unique colimit-preserving extension of $\Acal[S] \mapsto \Mcal[S]$.
There is a unique symmetric monoidal tensor product $-\otimes_{(\Acal,\Mcal)}-$ making the functor above symmetric monoidal.
\item
For $M,N \in \Mod_{(\Acal,\Mcal)}^{\cond}$, $\underline{\Hom}_{\Acal}(M,N)$ is also $(\Acal,\Mcal)$-complete, and $\underline{\Hom}_{\Acal}(-,-)$ becomes an internal Hom of $\Mod_{(\Acal,\Mcal)}^{\cond}$.
\end{itemize}
\item
\begin{itemize}
\item
The full $\infty$-subcategory $\Dcal(\Acal,\Mcal) \subset \Dcal(\Acal)$ is a stable subcategory stable under limits and colimits.
The image of the functor 
$\Dcal(\Mod_{(\Acal,\Mcal)}^{\cond}) \to \Dcal(\Acal)$
is included in $\Dcal(\Acal,\Mcal)$, and 
$\Dcal(\Mod_{(\Acal,\Mcal)}^{\cond}) \to \Dcal(\Acal,\Mcal)$
is an equivalence.
For $M \in \Dcal(\Acal,\Mcal)$, $$R\intHom_{\Acal}(\Mcal[S], M) \to R\intHom_{\Acal}(\Acal[S], M)$$ is also an equivalence.
An object $M \in \Dcal(\Acal)$ is $(\Acal,\Mcal)$-complete if and only if $H^i(M)$ is $(\Acal,\Mcal)$-complete for all $i\in \Zbb$.
\item
The inclusion functor
$\Dcal(\Acal,\Mcal) \to \Dcal(\Acal)$
admits a left adjoint
$$\Dcal(\Acal) \to \Dcal(\Acal,\Mcal) ;\; M \mapsto M\otimes_{\Acal}^{\Lbb} (\Acal, \Mcal),$$
which is a left derived functor of $M \mapsto M\otimes_{\Acal} (\Acal, \Mcal)$.
There is a unique symmetric monoidal tensor product $-\otimes_{(\Acal,\Mcal)}^{\Lbb}-$ making the functor above symmetric monoidal.
\item
For $M,N \in \Dcal(\Acal,\Mcal)$, $R\intHom_{\Acal}(M,N)$ is also $(\Acal,\Mcal)$-complete, and $R\intHom_{\Acal}(-,-)$ becomes an internal Hom of $\Dcal(\Acal,\Mcal)$.
\end{itemize}
\end{enumerate}
\end{proposition}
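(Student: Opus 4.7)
The plan is to build everything out of the fact that $\{\Acal[S]\}_{S}$, with $S$ ranging over extremally disconnected sets, is a set of compact projective generators of $\Mod_\Acal^{\cond}$, together with the analytic-ring axiom, which roughly says that $\Mcal[S]$ behaves like $\Acal[S]$ when mapping into things built from the $\Mcal[T]$'s. Stability of $\Mod_{(\Acal,\Mcal)}^{\cond}$ under limits is then immediate, because the defining condition $\Hom_\Acal(\Mcal[S],M)\cong M(S)$ equates two limit-preserving functors of $M$.

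To construct the left adjoint $L$ in part (1), I would invoke presentability of $\Mod_\Acal^{\cond}$: there is a unique cocontinuous functor $L\colon \Mod_\Acal^{\cond}\to \Mod_\Acal^{\cond}$ with $L(\Acal[S])=\Mcal[S]$, equipped with a natural transformation $\id\to L$ coming from $\Phi_{(\Acal,\Mcal)}$. The key point is to check that $L(M)$ is complete for every $M$; here I would take a free presentation $\bigoplus\Acal[T_1]\to \bigoplus\Acal[T_0]\to M\to 0$, apply $L$ to obtain the two-term complex $\bigoplus\Mcal[T_1]\to \bigoplus\Mcal[T_0]\to L(M)\to 0$, and invoke the analytic-ring axiom on this complex to identify $\Hom_\Acal(\Mcal[S],L(M))$ with $L(M)(S)$. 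Once $L$ is in hand, $\Mod_{(\Acal,\Mcal)}^{\cond}$ becomes a reflective abelian subcategory by standard localization: kernels are computed in $\Mod_\Acal^{\cond}$, cokernels and general colimits via $L$, closure under extensions follows by a snake-lemma argument, and the $\Mcal[S]=L(\Acal[S])$ inherit the compact projective generator property since the inclusion preserves filtered colimits (as $\Mcal[S]$ is already compact in $\Mod_\Acal^{\cond}$). The symmetric monoidal tensor product is forced by $\Mcal[S]\otimes_{(\Acal,\Mcal)}\Mcal[T]=L(\Acal[S\times T])=\Mcal[S\times T]$, extended by colimits; the universal property of $L$ makes it symmetric monoidal. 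For the internal Hom, given complete $M,N$, I would rewrite
\[
\Hom_\Acal(\Acal[S],\intHom_\Acal(M,N))=\Hom_\Acal(\Acal[S]\otimes_\Acal M,N)
\]
and use completeness of $N$ to replace $\Acal[S]$ by $\Mcal[S]$ on the right.

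For part (2), the derived version is essentially parallel, but the analytic-ring axiom is now doing its full job. The derived left adjoint $\Lbb L$ is obtained by resolving by direct sums of $\Acal[T]$'s and applying $L$ termwise; the axiom is precisely the statement that $R\intHom_\Acal(\Mcal[S],-)$ and $R\intHom_\Acal(\Acal[S],-)$ agree on complexes built from the $\Mcal[T]$'s, which places $\Lbb L(M)$ in $\Dcal(\Acal,\Mcal)$. The derived tensor product and internal Hom then descend by the same universal arguments, and the identification $\Dcal(\Mod_{(\Acal,\Mcal)}^{\cond})\simeq \Dcal(\Acal,\Mcal)$ follows by showing the $t$-structure on $\Dcal(\Acal)$ restricts to one on $\Dcal(\Acal,\Mcal)$ with heart $\Mod_{(\Acal,\Mcal)}^{\cond}$ (using closure under kernels, cokernels, and extensions to pass completeness through each $H^i$) and that both sides are left- and right-complete. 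The hardest step I expect is the verification that $\Lbb L$ lands in $\Dcal(\Acal,\Mcal)$ for arbitrary complexes: the axiom is naturally stated for complexes concentrated in non-positive degrees built from $\Mcal[T]$'s, so extending to all of $\Dcal(\Acal)$ requires a careful truncation and approximation argument, amounting to showing that a well-chosen totalization of complete objects remains complete after applying $L$.
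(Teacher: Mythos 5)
First, a point of reference: the paper does not prove this proposition. It is quoted from \cite[Proposition 7.5]{CM} and \cite[Lemma 5.27]{And21}, and the surrounding text explicitly defers all proofs in Section 1 to those sources. So your sketch must be measured against the standard argument in the cited references, and in outline it does follow that argument: build the reflector by the colimit-preserving extension of $\Acal[S]\mapsto\Mcal[S]$, use the analytic-ring axiom on (resolutions by) complexes of direct sums of $\Mcal[T]$'s to see that the output is complete, and deduce the monoidal and internal-Hom statements formally. (One small omission: even to make $S\mapsto\Mcal[S]$ into a functor on the full subcategory spanned by the $\Acal[S]$'s you already need the axiom in the degenerate case $C=\Mcal[T][0]$, which gives $\Hom_{\Acal}(\Mcal[S],\Mcal[T])\cong\Mcal[T](S)$.)

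There is, however, one step whose justification is wrong as written. You derive closure of $\Mod_{(\Acal,\Mcal)}^{\cond}$ under filtered colimits, and hence compact generation by the $\Mcal[S]$, from the assertion that ``$\Mcal[S]$ is already compact in $\Mod_{\Acal}^{\cond}$''. Nothing in the definition of an analytic ring gives this, and it is not true in the basic examples: for $\Zbb_{\bs}$ one has $\Mcal[S]\cong\prod_J\underline{\Zbb}$, which is compact projective in solid abelian groups but not in $\Cond(\Ab)$, whose compact projectives are the retracts of the $\Zbb[S]$. The deduction has to run the other way: first show that $L$ exists, preserves colimits, lands in complete objects, and is left adjoint to the fully faithful inclusion; then the unit $M\to L(M)$ is an isomorphism on complete objects, so for a filtered diagram of complete objects one gets $\varinjlim M_i\cong\varinjlim L(M_i)\cong L(\varinjlim M_i)$, which is complete; compactness of $\Mcal[S]$ \emph{in the subcategory} then follows from $\Hom_{\Acal}(\Mcal[S],\varinjlim M_i)\cong(\varinjlim M_i)(S)\cong\varinjlim M_i(S)$. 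Two further points need care rather than being outright errors: in your two-term-presentation argument the axiom gives $R\intHom_{\Acal}(\Mcal[S],C)\simeq C(S)$ for $C=[\bigoplus\Mcal[T_1]\to\bigoplus\Mcal[T_0]]$, which may have nonzero $H^{-1}$, so to extract $\Hom_{\Acal}(\Mcal[S],H^0(C))\cong H^0(C)(S)$ you must also note that maps from the heart object $\Mcal[S]$ into $\tau^{\leq-1}C$ and $\tau^{\leq-1}C[1]$ vanish; and the passage from connective to unbounded complexes in part (2), which you flag but do not carry out, is done by writing a complex of direct sums of $\Acal[S]$'s as the sequential colimit of its stupid truncations and invoking the colimit-closure just established, so it genuinely depends on fixing the previous point.
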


\begin{definition}[{\cite[Lecture VII, page 47]{CM}}]
Let $(\Acal,\Mcal)$ and $(\Bcal, \Ncal)$ be uncompleted analytic rings. 
A map of uncompleted analytic rings from $(\Acal,\Mcal)$ to $(\Bcal, \Ncal)$ is a map $\Acal \to \Bcal$ of underlying condensed rings such that for all extremally disconnected sets $S$, the $\Acal$-module $\Ncal[S]$ is $(\Acal,\Mcal)$-complete.
\end{definition}

\begin{remark}
For a map $(\Acal,\Mcal) \to (\Bcal, \Ncal)$ of uncompleted analytic rings, every object $M\in \Dcal(\Bcal, \Ncal)$ is $(\Acal,\Mcal)$-complete,
since $\Dcal(\Bcal, \Ncal)$ is generated by $\Ncal[S]$ for various extremally disconnected sets $S$ under shifts and sifted colimits and  $\Ncal[S]$ is $(\Acal,\Mcal)$-complete.
\end{remark}

\begin{proposition}[{\cite[Proposition 7.7]{CM}}]
Let $(\Acal,\Mcal)$ and $(\Bcal, \Ncal)$ be uncompleted analytic rings, and let $f \colon (\Acal,\Mcal) \to (\Bcal, \Ncal)$ be a map of uncompleted analytic rings.
\begin{enumerate}
\item
The composite functor
$$\Mod_{\Acal}^{\cond} \xrightarrow{- \otimes_{\Acal} \Bcal}\Mod_{\Bcal}^{\cond} \xrightarrow{-\otimes_{\Bcal} (\Bcal, \Ncal)} \Mod_{(\Bcal, \Ncal)}^{\cond}$$
factors over  $\Mod_{(\Acal, \Mcal)}^{\cond}$, via a functor denoted by
$$\Mod_{(\Acal, \Mcal)}^{\cond} \to \Mod_{(\Bcal, \Ncal)}^{\cond} ;\; M \to M\otimes_{(\Acal, \Mcal)}(\Bcal, \Ncal).$$
This is the left adjoint of the forgetful functor $\Mod_{(\Bcal,\Ncal)}^{\cond} \to \Mod_{(\Acal,\Mcal)}^{\cond}$.
\item
The composite functor
$$\Dcal(\Acal) \xrightarrow{- \otimes_{\Acal}^{\Lbb} \Bcal}\Dcal(\Bcal) \xrightarrow{-\otimes_{\Bcal}^{\Lbb} (\Bcal, \Ncal)} \Dcal(\Bcal, \Ncal)$$
factors over  $\Dcal(\Acal, \Mcal)$, via a functor denoted by 
$$\Dcal(\Acal, \Mcal) \to \Dcal(\Bcal, \Ncal) ;\; M \to M\otimes_{(\Acal, \Mcal)}^{\Lbb}(\Bcal, \Ncal).$$
This is the left adjoint functor of the forgetful functor $\Dcal(\Bcal,\Ncal) \to \Dcal(\Acal,\Mcal)$.
\end{enumerate}
\end{proposition}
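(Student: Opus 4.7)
The plan is to obtain both functors from a pair of adjunctions, using the defining condition of a map of analytic rings to control how the two completion procedures interact. The crucial observation is that for any $N \in \Mod_{(\Bcal,\Ncal)}^{\cond}$, its restriction to $\Mod_{\Acal}^{\cond}$ along $f$ is already $(\Acal,\Mcal)$-complete: by the definition of a map of analytic rings, each generator $\Ncal[S]$ is $(\Acal,\Mcal)$-complete, and $\Mod_{(\Acal,\Mcal)}^{\cond}$ is stable under colimits in $\Mod_{\Acal}^{\cond}$ by the preceding proposition. The derived analogue is noted in the remark just above: each $\Ncal[S]$ lies in $\Dcal(\Acal,\Mcal)$, and $\Dcal(\Acal,\Mcal)$ is stable under the shifts and sifted colimits through which all of $\Dcal(\Bcal,\Ncal)$ is generated.

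For Part (1), write $F\colon \Mod_{\Acal}^{\cond} \to \Mod_{(\Bcal,\Ncal)}^{\cond}$ for the composite $M \mapsto (M \otimes_{\Acal} \Bcal) \otimes_{\Bcal} (\Bcal,\Ncal)$, and chain the two obvious adjunctions ($- \otimes_{\Acal} \Bcal$ against restriction of scalars, and $- \otimes_{\Bcal}(\Bcal,\Ncal)$ against the inclusion into $\Mod_{\Bcal}^{\cond}$) to obtain, for $M \in \Mod_{\Acal}^{\cond}$ and $N \in \Mod_{(\Bcal,\Ncal)}^{\cond}$,
\[
\Hom_{(\Bcal,\Ncal)}(F(M), N) \;\cong\; \Hom_{\Acal}(M, N).
\]
By the previous paragraph $N$ is $(\Acal,\Mcal)$-complete when viewed over $\Acal$, so the right-hand side is unchanged when $M$ is replaced by $M \otimes_{\Acal} (\Acal,\Mcal)$. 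The Yoneda lemma then forces $F$ to invert the unit maps $M \to M \otimes_{\Acal}(\Acal,\Mcal)$, and the universal property of the reflective localization $\Mod_{\Acal}^{\cond} \to \Mod_{(\Acal,\Mcal)}^{\cond}$ produces a unique factorization $\bar{F}\colon \Mod_{(\Acal,\Mcal)}^{\cond} \to \Mod_{(\Bcal,\Ncal)}^{\cond}$ satisfying
\[
\Hom_{(\Bcal,\Ncal)}(\bar{F}(M), N) \;\cong\; \Hom_{(\Acal,\Mcal)}(M, N).
\]
This is the desired left adjoint, denoted $M \mapsto M \otimes_{(\Acal,\Mcal)}(\Bcal,\Ncal)$.

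Part (2) proceeds in exactly the same way once $\Mod$, $\otimes$, $\Hom$ are replaced by $\Dcal$, $\otimes^{\Lbb}$, $R\Hom$ throughout. The input adjunctions are $- \otimes_{\Acal}^{\Lbb}\Bcal$ against derived restriction of scalars, and $- \otimes_{\Bcal}^{\Lbb}(\Bcal,\Ncal)$ against the inclusion $\Dcal(\Bcal,\Ncal) \hookrightarrow \Dcal(\Bcal)$; chaining them gives the derived analogue of the identity above, and the same factorization argument, applied in the stable $\infty$-categorical localization $\Dcal(\Acal) \to \Dcal(\Acal,\Mcal)$, produces the required left adjoint $M \mapsto M \otimes_{(\Acal,\Mcal)}^{\Lbb}(\Bcal,\Ncal)$.

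The one nontrivial input, and the step I would budget the most care for, is the closure property that restriction along $f$ sends $\Mod_{(\Bcal,\Ncal)}^{\cond}$ into $\Mod_{(\Acal,\Mcal)}^{\cond}$ and $\Dcal(\Bcal,\Ncal)$ into $\Dcal(\Acal,\Mcal)$; here one has to combine the hypothesis on generators with stability of completeness under the relevant (co)limits. Everything downstream is formal bookkeeping with adjunctions between reflective (sub-)categories.
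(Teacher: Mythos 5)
Your argument is correct: the paper itself gives no proof of this statement (it is quoted from \cite[Proposition 7.7]{CM}, and Section 1 explicitly defers all such proofs to the references), and your reduction to the key closure property --- that restriction along $f$ carries $(\Bcal,\Ncal)$-complete objects to $(\Acal,\Mcal)$-complete ones, which is exactly the content of the remark following the definition of maps of uncompleted analytic rings --- followed by formal manipulation of the composite adjunction and the universal property of the reflective localization, is the same route taken in the cited source. No gaps.
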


Similarly, we can also make a definition of a (complete commutative) analytic animated ring $(\Acal, \Mcal)$, and similar propositions hold true for it.
In essence, however, we treat only (0-truncated) analytic rings in this paper, so we do not give the precise definition.
See \cite[Lectures XI, XII]{AG}, \cite[\S 2.3]{Mann22} for more details. 
We denote the $\infty$-category of (complete commutative) analytic animated rings by $\AnRing$.
Then the category of (complete) analytic rings becomes a full $\infty$-subcategory of $\AnRing$ in a natural way.

\begin{theorem}[{\cite[Proposition 12.12]{AG}}]
The $\infty$-category $\AnRing$ admits all small colimits. In particular, $\AnRing$ admits pushouts.
Moreover, sifted colimits in $\AnRing$ commute with the functor $(\Acal,\Mcal) \to \Mcal[S]$ to $\Dcal^{\leq 0}(\Cond(\Ab))$ for every extremally disconnected set $S$, where $\Cond(\Ab)$ is the category of condensed abelian groups.
\end{theorem}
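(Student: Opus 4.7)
The plan is to establish existence of arbitrary small colimits in $\AnRing$ by combining two separate constructions: pushouts (and more generally finite coproducts) built via a relative tensor product of analytic rings, and sifted colimits built termwise on the functor of measures. Since every small colimit can be written as a sifted colimit of finite coproducts, these two pieces together give all small colimits. The assertion about sifted colimits commuting with $(\Acal,\Mcal)\mapsto\Mcal[S]$ into $\Dcal^{\leq 0}(\Cond(\Ab))$ will emerge from the second construction.

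For pushouts, I would start with a span $(\Acal,\Mcal)\leftarrow(\Ccal,\Pcal)\rightarrow(\Bcal,\Ncal)$ in $\AnRing$, form the underlying pushout $\Dcal := \Acal\otimes_{\Ccal}^{\Lbb}\Bcal$ of condensed animated commutative rings (which exists by presentability of that category), and equip it with a candidate functor of measures $S\mapsto \Mcal[S]\otimes_{(\Ccal,\Pcal)}^{\Lbb}(\Bcal,\Ncal)$, where the right factor is interpreted via the base-change functor along $(\Ccal,\Pcal)\to(\Bcal,\Ncal)$ already established in Proposition 1.? (the base-change for maps of uncompleted analytic rings). I would then normalize to make the map $\Dcal\to \Mcal[\ast]\otimes_{(\Ccal,\Pcal)}^{\Lbb}(\Bcal,\Ncal)$ an isomorphism on the underlying ring. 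Verifying the analytic ring axiom for this candidate reduces, by generation of $\Dcal(\Bcal,\Ncal)$ under shifts and sifted colimits by the objects $\Ncal[T]$, to an identity already satisfied by $(\Bcal,\Ncal)$, and the required universal property against maps to another analytic ring $(\Escr,\Mscr)$ follows from the universal properties of the two base-change steps composed with completion.

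For sifted colimits, given a sifted diagram $i\mapsto(\Acal_i,\Mcal_i)$, I would set $\Acal := \colim_i \Acal_i$ in condensed animated commutative rings and define $\Mcal[S] := \colim_i \Mcal_i[S]$ in $\Dcal^{\leq 0}(\Cond(\Ab))$, where the colimit is taken with its induced $\Acal$-module structure (possible because sifted colimits in module categories are computed on underlying objects, and because sifted colimits in animated rings are underlying). The natural map $\Phi\colon\underline{S}\to\Mcal[S]$ is obtained as the colimit of the $\Phi_i$. Three conditions must be checked: (i) normalization $\Acal\to\Mcal[\ast]$ is an iso, which is immediate since both sides are sifted colimits of the same diagram; (ii) the functor $S\mapsto\Mcal[S]$ sends finite disjoint unions to finite products, using that sifted colimits commute with finite products in $\Dcal^{\leq 0}(\Cond(\Ab))$; and (iii) the analytic ring axiom for the complex condition.

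The main obstacle is verifying (iii), i.e., that for a complex $C$ of sums of $\Mcal[T]$'s the map $R\intHom_{\Acal}(\Mcal[S],C)\to R\intHom_{\Acal}(\Acal[S],C)$ is an equivalence. I would reduce to the case $C=\Mcal[T]$ by passing through the defining colimit, and then use that $\Mcal[T]=\colim_i\Mcal_i[T]$ to rewrite both sides as sifted colimits. Here compactness of $\Acal[S]$ in $\Dcal(\Acal)$ lets $R\intHom_{\Acal}(\Acal[S],-)$ commute with filtered colimits along the diagram (after base-changing the $\Acal_i[S]$ along $\Acal_i\to\Acal$), and the corresponding fact for $\Mcal[S]$ follows from the analogous compactness of $\Mcal_i[S]$ inside $\Dcal(\Acal_i,\Mcal_i)$ combined with the maps $(\Acal_i,\Mcal_i)\to(\Acal,\Mcal)$ being maps of uncompleted analytic rings by construction. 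Once (iii) is established the resulting $(\Acal,\Mcal)$ is the colimit in $\AnRing$, and the displayed compatibility $\colim_i\Mcal_i[S]\simeq \Mcal[S]$ is exactly the asserted commutation of sifted colimits with $(\Acal,\Mcal)\mapsto\Mcal[S]$. Combining with the pushouts from the previous paragraph yields all small colimits.
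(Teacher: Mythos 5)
The paper does not prove this statement: it is quoted directly from \cite[Proposition 12.12]{AG}, and the section explicitly defers all proofs to the references. So the comparison below is with the argument in \cite{AG}, whose overall shape your proposal does share (reduce all small colimits to sifted colimits plus finite coproducts; build sifted colimits termwise on the functor of measures). Within that shared skeleton, however, there is a genuine gap in your pushout construction. The candidate functor of measures $S\mapsto \Mcal[S]\otimes_{(\Ccal,\Pcal)}^{\Lbb}(\Bcal,\Ncal)$ is the result of completing $(\Acal\otimes_{\Ccal}^{\Lbb}\Bcal)[S]$ once with respect to $(\Acal,\Mcal)$ and once with respect to $(\Bcal,\Ncal)$, and there is no reason for the outcome to still be $(\Acal,\Mcal)$-complete: the second completion can destroy the first. (For instance, with $\Ccal=\underline{\Zbb}$ carrying the free structure, $(\Acal,\Mcal)=\Zbb_{\bs}$ and $\Bcal=\underline{\Zbb_p}$ with the free structure, the candidate measure is $\bigl(\prod_J\underline{\Zbb}\bigr)\otimes_{\underline{\Zbb}}\underline{\Zbb_p}$, which is not solid.) If the candidate measures are not $(\Acal,\Mcal)$-complete, the axiom of Definition \ref{def:analytic ring} fails for them, and your proposed reduction of that axiom ``to an identity already satisfied by $(\Bcal,\Ncal)$'' silently assumes exactly the missing completeness. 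This failure is precisely what the notion of steadiness (Proposition \ref{steady bc}) is designed to control, and steadiness is not available for arbitrary maps of analytic rings. The actual construction in \cite{AG} avoids any closed formula: one \emph{defines} the pushout structure on $\Ecal=\Acal\otimes_{\Ccal}^{\Lbb}\Bcal$ by declaring $\Dcal(\Ecal,\Mcal_{\Ecal})=\Dcal(\Ecal)\cap\Dcal(\Acal,\Mcal)\cap\Dcal(\Bcal,\Ncal)$ (cf.\ Proposition \ref{p} and Lemma \ref{pushout}), checks that this intersection is closed under limits and colimits, and obtains $\Mcal_{\Ecal}[S]$ abstractly as the reflection of $\Ecal[S]$ into it via the adjoint functor theorem.

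Your verification of the analytic ring axiom for sifted colimits is also shakier than it needs to be. The index category is sifted but not filtered, so compactness of $\Acal[S]$ (a statement about filtered colimits) does not directly let you commute $R\intHom$ past the defining colimit; and reducing from an arbitrary connective complex $C$ of direct sums of $\Mcal[T]$'s to a single $\Mcal[T]$ presupposes that $R\intHom_{\Acal}(\Mcal[S],-)$ commutes with the relevant colimits, which is essentially the content of the axiom itself. The clean argument runs the other way: one shows that each $\Mcal[S]=\colim_i\Mcal_i[S]$ lies in the full subcategory of $\Acal$-modules whose restriction to every $\Acal_i$ is $(\Acal_i,\Mcal_i)$-complete (using that, by siftedness, the colimit may be computed over the cofinal subdiagram of indices under a fixed $i$, where each term is $(\Acal_i,\Mcal_i)$-complete, and that each $\Dcal(\Acal_i,\Mcal_i)$ is closed under colimits); since that subcategory is closed under colimits it contains every complex $C$ as in the axiom, and then $R\intHom_{\Acal}(\Mcal[S],C)\simeq\varprojlim_i R\intHom_{\Acal_i}(\Mcal_i[S],C)\simeq C(S)$ because the limit is over a weakly contractible category with essentially constant value. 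Once the sifted case is set up this way, the asserted commutation of sifted colimits with $(\Acal,\Mcal)\mapsto\Mcal[S]$ is indeed immediate from the construction, as you say.
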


\begin{proposition}[{\cite[Proposition 12.12]{AG}}]\label{p}
Let 
$$\xymatrix{
(\Acal, \Mcal_{\Acal}) \ar[r] \ar[d] &(\Bcal, \Mcal_{\Bcal}) \ar[d] \\
(\Ccal, \Mcal_{\Ccal}) \ar[r] &(\Ecal, \Mcal_{\Ecal}) 
}
$$
be a pushout diagram in $\AnRing$.
Then for any object $M\in \Dcal(\Ecal)$, $M$ is $(\Ecal, \Mcal_{\Ecal})$-complete if and only if $M$ is $(\Bcal, \Mcal_{\Bcal})$-complete and $(\Ccal, \Mcal_{\Ccal})$-complete.
\end{proposition}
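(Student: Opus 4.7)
The ``only if'' direction is immediate from the remark following the definition of a map of uncompleted analytic rings: the pushout supplies maps $(\Bcal,\Mcal_{\Bcal}) \to (\Ecal,\Mcal_{\Ecal})$ and $(\Ccal,\Mcal_{\Ccal}) \to (\Ecal,\Mcal_{\Ecal})$ of analytic rings, so any object of $\Dcal(\Ecal,\Mcal_{\Ecal})$ is automatically $(\Bcal,\Mcal_{\Bcal})$- and $(\Ccal,\Mcal_{\Ccal})$-complete by that remark.

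For the ``if'' direction, assume $M \in \Dcal(\Ecal)$ is both $(\Bcal,\Mcal_{\Bcal})$- and $(\Ccal,\Mcal_{\Ccal})$-complete. It will suffice to show that for every extremally disconnected set $S$ the natural map
$$R\intHom_{\Ecal}(\Mcal_{\Ecal}[S], M) \to R\intHom_{\Ecal}(\Ecal[S], M) = M(S)$$
is an equivalence. The plan is to present $(\Ecal, \Mcal_{\Ecal})$ as the geometric realization of the two-sided bar construction in $\AnRing$, namely the simplicial analytic ring $B_\bullet$ with $B_n = (\Bcal, \Mcal_{\Bcal}) \otimes (\Acal, \Mcal_{\Acal})^{\otimes n} \otimes (\Ccal, \Mcal_{\Ccal})$, where the tensor products denote pushouts in $\AnRing$ over the initial analytic ring. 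By the preceding theorem, $(\Acal, \Mcal) \mapsto \Mcal[S]$ commutes with sifted colimits in $\AnRing$, so $\Mcal_{\Ecal}[S]$ is the geometric realization of $\Mcal_{B_\bullet}[S]$; applying $R\intHom_{\Ecal}(-, M)$ then turns the equivalence into the totalization of the cosimplicial object with $n$-th term $R\intHom_{\Ecal}(\Mcal_{B_n}[S], M)$.

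The remaining task will be to show that each cosimplicial term collapses to $M(S)$. Using the universal property of the pushout in $\AnRing$, one describes $\Mcal_{B_n}[S]$ as an iterated tensor product involving $\Mcal_{\Bcal}[S]$, $\Mcal_{\Acal}[\ast]^{\otimes n}$, and $(\Ccal, \Mcal_{\Ccal})$. Iteratively applying the tensor--Hom adjunctions along the structural maps $(\Acal, \Mcal_{\Acal}) \to (\Bcal, \Mcal_{\Bcal})$ and $(\Acal, \Mcal_{\Acal}) \to (\Ccal, \Mcal_{\Ccal})$, and using that $M$ is $(\Bcal,\Mcal_{\Bcal})$- and $(\Ccal,\Mcal_{\Ccal})$-complete (and hence $(\Acal,\Mcal_{\Acal})$-complete by the remark), I expect each $R\intHom_{\Ecal}(\Mcal_{B_n}[S], M)$ to reduce to $M(S)$ with face and degeneracy maps acting as the identity. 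The totalization of this constant cosimplicial diagram is $M(S)$, yielding the required equivalence. The hardest step will be the explicit computation of the functor of measures $\Mcal_{B_n}[S]$ for the bar construction and the careful bookkeeping of the tensor--Hom adjunctions across the three layers of completion.
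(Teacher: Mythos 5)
First, note that the paper does not prove this statement at all: it is quoted verbatim from \cite[Proposition 12.12]{AG}, where it holds essentially by construction --- the pushout $(\Ecal,\Mcal_{\Ecal})$ is built by \emph{defining} $\Dcal(\Ecal,\Mcal_{\Ecal})$ to be the full subcategory of $\Dcal(\Bcal\otimes_{\Acal}^{\Lbb}\Ccal)$ of objects that are simultaneously $(\Bcal,\Mcal_{\Bcal})$-complete and $(\Ccal,\Mcal_{\Ccal})$-complete, producing $\Mcal_{\Ecal}[S]$ as the completion of $\Ecal[S]$ for this subcategory, and then verifying the analytic-ring axiom and the universal property. Your ``only if'' direction is correct and is exactly the remark following the definition of maps of analytic rings.

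Your ``if'' direction, however, has a genuine gap at the step you yourself flag as the hardest one. The bar resolution $B_n=(\Bcal,\Mcal_{\Bcal})\sqcup(\Acal,\Mcal_{\Acal})^{\sqcup n}\sqcup(\Ccal,\Mcal_{\Ccal})$ and the sifted-colimit compatibility of $(\Acal,\Mcal)\mapsto\Mcal[S]$ are fine, but the claim that each term $R\intHom(\Mcal_{B_n}[S],M)$ collapses to $M(S)$ is precisely the assertion that $M$ is $B_n$-complete. Since $B_n$ is a coproduct of analytic rings, i.e.\ an iterated pushout over the \emph{initial} analytic ring, this is an instance of the very proposition you are trying to prove (in the special case $\Acal=\Zbb$ with the trivial functor of measures), and that special case is not any easier than the general one: there is no independent formula expressing $\Mcal_{B_n}[S]$ as an ``iterated tensor product'' of the individual $\Mcal_{\Bcal}[S]$, $\Mcal_{\Acal}[S]$, $\Mcal_{\Ccal}[S]$ --- the functor of measures of a coproduct is itself defined by a completion process whose effect on modules is exactly what is in question. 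So the bar construction trades one pushout for infinitely many coproducts without reducing the problem to anything previously established. The non-circular route is the one in \cite{AG}: show that the intersection of the two completeness conditions inside $\Dcal(\Bcal\otimes_{\Acal}^{\Lbb}\Ccal)$ is itself the module category of an analytic ring (the delicate point being that completing $\Ecal[S]$ for one condition can destroy the other, so one must iterate or argue that the two completion functors interact well), and then check that this analytic ring has the universal property of the pushout; the characterization of complete objects is then true by construction.
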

 
The following lemma is useful for showing that a given diagram is a pushout diagram.
 
\begin{lemma}[{\cite[Proposition 12.12]{AG}}]\label{pushout}
Let 
$$\xymatrix{
(\Acal, \Mcal_{\Acal}) \ar[r] \ar[d] &(\Bcal, \Mcal_{\Bcal}) \ar[d] \\
(\Ccal, \Mcal_{\Ccal}) \ar[r] &(\Ecal, \Mcal_{\Ecal}) 
}
$$
be a commutative diagram in $\AnRing$.
We assume that the following are satisfied:
\begin{enumerate}
\item
The object $\Ecal$ is $(\Bcal, \Mcal_{\Bcal})$-complete, and the natural map $\Ccal \otimes_{(\Acal, \Mcal_{\Acal})}^{\Lbb} (\Bcal, \Mcal_{\Bcal}) \to \Ecal$ is an equivalence.
\item
For any object $M\in \Dcal(\Ecal)$, $M$ is $(\Ecal, \Mcal_{\Ecal})$-complete if and only if $M$ is $(\Bcal, \Mcal_{\Bcal})$-complete and $(\Ccal, \Mcal_{\Ccal})$-complete.
\end{enumerate}
Then the diagram above is a pushout.
\end{lemma}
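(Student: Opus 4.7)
My plan is to show that the canonical comparison map $\varphi \colon (\Fcal, \Mcal_{\Fcal}) \to (\Ecal, \Mcal_{\Ecal})$ is an equivalence, where $(\Fcal, \Mcal_{\Fcal})$ denotes the actual pushout of $(\Bcal, \Mcal_{\Bcal}) \leftarrow (\Acal, \Mcal_{\Acal}) \to (\Ccal, \Mcal_{\Ccal})$ in $\AnRing$ and $\varphi$ is produced by the universal property applied to the given commuting square. The verification splits naturally into two parts: first identifying underlying condensed rings, and then comparing the two analytic structures on this common ring via their completion subcategories.

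For the underlying condensed ring, I would appeal to the explicit construction of pushouts in $\AnRing$ from \cite[Proposition 12.12]{AG} (the theorem recalled just before Proposition \ref{p}): it identifies the underlying condensed ring of $(\Fcal, \Mcal_{\Fcal})$ with the analytic tensor product $\Ccal \otimes^{\Lbb}_{(\Acal, \Mcal_{\Acal})} (\Bcal, \Mcal_{\Bcal})$. By hypothesis (1), this is canonically isomorphic to $\Ecal$ via the map induced by $\varphi$. So after this identification I may assume $\Fcal = \Ecal$, and the problem reduces to comparing the two functors of measures $\Mcal_{\Fcal}$ and $\Mcal_{\Ecal}$ on this common ring.

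For the analytic structures, I would compare the completion subcategories inside $\Dcal(\Fcal) = \Dcal(\Ecal)$. Proposition \ref{p} applied to the pushout square defining $(\Fcal, \Mcal_{\Fcal})$ characterizes $(\Fcal, \Mcal_{\Fcal})$-completeness as the conjunction of $(\Bcal, \Mcal_{\Bcal})$-completeness and $(\Ccal, \Mcal_{\Ccal})$-completeness. Hypothesis (2) provides exactly the same characterization of $(\Ecal, \Mcal_{\Ecal})$-completeness. Hence the reflective subcategories $\Dcal(\Fcal, \Mcal_{\Fcal})$ and $\Dcal(\Ecal, \Mcal_{\Ecal})$ coincide. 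Since an analytic ring structure on a given condensed ring is determined by its completion class, with the functor of measures recovered as $\Mcal[S] = \Fcal[S] \otimes^{\Lbb}_{\Fcal} (\Fcal, \Mcal)$ (i.e., the reflector evaluated on the compact projective generator $\Fcal[S]$), we obtain natural equivalences $\Mcal_{\Fcal}[S] \simeq \Mcal_{\Ecal}[S]$ functorial in the extremally disconnected set $S$ and compatible with the maps from $\underline{S}$. Thus $\varphi$ is an equivalence.

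The step I expect to require the most care is the first one: matching up the analytic tensor product appearing in hypothesis (1) with the underlying condensed ring of the pushout $(\Fcal, \Mcal_{\Fcal})$, since this depends on unwinding the explicit construction of colimits in $\AnRing$ from \cite[Proposition 12.12]{AG}. Once this is in place, the remainder is a formal consequence of Proposition \ref{p} combined with the general principle that a reflective subcategory is determined by its class of local objects.
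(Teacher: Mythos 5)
The paper gives no argument for this lemma at all---it is imported verbatim from \cite[Proposition 12.12]{AG}---so there is no internal proof to compare against; I will instead assess your argument on its own terms. Your overall strategy is the natural one and is essentially sound: form the genuine pushout $(\Fcal,\Mcal_{\Fcal})$, match up underlying condensed rings, and then use that a (normalized) analytic ring structure on a fixed condensed ring is determined by its class of complete objects, since $\Mcal[S]$ is recovered as the reflection of $\Fcal[S]$ into the complete subcategory; Proposition \ref{p} applied to $(\Fcal,\Mcal_{\Fcal})$ together with hypothesis (2) then shows the two completion classes coincide, and compatibility of the structure maps $\underline{S}\to\Mcal[S]$ follows from naturality of the unit of the reflection.

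The one step that is not correct as stated is your identification of the underlying ring of the pushout. The construction in \cite[Proposition 12.12]{AG} exhibits the underlying condensed (animated) ring of $(\Fcal,\Mcal_{\Fcal})$ as the completion of $\Bcal\otimes^{\Lbb}_{\Acal}\Ccal$ with respect to \emph{both} induced structures, i.e.\ the reflection into the full subcategory of objects that are simultaneously $(\Bcal,\Mcal_{\Bcal})$-complete and $(\Ccal,\Mcal_{\Ccal})$-complete---not the one-sided base change $\Ccal\otimes^{\Lbb}_{(\Acal,\Mcal_{\Acal})}(\Bcal,\Mcal_{\Bcal})$, which is only the $(\Bcal,\Mcal_{\Bcal})$-completion. (If the one-sided formula held in general it would have to agree with its mirror image $\Bcal\otimes^{\Lbb}_{(\Acal,\Mcal_{\Acal})}(\Ccal,\Mcal_{\Ccal})$, which fails without a steadiness hypothesis; this asymmetry is exactly why the notion of steady maps exists.) The gap is easy to close using the hypotheses you have not yet exploited at that point: $\Ecal\cong\Mcal_{\Ecal}[\ast]$ is tautologically $(\Ecal,\Mcal_{\Ecal})$-complete, so hypothesis (2) shows $\Ecal$ is also $(\Ccal,\Mcal_{\Ccal})$-complete; by hypothesis (1) the $(\Bcal,\Mcal_{\Bcal})$-completion $\Ccal\otimes^{\Lbb}_{(\Acal,\Mcal_{\Acal})}(\Bcal,\Mcal_{\Bcal})$ of $\Bcal\otimes^{\Lbb}_{\Acal}\Ccal$ therefore already lies in the doubly complete subcategory, hence is the reflection into it and agrees with the underlying ring of $\Fcal$. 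With that insertion the remainder of your argument goes through.
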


%\begin{remark}
%The converse is not true.
%\end{remark}

\begin{definition}%[{\cite[Definition 12.13]{AG}}]
A map $f \colon (\Acal, \Mcal_{\Acal}) \to (\Bcal, \Mcal_{\Bcal})$ in $\AnRing$ is \textit{steady} if for all maps $g \colon (\Acal, \Mcal_{\Acal}) \to (\Ccal, \Mcal_{\Ccal})$ in $\AnRing$ and all $M \in \Dcal(\Ccal, \Mcal_{\Ccal})$, 
the object $M \otimes_{(\Acal, \Mcal_{\Acal})}^{\Lbb} (\Bcal, \Mcal_{\Bcal})$, which is a priori an object of $\Dcal(\Ccal \otimes_{\Acal} \Bcal)$, is $(\Ccal, \Mcal_{\Ccal})$-complete.
\end{definition}

By definition, the class of steady maps is stable under base change and composition.

\begin{remark}
The original definition \cite[Definition 12.13]{AG} of steady maps is different from our definition.
It is claimed in \cite[page 83]{AG} that the original definition is equivalent to our definition, but this is not true.
However this causes no issues in the result of \cite{AG} because only our definition is used there.
\end{remark}

\begin{proposition}[{\cite[Proposition 12.14]{AG}}]\label{steady bc}
A map $f \colon (\Acal, \Mcal_{\Acal}) \to (\Bcal, \Mcal_{\Bcal})$ in $\AnRing$ is steady if and only if for all pushout diagrams
$$\xymatrix{
(\Acal, \Mcal_{\Acal}) \ar[r]^{f} \ar[d] &(\Bcal, \Mcal_{\Bcal}) \ar[d] \\
(\Ccal, \Mcal_{\Ccal}) \ar[r] &(\Ecal, \Mcal_{\Ecal}) 
}
$$
and all $M \in \Dcal(\Ccal, \Mcal_{\Ccal})$, the base change map
$$\left(M |_{\Acal}\right) \otimes_{(\Acal, \Mcal_{\Acal})}^{\Lbb} (\Bcal, \Mcal_{\Bcal}) \to ( M \otimes_{(\Ccal, \Mcal_{\Ccal})}^{\Lbb} (\Ecal, \Mcal_{\Ecal}) ) |_{\Bcal}$$
is an equivalence.
\end{proposition}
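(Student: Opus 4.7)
The plan is to pass between the two formulations by using Proposition~\ref{p} to recognise $(\Ecal,\Mcal_{\Ecal})$-complete objects as those which are simultaneously $(\Bcal,\Mcal_{\Bcal})$- and $(\Ccal,\Mcal_{\Ccal})$-complete.

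For the direction ``steady $\Rightarrow$ base change is an equivalence'', fix a pushout square as in the statement and $M \in \Dcal(\Ccal,\Mcal_{\Ccal})$, and set $T := (M|_{\Acal}) \otimes^{\Lbb}_{(\Acal,\Mcal_{\Acal})} (\Bcal,\Mcal_{\Bcal})$. Because the $\Ccal$-action on $M$ is $\Acal$-linear, $T$ carries a natural $\Ccal \otimes_{\Acal} \Bcal$-module structure, and $m \mapsto m \otimes 1$ gives a $\Ccal$-linear map $M \to T$. By construction $T$ is $(\Bcal,\Mcal_{\Bcal})$-complete, while steadiness gives it $(\Ccal,\Mcal_{\Ccal})$-completeness; Proposition~\ref{p} then promotes $T$ to an object of $\Dcal(\Ecal,\Mcal_{\Ecal})$. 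Adjointness for $(\Ccal,\Mcal_{\Ccal}) \to (\Ecal,\Mcal_{\Ecal})$ turns $M \to T$ into a natural map $\alpha \colon M \otimes^{\Lbb}_{(\Ccal,\Mcal_{\Ccal})} (\Ecal,\Mcal_{\Ecal}) \to T$ in $\Dcal(\Ecal,\Mcal_{\Ecal})$; one checks $\alpha$ is an equivalence by reducing to the compact projective generators $M = \Mcal_{\Ccal}[S]$, where both sides compute $\Mcal_{\Ecal}[S]$. Restricting $\alpha$ to $\Bcal$ then gives the base change equivalence.

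For the converse, fix an arbitrary $g\colon(\Acal,\Mcal_{\Acal})\to(\Ccal,\Mcal_{\Ccal})$ and $M\in\Dcal(\Ccal,\Mcal_{\Ccal})$, and form the pushout $(\Ecal,\Mcal_{\Ecal})$. The object $(M \otimes^{\Lbb}_{(\Ccal,\Mcal_{\Ccal})} (\Ecal,\Mcal_{\Ecal}))|_{\Bcal}$ is $(\Ecal,\Mcal_{\Ecal})$-complete, hence $(\Ccal,\Mcal_{\Ccal})$-complete by Proposition~\ref{p}. The base change map is $\Ccal \otimes_{\Acal} \Bcal$-linear, and any map of $\Ccal \otimes_{\Acal} \Bcal$-modules that is an equivalence on underlying objects is already an equivalence; hence the hypothesis transports $(\Ccal,\Mcal_{\Ccal})$-completeness backwards to $(M|_{\Acal}) \otimes^{\Lbb}_{(\Acal,\Mcal_{\Acal})} (\Bcal,\Mcal_{\Bcal})$, which is the defining condition for steadiness. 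The main point requiring care is the $(\Ccal \otimes_{\Acal} \Bcal)$-linearity of the base change map and the identification of $T$ with $M \otimes^{\Lbb}_{(\Ccal,\Mcal_{\Ccal})} (\Ecal,\Mcal_{\Ecal})$; both reduce to bookkeeping with the adjunctions recorded in the preceding propositions together with the universal property of pushouts in $\AnRing$, but the various $\Ccal$- and $\Bcal$-structures must be kept straight throughout.
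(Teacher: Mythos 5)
The paper does not prove this proposition: it is imported verbatim from \cite[Proposition 12.14]{AG}, so there is no in-paper argument to compare against. Your proof is correct and is essentially the standard argument from that reference: both directions hinge on Proposition~\ref{p} identifying $(\Ecal,\Mcal_{\Ecal})$-completeness with simultaneous $(\Bcal,\Mcal_{\Bcal})$- and $(\Ccal,\Mcal_{\Ccal})$-completeness, the forward direction upgrades $T=(M|_{\Acal})\otimes^{\Lbb}_{(\Acal,\Mcal_{\Acal})}(\Bcal,\Mcal_{\Bcal})$ to an object of $\Dcal(\Ecal,\Mcal_{\Ecal})$ and compares colimit-preserving functors on the generators $\Mcal_{\Ccal}[S]$, and the converse transports completeness across the assumed equivalence. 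The one place where your sketch leans hardest on elided detail is the claim that both sides compute $\Mcal_{\Ecal}[S]$ for $M=\Mcal_{\Ccal}[S]$: for the right-hand side this is exactly where steadiness enters (it makes the $(\Bcal,\Mcal_{\Bcal})$-completion of $\Mcal_{\Ccal}[S]\otimes^{\Lbb}_{\underline{\Acal}}\underline{\Bcal}$ already $(\Ccal,\Mcal_{\Ccal})$-complete, hence equal to the $(\Ecal,\Mcal_{\Ecal})$-completion computed in \cite[Proposition 12.12]{AG}), and spelling out that identification, together with the check that your $\alpha$ and the canonical base change map are mutually inverse, is the only real content missing.
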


\subsection{Analytic rings associated to complete affinoid pairs}
We begin with the construction of (complete) analytic rings associated to complete affinoid pairs.
\begin{construction}[{\cite[Example 7.3, Theorem 8.1]{CM}}]\label{const}
Let $A$ be a finitely generated $\Zbb$-algebra. We define an uncompleted pre-analytic ring $A_{\bs}$ as follows:
\begin{itemize}
\item
The underlying ring is $\underline{A}$.
\item
For an extremally disconnected set $\displaystyle S= \varprojlim_{i} S_i$ (each $S_i$ is a finite set), we define $\displaystyle A_{\bs}[S]= \varprojlim_i \underline{A}[S_i]$.  
\item
The natural transform $\underline{S} \to A_{\bs}[S]$ is defined in the obvious way.
\end{itemize}
Then the uncompleted pre-analytic ring $A_{\bs}$ becomes an analytic ring.
\end{construction}

\begin{construction}[{\cite[Lemma 3.19, Definition 3.20]{And21}}]
Let $(A, A^+)$ be an affinoid pair such that $A$ is a discrete ring. We define an uncompleted pre-analytic ring $(A, A^+)_{\bs}$ as follows:
\begin{itemize}
\item
The underlying ring is $\underline{A}$.
\item
For an extremally disconnected set $S$, we define $\displaystyle (A, A^+)_{\bs}[S]= \varinjlim_{B \subset A^+} \underline{A} \otimes_{\underline{B}} B_{\bs}[S]$, where the colimit is taken over all finitely generated $\Zbb$-subalgebras $B$ of $A^+$.
\item
The natural transform $\underline{S} \to (A,A^+)_{\bs}[S]$ is defined in the obvious way.
\end{itemize}
Then the uncompleted pre-analytic ring $(A, A^+)_{\bs}$ becomes an analytic ring. If $A^+$ is equal to $A$, then we denote $(A, A^+)_{\bs}$ by $A_{\bs}$. This notation is compatible with that of Construction \ref{const}.
\end{construction}

\begin{construction}[{\cite[Lemma 3.25, Theorem 3.28]{And21}}]\label{construction}
Let $(A, A^+)$ be a complete affinoid pair. We define an uncompleted pre-analytic ring $(A, A^+)_{\bs}$ as follows:
\begin{itemize}
\item
The underlying ring is $\underline{A}$.
\item
For an extremally disconnected set $S$, we define $(A, A^+)_{\bs}[S]= \underline{A} \otimes_{(A_{\disc}, A^+_{\disc})_{\bs}} (A_{\disc}, A^+_{\disc})_{\bs}[S]$.
\item
The natural transform $\underline{S} \to (A,A^+)_{\bs}[S]$ is defined in the obvious way.
\end{itemize}
Then the uncompleted pre-analytic ring $(A, A^+)_{\bs}$ becomes an analytic ring. If $A^+$ is equal to $A$, then we denote $(A, A^+)_{\bs}$ by $A_{\bs}$.
\end{construction}

We recall some results about such analytic rings.

\begin{proposition}[{\cite[Proposition 3.22]{And21}}]\label{integral}
Let $f \colon A \to B$ be an integral map of discrete rings. 
Then $M \in \Dcal(\underline{B})$ is $B_{\bs}$-complete if and only if $M$ is $A_{\bs}$-complete.
\end{proposition}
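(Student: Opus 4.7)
My plan is to reduce the proposition to the following key identity in $\Dcal(\underline{B})$: for every extremally disconnected set $S$, the natural map
$$\phi_S \colon A_{\bs}[S] \otimes_{\underline{A}}^{\Lbb} \underline{B} \;\lra\; B_{\bs}[S]$$
is an equivalence. Granting this, the proposition follows by the adjunction between $-\otimes_{\underline{A}}^{\Lbb}\underline{B}$ and the forgetful functor $\Dcal(\underline{B}) \to \Dcal(\underline{A})$: for $M \in \Dcal(\underline{B})$,
$$R\Hom_{\underline{A}}(A_{\bs}[S], M) \simeq R\Hom_{\underline{B}}(A_{\bs}[S] \otimes_{\underline{A}}^{\Lbb} \underline{B},\, M) \simeq R\Hom_{\underline{B}}(B_{\bs}[S], M),$$
while $R\Hom_{\underline{A}}(\underline{A}[S], M) = M(S) = R\Hom_{\underline{B}}(\underline{B}[S], M)$. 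Thus the $A_{\bs}$-completeness condition on $M$ translates literally to the $B_{\bs}$-completeness condition.

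To establish $\phi_S$, I would unfold both sides using Construction~\ref{construction}: $A_{\bs}[S] = \varinjlim_{A_0 \subset A} \underline{A} \otimes_{\underline{A_0}} A_0{}_{\bs}[S]$ and $B_{\bs}[S] = \varinjlim_{B_0 \subset B} \underline{B} \otimes_{\underline{B_0}} B_0{}_{\bs}[S]$, where $A_0$ and $B_0$ range over finitely generated $\Zbb$-subalgebras. The integrality of $A \to B$ is used at exactly this point: given a finitely generated $\Zbb$-subalgebra $B_0 \subset B$, each generator of $B_0$ satisfies a monic equation over $A$ with finitely many coefficients, which lie in some finitely generated $\Zbb$-subalgebra $A_0 \subset A$; enlarging if necessary, one may arrange that the image $\bar{A_0} \subset B$ sits inside $B_0$ and that $B_0$ is finite as a $\bar{A_0}$-module. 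This yields a cofinal system of pairs $(A_0, B_0)$ with $A_0 \to B_0$ a finite map of finitely generated $\Zbb$-algebras, reducing $\phi_S$ to the sub-claim
$$A_0{}_{\bs}[S] \otimes_{\underline{A_0}}^{\Lbb} \underline{B_0} \;\simeq\; B_0{}_{\bs}[S].$$

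For the sub-claim I would write $S = \varprojlim_i S_i$ with $S_i$ finite, so that $A_0{}_{\bs}[S] = \varprojlim_i \underline{A_0}^{S_i}$ and $B_0{}_{\bs}[S] = \varprojlim_i \underline{B_0}^{S_i}$, and use that $B_0$ is a finite module over the Noetherian ring $A_0$, hence admits a resolution by finite free $A_0$-modules; the identity is immediate on each term of such a resolution. The step I expect to be the main obstacle is verifying that the derived tensor product $\underline{B_0} \otimes_{\underline{A_0}}^{\Lbb}(-)$ commutes with the cofiltered inverse limit $\varprojlim_i \underline{A_0}^{S_i}$, which is not a formal consequence of general nonsense. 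To handle it, I would exploit the specific nature of this limit as a functor of measures on a f.g.\ $\Zbb$-algebra (Theorem 8.1 of \cite{CM}): the transition maps in the system $\{\underline{A_0}^{S_i}\}$ are induced by surjections of finite sets and hence admit sections, giving the required Tor-vanishing against a finitely presented $A_0$-module. Once this commutation is in place, everything glues and the proposition follows.
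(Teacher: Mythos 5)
The paper does not prove this proposition at all --- it is imported verbatim from \cite[Proposition 3.22]{And21} --- so there is no internal proof to compare against; I will instead assess your argument on its own terms. Your strategy is sound: once the natural map $\phi_S\colon A_{\bs}[S]\otimes_{\underline{A}}^{\Lbb}\underline{B}\to B_{\bs}[S]$ is known to be an equivalence, the extension--restriction adjunction converts the $A_{\bs}$-completeness condition on $M\in\Dcal(\underline{B})$ literally into the $B_{\bs}$-completeness condition, and this gives both implications at once. The reduction to module-finite maps $A_0\to B_0$ of finitely generated $\Zbb$-algebras is exactly where integrality enters, and your device of enlarging $B_0$ so that it contains the image of the coefficient algebra $A_0$ and is then finite over $\bar{A_0}$ is the right one.

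Two steps are thinner than they should be. First, you need underived tensor products to agree with derived ones in several places: the terms $\underline{A}\otimes_{\underline{A_0}}(A_0)_{\bs}[S]$ in the defining colimit of $A_{\bs}[S]$ are underived, so before cancelling $\underline{A}$ against $-\otimes_{\underline{A}}^{\Lbb}\underline{B}$ you must check the higher Tor's vanish. Both this and your sub-claim $(A_0)_{\bs}[S]\otimes_{\underline{A_0}}^{\Lbb}\underline{B_0}\simeq (B_0)_{\bs}[S]$ follow from the identification $(A_0)_{\bs}[S]\cong\prod_J\underline{A_0}$ (Lemma \ref{lem} and Lemma \ref{eq}) together with exactness of infinite products in $\Mod^{\cond}$ (Theorem \ref{gro}) applied to a finite free resolution of $B_0$ over the noetherian ring $A_0$; this is cleaner than your proposed analysis of sections of the transition maps in $\varprojlim_i\underline{A_0}^{S_i}$, which in effect re-proves Lemma \ref{lem}. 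Second, the concluding ``everything glues'' hides the real point: by Theorem \ref{dir prod} both sides of $\phi_S$ are filtered colimits of objects $\prod_J\underline{M}$ indexed by finitely generated modules $M\subset B$ over finitely generated subalgebras of $A$ (for the left-hand side) respectively of $B$ (for the right-hand side), and one must verify that these two directed families of submodules of $B$ are mutually cofinal. This is the second place integrality is used --- every finitely generated $B_0$-submodule of $B$ is contained in a subalgebra that is module-finite over some $\bar{A_0}$ --- and it is not covered by the subalgebra-level cofinality you describe. With those two verifications written out, the proof is complete.
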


\begin{proposition}[{\cite[Proposition 3.32]{And21}}]\label{prop:int}
Let $A$ be an f-adic ring, and $f_I=\{f_i \mid i \in I \}$ be a collection of elements of $A^{\circ}$.
Let $A^+$ be the minimal ring of integral elements containing $f_I$.
Then $M \in \Dcal(\underline{A})$ is $(A, A^+)_{\bs}$-complete if and only if $M$ is $\Zbb[f_i]_{\bs}$-complete for all $i \in I$, where we regard $\Zbb[f_i]$ as a discrete ring.
\end{proposition}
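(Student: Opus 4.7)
$\textbf{Plan.}$
My overall strategy is to peel off the analytic ring structure in successive stages and reduce $(A,A^+)_\bs$-completeness of $M$ to completeness over the atomic polynomial rings $\Zbb[x]_\bs$. By Construction \ref{construction} we have $(A,A^+)_\bs[S]=\underline{A}\otimes_{(A_{\disc},A^+_{\disc})_\bs}(A_{\disc},A^+_{\disc})_\bs[S]$, so tensor-Hom adjunction shows that $M\in\Dcal(\underline{A})$ is $(A,A^+)_\bs$-complete if and only if its restriction to $\Dcal(\underline{A_{\disc}})$ is $(A_{\disc},A^+_{\disc})_\bs$-complete. Hence I may assume $A$ is discrete.

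In this discrete setting, the construction unfolds to $(A,A^+)_\bs[S]=\varinjlim_{B\subset A^+}\underline{A}\otimes_{\underline{B}} B_\bs[S]$, where $B$ runs over finitely generated $\Zbb$-subalgebras of $A^+$. I plan to prove that $M$ is $(A,A^+)_\bs$-complete if and only if $M$ is $B_\bs$-complete (as an object of $\Dcal(\underline{B})$) for every such $B$. The ``if'' direction is immediate: adjunction converts the $R\Hom$ out of this colimit into the filtered limit $R\varprojlim_B R\Hom_{\underline{B}}(B_\bs[S],M)$, each term of which equals $M(S)$ by $B_\bs$-completeness, with transition maps compatible with the identity on $M(S)$, so the limit itself is $M(S)=R\Hom_{\underline{A}}(\underline{A}[S],M)$. $\textit{The main obstacle}$ is the converse: here I need to produce, for each such $B$, a map of analytic rings $B_\bs\to(A,A^+)_\bs$, which amounts to verifying that $(A,A^+)_\bs[S]$ is $B_\bs$-complete. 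I plan to do this by restricting the colimit to the cofinal subsystem $\{B'\supset B\}$, where $B\hookrightarrow B'$ becomes integral within such a subsystem, and then invoking Proposition \ref{integral} together with closure of $B_\bs$-complete modules under filtered colimits.

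For the final reduction, since $A^+$ is integral over $\Zbb[f_I,A^{\circ\circ}]$, every finitely generated $\Zbb$-subalgebra $B\subset A^+$ is integral over some $\Zbb[f_{i_1},\dots,f_{i_n}]$ with $\{i_1,\dots,i_n\}\subset I$ finite (the contributions of elements of $A^{\circ\circ}$ being absorbed by the condensed ring structure of $\underline{A}$). By Proposition \ref{integral}, $B_\bs$-completeness is then equivalent to $\Zbb[f_{i_1},\dots,f_{i_n}]_\bs$-completeness. Writing $\Zbb[f_{i_1},\dots,f_{i_n}]$ as a (necessarily integral) surjective quotient of $\Zbb[x_1,\dots,x_n]$ and applying Proposition \ref{integral} once more reduces this to $\Zbb[x_1,\dots,x_n]_\bs$-completeness. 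Finally, $\Zbb[x_1,\dots,x_n]_\bs$ is the iterated pushout $\Zbb[x_1]_\bs\otimes^{\Lbb}_{\Zbb_\bs}\cdots\otimes^{\Lbb}_{\Zbb_\bs}\Zbb[x_n]_\bs$ in $\AnRing$, so Proposition \ref{p} applied inductively decouples the completeness condition into one over each $\Zbb[x_k]_\bs$, and a last application of Proposition \ref{integral} to the surjection $\Zbb[x_k]\twoheadrightarrow\Zbb[f_{i_k}]$ converts it to $\Zbb[f_{i_k}]_\bs$-completeness, finishing the proof.
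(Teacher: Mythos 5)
The paper does not actually prove this proposition; it is quoted from \cite[Proposition 3.32]{And21} in a section whose results are explicitly stated without proof, so there is no internal argument to compare against. Judged on its own terms, your outline has the right overall shape (discretize via Construction \ref{construction}, unwind the filtered colimit over finitely generated $\Zbb$-subalgebras $B\subset A^+$, then reduce to one-variable rings through integrality and the pushout decomposition of $\Zbb[x_1,\ldots,x_n]_{\bs}$), but two steps do not hold up as written.

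First, your resolution of the ``main obstacle'' rests on the claim that in the cofinal subsystem $\{B'\supseteq B\}$ the inclusions $B\hookrightarrow B'$ become integral. This is false: already $\Zbb\subset\Zbb[x]$ for $x\in A^+$ transcendental over $\Qbb$ is a counterexample. The conclusion you want --- that $(A,A^+)_{\bs}[S]$ is $B_{\bs}$-complete, so that $B_{\bs}\to(A,A^+)_{\bs}$ is a map of analytic rings --- is true, but for a different reason: by Theorem \ref{dir prod} and Remark \ref{rem:dirprod}, $(A,A^+)_{\bs}[S]$ is a filtered colimit of products $\prod_J\underline{M}$ with $M$ a finitely generated module over some $B'\supseteq B$; each such $\underline{M}$ is a filtered colimit of finitely presented discrete $B$-modules and hence $B_{\bs}$-complete, and completeness is stable under products and filtered colimits. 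No integrality is available or needed there, so the step must be rewritten.

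Second, and more seriously, the parenthetical ``the contributions of elements of $A^{\circ\circ}$ being absorbed by the condensed ring structure of $\underline{A}$'' conceals the actual content of the proposition. Since any open integrally closed subring contains every topologically nilpotent element, the minimal ring of integral elements containing $f_I$ is the integral closure of the subring generated by $f_I$ \emph{together with all of} $A^{\circ\circ}$; hence a finitely generated $B\subset A^+$ is only integral over some $\Zbb[f_{i_1},\ldots,f_{i_n},t_1,\ldots,t_m]$ with $t_k\in A^{\circ\circ}$. To close the argument you must prove that $M$ is $\Zbb[t]_{\bs}$-complete for every topologically nilpotent $t\in A$, and this is a genuine statement about the topology of $A$ (in \cite{And21} it is an independent lemma, proved roughly by showing that $\underline{\Zbb((T^{-1}))}\otimes^{\Lbb}_{\Zbb[T]_{\bs}}\underline{A}$ vanishes when $T$ acts through a topologically nilpotent element, so that the solidness criterion over $\Zbb[T]$ is automatic for $\underline{A}$-modules that are already solid over $\Zbb$). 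It cannot be waved away, and it sits uneasily with your opening move of replacing $A$ by $A_{\disc}$, after which topological nilpotence is no longer visible. Relatedly, your intermediate characterization includes $\Zbb_{\bs}$-completeness (take $B=\Zbb$), which does not follow from the stated right-hand side when $I=\emptyset$; that edge case needs exactly the same topological input. Until this lemma is supplied, the passage from the $\Zbb[f_i]_{\bs}$-completeness hypotheses back to $B_{\bs}$-completeness for all $B\subset A^+$ does not go through.
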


\begin{remark}
It follows from Proposition \ref{integral} and Proposition \ref{prop:int} that for a map $(A,A^+)\to (B,B^+)$ of complete affinoid pairs, $(B,B^+)_{\bs}[S]$ is $(A,A^+)_{\bs}$-complete for any extremally disconnected set $S$.
In particular, we get a map $(A,A^+)_{\bs}\to (B,B^+)_{\bs}$ of analytic rings.
\end{remark}

\begin{proposition}[{\cite[Proposition 3.34]{And21}}]
Let $\cAff$ denote the category of complete affinoid pairs.
Then the following functor is fully faithful:
$$ \cAff \to \AnRing ;\; (A, A^+) \mapsto (A, A^+)_{\bs}.$$
Moreover the image of this functor is contained in the full $\infty$-subcategory consisting of ($0$-truncated) analytic rings.
\end{proposition}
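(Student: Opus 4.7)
The plan splits into two claims. The $0$-truncatedness of the image is immediate from Construction~\ref{construction}: each condensed module $(A,A^+)_{\bs}[S]$ is an ordinary (non-derived) tensor product of static condensed modules, so $(A,A^+)_{\bs}$ is a static analytic ring in the sense of Definition~\ref{def:analytic ring} and lies in the full $\infty$-subcategory of $0$-truncated objects of $\AnRing$.

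For fully faithfulness, recall that a morphism $(A,A^+)_{\bs} \to (B,B^+)_{\bs}$ in $\AnRing$ is the datum of a map $\underline{A} \to \underline{B}$ of underlying condensed rings satisfying the completeness condition that $(B,B^+)_{\bs}[S]$ is $(A,A^+)_{\bs}$-complete for every extremally disconnected $S$. The first step is to identify these condensed ring maps with continuous ring homomorphisms $f \colon A \to B$; this uses the fully faithfulness of the condensation functor on complete f-adic rings, which holds since the topology of such a ring (a base of open additive subgroups) is detected by the values of $\underline{A}$ on profinite sets.

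The second step is to show that the completeness condition on $f$ is equivalent to $f(A^+) \subseteq B^+$. The ``if'' direction is exactly the remark preceding the proposition, via Propositions~\ref{integral} and~\ref{prop:int}. For the converse, I would use Proposition~\ref{prop:int} to reduce to the following assertion: for each $a \in A^+$, the $\Zbb[a]_{\bs}$-completeness of $(B,B^+)_{\bs}[S]$ (with $a$ acting as $f(a)$) for every $S$ forces $f(a) \in B^+$, after which Proposition~\ref{prop:int} applied to $(B,B^+)$ concludes. The main obstacle I expect is this converse, which requires analyzing when $\Zbb[T]_{\bs}$-completeness of a $\underline{B}$-module under a specific action $T \mapsto f(a)$ forces power-boundedness and integrality of the acting element. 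The natural strategy is by contradiction: assuming $f(a) \notin B^+$, construct an extremally disconnected $S$ and an $\underline{A}$-linear map $\Zbb[T]_{\bs}[S] \to (B,B^+)_{\bs}[S]$ not factoring through $\Zbb[T][S]$, witnessing the failure of completeness.
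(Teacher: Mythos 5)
The paper itself gives no proof of this statement: it is quoted verbatim from \cite[Proposition 3.34]{And21}, and Section 1 announces that such propositions are stated without proof. So your attempt can only be measured against the argument of \emph{loc.\ cit.}, whose overall shape your outline does reproduce: condensation is fully faithful on complete f-adic rings (they have a countable fundamental system of neighbourhoods of $0$, hence are compactly generated), and the remaining task is to show that a continuous ring map $f\colon A\to B$ satisfies the completeness condition making $\underline{f}$ a map $(A,A^+)_{\bs}\to(B,B^+)_{\bs}$ of analytic rings if and only if $f(A^+)\subseteq B^+$. The ``if'' direction and the reduction via Proposition \ref{prop:int} to single elements $a\in A^+$ are fine.

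The genuine gap is the converse, which you yourself flag as ``the main obstacle'': that $\Zbb[f(a)]_{\bs}$-completeness of every $(B,B^+)_{\bs}[S]$ forces $f(a)\in B^+$. This is the entire content of the proposition, your proposal contains no argument for it, and the strategy you sketch is miswired in two ways. First, the witness to failure of $\Zbb[T]_{\bs}$-completeness of a module $M$ is not a map $\Zbb[T]_{\bs}[S]\to M$ ``not factoring through $\Zbb[T][S]$'' --- every map out of $\Zbb[T]_{\bs}[S]$ restricts along $\underline{\Zbb[T]}[S]\to\Zbb[T]_{\bs}[S]$ --- but rather an element of $M(S)=\Hom(\underline{\Zbb[T]}[S],M)$ that does not extend, or does not extend uniquely, to $\Zbb[T]_{\bs}[S]$ (plus the derived analogues). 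Second, and more substantively, completeness of $\underline{B}=(B,B^+)_{\bs}[\ast]$ alone only detects power-boundedness, i.e.\ $f(a)\in B^{\circ}$: for $B=\Qbb_p\langle T\rangle$ with $B^+$ the minimal ring of integral elements, $\underline{B}$ is $\Zbb[T]_{\bs}$-complete although $T\in B^{\circ}\setminus B^+$. To detect $B^+$ rather than $B^{\circ}$ one must use the explicit description of $(B,B^+)_{\bs}[S]$ from Theorem \ref{dir prod}: solidity over $\Zbb[f(a)]$ forces the sum $\sum_n f(a)^n\delta_n=(1,f(a),f(a)^2,\dots)$ of the null sequence of coordinate vectors of $\prod_{\Nbb}\underline{M}$ to lie in $\varinjlim_{C,M}\prod_{\Nbb}\underline{M}$, i.e.\ all powers of $f(a)$ must lie in a single quasi-finitely generated module $M$ over a finitely generated $\Zbb$-subalgebra $C\subseteq B^+$; a Cayley--Hamilton argument modulo powers of an ideal of definition, combined with openness and integral closedness of $B^+$, then yields $f(a)\in B^+$. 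None of this appears in your outline, and it is where all the work of the proposition lies.
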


%\begin{corollary}\label{cor:fs} 
%Let $(A, A^+) \to (B,B^+)$ be a map of complete affinoid pairs such that $B^+$ is integral over $A^+$ (e.g a finite map of complete affinoid pair).
%Then $M \in \Dcal(\underline{B})$ is $(B,B^+)$-complete if and only if $M$ is $(A,A^+)$-complete.
%\end{corollary}

\begin{proposition}[{\cite[Proposition 3.33]{And21}}]\label{prop:dp} %discrete pushout
Let $A$ be a discrete ring, and $B,C$ be discrete $A$-algebras such that $B \otimes_{A} C \simeq B \otimes_{A}^{\Lbb} C$.
Then we have the following equivalence:
$$B_{\bs} \otimes_{A_{\bs}}^{\Lbb} C_{\bs} \simeq (B \otimes_{A} C)_{\bs}.$$
\end{proposition}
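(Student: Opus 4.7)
The strategy is to apply Lemma \ref{pushout} to show that the square
$$\xymatrix{
A_{\bs} \ar[r] \ar[d] & B_{\bs} \ar[d] \\
C_{\bs} \ar[r] & (B \otimes_A C)_{\bs}
}$$
is a pushout in $\AnRing$; this immediately yields the claimed equivalence.

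For condition (1) of Lemma \ref{pushout}, the $B_{\bs}$-completeness of $(B \otimes_A C)_{\bs}$ is immediate from the existence of the natural map $B_{\bs} \to (B \otimes_A C)_{\bs}$ of analytic rings. To identify the underlying condensed ring, I would choose a free resolution $P_\bullet \to C$ by free discrete $A$-modules; since each $\underline{P_i} \cong \underline{A}^{\oplus J_i}$ is $A_{\bs}$-complete (being a direct sum of copies of the unit $\underline{A} = A_{\bs}[\ast]$), we can compute
$$\underline{C} \otimes^{\Lbb}_{A_{\bs}} B_{\bs} \simeq \underline{P_\bullet} \otimes_{A_{\bs}} B_{\bs} \simeq \underline{P_\bullet \otimes_A B} \simeq \underline{B \otimes_A^{\Lbb} C} \simeq \underline{B \otimes_A C},$$
where the third equivalence uses that $\underline{-}$ is exact on complexes of discrete abelian groups (since $\Cont(S,-) = \varinjlim_i (-)^{S_i}$ is exact on $\Ab$ for every extremally disconnected $S$) and the last uses the hypothesis.

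For condition (2) of Lemma \ref{pushout}, I must show that for $M \in \Dcal(\underline{B \otimes_A C})$, $(B \otimes_A C)_{\bs}$-completeness is equivalent to simultaneous $B_{\bs}$- and $C_{\bs}$-completeness. By Proposition \ref{prop:int} each of these conditions reduces to $\Zbb[f]_{\bs}$-completeness as $f$ runs over the respective ring. The forward direction follows from Proposition \ref{integral} applied to the integral surjections $\Zbb[b] \twoheadrightarrow \Zbb[b \otimes 1]$ and $\Zbb[c] \twoheadrightarrow \Zbb[1 \otimes c]$, which identify the relevant completeness conditions. For the converse, given $g = \sum_i b_i \otimes c_i$ in $B \otimes_A C$, I reduce to showing completeness with respect to the f.g.\ $\Zbb$-subring $D \subset B \otimes_A C$ generated by the $b_i \otimes 1$ and $1 \otimes c_j$; this $D$ is an integral quotient of $\Zbb[Y_1,\ldots,Y_n] \otimes_{\Zbb} \Zbb[Z_1,\ldots,Z_n]$, so by Proposition \ref{integral} and the polynomial-ring special case of the present proposition (in which the derived-equals-underived hypothesis is automatic by $\Zbb$-flatness) one deduces $D_{\bs}$-completeness from $\Zbb[Y_i]_{\bs}$- and $\Zbb[Z_j]_{\bs}$-completeness, which in turn follow from the assumed $B_{\bs}$- and $C_{\bs}$-completeness via Proposition \ref{prop:int}.

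The main obstacle is the converse direction of condition (2), which rests on the polynomial-ring special case as a bootstrapping step. Verifying that case directly is tractable because of the direct-sum decomposition $\Zbb[Y] = \bigoplus_n \Zbb$ of polynomial rings over $\Zbb$, which makes the analytic tensor product amenable to explicit computation; however, care is required because the measure functor $\Zbb[Y]_{\bs}[S] = \varprojlim_i \Zbb[Y][S_i]$ is a nontrivial inverse limit and interacts with tensor products only after passing to completions.
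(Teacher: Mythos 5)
The paper does not prove this proposition at all: it is quoted verbatim from \cite[Proposition 3.33]{And21}, so there is no internal argument to compare yours against. Your strategy --- verifying the two hypotheses of Lemma \ref{pushout} for the square with vertex $(B\otimes_A C)_{\bs}$ --- is the natural one and is essentially how the cited result is established. Your treatment of condition (1) is correct: $\underline{B\otimes_A C}$ is $B_{\bs}$-complete because $B_{\bs}\to(B\otimes_A C)_{\bs}$ is a map of analytic rings, and resolving $C$ by free discrete $A$-modules identifies $\underline{C}\otimes^{\Lbb}_{A_{\bs}}B_{\bs}$ with $\underline{B\otimes_A^{\Lbb}C}$, which the Tor-independence hypothesis collapses to $\underline{B\otimes_A C}$; the terms $\underline{A}^{\oplus J_i}$ are indeed projective in $\Mod^{\cond}_{A_{\bs}}$ since $\underline{A}=A_{\bs}[\ast]$, so the termwise computation is legitimate. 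The forward direction of condition (2) is also fine (and is even more immediate than your argument: any object of $\Dcal((B\otimes_A C)_{\bs})$ is automatically $B_{\bs}$- and $C_{\bs}$-complete because the structure maps are maps of analytic rings).

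The one genuine defect is in the converse of condition (2), where you invoke ``the polynomial-ring special case of the present proposition'' as a bootstrapping step and then merely assert that it is ``tractable'' to verify. As written this is both an unproven claim and a near-circularity, since that special case is an instance of the statement you are proving. Fortunately the detour is unnecessary: Proposition \ref{prop:int}, applied to the discrete ring $D$ (or directly to $B\otimes_A C$) with the collection $f_I$ taken to be a \emph{generating set} of the ring, already says that $D_{\bs}$-completeness is equivalent to $\Zbb[f]_{\bs}$-completeness for the generators $f$ alone --- here the elements $b_i\otimes 1$ and $1\otimes c_j$ --- because the minimal ring of integral elements containing a generating set of a discrete ring is the ring itself. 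Combined with Proposition \ref{integral} for the integral surjections $\Zbb[b]\twoheadrightarrow\Zbb[b\otimes 1]$ and $\Zbb[c]\twoheadrightarrow\Zbb[1\otimes c]$, this gives directly that simultaneous $B_{\bs}$- and $C_{\bs}$-completeness implies $\Zbb[g]_{\bs}$-completeness for every $g\in B\otimes_A C$, hence $(B\otimes_A C)_{\bs}$-completeness. If you replace the appeal to the polynomial special case by this application of Proposition \ref{prop:int}, your proof is complete.
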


%%%%%%%%%%%%%%%%%%%%%%%%%%%%%%%%%%%%%%%%%%%%%%%%%%%%%%%%%%%%%%%%%%%%%%%
\begin{comment}
\begin{remark}
We have the functor $\Ring \to \AnRing ;\; A \mapsto A_{\bs}$, where $\Ring$ is the category of (discrete) rings. We can extend this functor to a functor 
$\Ani(\Ring) \to \AnRing$, where $\Ani(\Ring)$ is the $\infty$-category of animated rings.
Since we have an equivalence 
$$\Zbb[X_1, \ldots,X_m]_{\bs} \otimes_{\Zbb_{\bs}}^{\Lbb} \Zbb[Y_1, \ldots, Y_n]_{\bs} \simeq \Zbb[X_1, \ldots,X_m, Y_1, \ldots, Y_n]_{\bs}$$
(\cite[The proof of Proposition 3.32]{And21}), 
the functor $\Ani(\Ring) \to \AnRing$ commutes with small colimits.
Proposition \ref{prop:dp} easily follows from it.
\end{remark}
\end{comment}
%%%%%%%%%%%%%%%%%%%%%%%%%%%%%%%%%%%%%%%%%%%%%%%%%%%%%%%%%%%%%%%%%%%%%%%

\begin{lemma}[{\cite[Lemma 3.8]{And21}}]\label{poly steady}
The natural map $\Zbb_{\bs} \to \Zbb[T]_{\bs}$ in $\AnRing$ is steady.
\end{lemma}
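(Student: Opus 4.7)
The plan is to apply Proposition \ref{steady bc}, which reduces the steadiness of $\Zbb_{\bs} \to \Zbb[T]_{\bs}$ to checking, for every pushout square
$$
\xymatrix{
\Zbb_{\bs} \ar[r] \ar[d] & \Zbb[T]_{\bs} \ar[d] \\
(\Ccal,\Mcal_{\Ccal}) \ar[r] & (\Ecal,\Mcal_{\Ecal})
}
$$
in $\AnRing$ and every $M \in \Dcal(\Ccal,\Mcal_{\Ccal})$, that the base change map
$M \otimes_{\Zbb_{\bs}}^{\Lbb} \Zbb[T]_{\bs} \to (M \otimes_{(\Ccal,\Mcal_{\Ccal})}^{\Lbb} (\Ecal,\Mcal_{\Ecal}))|_{\Zbb[T]_{\bs}}$
is an equivalence. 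By Lemma \ref{pushout}, once I show that $M \otimes_{\Zbb_{\bs}}^{\Lbb} \Zbb[T]_{\bs}$ is already $(\Ccal,\Mcal_{\Ccal})$-complete, the right-hand side will coincide with it and the comparison map becomes tautological.

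I will then unpack the explicit description of $\Zbb[T]_{\bs}$ from Construction \ref{const}. For an extremally disconnected set $S=\varprojlim_i S_i$ with each $S_i$ finite,
$$
\Zbb[T]_{\bs}[S] \;=\; \varprojlim_i \underline{\Zbb[T]}[S_i] \;=\; \varprojlim_i \bigoplus_{n \geq 0} \underline{\Zbb}[S_i]\cdot T^n,
$$
so that $\Zbb[T]_{\bs}$ is assembled from the compact projective generators $\Zbb_{\bs}[S]$ of $\Mod_{\Zbb_{\bs}}^{\cond}$: the degree-$n$ graded piece of the inverse limit recovers a copy of $\Zbb_{\bs}[S]$. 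The forgetful functor $\Dcal(\Ccal) \to \Dcal(\Zbb_{\bs})$ (right adjoint to base change along $\Zbb_{\bs}\to\Ccal$) preserves limits and colimits, and $\Dcal(\Ccal,\Mcal_{\Ccal}) \subset \Dcal(\Ccal)$ is closed under small limits and colimits. Hence it suffices to check that for each extremally disconnected $S$ the object $M \otimes_{\Zbb_{\bs}}^{\Lbb} \Zbb_{\bs}[S]$ lies in $\Dcal(\Ccal,\Mcal_{\Ccal})$ and that the assembling inverse limits commute with $M \otimes_{\Zbb_{\bs}}^{\Lbb}(-)$.

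The hard part will be the latter commutation. One cannot simply replace $\Zbb[T]_{\bs}$ by the condensed direct sum $\bigoplus_{n \geq 0}\Zbb_{\bs}$: already for $S=\beta\Nbb$, the sections $\Zbb[T]_{\bs}[S]$ contain compatible systems of unbounded $T$-degree that are not captured by the degree-bounded direct sum $\bigoplus_n\Zbb_{\bs}[S]$, so the degree filtration on $\Zbb[T]$ does not lift to a splitting of the solid object. I must therefore work with the genuine inverse limit, and the crucial technical input is the defining feature of the solid formalism, namely that tensor products preserve the compact projective generators $\Zbb_{\bs}[S]$ and commute with the relevant cofiltered inverse limits used to build them. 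Once this compatibility with limits is in place, the resulting object lives in $\Dcal(\Ccal,\Mcal_{\Ccal})$ by the closure properties recalled above, yielding the $(\Ccal,\Mcal_{\Ccal})$-completeness and hence steadiness.
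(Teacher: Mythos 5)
First, a point of comparison: the paper gives no proof of Lemma \ref{poly steady} at all --- it is imported verbatim from \cite[Lemma 3.8]{And21} --- so your attempt can only be judged on its own terms and against the argument of loc.\ cit. Your target is the right one: by the paper's very definition of steadiness it suffices to show that $M \otimes^{\Lbb}_{\Zbb_{\bs}} \Zbb[T]_{\bs}$ is $(\Ccal,\Mcal_{\Ccal})$-complete for every analytic ring $(\Ccal,\Mcal_{\Ccal})$ under $\Zbb_{\bs}$ and every $M \in \Dcal(\Ccal,\Mcal_{\Ccal})$, and your detour through Proposition \ref{steady bc} and Lemma \ref{pushout} is just this definition unwound. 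The genuine gap is exactly at the step you flag as ``the hard part'': the claim that the solid tensor product ``commutes with the relevant cofiltered inverse limits'' building $\Zbb[T]_{\bs}[S] = \varprojlim_i \underline{\Zbb[T]}[S_i]$ is not a defining feature of the solid formalism. Solid tensor products do not commute with cofiltered limits in general; the specific instances that do hold (for example $\prod_I \underline{\Zbb} \otimes_{\Zbb_{\bs}} \prod_J \underline{\Zbb} \simeq \prod_{I\times J}\underline{\Zbb}$, or the dualities in Lemma \ref{eq}) are nonformal theorems of Clausen--Scholze. Worse, the commutation you need --- of $-\otimes^{\Lbb}_{\Zbb_{\bs}}\Zbb[T]_{\bs}$ with these limits for an arbitrary $M$ over an arbitrary $(\Ccal,\Mcal_{\Ccal})$ --- is essentially equivalent to the statement being proved, so asserting it is circular. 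There is also a structural confusion in the middle of the argument: the base change $M\otimes^{\Lbb}_{\Zbb_{\bs}}\Zbb[T]_{\bs}$ is the solidification over $\Zbb[T]$ of $M\otimes^{\Lbb}_{\underline{\Zbb}}\underline{\Zbb[T]}$, not a tensor of $M$ against the modules $\Zbb[T]_{\bs}[S]$; the reduction to generators should therefore be run on $M$ (reduce to $M=\Mcal_{\Ccal}[S']$, using that base change preserves colimits and that $\Dcal(\Ccal,\Mcal_{\Ccal})$ is closed under colimits), not on the target ring.

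The nonformal input that actually closes the argument --- both in \cite{And21} and in the machinery this paper builds for the statements it does prove --- is the identification of the base-change functor with an internal Hom out of a compact object, namely $-\otimes^{\Lbb}_{\Zbb_{\bs}}\Zbb[T]_{\bs} \simeq R\intHom_{\underline{\Zbb}}(R\intHom_{\underline{\Zbb}}(\underline{\Zbb[T]},\underline{\Zbb}),-)$ with $R\intHom_{\underline{\Zbb}}(\underline{\Zbb[T]},\underline{\Zbb})\simeq \prod_{\Nbb}\underline{\Zbb}$ compact; this is Lemma \ref{poly} combined with Lemma \ref{eq}, whose proofs rest on the Clausen--Scholze duality $R\intHom_{\underline{\Zbb}}(\prod_J\underline{\Zbb},\underline{\Zbb})\simeq\bigoplus_J\underline{\Zbb}$ and do not invoke Lemma \ref{poly steady}, so no circularity arises. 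Once the functor is an internal Hom out of a compact object of $\Dcal(\Zbb_{\bs})$, its values on $(\Ccal,\Mcal_{\Ccal})$-complete inputs can be checked to remain $(\Ccal,\Mcal_{\Ccal})$-complete by passing to the thick subcategory generated by the $\Zbb_{\bs}[S]$ and using closure of $\Dcal(\Ccal,\Mcal_{\Ccal})$ under limits and retracts. Your write-up correctly locates where the difficulty sits but supplies no substitute for this input, so as it stands it is not a proof.
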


\begin{lemma}[{\cite[Lemma 3.14]{And21}}]\label{tensor}
Let $M$ be a prodiscrete topological abelian group. Let $M\langle T_1,\ldots,T_n \rangle$ denote the topological abelian group of convergent formal power series with $n$ variables.
Then we have the following equivalence:
$$M \otimes_{\Zbb_{\bs}}^{\Lbb} \Zbb[T_1,\ldots,T_n]_{\bs} \simeq \underline{M\langle T_1,\ldots,T_n \rangle}.$$
\end{lemma}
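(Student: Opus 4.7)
My plan is to reduce to the case of one variable and a discrete $M$, and then handle the prodiscrete case by a limit argument.

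First I would reduce to $n=1$ by induction. Since $\Zbb[T_n]$ is flat over $\Zbb$, Proposition \ref{prop:dp} gives
$$\Zbb[T_1,\ldots,T_n]_{\bs} \simeq \Zbb[T_1,\ldots,T_{n-1}]_{\bs} \otimes_{\Zbb_{\bs}}^{\Lbb} \Zbb[T_n]_{\bs}.$$
Moreover, if $M = \varprojlim_i M_i$ is prodiscrete with each $M_i$ discrete, then $M\langle T_1\rangle = \varprojlim_i M_i[T_1]$ is also prodiscrete. So the $n$-variable statement follows from iterating the one-variable statement.

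Next I would handle the discrete case. Write $M = \colim_{\alpha} M_\alpha$ as a filtered colimit of finitely generated discrete abelian groups, each admitting a length-$2$ resolution by finite free $\Zbb$-modules. Since $\Zbb[T]$ is $\Zbb$-flat, Proposition \ref{prop:dp} applied to such a resolution yields
$$\underline{M_\alpha} \otimes_{\Zbb_{\bs}}^{\Lbb} \Zbb[T]_{\bs} \simeq \underline{M_\alpha[T]} = \underline{M_\alpha\langle T\rangle}.$$
Both sides commute with filtered colimits in $M$, yielding the discrete case.

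For the prodiscrete case, write $M = \varprojlim_i M_i$ with $M_i$ discrete. Then $\underline{M} = \varprojlim_i \underline{M_i}$ and $\underline{M\langle T\rangle} = \varprojlim_i \underline{M_i[T]}$ as condensed abelian groups, and by the discrete case the latter equals $\varprojlim_i\bigl(\underline{M_i} \otimes_{\Zbb_{\bs}}^{\Lbb} \Zbb[T]_{\bs}\bigr)$. It remains to show that the natural map
$$\underline{M} \otimes_{\Zbb_{\bs}}^{\Lbb} \Zbb[T]_{\bs} \longrightarrow \varprojlim_i \bigl(\underline{M_i} \otimes_{\Zbb_{\bs}}^{\Lbb} \Zbb[T]_{\bs}\bigr)$$
is an equivalence. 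This interchange of tensor product and inverse limit is the main obstacle, since tensor products do not commute with limits in general. I would attack it by evaluating on the compact projective generators $\Zbb_{\bs}[S]$ for extremally disconnected $S$, exploiting the explicit description $\Zbb[T]_{\bs}[S] = \varprojlim_j \Zbb[T]^{S_j}$ (for $S = \varprojlim_j S_j$) together with the steadiness of the map $\Zbb_{\bs} \to \Zbb[T]_{\bs}$ from Lemma \ref{poly steady}, which provides the necessary base-change compatibility. In practice this should reduce the desired interchange to the compatibility of the profinite limits computing $\Zbb[T]_{\bs}[S]$ with the Mittag–Leffler system $\{M_i\}$, with the $\mathrm{AB}4^*$ structure on condensed abelian groups ensuring that the limit is already exact on each factor.
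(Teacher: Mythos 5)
This lemma is imported from \cite[Lemma 3.14]{And21} and the paper gives no proof of its own, so your attempt can only be judged on internal soundness. The overall skeleton (reduce to $n=1$; handle discrete $M$ via finitely generated pieces and flatness of $\Zbb[T]$; then pass to the prodiscrete case by a limit) is reasonable, and the first two steps are essentially fine (modulo the cosmetic point that Proposition \ref{prop:dp} is a statement about algebras, not modules; what you actually need is that a finite free resolution of $M_\alpha$ stays a resolution after $-\otimes_{\Zbb_{\bs}}^{\Lbb}\Zbb[T]_{\bs}$, which follows from flatness of $\Zbb[T]$ and exactness of discrete condensification).

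The genuine gap is exactly at the step you identify as "the main obstacle," and the tools you invoke do not close it. Steadiness of $\Zbb_{\bs}\to\Zbb[T]_{\bs}$ (Lemma \ref{poly steady}) is a statement about completeness of $M\otimes_{\Zbb_{\bs}}^{\Lbb}\Zbb[T]_{\bs}$ over other analytic rings and yields base-change formulas along pushouts of analytic rings; it says nothing about commuting $-\otimes_{\Zbb_{\bs}}^{\Lbb}\Zbb[T]_{\bs}$ past an inverse limit of modules. Likewise, "evaluating on the generators $\Zbb_{\bs}[S]$" does not compute the left-hand side termwise: the tensor product is a colimit-type construction, so $(\underline{M}\otimes_{\Zbb_{\bs}}^{\Lbb}\Zbb[T]_{\bs})(S)$ is not determined by $\underline{M}(S)$, and you would still need to resolve $\underline{M}$ and track the tensor through the resolution. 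The clean, non-circular fix inside this paper's toolkit is Lemma \ref{poly} (whose proof does not use Lemma \ref{tensor}): the functor $-\otimes_{\Zbb_{\bs}}^{\Lbb}\Zbb[T_1,\ldots,T_n]_{\bs}$ is equivalent to $R\intHom_{\underline{\Zbb}}(R\intHom_{\underline{\Zbb}}(\underline{\Zbb[T_1,\ldots,T_n]},\underline{\Zbb}),-)$ with $R\intHom_{\underline{\Zbb}}(\underline{\Zbb[T_1,\ldots,T_n]},\underline{\Zbb})\simeq\prod\underline{\Zbb}$ compact, hence it commutes with all small limits; this gives the interchange you need in one line. Separately, you should justify that $\varprojlim_i\underline{M_i[T]}$ agrees with the \emph{derived} limit $R\varprojlim_i\underline{M_i[T]}$ (e.g.\ by arranging surjective transition maps so that the tower is Mittag--Leffler section-wise); you gesture at this with "AB4*" but do not carry it out.
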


\begin{lemma}[{\cite[Lemma 4.7]{And21}}]\label{poly bs}
Let $(A,A^+)$ be a complete affinoid pair. Then we have the following equivalence:
$$(A\langle T\rangle,A^+\langle T\rangle)_{\bs} \simeq (A, A^+)_{\bs} \otimes_{\Zbb_{\bs}}^{\Lbb} \Zbb[T]_{\bs}.$$
\end{lemma}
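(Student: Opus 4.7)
The plan is to verify the hypotheses of Lemma \ref{pushout} for the commutative square
\[
\xymatrix{
\Zbb_{\bs} \ar[r] \ar[d] & \Zbb[T]_{\bs} \ar[d] \\
(A,A^+)_{\bs} \ar[r] & (A\langle T\rangle, A^+\langle T\rangle)_{\bs}
}
\]
in $\AnRing$, whose right vertical map comes from $T \in A^+\langle T\rangle$ and whose bottom horizontal map comes from $A \hookrightarrow A\langle T\rangle$. The $\Zbb[T]_{\bs}$-completeness of $\underline{A\langle T\rangle}$ required by Lemma \ref{pushout} is automatic from the existence of the structure map $\Zbb[T]_{\bs} \to (A\langle T\rangle, A^+\langle T\rangle)_{\bs}$ in $\AnRing$.

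To verify condition (2), I would take $A^+$ itself as the generating family, so that $A^+$ is trivially the minimal ring of integral elements of $A$ containing it. The union $A^+ \cup \{T\}$ then has $A^+\langle T\rangle$ as its minimal ring of integral elements in $A\langle T\rangle$: any ring of integral elements containing $A^+$ and $T$ is open in $A\langle T\rangle$ and closed under multiplication, so by openness together with convergence of $\sum a_n T^n$ for $a_n \in A^+$ tending to $0$ it must contain all of $A^+\langle T\rangle$. Two applications of Proposition \ref{prop:int} then give: $M \in \Dcal(\underline{A\langle T\rangle})$ is $(A\langle T\rangle, A^+\langle T\rangle)_{\bs}$-complete iff it is $\Zbb[f]_{\bs}$-complete for every $f \in A^+ \cup \{T\}$, which is equivalent to being both $(A, A^+)_{\bs}$-complete and $\Zbb[T]_{\bs}$-complete.

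To verify condition (1), I need the underlying map
\[
\underline{A} \otimes_{\Zbb_{\bs}}^{\Lbb} \Zbb[T]_{\bs} \lra \underline{A\langle T\rangle}
\]
to be an equivalence in $\Dcal(\Zbb[T]_{\bs})$. Since $A^+$ is $I$-adically complete for an ideal of definition $I$, its underlying topological abelian group is prodiscrete, so Lemma \ref{tensor} already gives $\underline{A^+} \otimes_{\Zbb_{\bs}}^{\Lbb} \Zbb[T]_{\bs} \simeq \underline{A^+\langle T\rangle}$. To propagate this from $A^+$ to $A$ in the Tate setting (with pseudo-uniformizer $\pi$), I would use the identification $\underline{A} \simeq \varinjlim(\underline{A^+} \xrightarrow{\cdot\pi} \underline{A^+} \xrightarrow{\cdot\pi} \cdots)$ arising from $A = A^+[1/\pi]$, together with the analogous identification for $\underline{A\langle T\rangle}$ over $\underline{A^+\langle T\rangle}$; since the derived tensor product with $\Zbb[T]_{\bs}$ commutes with filtered colimits, the $A^+$-level equivalence extends to the desired $A$-level equivalence. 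Lemma \ref{pushout} then identifies the square above as a pushout in $\AnRing$, which is exactly the asserted equivalence.

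The main technical obstacle I anticipate is condition (1) for complete affinoid pairs that are not Tate, where presenting $\underline{A}$ as a clean filtered colimit of copies of $\underline{A^+}$ compatibly with the corresponding presentation of $\underline{A\langle T\rangle}$ requires additional care; in the Tate case, however, the argument above is transparent and the remaining verifications reduce to bookkeeping about minimal rings of integral elements.
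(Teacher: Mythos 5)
The paper does not actually prove this lemma; it is quoted verbatim from \cite[Lemma 4.7]{And21}, so there is no internal proof to compare against. Your strategy --- verify the two hypotheses of Lemma \ref{pushout} for the square with corners $\Zbb_{\bs}$, $\Zbb[T]_{\bs}$, $(A,A^+)_{\bs}$, $(A\langle T\rangle,A^+\langle T\rangle)_{\bs}$, using Proposition \ref{prop:int} for the completeness condition and Lemma \ref{tensor} for the underlying tensor computation --- is the right one, and your handling of condition (2) (identifying $A^+\langle T\rangle$ as the minimal ring of integral elements containing $A^+\cup\{T\}$ via openness-hence-closedness and convergence of partial sums, then applying Proposition \ref{prop:int} twice) is correct.

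The one genuine problem is the gap you flag yourself at the end: your verification of condition (1) only works for Tate rings, because you route the computation through $\underline{A^+}$ and then invert a pseudo-uniformizer. But a general complete affinoid pair need not be Tate, so as written the proof is incomplete. The detour is unnecessary: a complete Huber ring $A$ is itself prodiscrete as a topological abelian group, since for a pair of definition $(A_0,I)$ the open subgroups $I^nA_0$ form a basis of neighborhoods of $0$, each quotient $A/I^nA_0$ is discrete, and completeness gives $A\cong\varprojlim_n A/I^nA_0$. Hence Lemma \ref{tensor} applies directly to $M=A$ and yields $\underline{A}\otimes_{\Zbb_{\bs}}^{\Lbb}\Zbb[T]_{\bs}\simeq\underline{A\langle T\rangle}$ in one step, with no case distinction and no colimit bookkeeping. (Your assertion that $A^+$ is ``$I$-adically complete'' is also slightly off in general --- its topology is the subspace topology, with basis $I^nA_0$ rather than $I^nA^+$ --- though the conclusion that it is prodiscrete is true; this becomes moot once you work with $A$ directly.) With that substitution the argument is complete for arbitrary complete affinoid pairs.
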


\begin{theorem}[{\cite[Theorem 4.1]{And21}}]\label{andes}
Let $X$ be an analytic affinoid adic space and let $U$ denote an arbitrary open affinoid subspace of $X$.
Then the functor $U \mapsto \Dcal((\Ocal_X(U), \Ocal_X^+(U))_{\bs})$ defines a sheaf of $\infty$-categories on $X$.
\end{theorem}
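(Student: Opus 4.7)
The claim is a Čech-type descent statement for rational coverings of an analytic affinoid adic space $X = \Spa(A, A^+)$. First, by standard arguments from Huber's adic-space theory, the topology on $X$ is generated by Laurent coverings: any rational covering can be refined by iterated Laurent coverings of the form $X = X\langle f\rangle \cup X\langle f^{-1}\rangle$ for some $f \in A$. Hence it suffices to verify Čech descent for such a two-term Laurent covering, i.e.\ to show that
$$\Dcal((A, A^+)_{\bs}) \overset{\sim}{\lra} \varprojlim_{[n] \in \Delta} \Dcal((B^{n/A}, B^{n/A,+})_{\bs}),$$
where $B = A\langle f\rangle \times A\langle f^{-1}\rangle$.

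Second, I would reduce to a universal case over $\Zbb[T]_{\bs}$. Lemma \ref{poly bs} gives
$$(A\langle T\rangle, A^+\langle T\rangle)_{\bs} \simeq (A, A^+)_{\bs} \otimes_{\Zbb_{\bs}}^{\Lbb} \Zbb[T]_{\bs},$$
and analogous base-change formulas identify $(A\langle f\rangle, \cdot)_{\bs}$, $(A\langle f^{-1}\rangle, \cdot)_{\bs}$, and their intersections with the base change of the corresponding universal objects along $\Zbb[T]_{\bs} \to (A, A^+)_{\bs}$, $T \mapsto f$. Since $\Zbb_{\bs} \to \Zbb[T]_{\bs}$ is steady by Lemma \ref{poly steady}, Proposition \ref{steady bc} implies that base change commutes with the Čech limit, so it suffices to prove descent in the universal case $(A, A^+) = (\Zbb\langle T\rangle, \Zbb\langle T\rangle)$ for the Laurent covering generated by $T$.

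Third, I would verify descent in the universal case by upgrading classical Tate acyclicity to the solid setting. Tate's theorem says that
$$0 \to \Zbb\langle T\rangle \to \Zbb\langle T\rangle \oplus \Zbb\langle T^{-1}\rangle \to \Zbb\langle T, T^{-1}\rangle \to 0$$
is an exact sequence of topological abelian groups. By Lemma \ref{tensor}, each of these terms is the underlying condensed abelian group of a solid module of the form $M \otimes_{\Zbb_{\bs}}^{\Lbb} \Zbb[T]_{\bs}$, so the classical exactness gives the same statement in $\Dcal(\Zbb[T]_{\bs})$. Once this (derived) Čech acyclicity is in place, the descent equivalence follows by a standard descendability argument à la Mathew: the cover has descendability index $2$, so the Čech totalization can be identified with a finite limit and the equivalence on module categories follows by combining Proposition \ref{p} and Lemma \ref{pushout} to rewrite the cosimplicial diagram as a pushout in $\AnRing$.

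The main obstacle is the third step: transferring classical Tate acyclicity to a statement about the Čech object in $\AnRing$ requires not only exactness of the relevant sequence in $\Dcal(\Zbb[T]_{\bs})$ but also that the resulting cosimplicial object in analytic rings satisfies effective descent on module categories. This amounts to checking that the functor of measures of the Čech terms is compatible with the splittings provided by Tate acyclicity, so that solid completeness is preserved throughout the totalization. Once this compatibility is established, the rest of the argument is formal manipulation of steady base change and pushouts of analytic rings.
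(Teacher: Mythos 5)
First, note that the paper does not prove this statement at all: Theorem \ref{andes} is quoted verbatim from Andreychev (\cite[Theorem 4.1]{And21}) and used as a black box, so your attempt must be measured against that proof. Your overall shape (refine to simple Laurent coverings, pass to a universal situation over $\Zbb[T]_{\bs}$, feed in a Tate-type acyclicity sequence) matches the known argument, but there is a genuine gap at its core: you never establish, or even identify, the fact that each rational localization $(A,A^+)_{\bs} \to (\Ocal_X(U),\Ocal_X^+(U))_{\bs}$ is an \emph{idempotent} (and steady) map of analytic rings, i.e.\ that $(\Ocal_X(U),\Ocal_X^+(U))_{\bs} \otimes^{\Lbb}_{(A,A^+)_{\bs}} (\Ocal_X(U),\Ocal_X^+(U))_{\bs} \simeq (\Ocal_X(U),\Ocal_X^+(U))_{\bs}$. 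This is the engine of Andreychev's proof: idempotency makes the \v{C}ech nerve of the two-element Laurent cover degenerate to a finite diagram built from the three pieces $A\langle f\rangle$, $A\langle f^{-1}\rangle$, $A\langle f,f^{-1}\rangle$, so that the totalization is the finite limit computed by the three-term complex, base change (an exact left adjoint) commutes with it, and descent of module categories reduces to exactness of that complex by the standard Zariski-type gluing formalism for finite covers by idempotent localizations. Your assertions that ``the cover has descendability index $2$'' and that ``base change commutes with the \v{C}ech limit'' are exactly the places where this input is silently being used; without idempotency neither follows from the steadiness of $\Zbb_{\bs}\to\Zbb[T]_{\bs}$, and Proposition \ref{p} and Lemma \ref{pushout} do not substitute for it. Your closing paragraph in effect concedes this by listing ``effective descent on module categories for the \v{C}ech object'' as an unresolved obstacle --- but that is the theorem, so the argument is circular as written.

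Two further points. The universal case is not $(\Zbb\langle T\rangle,\Zbb\langle T\rangle)$ but the non-analytic base $(\Zbb[T],\Zbb)_{\bs}$ together with its three universal idempotent algebras for $\{|T|\le 1\}$, $\{|T|\ge 1\}$, $\{|T|=1\}$; since that base is not an analytic adic space, the universal statement is not an instance of the theorem, and what one extracts from it is the idempotency, steadiness, and the universal exact sequence --- not ``descent in the universal case.'' Second, invoking classical Tate acyclicity for $A$ itself is not legitimate here: Theorem \ref{andes} holds for arbitrary analytic affinoid adic spaces, including non-sheafy ones, for which classical Tate acyclicity is unavailable. The solid acyclicity of the \v{C}ech complex must instead be deduced by base-changing the elementary exact sequence $0\to\Zbb[T]\to\Zbb[T]\oplus\Zbb[T^{-1}]\to\Zbb[T,T^{-1}]\to 0$ of universal idempotent algebras along the steady map $\Zbb[T]_{\bs}\to (A,A^+)_{\bs}$, $T\mapsto f$ (using computations in the spirit of Lemma \ref{tensor} and Theorem \ref{dir prod} to control the functors of measures). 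So the proposal would need to be restructured around proving idempotency and steadiness of the universal localizations first; as it stands the key step is missing.
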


Finally, we compute $(A,A^+)_{\bs}[S]$ for an extremally disconnected set $S$. 
%In the following, we fix an extremally disconnected set $S$.

\begin{lemma}[{\cite[Corollary 5.5]{CM}}]\label{lem}
There exists a set $J$ and an isomorphism of condensed abelian groups
$$\Zbb_{\bs}[S] \cong \prod_{J} \underline{\Zbb}.$$
\end{lemma}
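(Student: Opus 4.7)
The plan is to use N\"obeling's theorem, which asserts that for any profinite set $T$ the abelian group $C(T,\Zbb)$ of continuous $\Zbb$-valued functions is a free abelian group.

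First, I would write $S=\varprojlim_i S_i$ as a cofiltered inverse limit of finite sets, so that by Construction \ref{const} one has $\Zbb_{\bs}[S]\cong\varprojlim_i \underline{\Zbb[S_i]}$ as condensed abelian groups, while $C(S,\Zbb)\cong\varinjlim_i \Zbb^{S_i}$. Applying N\"obeling's theorem, choose a $\Zbb$-basis $(e_j)_{j\in J}$ of $C(S,\Zbb)$. Each $e_j$, being a continuous map $S\to\Zbb$, extends uniquely to a map of condensed abelian groups $\Zbb_{\bs}[S]\to\underline{\Zbb}$. Assembling these produces a natural candidate map
$$\phi\colon \Zbb_{\bs}[S]\lra \prod_{j\in J}\underline{\Zbb}.$$

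Then I would verify that $\phi$ is an isomorphism of condensed abelian groups. Evaluating on the one-point object, injectivity of $\phi(\ast)\colon\varprojlim_i\Zbb[S_i]\to\prod_J\Zbb$ follows from the fact that the $e_j$ span the linear dual of $\varprojlim_i\Zbb[S_i]$, which is precisely $C(S,\Zbb)$ by construction. Surjectivity, which is the crux, uses the extremal disconnectedness of $S$: any prescribed system of coordinates $(n_j)_{j\in J}$ is realized by a compatible system of elements in the $\Zbb[S_i]$, whose existence rests on the projectivity of extremally disconnected sets in the category of profinite sets (Gleason's theorem) together with the compatibility of the chosen basis with the finite quotients $S_i$. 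Finally, repeating this argument after evaluation on an arbitrary extremally disconnected test object $T$, together with the observation that the formation of $\prod_J\underline{\Zbb}$ commutes with sections, promotes the bijection to an isomorphism in $\Cond(\Ab)$.

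The main obstacle is N\"obeling's theorem itself, a combinatorial structure result on $C(S,\Zbb)$ proved by an inductive construction of the basis, together with the accompanying surjectivity verification, which relies essentially on the projectivity of extremally disconnected profinite sets. Once these inputs are granted, producing $\phi$ and checking its bijectivity reduce to a formal duality computation between $\Zbb_{\bs}[S]$ and $C(S,\Zbb)$.
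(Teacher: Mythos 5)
The paper does not prove this lemma at all: it is quoted verbatim from \cite[Corollary 5.5]{CM}, so the only meaningful comparison is with that source, and your overall strategy --- N\"obeling's theorem that $C(S,\Zbb)$ is free, followed by a duality between $\Zbb_{\bs}[S]=\varprojlim_i\underline{\Zbb[S_i]}$ and $C(S,\Zbb)=\varinjlim_i\Zbb^{S_i}$ --- is exactly the one used there. N\"obeling's theorem is indeed the genuine content, and your construction of $\phi$ from a basis $(e_j)_{j\in J}$ is the right map.

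However, your verification that $\phi$ is bijective misidentifies the mechanism on both sides, and the surjectivity step as written would not go through. The appeal to Gleason's theorem and the projectivity of extremally disconnected sets is a red herring: the isomorphism holds for \emph{every} profinite $S$, and no projectivity enters; it is unclear how projectivity of $S$ would produce a preimage of an arbitrary tuple $(n_j)_{j\in J}$ in any case. Likewise, your injectivity argument asserts that $C(S,\Zbb)$ is ``the linear dual of $\varprojlim_i\Zbb[S_i]$''; that statement is a separate, nontrivial Specker-type fact and is not what you need. The correct (and formal) direction of the duality is the opposite one: since $\Hom_{\Zbb}(-,\Zbb)$ turns colimits into limits and $\Hom_{\Zbb}(\Zbb^{S_i},\Zbb)\cong\Zbb[S_i]$ for finite $S_i$, one has $\varprojlim_i\Zbb[S_i]\cong\Hom_{\Zbb}(\varinjlim_i\Zbb^{S_i},\Zbb)=\Hom_{\Zbb}(C(S,\Zbb),\Zbb)$, and then a N\"obeling basis gives $\Hom_{\Zbb}(\bigoplus_J\Zbb,\Zbb)\cong\prod_J\Zbb$, with injectivity and surjectivity both automatic (a tuple $(n_j)_j$ determines a functional on $C(S,\Zbb)$ by $e_j\mapsto n_j$, whose restrictions to the $\Zbb^{S_i}$ form the required compatible system). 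The upgrade to condensed abelian groups is the same computation with $\intHom$: $\Zbb_{\bs}[S]\cong\intHom_{\underline{\Zbb}}(\underline{C(S,\Zbb)},\underline{\Zbb})\cong\intHom_{\underline{\Zbb}}(\bigoplus_J\underline{\Zbb},\underline{\Zbb})\cong\prod_J\underline{\Zbb}$, rather than a pointwise re-run of the argument on test objects $T$. So the skeleton is right, but you should replace the Gleason-based surjectivity argument and the reversed duality claim by this formal $\Hom$-computation.
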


\begin{definition}[{\cite[Definition 3.4]{And21}}]
Let $A$ be an f-adic ring, and $B$ be a finitely generated $\Zbb$-subalgebra of $A^{\circ}$.
Then a $B$-submodule $M$ of $A$ is said to be \textit{quasi-finitely generated} over $B$ if $M$ is closed in $A$ and for some pair of definition $(A_0, I)$ such that $B \subset A_0$, the image of $M \to A/(I^nA_0)$ is a finitely generated $B$-module for any positive integer $n$.
\end{definition}

\begin{remark}
If $M$ is quasi-finitely generated over $B$, then for any pair of definition $(A_0, I)$ such that $B \subset A_0$, the image of $M \to A/(I^nA_0)$ is a finitely generated $B$-module for any positive integer $n$.
\end{remark}
\begin{remark}\label{rem:dis}
When $A$ is discrete, a $B$-submodule $M$ of $A$ is quasi-finitely generated over $B$ if and only if $M$ is finitely generated over $B$.
\end{remark}

\begin{theorem}[{\cite[Theorem 3.27]{And21}}]\label{dir prod}
Let $(A,A^+)$ be a complete affinoid pair, and let $J$ be a set such that $\displaystyle\Zbb_{\bs}[S] \cong \prod_{J} \Zbb$. Then we have an isomorphism
$$(A, A^+)_{\bs}[S] \cong \varinjlim_{B \subset A^+, M} \prod_J \underline{M},$$
where the colimit is taken over all the finitely generated $\Zbb$-subalgebras $B \subset A^+$ and all the quasi-finitely generated $B$-submodules $M$ of $A$.
\end{theorem}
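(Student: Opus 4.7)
The plan is to work through the three successive constructions of the analytic ring $(A, A^+)_{\bs}$ — first for finitely generated $\Zbb$-algebras (Construction \ref{const}), then for discrete affinoid pairs, and finally for general complete affinoid pairs (Construction \ref{construction}) — and identify the corresponding description of $(A, A^+)_{\bs}[S]$ at each stage.

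First, I would establish that for any finitely generated $\Zbb$-algebra $B$ there is a natural isomorphism $B_{\bs}[S] \cong \prod_J \underline{B}$, with $J$ as in Lemma \ref{lem}. The universal property of the free-module functor identifies $B_{\bs}[S]$ with $\Zbb_{\bs}[S]\otimes^{\Lbb}_{\Zbb_{\bs}} B_{\bs}$, and Lemma \ref{lem} rewrites this as $\bigl(\prod_J \underline{\Zbb}\bigr)\otimes^{\Lbb}_{\Zbb_{\bs}} B_{\bs}$. For $B = \Zbb[T_1, \ldots, T_n]$ I would induct using the steadiness of $\Zbb_{\bs}\to\Zbb[T]_{\bs}$ (Lemma \ref{poly steady}), combined with Lemma \ref{tensor} applied to the prodiscrete condensed abelian group $\prod_J\underline{\Zbb}$, so that base change pushes the product through the tensor. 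For a general $B = \Zbb[T_1,\ldots,T_n]/I$, tensoring a finite presentation of $B$ over $\Zbb[T_1,\ldots,T_n]$ against $\prod_J\underline{\Zbb}$ and using the exactness of $\prod_J$ in condensed abelian groups (Theorem \ref{gro}) reduces everything to the polynomial case.

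Next, for the discrete affinoid case, the construction of $(A, A^+)_{\bs}$ for discrete $A$ combined with the previous step gives
$$(A,A^+)_{\bs}[S] \;\simeq\; \varinjlim_{B\subset A^+}\; \underline{A}\otimes_{\underline{B}}\; \prod_J \underline{B},$$
with $B$ ranging over finitely generated $\Zbb$-subalgebras of $A^+$. Because each such $B$ is Noetherian, every finitely generated $B$-submodule $M\subset A$ is in fact finitely presented; a presentation $\underline{B}^{\oplus p}\to\underline{B}^{\oplus q}\to\underline{M}\to 0$ combined with the exactness of $\prod_J$ yields $\underline{M}\otimes_{\underline{B}}\prod_J\underline{B}\simeq \prod_J\underline{M}$. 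Writing $\underline{A} = \varinjlim_M\underline{M}$ and commuting the filtered colimit with the tensor product gives the formula; by Remark \ref{rem:dis}, ``quasi-finitely generated'' and ``finitely generated'' $B$-submodule coincide in the discrete case.

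Finally, Construction \ref{construction} reduces the complete affinoid case to showing
$$\underline{A}\otimes_{(A_{\disc},A^+_{\disc})_{\bs}}\prod_J\underline{M_{\disc}} \;\simeq\; \prod_J\underline{M},$$
where $M_{\disc}$ is a finitely generated $B$-submodule of $A_{\disc}$ and $M$ is an associated quasi-finitely generated $B$-submodule of $A$, together with a cofinality claim between the two indexing systems. The main obstacle is precisely this identification: one must show that the $(A,A^+)_{\bs}$-completion of $\prod_J\underline{M_{\disc}}$ commutes with the infinite product $\prod_J$. I would handle this by choosing a pair of definition $(A_0,I)$ with $B\subset A_0$, realizing $\underline{M}$ as $\varprojlim_n\underline{M/(I^nA_0\cap M)}$ with finitely generated $B$-quotients, and exploiting that products commute with countable cofiltered limits to interchange $\prod_J$ with the inverse limit defining the analytic-ring completion. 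Verifying that every quasi-finitely generated $B$-submodule of $A$ arises (cofinally) as such a completion of some finitely generated $B$-submodule of $A_{\disc}$ would then finish the argument.
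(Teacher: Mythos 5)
The paper does not prove this statement at all: it is quoted verbatim from \cite[Theorem 3.27]{And21} (Section 1 states explicitly that the results there are recalled without proof), so your proposal can only be judged on its own terms. Your first two stages are sound. The identification $B_{\bs}[S]\cong\prod_J\underline{B}$ for a finitely generated $\Zbb$-algebra $B$ via Lemma \ref{lem}, Lemma \ref{tensor} applied to the prodiscrete group $\prod_J\underline{\Zbb}$, and exactness of $\prod_J$ (Theorem \ref{gro}) is correct, and proving it independently is in fact necessary here, since in the paper the closely related Lemma \ref{eq}(3) is itself deduced from Theorem \ref{dir prod}. The passage to discrete affinoid pairs by writing $\underline{A}$ as the filtered colimit of its finitely generated $B$-submodules and commuting the colimit through the tensor product correctly recovers Remark \ref{rem:dirprod}.

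The gap is in your third step, which is where the actual content of the theorem sits. First, the functor $\underline{A}\otimes_{(A_{\disc},A^{+}_{\disc})_{\bs}}(-)$ is a left adjoint: it commutes with colimits but not with limits, so writing $\underline{M}$ as $\varprojlim_n\underline{M/(I^nA_0\cap M)}$ and ``interchanging $\prod_J$ with the inverse limit defining the analytic-ring completion'' is not a legitimate move --- the $(A,A^+)_{\bs}$-completion is not presented as an inverse limit that products can be pulled through. Second, your concluding cofinality claim is false: a quasi-finitely generated $B$-submodule $M\subset A$ is in general \emph{not} the closure of a finitely generated one, because the definition only demands that the image of $M$ in $A/I^nA_0$ be finitely generated for each $n$ separately, with no uniform bound on the number of generators as $n$ grows (already for $A=\Zbb_p\langle T\rangle$ and $B=\Zbb$, the closure of the $\Zbb$-span of $\{p^nT^n\}_{n\geq 0}$ is quasi-finitely generated but its image mod $p^n$ needs $n$ generators, so it is not the closure of any finitely generated $\Zbb$-submodule). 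This is precisely why the theorem is stated with quasi-finitely generated submodules: completing $\varinjlim_{M_{\disc}}\prod_J\underline{M_{\disc}}$ produces tuples $(a_j)_{j\in J}$ whose reductions mod $I^nA_0$ lie in finitely generated submodules that may grow with $n$, and the essential step is to assemble from such a tuple a single quasi-finitely generated $M$ containing all the $a_j$ --- exactly the inductive construction of the modules $M_n$ carried out in the proof of Proposition \ref{small} of this paper. Without an argument of that kind your final step does not close.
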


\begin{remark}\label{rem:dirprod}
We assume that $A$ is discrete.
Then by Remark \ref{rem:dis}, we get an isomorphism
$$(A, A^+)_{\bs}[S] \cong \varinjlim_{B \subset A^+} \prod_J \underline{M}.$$
\end{remark}

\begin{corollary}\label{cpt}
Let $(A,A^+)$ be a complete affinoid pair, and let $J$ be a set. Then $\displaystyle\varinjlim_{B \subset A^+, M} \prod_J \underline{M}$ is a compact projective object of $\Mod_{(A,A^+)_{\bs}}^{\cond}$.
\end{corollary}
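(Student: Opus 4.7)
The plan is to recognize the given colimit as the value $(A,A^+)_{\bs}[S]$ of the functor of measures on a suitable extremally disconnected set $S$, and then invoke the fact that such objects form a family of compact projective generators of $\Mod_{(A,A^+)_{\bs}}^{\cond}$.

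The first step is to exhibit an extremally disconnected set $S$ with $\Zbb_{\bs}[S] \cong \prod_J \underline{\Zbb}$. The natural candidate is $S = \beta J$, the Stone--\v{C}ech compactification of $J$ with the discrete topology, which is a standard example of an extremally disconnected compact Hausdorff space. Applying Lemma \ref{lem} to this $S$ gives some set $J'$ with $\Zbb_{\bs}[\beta J] \cong \prod_{J'} \underline{\Zbb}$; evaluating both sides on the one-point set (or more intrinsically tracing through the construction of $J'$ in the proof of Lemma \ref{lem}) identifies $J'$ with $J$. Thus we obtain the desired $S$ for any set $J$ occurring in the statement.

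The second step is to apply Theorem \ref{dir prod} to this $S$, yielding an isomorphism of condensed $\underline{A}$-modules
\[
(A, A^+)_{\bs}[S] \;\cong\; \varinjlim_{B \subset A^+,\, M} \prod_J \underline{M},
\]
where the colimit is indexed exactly as in the corollary.

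Finally, by the first part of the proposition following Definition \ref{def:analytic ring}, for any uncompleted analytic ring $(\Acal, \Mcal)$ the objects $\Mcal[S]$ with $S$ extremally disconnected form a family of compact projective generators of $\Mod_{(\Acal, \Mcal)}^{\cond}$. Specializing to $(A,A^+)_{\bs}$ and transporting compact projectivity across the isomorphism of the second step yields the conclusion.

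The only potentially delicate point is the first step, namely arranging $\Zbb_{\bs}[S] \cong \prod_J \underline{\Zbb}$ for an \emph{arbitrary} set $J$ rather than only for those $J$ that happen to appear in Lemma \ref{lem}; the Stone--\v{C}ech compactification handles this uniformly, so no serious obstacle arises.
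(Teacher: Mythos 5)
Your overall strategy (identify the colimit with a module of the form $(A,A^+)_{\bs}[S]$ and quote the compact projective generator statement) is the right one, but Step 1 contains a genuine error: it is \emph{not} true that $\Zbb_{\bs}[\beta J]\cong \prod_{J}\underline{\Zbb}$ for infinite $J$. By the proof of Lemma \ref{lem} (i.e.\ \cite[Corollary 5.5]{CM}), the index set $J'$ is a $\Zbb$-basis of $C(\beta J,\Zbb)$, the free abelian group of continuous integer-valued functions on $\beta J$; these are exactly the functions $J\to\Zbb$ with finite image, so $|J'|=2^{|J|}$, not $|J|$. Evaluating on the point does not rescue this: $\Zbb_{\bs}[\beta J](\ast)\cong\Hom(C(\beta J,\Zbb),\Zbb)\cong\prod_{2^{|J|}}\Zbb$. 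Worse, the defect is not merely a wrong choice of $S$: for $J$ countably infinite there is \emph{no} extremally disconnected $S$ with $\Zbb_{\bs}[S]\cong\prod_{J}\underline{\Zbb}$, since such an $S$ would need $C(S,\Zbb)$ of countable rank, forcing $S$ to be an infinite metrizable extremally disconnected compact space, which does not exist (such a space would contain a nontrivial convergent sequence). So the plan of realizing $\prod_J\underline{\Zbb}$ \emph{exactly} as some $\Zbb_{\bs}[S]$ cannot be carried out for general $J$, and Theorem \ref{dir prod} cannot be applied verbatim to your $J$.

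The repair is the device the paper itself uses in the proof of Lemma \ref{eq}(3): choose an extremally disconnected $S$ and a set $I\supseteq J$ with $\Zbb_{\bs}[S]\cong\prod_{I}\underline{\Zbb}$ (for instance $S=\beta J$, which gives $|I|=2^{|J|}\geq |J|$ after fixing an injection $J\hookrightarrow I$). Since $\prod_{I}\underline{M}\cong\prod_{J}\underline{M}\times\prod_{I\setminus J}\underline{M}$ naturally in $M$, passing to the filtered colimit exhibits $\varinjlim_{B\subset A^+,M}\prod_{J}\underline{M}$ as a retract of $\varinjlim_{B\subset A^+,M}\prod_{I}\underline{M}\cong (A,A^+)_{\bs}[S]$ (the last isomorphism by Theorem \ref{dir prod}). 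As $(A,A^+)_{\bs}[S]$ is compact projective in $\Mod^{\cond}_{(A,A^+)_{\bs}}$ and compact projective objects are stable under retracts, the conclusion follows. Your Steps 2 and 3 are fine as stated; only Step 1 needs to be replaced by this retract argument.
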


%%%%%%%%%%%%%%%%%%%%%%%%%%%%%%%%%%%%%%%%%%%%% section2 %%%%%%%%%%%%%%%%%%%%%%%%%%%%%%%%%%%%%%%
\section{Faithfully flat descent for discrete rings}
In what follows, we prove a version of fppf descent for discrete rings in condensed mathematics. 
In this section, all rings are discrete (static) topological rings unless otherwise stated.

\begin{definition}
Let $\Acal$ be a condensed ring. Then a \textit{finite projective $\Acal$-module} is a condensed $\Acal$-module which is a direct summand of the finite free $\Acal$-module $\Acal^n$ for some $n$.
\end{definition}

\begin{definition}[{\cite[Lemma 5.7]{And21}}]
Let $A$ be a ring and endow every $A$-module with the discrete topology.
Then the functor $\Mod_{A} \to \Mod_{\underline{A}}^{\cond} ;\; M \mapsto \underline{M}$ extends to a fully faithful exact functor $\dCond_{A} \colon \Dcal(A) \to \Dcal(A_{\bs})$, which is called the \textit{discrete condensification functor}.
For an object $M\in \Dcal(A)$, $\dCond_A(M)$ is given by $\underline{N^{\bullet}}$, where $N^{\bullet}$ is any complex representing $M$.
\end{definition}

\begin{remark}\label{cond}
From this, we find that for a perfect $A$-module $M$ (i.e., $M[0]$ is a perfect complex), $\underline{M}$ is quasi-isomorphic to a bounded complex of finite projective $\underline{A}$-modules.
\end{remark}

\begin{lemma}\label{perfect}
Let $A$ be a ring and $M$ be a pseudo-coherent complex of $A$-modules. 
Let $a,b\in \Zbb$ such that $a \leq b$.
If for every maximal ideal $\mfrak \subset A$, we have $H^i(M \otimes^{\Lbb}_{A} A/\mfrak)=0$ for all $i\notin [a,b]$, then $M$ is a perfect complex with Tor-amplitude in $[a,b]$.
\end{lemma}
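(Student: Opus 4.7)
The plan is to reduce to the local case by localization and then exploit a minimal free resolution.

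First, I would observe that both conclusions may be verified locally at maximal ideals. Pseudo-coherence is preserved under flat base change, so $M_{\mfrak}$ is pseudo-coherent over $A_{\mfrak}$ for every maximal ideal $\mfrak$. Since $A/\mfrak$ is already an $A_{\mfrak}$-module, the natural map $M \otimes^{\Lbb}_{A} A/\mfrak \to M_{\mfrak} \otimes^{\Lbb}_{A_{\mfrak}} A_{\mfrak}/\mfrak A_{\mfrak}$ is an equivalence, and so the hypothesis transfers to each $M_{\mfrak}$. Moreover, Tor-amplitude can be tested locally, because tensor product and cohomology commute with localization at each maximal ideal; and a pseudo-coherent complex of finite Tor-amplitude is automatically perfect. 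Hence it suffices to show that $M_{\mfrak}$ has Tor-amplitude in $[a, b]$ for every maximal ideal $\mfrak$.

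Now fix such an $\mfrak$ and set $R = A_{\mfrak}$, $k = A/\mfrak$, $M' = M_{\mfrak}$. Since $M'$ is pseudo-coherent over the local ring $R$, it is quasi-isomorphic to a bounded-above complex of finitely generated free $R$-modules, and by iteratively splitting off contractible direct summands of the form $R \xrightarrow{\id} R$ from the top, we may take this complex $P^{\bullet}$ to be \emph{minimal}, meaning $d(P^{\bullet}) \subseteq \mfrak P^{\bullet}$. Then the differentials of $P^{\bullet} \otimes_{R} k$ all vanish, so
\begin{equation*}
H^{i}\bigl(M' \otimes^{\Lbb}_{R} k\bigr) \;\cong\; H^{i}\bigl(P^{\bullet} \otimes_{R} k\bigr) \;\cong\; P^{i} \otimes_{R} k.
\end{equation*}
By hypothesis this vanishes for $i \notin [a, b]$, and since each $P^{i}$ is finite free over $R$, Nakayama's lemma forces $P^{i} = 0$ for $i \notin [a, b]$. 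Thus $M'$ is represented by a bounded complex of finite free $R$-modules concentrated in degrees $[a, b]$, which is visibly perfect with Tor-amplitude in $[a, b]$.

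The one nontrivial ingredient is the existence of the minimal resolution $P^{\bullet}$; this is classical for bounded-above complexes of finitely generated free modules over any local ring and requires no Noetherian hypothesis, so it should be the main (though standard) obstacle. Assembling the local conclusions shows $M$ itself has Tor-amplitude in $[a, b]$, and together with pseudo-coherence this yields perfectness, completing the plan.
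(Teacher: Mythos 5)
Your proof is correct. Note that the paper does not actually argue this lemma: it simply cites \cite[Corollary 8.3.6.5]{FGA} and the Stacks project, so what you have produced is a self-contained proof of the cited result, and it is essentially the standard one. Each step checks out: pseudo-coherence localizes; $A_{\mfrak}/\mfrak A_{\mfrak}=A/\mfrak$ because $\mfrak$ is maximal, so the hypothesis transfers to $M_{\mfrak}$; tor-amplitude (like vanishing of a module) is detected after localizing at all maximal ideals; over the local ring one passes to a minimal bounded-above complex $P^{\bullet}$ of finite free modules, where minimality kills the differentials of $P^{\bullet}\otimes_R k$ and Nakayama then kills $P^i$ for $i\notin[a,b]$ degreewise (including in all degrees $i<a$, so boundedness below comes out automatically rather than needing a preliminary truncation); and pseudo-coherent plus finite tor-amplitude gives perfectness. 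The one point that genuinely deserves the attention you give it is the existence of the minimal model over a local ring that need not be Noetherian: the splitting-off procedure (a unit entry in a differential matrix splits off a contractible summand $R \xrightarrow{\id} R$, and only finitely many splittings occur in each degree, proceeding downward from the top) uses only finiteness of the ranks, so no Noetherian hypothesis is needed. This is relevant to the paper, since Lemma \ref{perfect} is invoked not only for the Noetherian rings $A[T_1,\ldots,T_n]$ in Lemma \ref{disc rp} but also for the coherent, possibly non-Noetherian rings $A\langle T_1,\ldots,T_n\rangle$ in Lemma \ref{rp}. Compared with the paper's citation, your route buys a self-contained and appropriately general argument at the cost of re-deriving what the references already contain.
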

\begin{proof}
It easily follows from \cite[Corollary 8.3.6.5]{FGA} or \cite[\href{https://stacks.math.columbia.edu/tag/068V}{Tag 068V}]{stacks-project}.
\end{proof}

By using the above lemma, we get the following decomposition (cf.\ \cite[\href{https://stacks.math.columbia.edu/tag/068Y}{Tag 068Y}]{stacks-project}).

\begin{lemma}\label{disc rp} % discrete relatively perfect
Let $A$ be a finitely generated $\Zbb$-algebra, and $B$ be a finitely presented flat $A$-algebra. Then there exists a decomposition 
$$A \to A[T_1,\ldots,T_n] \to B$$
such that $B$ is a perfect $A[T_1,\ldots,T_n]$-module.
\end{lemma}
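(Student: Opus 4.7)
The plan is to construct the surjection by a presentation of $B$ as an $A$-algebra and then verify the two hypotheses of Lemma \ref{perfect}: pseudo-coherence over $A[T_1,\ldots,T_n]$, and uniformly bounded Tor-amplitude on fibers at closed points of $\Spec A[T_1,\ldots,T_n]$.

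Since $B$ is a finitely presented $A$-algebra, one can write $B \cong A[T_1,\ldots,T_n]/I$ for some finitely generated ideal $I$, which furnishes the desired factorization $A \to A[T_1,\ldots,T_n] \to B$. Because $A$ is a finitely generated $\Zbb$-algebra, the ring $A[T_1,\ldots,T_n]$ is Noetherian, and the finitely generated $A[T_1,\ldots,T_n]$-module $B$ is therefore pseudo-coherent.

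For the Tor-amplitude bound, fix a maximal ideal $\mfrak \subset A[T_1,\ldots,T_n]$ and set $\pfrak := \mfrak \cap A$. Using the flatness of $B$ over $A$ together with the flatness of $A[T_1,\ldots,T_n]$ over $A$, a standard base-change computation identifies
$$B \otimes^{\Lbb}_{A[T_1,\ldots,T_n]} k(\mfrak) \simeq \bar{B} \otimes^{\Lbb}_{k(\pfrak)[T_1,\ldots,T_n]} k(\mfrak),$$
where $\bar{B} := B \otimes_A k(\pfrak)$. Now $\bar{B}$ is a finitely generated module over the regular Noetherian ring $k(\pfrak)[T_1,\ldots,T_n]$, whose global dimension equals $n$. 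Hence $\bar{B}$ admits a finite projective resolution of length at most $n$, so $\mathrm{Tor}^{k(\pfrak)[T_1,\ldots,T_n]}_i(\bar{B}, k(\mfrak))$ vanishes for every $i > n$, uniformly in $\mfrak$.

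Combining these, $H^j\bigl(B \otimes^{\Lbb}_{A[T_1,\ldots,T_n]} k(\mfrak)\bigr)$ vanishes outside $[-n,0]$ for every maximal ideal $\mfrak$, and Lemma \ref{perfect} then implies that $B$ is perfect as an $A[T_1,\ldots,T_n]$-module. The only mildly technical step is the base-change identification, but it follows immediately once the flatness of $B$ over $A$ is fed into the projection formula; the rest of the argument is bookkeeping via regularity of $k(\pfrak)[T_1,\ldots,T_n]$.
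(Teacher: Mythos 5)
Your proof is correct and follows the same overall strategy as the paper's: choose a surjection $A[T_1,\ldots,T_n]\to B$, get pseudo-coherence from Noetherianity of $A[T_1,\ldots,T_n]$, bound the Tor-amplitude of the derived fibers at all maximal ideals $\mfrak$ using flatness of $B$ over $A$ together with Hilbert's syzygy theorem, and conclude via Lemma \ref{perfect}. The one place where you genuinely diverge is the treatment of the fiber: the paper first invokes the Jacobson property of the finitely generated $\Zbb$-algebra $A$ to ensure that $\nfrak=\mfrak\cap A$ is \emph{maximal}, so that $A/\nfrak$ is a field and $(A/\nfrak)[T_1,\ldots,T_n]$ has global dimension $n$; you instead base-change all the way to the residue field $k(\pfrak)=\Frac(A/\pfrak)$ of the (a priori only prime) contraction $\pfrak=\mfrak\cap A$, which is legitimate since $k(\mfrak)$ is a field extension of $k(\pfrak)$ and flatness of $B$ over $A$ identifies $B\otimes^{\Lbb}_{A[T_1,\ldots,T_n]}k(\pfrak)[T_1,\ldots,T_n]$ with the static module $\bar B=B\otimes_A k(\pfrak)$. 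This makes your fiber computation independent of the Jacobson hypothesis (finite generation over $\Zbb$ is then used only for Noetherianity, hence pseudo-coherence), at the cost of one extra intermediate base change; the paper's route is marginally shorter but tied to Jacobson rings. Both yield the same bound $H^i=0$ for $i\notin[-n,0]$, so the application of Lemma \ref{perfect} goes through identically. A cosmetic point: what you call ``the projection formula'' is really just associativity of the derived tensor product combined with flat base change, but the computation itself is right.
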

\begin{proof}
We take a surjection $A[T_1,\ldots,T_n] \to B$, and we will show that $B$ is a perfect $A[T_1,\ldots,T_n]$-module.
We take any maximal ideal $\mfrak$ of $A[T_1,\ldots,T_n]$.
Since $A$ is a Jacobson ring, the prime ideal $\nfrak = \mfrak \cap A$ of $A$ is a maximal ideal.
Therefore, the global dimension of $A/\nfrak[T_1,\ldots,T_n]$ is equal to $n$.
We have the following equivalence:
\begin{align*}
&B \otimes_{A[T_1,\ldots,T_n]}^{\Lbb} A[T_1,\ldots,T_n]/\mfrak \\
\simeq{} & (B \otimes_{A[T_1,\ldots,T_n]}^{\Lbb} A/\nfrak[T_1,\ldots,T_n]) \otimes_{A/\nfrak[T_1,\ldots,T_n]}^{\Lbb} A[T_1,\ldots,T_n]/\mfrak \\
\simeq{} & (B \otimes_{A[ T_1,\ldots,T_n]}^{\Lbb} (A[T_1,\ldots,T_n]\otimes_A^{\Lbb} A/\nfrak)) \otimes_{A/\nfrak[T_1,\ldots,T_n]}^{\Lbb} A[T_1,\ldots,T_n]/\mfrak \\
\simeq{} & (B \otimes_A^{\Lbb} A/\nfrak) \otimes_{A/\nfrak[T_1,\ldots,T_n]}^{\Lbb} A[T_1,\ldots,T_n]/\mfrak \\
\simeq{} &B/{\nfrak B} \otimes_{A/\nfrak[T_1,\ldots,T_n]}^{\Lbb} A[T_1,\ldots,T_n]/\mfrak,
\end{align*}
where the flatness of $B$ over $A$ is used in the fourth equivalence.
Therefore, we get $H^i(B \otimes_{A[T_1,\ldots,T_n]}^{\Lbb} A[T_1,\ldots,T_n]/\mfrak)=0$ for all $i \notin [-n, 0]$.
Since $A[T_1,\ldots,T_n]$ is noetherian, $B$ is a pseudo-coherent $A[T_1,\ldots,T_n]$-module, 
so we get the claim by Lemma \ref{perfect}.
\end{proof}

\begin{proposition}\label{steady2}
Let $A$ be a ring, and $B$ be a finitely presented flat $A$-algebra. Then $A_{\bs} \to B_{\bs}$ is steady. 
\end{proposition}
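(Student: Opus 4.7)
The overall strategy is to decompose $A_{\bs} \to B_{\bs}$ into a polynomial extension followed by a perfect-module extension, and to verify steadiness of each piece separately. First, I reduce to the case where $A$ is a finitely generated $\Zbb$-algebra: by standard descent of finite presentation, choose a finitely generated $\Zbb$-subalgebra $A_0 \subseteq A$ and a finitely presented flat $A_0$-algebra $B_0$ such that $B \cong B_0 \otimes_{A_0} A$. Flatness of $B_0$ over $A_0$ together with Proposition~\ref{prop:dp} gives $B_{0,\bs} \otimes_{A_{0,\bs}}^{\Lbb} A_{\bs} \simeq B_{\bs}$, realizing $A_{\bs} \to B_{\bs}$ as a pushout of $A_{0,\bs} \to B_{0,\bs}$. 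Since the class of steady maps is stable under base change, we may henceforth assume $A$ is a finitely generated $\Zbb$-algebra.

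Now fix, by Lemma~\ref{disc rp}, a factorization $A \to A' \to B$ with $A' := A[T_1, \ldots, T_n]$ and $B$ a perfect $A'$-module; it then suffices to show both pieces are steady and invoke closure under composition. For the polynomial piece: by Lemma~\ref{poly steady}, $\Zbb_{\bs} \to \Zbb[T]_{\bs}$ is steady, and by Proposition~\ref{prop:dp} (using flatness of $\Zbb[T_1, \ldots, T_n]$ over $\Zbb$) together with closure of steady maps under base change, each step $A[T_1, \ldots, T_{i-1}]_{\bs} \to A[T_1, \ldots, T_i]_{\bs}$ is steady; composing gives $A_{\bs} \to A'_{\bs}$ steady. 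For the perfect-module piece: as $A'$ is Noetherian and $B$ is perfect over $A'$, $B$ is finitely generated, hence integral over $A'$, so $B_{\bs}$ viewed as an object of $\Dcal(A'_{\bs})$ coincides with $\underline{B}$; by Remark~\ref{cond}, this is quasi-isomorphic to a bounded complex $P^\bullet$ of finite projective $\underline{A'}$-modules. Given any map $g \colon A'_{\bs} \to (\Ccal, \Mcal_{\Ccal})$ in $\AnRing$ and any $M \in \Dcal(\Ccal, \Mcal_{\Ccal})$,
$$M \otimes_{A'_{\bs}}^{\Lbb} B_{\bs} \;\simeq\; M \otimes_{A'_{\bs}}^{\Lbb} P^\bullet$$
is built from $M$ by a finite iterated combination of shifts, direct summands, and cofiber sequences (since each $P^i$ is a retract of $\underline{A'}^{n_i}$). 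Since $\Dcal(\Ccal, \Mcal_{\Ccal}) \subseteq \Dcal(\Ccal)$ is a stable subcategory closed under limits, colimits, and retracts, the output is $(\Ccal, \Mcal_{\Ccal})$-complete, so $A'_{\bs} \to B_{\bs}$ is steady.

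The main obstacle is the perfect-module step. One must carefully identify $B_{\bs}$ with $\underline{B}$ inside $\Dcal(A'_{\bs})$ (using integrality of $B$ over $A'$ via Proposition~\ref{integral}) and then argue that the ``uncompleted'' tensor product $M \otimes_{\underline{A'}}^{\Lbb} P^\bullet$ already lies in the $(\Ccal, \Mcal_{\Ccal})$-complete subcategory, so that no further completion is needed. Both points are delicate, but once they are in place the perfect-complex formalism closes the argument.
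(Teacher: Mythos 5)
Your proof is correct and follows essentially the same route as the paper: reduce to a finitely generated $\Zbb$-algebra via Proposition~\ref{prop:dp} and base-change stability, factor through $A[T_1,\ldots,T_n]$ using Lemma~\ref{disc rp}, handle the polynomial step by base change from $\Zbb_{\bs}\to\Zbb[T]_{\bs}$ (Lemma~\ref{poly steady}), and handle the perfect-module step by identifying $-\otimes_{A_{\bs}}^{\Lbb}B_{\bs}$ with $-\otimes_{\underline{A}}^{\Lbb}\underline{B}$ via Proposition~\ref{integral} and the finite projective resolution. The two ``delicate points'' you flag at the end are exactly the ones the paper's proof addresses, and your treatment of them matches.
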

\begin{proof}
By a limit argument, we can take a finitely generated $\Zbb$-subalgebra $A_0$ of $A$ and a finitely presented flat $A_0$-algebra $B_0$ such that $A \otimes_{A_0} B_0 \cong B$.
By Proposition \ref{prop:dp}, we have an equivalence $A_{\bs} \otimes_{(A_0)_{\bs}}^{\Lbb} (B_0)_{\bs} \simeq B_{\bs}$. Since the class of steady maps is stable under base change, we may assume that $A$ is a finitely generated $\Zbb$-algebra.
Since the class of steady maps is stable under composition, we may assume that $B$ is equal to $A[T]$ or $B$ is a perfect $A$-module by Lemma \ref{disc rp}.
If $B$ is equal to $A[T]$, then $B_{\bs}$ is equivalent to $A_{\bs} \otimes_{\Zbb_{\bs}}^{\Lbb} \Zbb[T]_{\bs}$.
Therefore, $A_{\bs} \to B_{\bs}$ is steady, since $\Zbb_{\bs} \to \Zbb[T]_{\bs}$ is steady by Lemma \ref{poly steady}.
Next, we show the claim when $B$ is a perfect $A$-module.
Since $\underline{B}$ is quasi-isomorphic to a bounded complex of finite projective $\underline{A}$-modules, 
for an object $M \in \Dcal(A_{\bs})$, $M \otimes_{\underline{A}}^{\Lbb} \underline{B}$ is $A_{\bs}$-complete and also $B_{\bs}$-complete by Proposition \ref{integral}. 
This implies that $M \otimes_{A_{\bs}}^{\Lbb} B_{\bs}$ is equivalent to $M\otimes_{\underline{A}}^{\Lbb} \underline{B}$. 
Therefore, for all maps $A_{\bs}\to (\Ccal, \Mcal)$ in $\AnRing$ and all $M \in \Dcal(\Ccal, \Mcal)$, $M \otimes_{A_{\bs}}^{\Lbb} B_{\bs} \simeq M\otimes_{\underline{A}}^{\Lbb} \underline{B}$ is $(\Ccal, \Mcal)$-complete since $\underline{B}$ is quasi-isomorphic to a bounded complex of finite projective $\underline{A}$-modules. 
From the above, we get that $A_{\bs} \to B_{\bs}$ is steady. 
\end{proof}

\begin{remark}
By the proof of this proposition, we can also show that $A_{\bs} \to B_{\bs}$ is steady if $A \to B$ is a perfect ring map, i.e., there exists a decomposition as in Lemma \ref{disc rp}.
More generally, it is proved in \cite[Proposition 2.9.7]{Mann22} that for every ring map $A\to B$ of discrete rings,  $A_{\bs} \to B_{\bs}$ is steady.
\end{remark}

\begin{lemma}\label{eq}
Let $J$ be a set and $A$ be a finitely generated $\Zbb$-algebra. We have the following equivalence:
\begin{enumerate}
\item
$\displaystyle R\intHom_{\underline{\Zbb}}(\bigoplus_{J} \underline{\Zbb}, \underline{\Zbb}) \simeq \prod_{J} \underline{\Zbb}$.
\item
$\displaystyle R\intHom_{\underline{\Zbb}}(\prod_{J} \underline{\Zbb}, \underline{\Zbb}) \simeq \bigoplus_{J} \underline{\Zbb}$.
\item
$\displaystyle (\prod_{J} \underline{\Zbb}) \otimes_{\Zbb_{\bs}}^{\Lbb} A_{\bs} \simeq \prod_{J} \underline{A}$.
\end{enumerate}
\end{lemma}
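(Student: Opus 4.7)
The plan is to handle the three parts in sequence, using Lemma \ref{lem} to trade the infinite product $\prod_J \underline{\Zbb}$ for the more tractable free object $\Zbb_{\bs}[S]$ on a suitable extremally disconnected set $S$.

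Part (1) should be purely formal: since $R\intHom$ converts colimits (in particular direct sums) in the first variable to limits (products), and $R\intHom_{\underline{\Zbb}}(\underline{\Zbb}, \underline{\Zbb}) \simeq \underline{\Zbb}$, one immediately obtains
\[ R\intHom_{\underline{\Zbb}}\Bigl(\bigoplus_{J} \underline{\Zbb}, \underline{\Zbb}\Bigr) \simeq \prod_{J} R\intHom_{\underline{\Zbb}}(\underline{\Zbb}, \underline{\Zbb}) \simeq \prod_{J} \underline{\Zbb}. \]

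For part (2), I would first apply Lemma \ref{lem} to realize $\prod_J \underline{\Zbb}$ as $\Zbb_{\bs}[S]$ for some extremally disconnected $S$. Because $\Zbb_{\bs}[S]$ is a compact projective generator of $\Mod_{\underline{\Zbb}}^{\cond}$, the derived internal Hom collapses to the underived one, and the free–forgetful adjunction together with the isomorphism $\Zbb_{\bs}[S] \otimes \Zbb_{\bs}[T] \cong \Zbb_{\bs}[S \times T]$ gives
\[ \intHom_{\underline{\Zbb}}(\Zbb_{\bs}[S], \underline{\Zbb})(T) \cong \Hom_{\underline{\Zbb}}(\Zbb_{\bs}[S \times T], \underline{\Zbb}) \cong C(S \times T, \Zbb). \]
Writing $S = \varprojlim_i S_i$ with $S_i$ finite and exchanging the filtered colimit with the tensor product, one identifies $C(S \times T, \Zbb) \cong C(S, \Zbb) \otimes_{\Zbb} \underline{\Zbb}(T)$. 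Tracing through the proof of Lemma \ref{lem}, the free abelian group $C(S, \Zbb)$ is naturally of the form $\bigoplus_J \Zbb$ for the same index set $J$, so the right-hand side equals $(\bigoplus_J \underline{\Zbb})(T)$, naturally in $T$.

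For part (3), I would again use $\prod_J \underline{\Zbb} \cong \Zbb_{\bs}[S]$. Since $\Zbb_{\bs}[S]$ is projective in $\Dcal(\Zbb_{\bs})$, the derived tensor product agrees with the ordinary one, and by the universal property underlying Construction \ref{const} the base change satisfies $\Zbb_{\bs}[S] \otimes_{\Zbb_{\bs}} A_{\bs} \cong A_{\bs}[S] = \varprojlim_i \underline{A}^{S_i}$. Repeating the argument of Lemma \ref{lem} with $\underline{A}$ in place of $\underline{\Zbb}$ yields $A_{\bs}[S] \cong \prod_J \underline{A}$ for the same $J$, since the same indexing system $\{S_i\}$ controls both decompositions.

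The main obstacle is a bookkeeping one: ensuring that the index set $J$ produced by Lemma \ref{lem} is the same $J$ that appears in the target in each of (2) and (3). Resolving this amounts to unwinding the proof of Lemma \ref{lem} and observing that its construction of $J$ as (the basis of) $C(S, \Zbb)$ is natural enough to survive both the double-duality dance in (2) and base change from $\Zbb$ to $A$ in (3); a secondary point is the justification that the underived $\intHom_{\underline{\Zbb}}(\Zbb_{\bs}[S], \underline{\Zbb})$ really computes the derived version, which is immediate once one notes that tensor products of compact projective generators $\Zbb_{\bs}[S]$ and $\Zbb_{\bs}[T]$ remain projective in $\Mod_{\underline{\Zbb}}^{\cond}$.
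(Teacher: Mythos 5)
Your part (1) is fine and coincides with the paper's argument: $R\intHom$ turns the direct sum in the first variable into a product, $R\intHom_{\underline{\Zbb}}(\underline{\Zbb},\underline{\Zbb})\simeq\underline{\Zbb}$, and the product stays in degree $0$ because products are exact (Theorem \ref{gro}, axiom (AB4*)). The trouble starts with the reduction you use for both (2) and (3): you cannot, for an \emph{arbitrary} set $J$, ``realize $\prod_J\underline{\Zbb}$ as $\Zbb_{\bs}[S]$'' with $S$ extremally disconnected. Lemma \ref{lem} only goes from $S$ to $J$, and the converse genuinely fails: for an infinite extremally disconnected $S$ the free abelian group $C(S,\Zbb)$ has cardinality, hence rank, at least $2^{\aleph_0}$, so the index set produced by Lemma \ref{lem} is never countably infinite, and $\prod_{\Nbb}\underline{\Zbb}$ is not of the form $\Zbb_{\bs}[S]$. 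This is exactly why the paper's proof of (3) instead chooses $S$ together with a set $I\supseteq J$ such that $\Zbb_{\bs}[S]\cong\prod_I\underline{\Zbb}$, proves the statement for $\prod_I\underline{\Zbb}$ via $\Zbb_{\bs}[S]\otimes_{\Zbb_{\bs}}^{\Lbb}A_{\bs}\simeq A_{\bs}[S]\cong\prod_I\underline{A}$ (Theorem \ref{dir prod}), and then passes to the direct factor indexed by $J$. Your arguments for (2) and (3) both need this retract step.

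In (2) there is a second, more serious error: $\Zbb_{\bs}[S]$ is a compact projective generator of $\Mod_{\Zbb_{\bs}}^{\cond}$, not of $\Mod_{\underline{\Zbb}}^{\cond}$ (whose compact projective generators are the free modules $\underline{\Zbb}[S]$), and $\prod_J\underline{\Zbb}$ is not projective as a condensed abelian group. So the assertion that ``the derived internal Hom collapses to the underived one'' is not formal; the vanishing of the higher $\Ext$'s of $\prod_J\underline{\Zbb}$ against $\underline{\Zbb}$ is precisely the non-formal content of \cite[Theorem 4.3]{CM}, which is what the paper cites for (2). Your identification of the degree-zero part, via $C(S\times T,\Zbb)\cong C(S,\Zbb)\otimes_{\Zbb}C(T,\Zbb)$ and the freeness of $C(S,\Zbb)$, is correct but says nothing about the higher terms. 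The gap can be repaired within the paper's framework: since $\underline{\Zbb}$ is solid, the analytic-ring property of $\Zbb_{\bs}$ (Construction \ref{const}) gives $R\intHom_{\underline{\Zbb}}(\Zbb_{\bs}[S],\underline{\Zbb})\simeq R\intHom_{\underline{\Zbb}}(\underline{\Zbb}[S],\underline{\Zbb})$, and the latter is concentrated in degree $0$ because profinite sets have no higher cohomology with discrete coefficients; but this is a different argument from the one you wrote, and it silently leans on \cite[Theorem 4.3]{CM}, on which the analytic-ring property of $\Zbb_{\bs}$ itself rests.
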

\begin{proof}
The first equivalence follows from Theorem \ref{gro}.
The second equivalence follows from \cite[Theorem 4.3]{CM} and its proof.
For the third equivalence, we can take an extremally disconnected set $S$ and a set $I$ containing $J$ as a subset such that $\displaystyle\Zbb_{\bs}[S] \cong \prod_{I} \underline{\Zbb}$. 
Then the claim follows from the equivalence $\Zbb_{\bs}[S] \otimes_{\Zbb_{\bs}}^{\Lbb} A_{\bs} \simeq A_{\bs}[S]$ and 
$\displaystyle A_{\bs}[S] \cong \prod_{I} \underline{A}$ by Theorem \ref{dir prod}.
\end{proof}

The following lemma is key in this paper.

\begin{lemma}\label{poly}
We have an equivalence of functors from $\Dcal(\Zbb_{\bs})$ to $\Dcal(\Zbb[T_1,\ldots,T_n]_{\bs})$
$$-\otimes_{\Zbb_{\bs}}^{\Lbb} \Zbb[T_1,\ldots,T_n]_{\bs} \overset{\sim}{\lra} R\intHom_{\underline{\Zbb}}(R\intHom_{\underline{\Zbb}}(\underline{\Zbb[T_1,\ldots,T_n]}, \underline{\Zbb}), -).$$
\end{lemma}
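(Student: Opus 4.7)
The plan is to identify $N := R\intHom_{\underline{\Zbb}}(\underline{\Zbb[T_1,\ldots,T_n]},\underline{\Zbb})$ explicitly as a compact projective object, verify that both sides preserve all small colimits, and check the equivalence on the compact projective generators $\Zbb_{\bs}[S]$ of $\Dcal(\Zbb_{\bs})$ using Lemma \ref{eq}.

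Since $\Zbb[T_1,\ldots,T_n]$ is free as an abelian group on the monomials $\{T^{\alpha}\}_{\alpha\in\Nbb^n}$, we have $\underline{\Zbb[T_1,\ldots,T_n]}\simeq\bigoplus_{\alpha\in\Nbb^n}\underline{\Zbb}$ as condensed abelian groups, so Lemma \ref{eq}(1) gives $N\simeq\prod_{\alpha\in\Nbb^n}\underline{\Zbb}$. Applying Corollary \ref{cpt} to the complete affinoid pair $(\Zbb,\Zbb)$ with the index set $J=\Nbb^n$ shows that $N$ is compact projective in $\Mod_{\Zbb_{\bs}}^{\cond}$: by Remark \ref{rem:dis} the colimit in Theorem \ref{dir prod} is indexed by the poset of finitely generated $\Zbb$-submodules of $\Zbb$, which has $\Zbb$ itself as its maximum, so the colimit collapses to $\prod_{\Nbb^n}\underline{\Zbb}=N$. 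Consequently $R\intHom_{\underline{\Zbb}}(N,-)$ preserves all small colimits, while the left-hand side preserves small colimits as a left adjoint.

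The natural transformation of the statement is constructed as follows. The evaluation pairing $N\otimes_{\underline{\Zbb}}^{\Lbb}\underline{\Zbb[T_1,\ldots,T_n]}\to\underline{\Zbb}$ is adjoint to a biduality map $\underline{\Zbb[T_1,\ldots,T_n]}\to R\intHom_{\underline{\Zbb}}(N,\underline{\Zbb})$. Tensoring with $M\in\Dcal(\Zbb_{\bs})$ and composing with the canonical tensor-Hom arrow $M\otimes R\intHom_{\underline{\Zbb}}(N,\underline{\Zbb})\to R\intHom_{\underline{\Zbb}}(N,M)$ yields a natural map
$$M\otimes_{\underline{\Zbb}}^{\Lbb}\underline{\Zbb[T_1,\ldots,T_n]}\longrightarrow R\intHom_{\underline{\Zbb}}(N,M)$$
in $\Dcal(\underline{\Zbb[T_1,\ldots,T_n]})$. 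Once the target is shown to be $\Zbb[T_1,\ldots,T_n]_{\bs}$-complete (which will emerge from the generator computation, since on generators it turns out to be a product of copies of $\underline{\Zbb[T_1,\ldots,T_n]}$), this map factors through the base change to give the desired natural transformation.

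To show this is an equivalence, by colimit preservation it suffices to check on $M=\Zbb_{\bs}[S]$ for $S$ extremally disconnected. Writing $\Zbb_{\bs}[S]\cong\prod_J\underline{\Zbb}$ via Lemma \ref{lem}, Lemma \ref{eq}(3) identifies the left-hand side with $\prod_J\underline{\Zbb[T_1,\ldots,T_n]}$, while pulling the product out of the second argument of $R\intHom_{\underline{\Zbb}}(N,-)$ and invoking Lemma \ref{eq}(2) identifies the right-hand side with $\prod_J R\intHom_{\underline{\Zbb}}(\prod_\alpha\underline{\Zbb},\underline{\Zbb})\simeq\prod_J\bigoplus_\alpha\underline{\Zbb}\simeq\prod_J\underline{\Zbb[T_1,\ldots,T_n]}$. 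The main obstacle is the bookkeeping in this last step: verifying that the abstract natural transformation actually implements these two identifications, which requires tracking the biduality pairing through the formulas of Lemma \ref{eq} rather than merely comparing the objects up to abstract isomorphism.
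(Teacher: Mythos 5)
Your proof is correct and follows essentially the same route as the paper's: identify $N=R\intHom_{\underline{\Zbb}}(\underline{\Zbb[T_1,\ldots,T_n]},\underline{\Zbb})$ as the compact projective object $\prod_{\Nbb^n}\underline{\Zbb}$ via Lemma \ref{eq}(1), observe that both functors preserve small colimits, and check the biduality map on the generators $\Zbb_{\bs}[S]\cong\prod_J\underline{\Zbb}$ using Lemma \ref{eq}(2),(3). The ``bookkeeping'' you flag at the end is resolved in the paper exactly as you anticipate: by naturality with respect to the product projections one reduces to $M=\underline{\Zbb}$, where the map in question is the canonical biduality map $\underline{\Zbb[T_1,\ldots,T_n]}\to R\intHom_{\underline{\Zbb}}(N,\underline{\Zbb})$, which Lemma \ref{eq}(1),(2) identify as an equivalence.
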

\begin{proof}
We begin with the outline of the proof.
We have a natural map of functors from $\Dcal(\Zbb_{\bs})$ to $\Dcal(\Zbb_{\bs})$
$$\id=R\intHom_{\underline{\Zbb}}(R\intHom_{\underline{\Zbb}}(\underline{\Zbb},\underline{\Zbb}), -) \to R\intHom_{\underline{\Zbb}}(R\intHom_{\underline{\Zbb}}(\underline{\Zbb[T_1,\ldots,T_n]}, \underline{\Zbb}), -)$$
which is induced by the map $\underline{\Zbb} \to \underline{\Zbb[T_1,\ldots,T_n]}$.
First, we will show that  for any object $M \in \Dcal(\Zbb_{\bs})$, $R\intHom_{\underline{\Zbb}}(R\intHom_{\underline{\Zbb}}(\underline{\Zbb[T_1,\ldots,T_n]}, \underline{\Zbb}), M)$ is $\Zbb[T_1,\ldots,T_n]_{\bs}$-complete. 
Then we get a map of functors from $\Dcal(\Zbb_{\bs})$ to $\Dcal(\Zbb[T_1, \ldots, T_n]_{\bs})$
$$-\otimes_{\Zbb_{\bs}}^{\Lbb} \Zbb[T_1,\ldots,T_n]_{\bs} \to R\intHom_{\underline{\Zbb}}(R\intHom_{\underline{\Zbb}}(\underline{\Zbb[T_1,\ldots,T_n]}, \underline{\Zbb}), -),$$
and we will show that this is an equivalence.

We take an object $M \in \Dcal(\Zbb_{\bs})$, and we will show that 
$$R\intHom_{\underline{\Zbb}}(R\intHom_{\underline{\Zbb}}(\underline{\Zbb[T_1,\ldots,T_n]}, \underline{\Zbb}), M)$$
is $\Zbb[T_1,\ldots,T_n]_{\bs}$-complete. 
The object $R\intHom_{\underline{\Zbb}}(\underline{\Zbb[T_1,\ldots,T_n]}, \underline{\Zbb}) \in \Dcal({\Zbb_{\bs}})$ is equivalent to $\prod \underline{\Zbb}$ by Lemma \ref{eq}, so it is compact as an object of $\Dcal({\Zbb_{\bs}})$.
Therefore, the functor 
\begin{align}
R\intHom_{\underline{\Zbb}}(R\intHom_{\underline{\Zbb}}(\underline{\Zbb[T_1,\ldots,T_n]}, \underline{\Zbb}), -) \colon \Dcal(\Zbb_{\bs}) \to \Dcal(\underline{\Zbb[T_1,\ldots,T_n]}) \label{fct1}
\end{align}
commutes with small colimits.
Since the objects of the form $\Zbb_{\bs}[S]$ for an extremally disconnected set $S$ form a family of compact generators of $\Dcal(\Zbb_{\bs})$ and $\Dcal(\Zbb[T_1,\ldots,T_n]_{\bs}) \subset \Dcal(\underline{\Zbb[T_1,\ldots,T_n]})$ is stable under small colimits, 
we may assume that $M$ is equal to $\Zbb_{\bs}[S]$ for an extremally disconnected set $S$. 
By Lemma \ref{lem}, we have an isomorphism $\displaystyle\Zbb_{\bs}[S] \cong \prod_{J} \underline{\Zbb}$.
Since the functor (\ref{fct1})
%$$R\intHom_{\underline{\Zbb}}(R\intHom_{\underline{\Zbb}}(\underline{\Zbb[T_1,\ldots,T_n]}, \underline{\Zbb}), -) \colon \Dcal(\Zbb_{\bs}) \to \Dcal(\underline{\Zbb[T_1,\ldots,T_n]})$$
commutes with small limits and $\Dcal(\Zbb[T_1,\ldots,T_n]_{\bs}) \subset \Dcal(\underline{\Zbb[T_1,\ldots,T_n]})$ is stable under small limits, 
we may assume that $M$ is equal to $\underline{\Zbb}$. By Lemma \ref{eq} (1), (2), the natural map 
$$\underline{\Zbb[T_1,\ldots,T_n]} \to R\intHom_{\underline{\Zbb}}(R\intHom_{\underline{\Zbb}}(\underline{\Zbb[T_1,\ldots,T_n]}, \underline{\Zbb}), \underline{\Zbb})$$
is an equivalence. In particular, 
$$R\intHom_{\underline{\Zbb}}(R\intHom_{\underline{\Zbb}}(\underline{\Zbb[T_1,\ldots,T_n]}, \underline{\Zbb}), \underline{\Zbb})$$ 
is $\Zbb[T_1,\ldots,T_n]_{\bs}$-complete. 

Finally, we prove that the map of functors from $\Dcal(\Zbb_{\bs})$ to $\Dcal(\Zbb[T_1, \ldots, T_n]_{\bs})$
$$-\otimes_{\Zbb_{\bs}}^{\Lbb} \Zbb[T_1,\ldots,T_n]_{\bs} \to R\intHom_{\underline{\Zbb}}(R\intHom_{\underline{\Zbb}}(\underline{\Zbb[T_1,\ldots,T_n]}, \underline{\Zbb}), -)$$
is an equivalence.
Since both functors commute with small colimits, it is enough to show that 
$$\prod_{J} \underline{\Zbb} \otimes_{\Zbb_{\bs}}^{\Lbb} \Zbb[T_1,\ldots,T_n]_{\bs} \to R\intHom_{\underline{\Zbb}}(R\intHom_{\underline{\Zbb}}(\underline{\Zbb[T_1,\ldots,T_n]}, \underline{\Zbb}), \prod_{J} \underline{\Zbb})$$
is an equivalence for any set $J$. We may also assume that $J$ is a singleton by Lemma \ref{eq} (3), and in this case the claim is already shown.
\end{proof}

\begin{theorem}\label{key1}
Let $A$ be a finitely generated $\Zbb$-algebra and $B$ be a finitely presented flat $A$-algebra.
Then we have an equivalence of functors from $\Dcal(A_{\bs})$ to $\Dcal(B_{\bs})$
$$-\otimes_{A_{\bs}}^{\Lbb} B_{\bs} \overset{\sim}{\lra} R\intHom_{\underline{A}}(R\intHom_{\underline{A}}(\underline{B}, \underline{A}), -).$$
\end{theorem}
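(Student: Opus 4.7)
The plan is to factor $A \to B$ as $A \to C \to B$ with $C = A[T_1, \ldots, T_n]$ and $B$ perfect as a $\underline{C}$-module, using Lemma \ref{disc rp}, then prove the analogous equivalence for each of $A \to C$ and $C \to B$ separately, and finally glue them by a tensor-Hom manipulation.

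For the relative piece $C \to B$: by Remark \ref{cond}, $\underline{B}$ is quasi-isomorphic to a bounded complex of finite projective $\underline{C}$-modules, so the standard double-dual identification gives
\[
R\intHom_{\underline{C}}(R\intHom_{\underline{C}}(\underline{B},\underline{C}), M) \simeq M \otimes_{\underline{C}}^{\Lbb} \underline{B}
\]
for every $M \in \Dcal(\underline{C})$. Since $C \to B$ is a surjection, it is integral, so by Proposition \ref{integral} and exactly as in the proof of Proposition \ref{steady2}, the right-hand side is both $C_{\bs}$-complete and $B_{\bs}$-complete and therefore coincides with $M \otimes_{C_{\bs}}^{\Lbb} B_{\bs}$.

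For the polynomial piece $A \to C$: Proposition \ref{prop:dp} gives $-\otimes_{A_{\bs}}^{\Lbb} C_{\bs} \simeq -\otimes_{\Zbb_{\bs}}^{\Lbb} \Zbb[T_1,\ldots,T_n]_{\bs}$ on $\Dcal(A_{\bs})$, so Lemma \ref{poly} rewrites this as $R\intHom_{\underline{\Zbb}}(R\intHom_{\underline{\Zbb}}(\underline{\Zbb[T_1,\ldots,T_n]},\underline{\Zbb}), -)$. Since $\underline{C} = \underline{A[T_1,\ldots,T_n]}$ is the direct sum of copies of $\underline{A}$ indexed by monomials, Lemma \ref{eq}(1) computes $R\intHom_{\underline{A}}(\underline{C},\underline{A}) \simeq \prod \underline{A}$ and likewise over $\underline{\Zbb}$. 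Lemma \ref{eq}(3) yields the base-change identification $(\prod \underline{\Zbb})\otimes_{\Zbb_{\bs}}^{\Lbb} A_{\bs} \simeq \prod \underline{A}$, and then the adjunction between $-\otimes_{\Zbb_{\bs}}^{\Lbb} A_{\bs}$ and the forgetful functor $\Dcal(A_{\bs}) \to \Dcal(\Zbb_{\bs})$ identifies $R\intHom_{\underline{\Zbb}}(\prod \underline{\Zbb}, M)$ with $R\intHom_{\underline{A}}(\prod \underline{A}, M)$ for every $M \in \Dcal(A_{\bs})$. This delivers the theorem for $A \to C$.

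The gluing rests on the identity
\[
R\intHom_{\underline{A}}(\underline{B},\underline{A}) \simeq R\intHom_{\underline{C}}(\underline{B},\underline{C}) \otimes_{\underline{C}}^{\Lbb} R\intHom_{\underline{A}}(\underline{C},\underline{A}),
\]
which follows by combining the change-of-rings adjunction $R\intHom_{\underline{A}}(\underline{B},\underline{A}) \simeq R\intHom_{\underline{C}}(\underline{B}, R\intHom_{\underline{A}}(\underline{C},\underline{A}))$ with the perfectness of $\underline{B}$ over $\underline{C}$. Substituting this into the tensor-Hom adjunction
\[
R\intHom_{\underline{A}}(X \otimes_{\underline{C}}^{\Lbb} Y, M) \simeq R\intHom_{\underline{C}}(X, R\intHom_{\underline{A}}(Y, M))
\]
(with $X = R\intHom_{\underline{C}}(\underline{B},\underline{C})$ and $Y = R\intHom_{\underline{A}}(\underline{C},\underline{A})$), together with the obvious decomposition $M \otimes_{A_{\bs}}^{\Lbb} B_{\bs} \simeq (M \otimes_{A_{\bs}}^{\Lbb} C_{\bs}) \otimes_{C_{\bs}}^{\Lbb} B_{\bs}$, composes the two cases to give the desired equivalence, natural in $M$. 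The main technical obstacle will be keeping careful track of naturality throughout the chain of adjunctions and confirming that the intermediate objects really live in the correct completed categories so that the two tensor products of analytic rings agree with the corresponding tensor products of underlying condensed modules; this reduces to the two individual cases handled above.
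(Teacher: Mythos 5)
Your proposal is correct and follows essentially the same route as the paper: the same decomposition $A \to A[T_1,\ldots,T_n] \to B$ from Lemma \ref{disc rp}, the polynomial case handled via Lemma \ref{poly} base-changed along $\Zbb_{\bs}\to A_{\bs}$, the perfect case via the finite-projective-resolution and integrality argument of Proposition \ref{steady2} and Proposition \ref{integral}, and the same gluing through the equivalence $R\intHom_{\underline{C}}(\underline{B},\underline{C})\otimes_{\underline{C}}^{\Lbb} R\intHom_{\underline{A}}(\underline{C},\underline{A})\simeq R\intHom_{\underline{A}}(\underline{B},\underline{A})$. The only difference is organizational and harmless: the paper re-runs the generator-reduction of Lemma \ref{poly} over $A$ to verify compactness of the dual, biduality, and limit-preservation, whereas you import Lemma \ref{poly} directly via Proposition \ref{prop:dp} and the base-change adjunction.
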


\begin{proof}
In this proof, we denote $A[T_1, \ldots, T_n]$ by $A_n$.
We assume that $B=A_n$ or $B$ is a perfect $A$-module. 
By the same argument as in the proof of Lemma \ref{poly}, it is enough to show the following: 
\begin{enumerate}
\item
$R\intHom_{\underline{A}}(\underline{B}, \underline{A})$ is compact as an object of $\Dcal(A_{\bs})$.
\item
The map $\underline{B} \to R\intHom_{\underline{A}}(R\intHom_{\underline{A}}(\underline{B}, \underline{A}), \underline{A})$ is an equivalence.
\item
The functor $- \otimes_{A_{\bs}}^{\Lbb} B_{\bs}$ commutes with small limits.
\end{enumerate}
First, we check them for $B=A_n$.
We have an equivalence $R\intHom_{\underline{A}}(\underline{A_n}, \underline{A}) \simeq \prod \underline{A}$, and this is compact by Corollary \ref{cpt}, which proves (1).
Next, we prove (2).
By Lemma \ref{eq} (3), we have an equivalence $R\intHom_{\underline{A}}(\underline{A_n}, \underline{A}) \simeq R\intHom_{\underline{\Zbb}}(\underline{\Zbb[T_1, \ldots, T_n]}, \underline{\Zbb}) \otimes_{\Zbb_{\bs}}^{\Lbb} A_{\bs}$.
Therefore, we get 
\begin{align*}
&R\intHom_{\underline{A}}(R\intHom_{\underline{A}}(\underline{A_n}, \underline{A}), \underline{A}) \\
 \simeq{} & R\intHom_{\underline{A}}(R\intHom_{\underline{\Zbb}}(\underline{\Zbb[T_1, \ldots, T_n]}, \underline{\Zbb}) \otimes_{\Zbb_{\bs}}^{\Lbb} A_{\bs}, \underline{A}) \\
 \simeq{} & R\intHom_{\underline{\Zbb}}(R\intHom_{\underline{\Zbb}}(\underline{\Zbb[T_1, \ldots, T_n]}, \underline{\Zbb}), \underline{A}) \\
 \simeq{} & \underline{A} \otimes_{\Zbb_{\bs}}^{\Lbb} \Zbb[T_1,\ldots,T_n]_{\bs} \\
 \simeq{} & \underline{A_n},
\end{align*}
where the third equivalence follows from Lemma \ref{poly} and the fourth equivalence follows from Lemma \ref{tensor}.
Finally we prove (3).
We have the equivalence 
$$(A_n)_{\bs} \simeq A_{\bs} \otimes_{\Zbb_{\bs}}^{\Lbb} \Zbb[T_1, \ldots,T_n]_{\bs}$$ 
by Lemma \ref{poly bs}, where we note that $A_n \cong A\langle T_1, \ldots,T_n \rangle$ since $A$ is discrete.
By Proposition \ref{steady2}, $\Zbb_{\bs} \to \Zbb[T_1, \ldots,T_n]_{\bs}$ is steady.
According to Proposition \ref{steady bc}, it suffices to show that the functor 
$$-\otimes_{\Zbb_{\bs}}^{\Lbb} \Zbb[T_1, \ldots,T_n]_{\bs} \colon \Dcal(\Zbb_{\bs}) \to \Dcal(\Zbb[T_1, \ldots,T_n]_{\bs})$$
commutes with small limits, which follows from Lemma \ref{poly}.

Next, we check the conditions (1), (2), and (3) when $B$ is a perfect $A$-module.
%It is enough to show the following:
%\begin{enumerate}
%\item
%$R\intHom_{\underline{A}}(\underline{B}, \underline{A})$ is compact as an object of $\Dcal(A_{\bs})$.
%\item
%The map $\underline{B} \to R\intHom_{\underline{A}}(R\intHom_{\underline{A}}(\underline{B}, \underline{A}), \underline{A})$ is an equivalence.
%\item
%The functor $- \otimes_{A_{\bs}}^{\Lbb}B_{\bs}$ commutes with small limits.
%\end{enumerate}
The complex $\underline{B}$ is quasi-isomorphic to a bounded complex of finite projective $\underline{A}$-modules, and the functor $- \otimes_{A_{\bs}}^{\Lbb} B_{\bs}$ is equivalent to $- \otimes_{\underline{A}}^{\Lbb} \underline{B}$ by the proof of Proposition \ref{steady2}, so the conditions are independent of the ring structure of $B$ (i.e., they depend only on the $A$-module structure on $B$).
Therefore, we may assume that $B$ is a finitely generated projective $A$-module, and then the claim is clear. 

Finally, we treat a general case. We take a decomposition $A \to A_n \to B$ as Lemma \ref{disc rp}. By the discussion above, we have an equivalence of functors from $\Dcal(A_{\bs})$ to $\Dcal(B_{\bs})$ 
\begin{align*}
&-\otimes_{A_{\bs}}^{\Lbb} B_{\bs} \\
\simeq{} &R\intHom_{\underline{A_n}}(R\intHom_{A_n}(\underline{B}, \underline{A_n}), R\intHom_{\underline{A}}(R\intHom_{\underline{A}}(\underline{A_n}, \underline{A}), -))\\
\simeq{} & R\intHom_{\underline{A}}(R\intHom_{A_n}(\underline{B}, \underline{A_n}) \otimes_{\underline{A_n}}^{\Lbb} R\intHom_{\underline{A}}(\underline{A_n}, \underline{A}), -).
\end{align*}
Therefore, it is enough to show that the natural map 
$$R\intHom_{A_n}(\underline{B}, \underline{A_n}) \otimes_{\underline{A_n}}^{\Lbb} R\intHom_{\underline{A}}(\underline{A_n}, \underline{A}) \\
\to R\intHom_{\underline{A}}(\underline{B}, \underline{A})$$
is an equivalence. This follows from the fact that $\underline{B}$ is quasi-isomorphic to a bounded complex of finite projective $\underline{A_n}$-modules.
\end{proof}

\begin{corollary}\label{limit}
Let $A\to B$ be a finitely presented flat map of rings.
Then there exists an object $N_{B/A} \in \Dcal(\underline{B})$ satisfying the following conditions:
\begin{itemize}
\item
The object $N_{B/A}$ is $A_{\bs}$-complete and compact as an object of $\Dcal(A_{\bs})$.
\item
We have an equivalence of functors from $\Dcal(A_{\bs})$ to $\Dcal(B_{\bs})$
\begin{align}
-\otimes_{A_{\bs}}^{\Lbb} B_{\bs} \overset{\sim}{\lra} R\intHom_{\underline{A}}(N_{B/A}, -).
\label{eq3}
\end{align}
\end{itemize}
In particular, the functor
$$- \otimes_{A_{\bs}}^{\Lbb} B_{\bs} \colon \Dcal(A_{\bs}) \to \Dcal(B_{\bs})$$
commutes with small limits.
\end{corollary}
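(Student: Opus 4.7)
The plan is to deduce the corollary from Theorem \ref{key1} by a noetherian approximation argument followed by base change in analytic rings.

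First I write $A = \varinjlim_\alpha A_\alpha$ as the filtered colimit of its finitely generated $\Zbb$-subalgebras. Since $A \to B$ is finitely presented and flat, standard limit arguments produce an index $\alpha_0$ and a finitely presented flat $A_{\alpha_0}$-algebra $B_0$ with $B \cong B_0 \otimes_{A_{\alpha_0}} A$. The flatness of $B_0$ over $A_{\alpha_0}$ combined with Proposition \ref{prop:dp} yields $(B_0)_{\bs} \otimes_{(A_{\alpha_0})_{\bs}}^{\Lbb} A_{\bs} \simeq B_{\bs}$. Moreover, by Proposition \ref{prop:int} applied to $A$ viewed as a discrete f-adic ring, the natural map $(A_{\alpha_0})_{\bs} \to A_{\bs}$ is a map of analytic rings, so every $M \in \Dcal(A_{\bs})$ is automatically $(A_{\alpha_0})_{\bs}$-complete.

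Applying Theorem \ref{key1} to $A_{\alpha_0} \to B_0$ produces $N_0 := R\intHom_{\underline{A_{\alpha_0}}}(\underline{B_0}, \underline{A_{\alpha_0}})$ in $\Dcal(\underline{B_0})$, which is $(A_{\alpha_0})_{\bs}$-complete, compact in $\Dcal((A_{\alpha_0})_{\bs})$, and satisfies $- \otimes_{(A_{\alpha_0})_{\bs}}^{\Lbb} (B_0)_{\bs} \simeq R\intHom_{\underline{A_{\alpha_0}}}(N_0, -)$. I then define $N_{B/A} := N_0 \otimes_{(A_{\alpha_0})_{\bs}}^{\Lbb} A_{\bs}$. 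The $\underline{B_0}$-module structure on $N_0$ base-changes to a $\underline{B}$-module structure on $N_{B/A}$ via the natural map $\underline{B_0} \otimes_{\underline{A_{\alpha_0}}} \underline{A} \to \underline{B}$, so $N_{B/A} \in \Dcal(\underline{B})$. By construction $N_{B/A}$ is $A_{\bs}$-complete, and its compactness in $\Dcal(A_{\bs})$ follows from that of $N_0$ because the right adjoint of the base change, namely the forgetful functor $\Dcal(A_{\bs}) \to \Dcal((A_{\alpha_0})_{\bs})$, preserves all colimits.

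Finally, for any $M \in \Dcal(A_{\bs})$, I compute
\begin{align*}
M \otimes_{A_{\bs}}^{\Lbb} B_{\bs}
&\simeq M \otimes_{(A_{\alpha_0})_{\bs}}^{\Lbb} (B_0)_{\bs}\\
&\simeq R\intHom_{\underline{A_{\alpha_0}}}(N_0, M)\\
&\simeq R\intHom_{\underline{A}}\bigl(N_0 \otimes_{(A_{\alpha_0})_{\bs}}^{\Lbb} A_{\bs},\, M\bigr) = R\intHom_{\underline{A}}(N_{B/A}, M),
\end{align*}
where the second equivalence is Theorem \ref{key1} applied with $M$ regarded as an $(A_{\alpha_0})_{\bs}$-complete module, and the third is tensor-hom adjunction. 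The commutation of $- \otimes_{A_{\bs}}^{\Lbb} B_{\bs}$ with small limits is then immediate from this equivalence. The subtle step is the last one: raw tensor-hom adjunction yields $R\intHom_{\underline{A}}(N_0 \otimes_{\underline{A_{\alpha_0}}}^{\Lbb} \underline{A}, M)$, and one must verify that replacing the uncompleted base change by the analytic-ring base change $N_{B/A}$ leaves $R\intHom_{\underline{A}}(-, M)$ unchanged; this holds because $M$ is $A_{\bs}$-complete, so $R\intHom_{\underline{A}}(-, M)$ factors through the $A_{\bs}$-completion functor $- \otimes_{\underline{A}}^{\Lbb} A_{\bs}$.
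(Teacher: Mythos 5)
Your proof is correct and takes essentially the same route as the paper's: noetherian approximation to a finitely generated $\Zbb$-subalgebra $A_0 \subset A$, the same definition $N_{B/A} = R\intHom_{\underline{A_0}}(\underline{B_0},\underline{A_0}) \otimes_{(A_0)_{\bs}}^{\Lbb} A_{\bs}$, Theorem \ref{key1} over $A_0$, and tensor-hom adjunction (plus the observation, which the paper leaves implicit, that compactness is preserved under base change because the forgetful right adjoint preserves colimits). The only step you should make explicit is that your first equivalence $M \otimes_{A_{\bs}}^{\Lbb} B_{\bs} \simeq M \otimes_{(A_0)_{\bs}}^{\Lbb} (B_0)_{\bs}$ is the base-change formula of Proposition \ref{steady bc} and therefore requires the steadiness of $(A_0)_{\bs} \to (B_0)_{\bs}$ supplied by Proposition \ref{steady2}, exactly as in the paper's proof.
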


\begin{proof}
By a limit argument, we can take a finitely generated $\Zbb$-subalgebra $A_0$ of $A$ and a finitely presented flat $A_0$-algebra $B_0$ such that $A \otimes_{A_0} B_0 \cong B$.
We define $N_{B/A}=R\intHom_{\underline{A_0}}(\underline{B_0}, \underline{A_0})\otimes_{(A_0)_\bs}^{\Lbb} A_{\bs}$ which is an object of $\Dcal(\underline{B})$.
By Theorem \ref{key1} we have the following equivalence for every object $M\in \Dcal(A_{\bs})$:
$$R\intHom_{\underline{A}}(N_{B/A}, M) \simeq R\intHom_{\underline{A_0}}(R\intHom_{\underline{A_0}}(\underline{B_0}, \underline{A_0}), M) \simeq M\otimes_{(A_0)_{\bs}}^{\Lbb} (B_0)_{\bs}.$$
Since $R\intHom_{\underline{A}}(N_{B/A}, M)$ is $A_{\bs}$-complete and $M\otimes_{(A_0)_{\bs}}^{\Lbb} (B_0)_{\bs}$ is $(B_0)_{\bs}$-complete,
we find that $R\intHom_{\underline{A}}(N_{B/A}, M)$ is also $B_{\bs}$-complete, where we use Proposition \ref{p} and Proposition \ref{prop:dp}.
We have a natural map $N_{B/A} \to \underline{A}$ induced by the natural map $R\intHom_{\underline{A_0}}(\underline{B_0}, \underline{A_0})\to \underline{A_0}$.
Therefore, we get a natural map of functors from $\Dcal(A_{\bs})$ to $\Dcal(B_{\bs})$
$$-\otimes_{A_{\bs}} B_{\bs} \to R\intHom_{\underline{A}}(N_{B/A}, -)$$
induced by the natural map $N_{B/A} \to \underline{A}$, and it is an equivalence by Proposition \ref{steady2} and Proposition \ref{steady bc}.
\end{proof}

\begin{remark}\label{functoriality}
The equivalence (\ref{eq3}) is functorial in the following sense:
Let $\Alg_{A}^{\mathrm{fpf}}$ be the category of finitely presented flat $A$-algebras endowed with the coCartesian monoidal structure (i.e., the monoidal structure given by tensor products over $A$), 
and let $\End_{\Dcal(A_{\bs})}(\Dcal(A_{\bs}))$ be the $\infty$-category of $\Dcal(A_{\bs})$-enriched endofunctors (for the definition, see \cite[Definition A.4.4]{Mann22}).
Then there exists a monoidal functor $$N \colon \Alg_{A}^{\mathrm{fpf}} \to \Dcal(A_{\bs})^{\op};\; B \mapsto N_{B/A},$$ and an equivalence of functors from $\Alg_{A}^{\mathrm{fpf}} \to \End_{\Dcal(A_{\bs})}(\Dcal(A_{\bs}))$
$$B \mapsto -\otimes_{A_{\bs}}^{\Lbb} B_{\bs},$$
and
$$B \mapsto R\intHom_{\underline{A}}(N_{B/A}, -).$$

To see this, first observe that we have monoidal functors $$F\colon\Alg_{A}^{\mathrm{fpf}} \to \End_{\Dcal(A_{\bs})}(\Dcal(A_{\bs})) ;\; B \mapsto -\otimes_{A_{\bs}}^{\Lbb} B_{\bs}$$ and
$$G\colon\Dcal(A_{\bs})^{\op} \to \End_{\Dcal(A_{\bs})}(\Dcal(A_{\bs}));\; M\mapsto R\intHom_{\underline{A}}(M, -),$$
where the monoidal structure on $\End_{\Dcal(A_{\bs})}(\Dcal(A_{\bs}))$ is defined by compositions and the monoidal structure on $F$ is defined by Proposition \ref{steady bc} and Proposition \ref{steady2}.
By \cite[Corollary A.4.9]{Mann22} $G$ is fully faithful, and the image of $F$ is included in the essential image of $G$ by Corollary \ref{limit}, so we get the desired functoriality result.
\end{remark}

From the corollary above, we find that it is enough to examine properties of $N_{B/A}$ to prove faithfully flat descent.
To examine them, we will use a property of maps of rings introduced by Mathew in \cite{Mat16}.

\begin{definition}[{\cite[Definition 3.18, Proposition 3.20]{Mat16}}]\label{descendable def}
Let $\Ccal$ be a presentable symmetric monoidal stable $\infty$-category where the $\otimes$-product commutes with small colimits in each variable, which is called \textit{a stable homotopy theory} in \cite{Mat16}.
A map $A \to B$ of $\Ebb_{\infty}$-algebras in $\Ccal$ is \textit{descendable} if the pro-object $\{\Tot_{n}(B^{\bullet/A})\}_n$ of $\Dcal(A)$ is a constant pro-object which converges to $A$, where $B^{\bullet/A}$ is the \v{C}ech nerve of $A \to B$ and $\Tot_{n}(B^{\bullet/A})$ is the limit of the $n$-truncation of $B^{\bullet/A}$.
\end{definition}

\begin{proposition}[{\cite[Corollary 3.33]{Mat16}}]\label{des}
A finitely presented faithfully flat map $A \to B$ of rings is descendable.
\end{proposition}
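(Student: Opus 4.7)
The plan is a three-step reduction: first to the Noetherian case, then to two elementary building blocks (Zariski covers and finite faithfully flat maps), and finally a structural reduction combining them via Noetherian induction.

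\emph{Step 1 (Noetherian reduction).} Writing $A = \colim_{\alpha} A_{\alpha}$ as a filtered colimit of its finitely generated $\mathbb{Z}$-subalgebras and using finite presentation of $B/A$, one descends $A \to B$ to a faithfully flat, finitely presented map $A_{\alpha_0} \to B_0$ with $B \cong A \otimes_{A_{\alpha_0}} B_0$. The \v{C}ech nerve and each finite partial totalization $\Tot_n$ commute with the (flat) base change along $A_{\alpha_0} \to A$, so descendability of $A_{\alpha_0} \to B_0$ implies that of $A \to B$. We may therefore assume $A$ is Noetherian.

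\emph{Step 2 (two elementary cases).} (a) A Zariski cover $A \to \prod_{i=1}^n A[f_i^{-1}]$ with $(f_1,\ldots,f_n) = A$ is descendable by an explicit Koszul-type computation: the associated augmented \v{C}ech complex is filtered by Koszul-type subcomplexes in the $f_i$ and collapses in $n$ steps, producing a bounded constant pro-tower. (b) If $A \to B$ is finite faithfully flat, then $B$ is finitely generated projective over $A$ and the multiplication $B \otimes_A B \to B$ retracts the unit $B \to B \otimes_A B$ as $B$-algebras; this splitting makes the augmented \v{C}ech cosimplicial object split exact after base change to $B$, and descendability then follows from faithfully flat descent of the splitting together with the compactness of $B$ as an $A$-module.

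\emph{Step 3 (the main obstacle).} The substantive content is to reduce the general Noetherian finitely presented faithfully flat case to (a) and (b). By generic flatness together with a Chevalley-type analysis of constructible images of $\Spec B \to \Spec A$, one can, after passing to a suitable Zariski-open refinement of $\Spec A$, find a finite faithfully flat cover factoring through $\Spec B \to \Spec A$ over a dense open $U \subseteq \Spec A$. Noetherian induction on the closed complement $\Spec A \setminus U$, combined with the closure of descendability under composition, then exhausts $\Spec A$. The delicate point is that descendability is not a pointwise or stalkwise property: one needs \emph{uniformly bounded} convergence of the \v{C}ech totalizations across the stratification. This uniformity is extracted from Noetherianness (only finitely many strata appear), the base-change stability from Step 1, and quasi-compactness of the constructible topology on $\Spec A$, which allow the bounds from (a) and (b) to be assembled into a single finite index $n$ with $\Tot_n(B^{\bullet/A}) \simeq A$ in the pro-sense.
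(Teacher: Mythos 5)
The paper gives no proof of this statement: it is imported verbatim as \cite[Corollary 3.33]{Mat16}. Your proposal reconstructs the architecture of Mathew's actual argument---reduce to the Noetherian case, prove descendability for Zariski covers and for finite locally free surjections, and refine a general fppf map by these two building blocks---and Steps 1 and 2(a) are fine as sketches. (In 2(b), ``faithfully flat descent of the splitting'' is close to circular; the honest argument is that $B$ is a perfect $A$-module whose support is all of $\Spec A$, so $A$ lies in the thick $\otimes$-ideal generated by $B$, and one then applies the nilpotence criterion of \cite[Proposition 3.20]{Mat16}.)

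The genuine gap is in Step 3, in two places. First, the tools you name cannot produce the required refinement: generic flatness and Chevalley's theorem give nothing here, since $B$ is already flat over all of $\Spec A$ and the image of $\Spec B \to \Spec A$ is already everything. What is actually needed is the local structure theory of flat, finitely presented morphisms: at a point $s\in\Spec A$ one chooses a closed point $x$ of the fiber, slices $\Spec B$ by a relative system of parameters at $x$ to obtain a subscheme that is flat and quasi-finite over $A$ at $x$, and then uses Zariski's main theorem to shrink it to a finite locally free surjection onto an open neighborhood of $s$ (EGA IV, 17.16.2; \cite[\href{https://stacks.math.columbia.edu/tag/05WN}{Tag 05WN}]{stacks-project}). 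This is a pointwise-local construction valid at \emph{every} point, so quasi-compactness of $\Spec A$ immediately yields a single finite Zariski cover $\{U_i\}$ with finite locally free surjections $V_i\to U_i$ factoring through $\Spec B$; no dense open and no Noetherian induction on closed complements ever enters. Second, the induction you propose has no mechanism to run: the complement $Z=\Spec A\setminus U$ is closed, not open, so $\{U,Z\}$ is not a flat cover of $\Spec A$, and descendability over $U$ together with descendability of $A/I \to B/IB$ does not combine into descendability of $A\to B$. Finally, the lemma that converts the refinement into the conclusion is not ``closure under composition'' but its partial converse: if $A\to B\to C$ has descendable composite, then $A\to B$ is descendable (\cite[Proposition 3.22]{Mat16}). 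Applying this with $C=\prod_i\Ocal(V_i)$, whose structure map over $A$ is a composite of a Zariski cover and a finite locally free surjection and hence descendable with an explicit index, finishes the proof; the ``uniform boundedness across a stratification'' you worry about never arises.
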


\begin{theorem}\label{colimit}
Let $A \to B$ be a finitely presented faithfully flat map of rings.
Let $N_{B/A} \in \Dcal(\underline{B})$ be as in Corollary \ref{limit}, and let $N_{B/A} \to \underline{A}$ be the natural map.
Then the induced augmented semisimplicial diagram $\{N_{B/A}^{\otimes (m+1)}\}_{m\geq -1}$ is a colimit diagram (i.e., $\displaystyle \varinjlim_{[m] \in \Delta_s^{\op}}N_{B/A}^{\otimes (m+1)} \overset{\sim}{\to} A$) , where $N_{B/A}^{\otimes (m+1)}$ is the $(m+1)$-fold derived tensor product of $N_{B/A}$ over $A_{\bs}$.
\end{theorem}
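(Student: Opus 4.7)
The plan is to adapt Mathew's argument for the descendability of finitely presented faithfully flat maps \cite[Proposition 3.33]{Mat16} to the condensed setting, working through the compact object $N_{B/A}$.

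I first invoke the monoidal functoriality of Remark \ref{functoriality}: the assignment $B' \mapsto N_{B'/A}$ extends to a monoidal functor $\Alg_A^{\mathrm{fpf}} \to \Dcal(A_{\bs})^{\op}$, so $N_{B/A}^{\otimes (m+1)} \simeq N_{B^{\otimes (m+1)/A}/A}$ and the augmented semisimplicial structure on $\{N_{B/A}^{\otimes(m+1)}\}_{m\ge -1}$ is the contravariant image of the \v{C}ech cosimplicial structure on $\{B^{\otimes(m+1)/A}\}_{m \ge -1}$. Together with Corollary \ref{limit}, this yields for every $M \in \Dcal(A_{\bs})$ a natural equivalence of augmented cosimplicial objects
\[
R\intHom_{\underline{A}}(N_{B/A}^{\otimes(m+1)}, M) \;\simeq\; M \otimes_{A_{\bs}}^{\Lbb} (B^{\otimes(m+1)/A})_{\bs}.
\]
By the $R\intHom$--$\otimes$ adjunction, the colimit assertion $\varinjlim_{[m] \in \Delta_s^{\op}} N_{B/A}^{\otimes(m+1)} \overset{\sim}{\to} \underline{A}$ is equivalent to the Amitsur-type descent statement: for every $M \in \Dcal(A_{\bs})$, the natural map $M \to \varprojlim_{[m] \in \Delta_s} M \otimes_{A_{\bs}}^{\Lbb} (B^{\otimes(m+1)/A})_{\bs}$ is an equivalence.

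To prove the descent statement, I invoke Proposition \ref{des}: since $A \to B$ is finitely presented faithfully flat, it is descendable in $\Dcal(A)$ with some finite index $n_0$, so that the pro-system $\{\Tot_n(B^{\bullet/A})\}_n$ is pro-constant at $A$. Applying the exact symmetric monoidal fully faithful discrete condensification $\dCond_A\colon \Dcal(A) \to \Dcal(A_{\bs})$, which preserves the finite $\Tot_{n_0}$, I obtain the corresponding pro-constancy in $\Dcal(A_{\bs})$ with $\underline{B^{\otimes(\bullet+1)/A}}$ converging to $\underline{A}$. I then extend from discrete condensations to arbitrary $M \in \Dcal(A_{\bs})$ by reducing to the compact projective generators $M = A_{\bs}[S]$ for extremally disconnected $S$, using Theorem \ref{dir prod} (with Remark \ref{rem:dirprod}) and Proposition \ref{prop:dp} to describe $(B^{\otimes(m+1)/A})_{\bs}[S]$ explicitly.

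The main obstacle is the gap between the discrete tensor product $- \otimes_{\underline{A}}^{\Lbb} \underline{B^{\otimes(m+1)/A}}$ and the analytic one $- \otimes_{A_{\bs}}^{\Lbb} (B^{\otimes(m+1)/A})_{\bs}$: they need not agree since $B^{\otimes(m+1)/A}$ is not typically perfect as an $A$-module. I expect to circumvent this by working systematically through the compact objects $N_{B/A}^{\otimes(m+1)}$ (rather than through $\underline{B^{\otimes(m+1)/A}}$), relying on the fact that $- \otimes_{A_{\bs}}^{\Lbb} (B^{\otimes(m+1)/A})_{\bs}$ commutes with small limits by Corollary \ref{limit}, so that Mathew's bounded convergence in $\Dcal(A)$ propagates to full Amitsur convergence in $\Dcal(A_{\bs})$.
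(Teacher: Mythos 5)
Your first two steps are sound and agree with the paper: the identification $N_{B/A}^{\otimes(m+1)}\simeq R\intHom_{\underline{A}}(\underline{B^{m/A}},\underline{A})$ (after reducing, by a limit argument, to $A$ finitely generated over $\Zbb$) via steadiness/monoidal functoriality, and the plan to feed this into Mathew's descendability (Proposition \ref{des}) are exactly the paper's ingredients. The gap is in how you close the argument. You replace the colimit assertion by the Amitsur statement ``for every $M$, the map $M\to\varprojlim_{[m]\in\Delta_s}M\otimes_{A_{\bs}}^{\Lbb}(B^{m/A})_{\bs}$ is an equivalence'' and then propose to ``reduce to the compact projective generators $M=A_{\bs}[S]$.'' This reduction does not work in either direction. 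First, the equivalence of the two formulations requires the Amitsur statement for \emph{all} $M$ (one recovers $\varinjlim N_{B/A}^{\otimes(m+1)}\overset{\sim}{\to}\underline{A}$ by mapping \emph{into} arbitrary $M$); a family of generators detects equivalences through $R\Hom(A_{\bs}[S],-)$, i.e.\ maps \emph{out} of the generators, so verifying the Amitsur statement only for $M=A_{\bs}[S]$ proves nothing about the colimit. Second, the passage from generators to arbitrary $M$ would require the functor $M\mapsto\varprojlim_{[m]\in\Delta_s}(M\otimes_{A_{\bs}}^{\Lbb}(B^{m/A})_{\bs})$ to commute with colimits; each term does (it is $R\intHom$ out of the compact object $N_{B/A}^{\otimes(m+1)}$), but the infinite limit over $\Delta_s$ does not a priori, and establishing the uniform (pro-constant) convergence needed to make it do so is essentially the theorem itself. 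Your closing sentence ``I expect to circumvent this'' is precisely where the proof is missing.

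The paper avoids quantifying over $M$ entirely, and thereby also avoids the discrete-versus-analytic tensor product obstacle you flag (one never tensors a general $M$ with $\underline{B^{m/A}}$; one only dualizes into $\underline{A}$). Concretely: after reducing to $A$ finitely generated over $\Zbb$, the augmented semisimplicial object $\{N_{B/A}^{\otimes(m+1)}\}_{m\geq-1}$ is the image of the \v{C}ech conerve $\{\underline{B^{m/A}}\}_{m\geq-1}$ under the contravariant functor $R\intHom_{\underline{A}}(-,\underline{A})$. This functor carries the finite limits $\Tot_n(\underline{B^{\bullet/A}})$ to the finite partial colimits $\varinjlim_{[m]\in\Delta_{\leq n}^{\op}}N_{B/A}^{\otimes(m+1)}$, so Mathew's pro-constancy of $\{\Tot_n(B^{\bullet/A})\}_n$ at $A$ --- transported to $\Dcal(A_{\bs})$ by the exact functor $\dCond_A$ --- becomes ind-constancy of the partial colimits at $\underline{A}$, whence $\varinjlim_{[m]\in\Delta^{\op}}N_{B/A}^{\otimes(m+1)}\simeq\underline{A}$; cofinality of $\Delta_s^{\op}\subset\Delta^{\op}$ then gives the statement over $\Delta_s^{\op}$. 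I recommend you rewrite your final step along these lines: apply $R\intHom_{\underline{A}}(-,\underline{A})$ directly to the pro-constant tower of partial totalizations rather than routing through an Amitsur statement for arbitrary coefficients.
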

\begin{proof}
We may assume that $A$ is a finitely generated $\Zbb$-algebra by a limit argument.
Since $A_{\bs} \to B_{\bs}$ is steady, we have the following equivalence of functors from $\Dcal(A_{\bs})$ to $\Dcal(A_{\bs})$:
\begin{align*}
&- \otimes_{A_{\bs}}^{\Lbb} (B\otimes_A B)_{\bs} \\
\simeq{} &(-\otimes_{A_{\bs}}^{\Lbb} B_{\bs}) \otimes_{A_{\bs}}^{\Lbb} B_{\bs} \\
\simeq{} &R\intHom_{\underline{A}}(N_{B/A}, R\intHom_{\underline{A}}(N_{B/A}, -))\\
\simeq{} &R\intHom_{\underline{A}}(N_{B/A}\otimes_{A_{\bs}}^{\Lbb} N_{B/A},-). 
\end{align*}
By repeating the same argument, we get the following equivalence of functors from $\Dcal(A_{\bs})$ to $\Dcal(A_{\bs})$:
$$- \otimes_{A_{\bs}}^{\Lbb} (B^{m/A})_{\bs} \simeq R\intHom_{\underline{A}}(N_{B/A}^{\otimes (m+1)},-),$$
where $B^{m/A}$ is the $(m+1)$-fold tensor product of $B$ over $A$.
Therefore, the induced augmented semisimplicial diagram $\{N_{B/A}^{\otimes (m+1)}\}_{m\geq -1}$ is equivalent to the underlying augmented semisimplicial diagram of the augmented simplicial diagram $\{R\intHom_{\underline{A}}(\underline{B^{m/A}}, \underline{A})\}_{m \geq -1}$.
Since the inclusion functor $N(\Delta_s^{\op}) \subset N(\Delta^{\op})$ is cofinal by \cite[Lemma 6.5.3.7]{HTT}, the theorem follows from Proposition \ref{des}.
\end{proof}

\begin{corollary}\label{cons}
Let $A\to B$ be a finitely presented faithfully flat map of rings. Then the functor
$$- \otimes_{A_{\bs}}^{\Lbb} B_{\bs} \colon \Dcal(A_{\bs}) \to \Dcal(B_{\bs})$$
is conservative.
\end{corollary}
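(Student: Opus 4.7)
The plan is to deduce conservativity from Theorem \ref{colimit} by a standard descent-style manipulation. Let $M \in \Dcal(A_{\bs})$ with $M \otimes_{A_{\bs}}^{\Lbb} B_{\bs} \simeq 0$; working in the stable $\infty$-category $\Dcal(A_{\bs})$, it suffices to show that this vanishing hypothesis forces $M \simeq 0$.

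First I would propagate the vanishing to every tensor power of $N_{B/A}$. By Corollary \ref{limit} the hypothesis reads $R\intHom_{\underline{A}}(N_{B/A}, M) \simeq 0$, and iterated application of the tensor-hom adjunction yields, for each $m \geq 0$,
$$R\intHom_{\underline{A}}(N_{B/A}^{\otimes (m+1)}, M) \simeq R\intHom_{\underline{A}}(N_{B/A}^{\otimes m}, R\intHom_{\underline{A}}(N_{B/A}, M)) \simeq 0.$$
Next I would invoke the colimit presentation $\underline{A} \simeq \varinjlim_{[m] \in \Delta_s^{\op}} N_{B/A}^{\otimes (m+1)}$ furnished by Theorem \ref{colimit}. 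Since $R\intHom_{\underline{A}}(-, M)$ converts colimits to limits, this gives
$$M \simeq R\intHom_{\underline{A}}(\underline{A}, M) \simeq \varprojlim_{[m] \in \Delta_s} R\intHom_{\underline{A}}(N_{B/A}^{\otimes (m+1)}, M) \simeq \varprojlim 0 \simeq 0,$$
which is the desired conclusion.

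The main content of the argument has in fact already been absorbed into Theorem \ref{colimit}, which in turn relied on Mathew's descendability result (Proposition \ref{des}). Once one has the internal-Hom formula of Corollary \ref{limit} together with the colimit presentation of $\underline{A}$ by tensor powers of $N_{B/A}$, the step from ``base change kills $M$'' to ``$M \simeq 0$'' is essentially a formal consequence of the tensor-hom adjunction, so I do not anticipate any genuine obstacle at this stage; the only care needed is to carry out the iteration in the first step and to note that $R\intHom_{\underline{A}}(\underline{A}, -)$ is the identity because $\underline{A}$ is the unit of the tensor product.
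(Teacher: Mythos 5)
Your argument is correct and is essentially the paper's own proof: both use the tensor–hom adjunction together with the identification $R\intHom_{\underline{A}}(N_{B/A},-)\simeq -\otimes_{A_{\bs}}^{\Lbb}B_{\bs}$ to kill each term $R\intHom_{\underline{A}}(N_{B/A}^{\otimes(m+1)},M)$, and then the colimit presentation of $\underline{A}$ from Theorem \ref{colimit} to conclude $M\simeq\varprojlim 0\simeq 0$. The only difference is the order in which the two steps are presented, which is immaterial.
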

\begin{proof}
We take an object $M \in \Dcal(A_{\bs})$ such that $M \otimes_{A_{\bs}}^{\Lbb} B_{\bs} \simeq 0$. We want to show that $M$ is equivalent to $0$.
By Theorem \ref{colimit}, we get the following equivalence:
\begin{align*}
M \simeq{} & R\intHom_{\underline{A}}(\underline{A},M)\\
\simeq{} &R\intHom_{\underline{A}}(\varinjlim_{[m] \in \Delta_s^{\op}} N_{B/A}^{\otimes (m+1)}, M)\\
\simeq{} &\varprojlim_{[m] \in \Delta_s}R\intHom_{\underline{A}}(N_{B/A}^{\otimes (m+1)}, M).
\end{align*}
On the other hand, we have 
\begin{align*}
&R\intHom_{\underline{A}}(N_{B/A}^{\otimes (m+1)}, M)\\
\simeq{} &R\intHom_{\underline{A}}(N_{B/A}^{\otimes m}, R\intHom_{\underline{A}}(N_{B/A},M))\\
\simeq{} &R\intHom_{\underline{A}}(N_{B/A}^{\otimes m}, M \otimes_{A_{\bs}}^{\Lbb} B_{\bs})\\

\simeq{} &0
\end{align*}
for every $m \geq 0$.
Therefore, we find $M \simeq 0$.
\end{proof}
 
Faithfully flat descent follows from the following general result.
 
\begin{lemma}\label{general des}
Let $(\Acal, \Mcal) \to (\Bcal, \Ncal)$ be a map in $\AnRing$. We assume that it satisfies the following:
\begin{enumerate}
\item
$(\Acal, \Mcal) \to (\Bcal, \Ncal)$ is steady.
\item
The functor $- \otimes_{(\Acal, \Mcal)}^{\Lbb} (\Bcal, \Ncal) \colon \Dcal(\Acal, \Mcal) \to \Dcal(\Bcal, \Ncal)$ commutes with small limits.
\item
The functor $- \otimes_{(\Acal, \Mcal)}^{\Lbb} (\Bcal, \Ncal) \colon \Dcal(\Acal, \Mcal) \to \Dcal(\Bcal, \Ncal)$ is conservative.
\end{enumerate}
Then the map of $\infty$-categories 
$$ \Dcal(\Acal, \Mcal) \overset{\sim}{\lra} \varprojlim_{[n] \in \Delta} \Dcal((\Bcal, \Ncal)^{n/(\Acal, \Mcal)})$$
is an equivalence, where $(\Bcal, \Ncal)^{\bullet/(\Acal, \Mcal)}$ is the \v{C}ech nerve of $(\Acal, \Mcal) \to (\Bcal, \Ncal)$.
\end{lemma}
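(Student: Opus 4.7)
The plan is to apply the comonadic form of the $\infty$-categorical Barr--Beck--Lurie theorem to the base-change adjunction $F \dashv U$, where $F := - \otimes_{(\Acal,\Mcal)}^{\Lbb}(\Bcal,\Ncal)$ and $U \colon \Dcal(\Bcal,\Ncal) \to \Dcal(\Acal,\Mcal)$ is the forgetful functor, and then to identify the resulting $\infty$-category of coalgebras with the totalization along the \v{C}ech nerve.

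First, I would verify the hypotheses of the comonadic Barr--Beck--Lurie theorem for $F$. As a left adjoint, $F$ preserves all colimits; hypothesis (3) supplies conservativity, and hypothesis (2) supplies preservation of all small limits, hence in particular of $F$-split cosimplicial limits. Therefore $F$ is comonadic, giving an equivalence
$$\Dcal(\Acal,\Mcal) \overset{\sim}{\lra} \mathrm{coMod}_{FU}\bigl(\Dcal(\Bcal,\Ncal)\bigr)$$
intertwining $F$ with the forgetful functor out of the coalgebra category.

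Next, I would identify the right-hand side with $\varprojlim_{[n] \in \Delta} \Dcal((\Bcal,\Ncal)^{n/(\Acal,\Mcal)})$. The key input here is steadiness, hypothesis (1): by iterated application of Proposition \ref{steady bc} to the pushout squares that define the higher terms of the \v{C}ech nerve, for any $M \in \Dcal(\Bcal,\Ncal)$ the $n$-fold composition $(FU)^{n}(M)$ is naturally equivalent to $M \otimes_{(\Bcal,\Ncal)}^{\Lbb} (\Bcal,\Ncal)^{n/(\Acal,\Mcal)}$, and these equivalences are compatible with the face and degeneracy maps of both the comonadic cobar resolution and the \v{C}ech nerve. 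This produces an equivalence between the cobar resolution of $FU$ and the cosimplicial $\infty$-category $[n] \mapsto \Dcal((\Bcal,\Ncal)^{n/(\Acal,\Mcal)})$, and taking limits yields the required equivalence.

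The main obstacle I anticipate is the coherent assembly in the second step: Proposition \ref{steady bc} provides the base-change equivalence one level at a time, so gluing these into an equivalence of cosimplicial $\infty$-categories requires keeping track of the higher coherence data coming from the simplicial structure of the \v{C}ech nerve. This is however a standard manipulation in $\infty$-categorical descent theory and should follow formally once the comonadic framework of the first step is in place.
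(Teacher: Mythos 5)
Your overall strategy is sound, and it is in fact the alternative route that the paper itself sketches in the remark immediately following its proof of Lemma \ref{general des}: conservativity (3) and preservation of limits (2) make the base-change functor $F=-\otimes^{\Lbb}_{(\Acal,\Mcal)}(\Bcal,\Ncal)$ comonadic by the dual Barr--Beck--Lurie theorem, and steadiness (1) is what identifies the resulting comodule category with the totalization of the \v{C}ech diagram. The paper's own proof is packaged differently: it avoids the comonadicity formalism and instead verifies the two hypotheses of \cite[Proposition 5.2.2.36]{HA} directly for the biCartesian fibration over $\Delta^{\triangleleft}$ classified by $[n]\mapsto \Dcal((\Bcal,\Ncal)^{(n+1)/(\Acal,\Mcal)})$. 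The key step there is a d\'ecalage argument: composing a coCartesian section with $[n]\mapsto[n+1]$ produces, after applying (1), a split augmented cosimplicial object, hence a limit diagram by \cite[Lemma 6.1.3.16]{HTT}, and then (2) gives $(\varprojlim_n M^n)\otimes^{\Lbb}_{(\Acal,\Mcal)}(\Bcal,\Ncal)\simeq M^0$. That route only ever base-changes along $(\Acal,\Mcal)\to(\Bcal,\Ncal)$ and its pushouts.

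The one place where your argument is genuinely under-justified is your second step. Iterated application of Proposition \ref{steady bc} to pushouts of $(\Acal,\Mcal)\to(\Bcal,\Ncal)$ does give the object-level identification $(FU)^{n}(M)\simeq M\otimes^{\Lbb}_{(\Bcal,\Ncal)}(\Bcal,\Ncal)^{n/(\Acal,\Mcal)}$, but the comparison of $\mathrm{coMod}_{FU}(\Dcal(\Bcal,\Ncal))$ with $\varprojlim_{[n]\in\Delta}\Dcal((\Bcal,\Ncal)^{n/(\Acal,\Mcal)})$ via \cite[4.7.5]{HA} requires a Beck--Chevalley (adjointability) condition for squares indexed by \emph{all} morphisms $[m]\to[n]$ of $\Delta$, including the codegeneracies. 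The transition maps of the \v{C}ech nerve corresponding to surjections are base changes of the codiagonal $(\Bcal,\Ncal)\otimes^{\Lbb}_{(\Acal,\Mcal)}(\Bcal,\Ncal)\to(\Bcal,\Ncal)$, not of $(\Acal,\Mcal)\to(\Bcal,\Ncal)$, so their steadiness does not follow from hypothesis (1) together with Proposition \ref{steady bc} alone; one needs the additional fact that the codiagonal of a steady map is steady, which the paper cites from \cite[Proposition 2.3.19 (2)]{Mann22}. This is not mere coherence bookkeeping but a genuine extra input. Once you add it, your proof goes through (and, as the paper's remark notes, this route even allows hypothesis (2) to be weakened to preservation of the relevant split totalizations).
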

\begin{proof}
We will check the conditions (a) and (b) of \cite[Proposition 5.2.2.36]{HA}.
The condition (a) follows from (3). Let us check the condition (b).
We note that the functor 
$$\chi \colon\Delta^{\triangleleft} \to \Cat_{\infty};\; [n] \mapsto \Dcal((\Bcal,\Ncal)^{(n+1)/(\Acal,\Mcal)}),$$ where we denote the cone point of $\Delta^{\triangleleft}$ by $[-1]$, 
classifies a biCartesian fibration $q\colon \Ccal \to \Delta^{\triangleleft}$ since for every $\alpha \colon [m] \to [n]$, $$\chi(\alpha) \colon \Dcal((\Bcal,\Ncal)^{(m+1)/(\Acal,\Mcal)}) \to \Dcal((\Bcal,\Ncal)^{(n+1)/(\Acal,\Mcal)})$$ has a right adjoint functor (\cite[Proposition 4.7.4.17]{HA}).
We take $M \in \Fun_{\Delta^{\triangleleft}}(\Delta, \Ccal)$ which carries each morphism in $\Delta$ to a $q$-coCartesian morphism in $\Ccal$.
We put $$M^n =M([n]) \in q^{-1}([n]) \simeq \Dcal((\Bcal,\Ncal)^{(n+1)/(\Acal,\Mcal)}).$$
By \cite[Corollary 4.3.1.11]{HTT} and its proof, we can extend $M$ to a $q$-limit diagram $\overline{M} \in \Fun_{\Delta^{\triangleleft}}(\Delta^{\triangleleft}, \Ccal)$, and $M([-1]) \in q^{-1}([-1]) \simeq \Dcal(\Acal,\Mcal)$ is equivalent to $\displaystyle \varprojlim_{[n] \in \Delta} M^n$.
We will show that $\overline{M}$ carries each map $[-1] \to [n]$ to a $q$-coCartesian morphism in $\Ccal$.
It is equivalent to show 
$$(\varprojlim_{[n] \in \Delta} M^n)\otimes_{(\Acal,\Mcal)}^{\Lbb}(\Bcal,\Ncal)^{(m+1)/(\Acal,\Mcal)}\simeq M^m.$$
We define a functor $F \colon \Delta^{\triangleleft} \to \Delta ;\; [n] \mapsto [n+1]$ by
$$
F(\alpha)(i)=
\begin{cases}
\alpha(i) & (i\in [n]), \\
m+1 & (i=n+1)
\end{cases}
$$
for $\alpha \colon[n] \to [m]$ in $\Delta^{\triangleleft}$.
Let $\overline{M}^{\prime}\in \Fun(\Delta^{\triangleleft}, \Ccal)$ be the augmented simplicial object given by composing $M$ with the functor $F$, and $M^{\prime}$ be the underling simplicial object of $\overline{M}^{\prime}$.
We regard $\overline{M}$, $\overline{M}^{\prime}$ as augmented simplicial objects of $\Dcal(\Acal,\Mcal)$, $\Dcal(\Bcal,\Ncal)$.
Then by (1), we have an equivalence $M \otimes_{(\Acal,\Mcal)}^{\Lbb}(\Bcal,\Ncal) \simeq M^{\prime}$.
By \cite[Lemma 6.1.3.16]{HTT}), $\overline{M}^{\prime}$ is a limit diagram, that is, $\displaystyle \varprojlim_{[n] \in \Delta} M^{(n+1)} \simeq M^0$.
Therefore, we get $\displaystyle(\varprojlim_{[n] \in \Delta} M^n)\otimes_{(\Acal,\Mcal)}^{\Lbb}(\Bcal,\Ncal)\simeq M^0$ from (2).
We let $f_m \colon (\Bcal, \Ncal) \to (\Bcal, \Ncal)^{(m+1)/(\Acal,\Mcal)}$ denote the morphism of analytic rings induced from the map $[0] \to [m] ;\; 0 \mapsto 0$.
Then we get the following equivalence:
\begin{align*}
&(\varprojlim_{[n] \in \Delta} M^n)\otimes_{(\Acal,\Mcal)}^{\Lbb}(\Bcal,\Ncal)^{(m+1)/(\Acal,\Mcal)}\\
\simeq &((\varprojlim_{[n] \in \Delta} M^n)\otimes_{(\Acal,\Mcal)}^{\Lbb}(\Bcal,\Ncal)) \otimes_{(\Bcal, \Ncal),f_m}^{\Lbb} (\Bcal, \Ncal)^{(m+1)/(\Acal,\Mcal)}\\
\simeq &M^0 \otimes_{(\Bcal, \Ncal),f_m}^{\Lbb} (\Bcal, \Ncal)^{m+1/(\Acal,\Mcal)}\\
\simeq & M^m.
\end{align*}
\end{proof}

\begin{remark}
Since the codiagonal map $(\Bcal, \Ncal) \otimes_{(\Acal, \Mcal)}^{\Lbb} (\Bcal, \Ncal) \to (\Bcal, \Ncal)$ in $\AnRing$ is steady for a steady map $(\Acal, \Mcal) \to (\Bcal, \Ncal)$ (cf.\ \cite[Proposition 2.3.19 (2)]{Mann22}), 
the condition (1) of Lemma \ref{general des} implies that for every map $[m] \to [n]$ in $\Delta$, the map $(\Bcal,\Ncal)^{m/(\Acal,\Mcal)} \to (\Bcal,\Ncal)^{n/(\Acal,\Mcal)}$ is steady.
Therefore Lemma \ref{general des} follows also from the Barr-Beck-Lurie monadicity theorem \cite[4.7.5]{HA}, and we can weaken the condition (2) of Lemma \ref{general des} as stated in loc.\ cit.
\end{remark}

\begin{theorem}\label{dd}
Let $A \to B$ be a finitely presented faithfully flat map of rings.
Let $B^{n/A}$ denote the $(n+1)$-fold tensor product of $B$ over $A$.
Then we have an equivalence of $\infty$-categories 
$$\Dcal(A_{\bs}) \overset{\sim}{\lra} \varprojlim_{[n] \in \Delta} \Dcal((B^{n/A})_{\bs}). $$
\end{theorem}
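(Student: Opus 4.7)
The plan is to apply Lemma \ref{general des} to the map of analytic rings $A_{\bs} \to B_{\bs}$. All three hypotheses have essentially been established earlier in the section, so the argument reduces to assembling them and identifying the \v{C}ech nerve correctly.

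First I would verify the three conditions of Lemma \ref{general des}. Condition (1), steadiness of $A_{\bs} \to B_{\bs}$, is exactly Proposition \ref{steady2}. Condition (2), that $-\otimes_{A_{\bs}}^{\Lbb} B_{\bs}$ commutes with small limits, follows from Corollary \ref{limit}, since that corollary expresses this functor as $R\intHom_{\underline{A}}(N_{B/A},-)$, and any functor of the form $R\intHom_{\underline{A}}(N,-)$ preserves limits. Condition (3), conservativity of $-\otimes_{A_{\bs}}^{\Lbb} B_{\bs}$, is Corollary \ref{cons}.

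Next I would identify the \v{C}ech nerve of $A_{\bs} \to B_{\bs}$ in $\AnRing$ with $\{(B^{n/A})_{\bs}\}_{[n]\in\Delta}$. Since $A \to B$ is flat, all the iterated tensor products $B^{n/A}$ agree with the derived tensor products, so Proposition \ref{prop:dp} applied repeatedly (together with the associativity of $\otimes_{A_{\bs}}^{\Lbb}$) gives
\[
\underbrace{B_{\bs} \otimes_{A_{\bs}}^{\Lbb} \cdots \otimes_{A_{\bs}}^{\Lbb} B_{\bs}}_{n+1 \text{ factors}} \simeq (B^{n/A})_{\bs}.
\]
Thus the \v{C}ech nerve $(B_{\bs})^{\bullet/A_{\bs}}$ in $\AnRing$ is levelwise equivalent to the cosimplicial analytic ring $(B^{\bullet/A})_{\bs}$.

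Combining these two ingredients, Lemma \ref{general des} yields the desired equivalence
\[
\Dcal(A_{\bs}) \overset{\sim}{\lra} \varprojlim_{[n]\in\Delta} \Dcal((B^{n/A})_{\bs}).
\]
The main conceptual work has already been done in Corollary \ref{limit} and Theorem \ref{colimit}, which built the compact dual $N_{B/A}$ and established the descendability property that makes $-\otimes_{A_{\bs}}^{\Lbb} B_{\bs}$ both limit-preserving and conservative; the present theorem is a formal consequence. There is no genuine obstacle remaining — the only small point to check is the identification of the \v{C}ech nerve, and this is immediate from the flatness hypothesis via Proposition \ref{prop:dp}.
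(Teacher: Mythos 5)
Your proof is correct and is essentially identical to the paper's: the paper's own proof of this theorem consists of exactly the citation of Proposition \ref{prop:dp}, Proposition \ref{steady2}, Corollary \ref{limit}, Corollary \ref{cons}, and Lemma \ref{general des}, which is precisely the assembly you carry out (with the flatness-based identification of the \v{C}ech nerve via Proposition \ref{prop:dp} made explicit).
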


\begin{proof}
It follows from Proposition \ref{prop:dp}, Proposition \ref{steady2}, Corollary \ref{limit}, Corollary \ref{cons}, and Lemma \ref{general des}.
\end{proof}

Finally, we prove the uniform boundedness of $N_{B/A}$, which is essential for the proof of the main theorem.
\begin{proposition}\label{bdd}
Let $A \to B$ be a finitely presented flat map of rings.
We take $N_{B/A} \in \Dcal(\underline{B})$ as in Corollary \ref{limit}.
Then for every $n \geq 1$, $N_{B/A}^{\otimes n}$ is quasi-isomorphic to a complex of the form $0 \to M^0 \to M^1 \to 0$ where $M^0$ is placed in cohomological degree $0$ and $M^0, M^1$ are projective objects of $\Mod_{A_{\bs}}^{\cond}$.
\end{proposition}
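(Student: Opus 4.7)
The plan is to use the monoidal functoriality of $N$ from Remark \ref{functoriality} to reduce the claim about $N_{B/A}^{\otimes n}$ to a single statement about $N_{C/A}$ for a general finitely presented flat $A$-algebra $C$, and then to produce an explicit two-term projective resolution of $N_{C/A}$ by dualizing a mapping telescope presentation of $\underline{C}$.

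By the monoidality of $N \colon \Alg_{A}^{\mathrm{fpf}} \to \Dcal(A_{\bs})^{\op}$ in Remark \ref{functoriality}, the tensor power $N_{B/A}^{\otimes n}$ is naturally equivalent to $N_{C/A}$ for $C := B \otimes_{A} \cdots \otimes_{A} B$ ($n$ factors), which is again a finitely presented flat $A$-algebra. Hence it suffices to prove the statement with $N_{C/A}$ in place of $N_{B/A}^{\otimes n}$ for an arbitrary such $C$. Using the base-change formula $N_{C/A} \simeq N_{C_{0}/A_{0}} \otimes_{(A_{0})_{\bs}}^{\Lbb} A_{\bs}$ from the proof of Corollary \ref{limit}, together with the fact that the left adjoint $-\otimes_{(A_{0})_{\bs}}^{\Lbb} A_{\bs}$ carries projective condensed $(A_{0})_{\bs}$-modules to projective condensed $A_{\bs}$-modules, I may further assume that $A$ is a finitely generated $\Zbb$-algebra. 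In particular $A$, and hence the finitely presented $A$-algebra $C$, is a countable set.

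Next, I would construct a two-term projective resolution of $\underline{C}$ in $\Mod_{\underline{A}}^{\cond}$. The Govorov-Lazard theorem presents $C$, as a flat $A$-module, as a filtered colimit of finite free $A$-modules. Since $C$ is countable, a standard diagonal argument produces a cofinal sequential subsystem $C \cong \varinjlim_{n \in \Nbb} F_{n}$ with each $F_{n}$ a finite free $A$-module of some rank $m_{n}$. The discrete condensification functor is exact, and filtered colimits in $\Mod_{\underline{A}}^{\cond}$ are exact by Theorem \ref{gro}, so the mapping telescope construction yields a short exact sequence
\begin{equation*}
0 \to \bigoplus_{n \in \Nbb} \underline{F_{n}} \xrightarrow{\id - \mathrm{shift}} \bigoplus_{n \in \Nbb} \underline{F_{n}} \to \underline{C} \to 0
\end{equation*}
in $\Mod_{\underline{A}}^{\cond}$. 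Each term is a direct sum of finite free $\underline{A}$-modules and is thus projective, giving a two-term projective resolution of $\underline{C}$.

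Finally, I would dualize. In the reduced situation the construction of Corollary \ref{limit} simplifies to $N_{C/A} \simeq R\intHom_{\underline{A}}(\underline{C}, \underline{A})$, and applying $R\intHom_{\underline{A}}(-, \underline{A})$ to the above resolution yields a quasi-isomorphism between $N_{C/A}$ and a two-term complex $[M^{0} \to M^{1}]$ in cohomological degrees $[0, 1]$, with $M^{0} \cong M^{1} \cong \prod_{n \in \Nbb} \underline{A}^{m_{n}} \cong \prod_{j \in J} \underline{A}$, where $J := \bigsqcup_{n \in \Nbb} \{1, \dots, m_{n}\}$ is a countable set. To finish I would verify that $\prod_{J} \underline{A}$ is projective in $\Mod_{A_{\bs}}^{\cond}$: by Lemma \ref{lem} one may choose an extremally disconnected set $S$ and an injection $J \hookrightarrow I$ of sets with $\Zbb_{\bs}[S] \cong \prod_{I} \underline{\Zbb}$; the projection $\prod_{I} \underline{\Zbb} \to \prod_{J} \underline{\Zbb}$ admits an obvious extension-by-zero section, realizing $\prod_{J} \underline{\Zbb}$ as a direct summand of $\Zbb_{\bs}[S]$, and tensoring with $A_{\bs}$ over $\Zbb_{\bs}$ (via Lemma \ref{eq}(3) and Theorem \ref{dir prod}) exhibits $\prod_{J} \underline{A}$ as a direct summand of $A_{\bs}[S]$, hence projective. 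The main obstacle I expect is the countability reduction together with the sequential-colimit refinement: these are precisely what keep the index set $J$ countable, so that Lemma \ref{lem} and Theorem \ref{dir prod} deliver the required projectivity directly, without being forced into limits or large products that would no longer be summands of a single $A_{\bs}[S]$.
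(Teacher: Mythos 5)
Your proof is correct and follows essentially the same route as the paper's: reduce to $n=1$ via the monoidal identification $N_{B/A}^{\otimes n}\simeq N_{C/A}$ with $C$ the $n$-fold tensor product, reduce to $A$ finitely generated over $\Zbb$, produce a length-one free resolution of the algebra, and dualize to a two-term complex of products $\prod_J\underline{A}$, which are projective. The only deviations are cosmetic: where the paper cites Raynaud--Gruson for the length-one free resolution you inline its standard proof via Lazard's theorem and the mapping telescope, and your countability bookkeeping is unnecessary, since Corollary \ref{cpt} already gives projectivity of $\prod_J\underline{A}$ for an arbitrary index set $J$.
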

\begin{proof}
By the proof of Theorem \ref{colimit}, it is enough to show the claim in the case where $n=1$.
We take a finitely generated $\Zbb$-subalgebra $A_0$ of $A$ and a finitely presented flat $A_0$-algebra $B_0$ such that $A \otimes_{A_0} B_0 \cong B$.
Then $N_{B/A}$ is quasi-isomorphic to $R\intHom_{\underline{A_0}}(\underline{B_0}, \underline{A_0})\otimes_{(A_0)_\bs}^{\Lbb} A_{\bs}$, so it is enough to show that $R\intHom_{\underline{A_0}}(\underline{B_0}, \underline{A_0})$ is quasi-isomorphic to a complex of the form $\displaystyle 0 \to \prod_I \underline{A_0} \to \prod_J \underline{A_0} \to 0$, where we note that $\displaystyle \prod_I \underline{A_0}, \prod_J \underline{A_0}$ are projective objects of $\Mod_{(A_0)_{\bs}}^{\cond}$.
By \cite[Seconde partie: Corollary 3.3.2]{RG71} $B_0$ has a free resolution of length $1$ over $A_0$, and the claim follows from taking $A_0$-duals of the resolution of $B_0$.
\end{proof}
%%%%%%%%%%%%%%%%%%%%%%%%%%%%%%%%%%%%%%%%%%%%%%%% section 3 %%%%%%%%%%%%%%%%%%%%%%%%%%%%%%%%%%%%%%%%%%%%%%%%%%%%%%%%%%
\section{Adic completeness and small complete adic rings}
In this section, we will introduce some notions which are necessary for proving the main theorem.
\subsection{Adic completeness}
First, we will introduce the notion of adic completeness in the context of analytic rings.
It is an analogue of derived adic completeness of ordinary rings (cf.\ \cite[\href{https://stacks.math.columbia.edu/tag/091N}{Tag 091N}]{stacks-project}).
For our need, it is sufficient to consider analytic rings associated to complete $\pi$-adic rings where $\pi$ is a non-zero-divisor.
For more general theory, see \cite[\S 2.12]{Mann22}.

Let $A$ be a complete $\pi$-adic ring where $\pi \in A$ is a non-zero-divisor.
\begin{definition}
An object $M \in \Dcal(A_{\bs})$ is \textit{$\pi$-adically complete} if $\displaystyle R\varprojlim(\cdots \overset{\pi}{\to}M \overset{\pi}{\to}M\overset{\pi}{\to}M)$ is equivalent to $0$.
We denote the full $\infty$-subcategory of $\pi$-adically complete objects of $\Dcal(A_{\bs})$ by $\hat{\Dcal}_{\pi}(A_{\bs})$.
\end{definition}
By definition,  $\hat{\Dcal}_{\pi}(A_{\bs})$ is a stable subcategory of $\Dcal(A_{\bs})$ stable under retracts.

\begin{proposition}\label{drvd cplt}
Let $M$ be an object of $\Dcal(A_{\bs})$. 
Then the following are equivalent:
\begin{enumerate}
\item
The object $M$ is $\pi$-adically complete.
\item
For every integer $i \in \Zbb$, $H^i(M)$ is $\pi$-adically complete (as an object of $\Dcal(A_{\bs})$).
\item
The natural map $\displaystyle M \to R\varprojlim_{n} (M\otimes_{A_{\bs}}^{\Lbb} \underline{A/\pi^n})$ is an equivalence.
\end{enumerate}
\end{proposition}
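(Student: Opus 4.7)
The plan is to build everything from the fundamental fiber sequence
$$T(M) \to M \to R\varprojlim_n (M \otimes_{A_{\bs}}^{\Lbb} \underline{A/\pi^n})$$
in $\Dcal(A_{\bs})$, where $T(M) := R\varprojlim(\cdots \xrightarrow{\pi} M \xrightarrow{\pi} M)$. To construct it, I would first observe that since $\pi$ is a non-zero-divisor, $A/\pi^n$ admits the free resolution $A \xrightarrow{\pi^n} A$; applying the fully faithful exact functor $\dCond_A$ and using that $\underline{A/\pi^n}$ lies in $\Dcal(A_{\bs})$ (from Construction \ref{construction}), one obtains $\underline{A/\pi^n} \simeq \cofib(\underline{A} \xrightarrow{\pi^n} \underline{A})$, whence $M \otimes_{A_{\bs}}^{\Lbb} \underline{A/\pi^n} \simeq \cofib(M \xrightarrow{\pi^n} M)$. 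The fiber of the natural map $M \to \cofib(\pi^n \colon M \to M)$ is $M$ itself, and the transition maps between consecutive fibers (as $n$ increases) are multiplication by $\pi$, so $R\varprojlim_n$ of these fibers computes $T(M)$ and yields the displayed fiber sequence. The equivalence $(1) \Leftrightarrow (3)$ is then immediate: $T(M) = 0$ iff the map $M \to R\varprojlim_n (M \otimes_{A_{\bs}}^{\Lbb} \underline{A/\pi^n})$ is an equivalence.

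For $(1) \Leftrightarrow (2)$, I would exploit the Milnor-type presentation of sequential limits. In the stable $\infty$-category $\Dcal(A_{\bs})$, one has a fiber sequence
$$T(M) \to \prod_{n} M \xrightarrow{\mathrm{shift} - \pi} \prod_{n} M.$$
By Theorem \ref{gro}, $\Mod_{A_{\bs}}^{\cond}$ satisfies (AB4*), so infinite products are exact and therefore commute with $H^i$. The associated long exact sequence collapses into short exact Milnor sequences
$$0 \to R^1\varprojlim_n \bigl(H^{i-1}(M) \xrightarrow{\pi} H^{i-1}(M)\bigr) \to H^i(T(M)) \to \varprojlim_n \bigl(H^i(M) \xrightarrow{\pi} H^i(M)\bigr) \to 0$$
in $\Mod_{A_{\bs}}^{\cond}$ for each $i$. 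Applied to a static object $N$, this identifies $T(N) = 0$ with the simultaneous vanishing of $\varprojlim$ and $R^1\varprojlim$ of the tower $\cdots \xrightarrow{\pi} N \xrightarrow{\pi} N$.

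Combining these observations closes both implications. If every $H^j(M)$ is $\pi$-adically complete, the outer terms of the Milnor sequence vanish for all $i$, forcing $H^i(T(M)) = 0$ and hence $T(M) = 0$; this gives $(2) \Rightarrow (1)$. Conversely, if $T(M) = 0$ then $H^i(T(M)) = 0$ for all $i$, so the short exact Milnor sequences force both $\varprojlim$ and $R^1\varprojlim$ of each tower $\cdots \xrightarrow{\pi} H^j(M) \xrightarrow{\pi} H^j(M)$ to vanish, which is precisely $\pi$-adic completeness of $H^j(M)$; this gives $(1) \Rightarrow (2)$. The only nonroutine check I foresee is verifying that $\underline{A/\pi^n}$ genuinely lies in $\Dcal(A_{\bs})$ so that the derived tensor product in the main fiber sequence is formed inside $\Dcal(A_{\bs})$; this should follow easily from Construction \ref{construction} since $A/\pi^n$ is a discrete quotient of the complete adic ring $A$.
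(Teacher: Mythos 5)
Your proof is correct and follows essentially the same route as the paper: the equivalence $(1)\Leftrightarrow(3)$ via the levelwise fiber sequences $M \xrightarrow{\pi^n} M \to M\otimes_{A_{\bs}}^{\Lbb}\underline{A/\pi^n}$ and passage to $R\varprojlim$, and the equivalence $(1)\Leftrightarrow(2)$ via the product presentation of the sequential limit together with exactness of products (AB4*), your Milnor $\varprojlim$/$R^1\varprojlim$ sequences being just a repackaging of the paper's long exact sequence. No gaps.
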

\begin{proof}
First, we show the equivalence of (1) and (2).
We have the following fiber sequence:
$$ R\varprojlim(\cdots \overset{\pi}{\to}M \overset{\pi}{\to}M) \to \prod_{\Nbb} M \to \prod_{\Nbb} M,$$
where the map $\displaystyle \prod_{\Nbb} M \to \prod_{\Nbb} M$ is given by $(x_n) \mapsto (x_n-\pi x_{n+1})$.
Since direct products in $\Mod_{A_{\bs}}^{\cond}$ are exact, we have the following long exact sequence:
\begin{align*}
\cdots \to H^{i-1}(R\varprojlim(\cdots \overset{\pi}{\to}M \overset{\pi}{\to}M)) &\to \prod_{\Nbb} H^{i-1}(M) \to \prod_{\Nbb} H^{i-1}(M) \\
\to H^i(R\varprojlim(\cdots \overset{\pi}{\to}M \overset{\pi}{\to}M)) &\to \prod_{\Nbb} H^i(M) \to \prod_{\Nbb} H^i(M) \to \cdots.
\end{align*}
Since $H^i(M)$ is $\pi$-adically complete if and only if the map $\displaystyle\prod_{\Nbb} H^i(M) \to \prod_{\Nbb} H^i(M)$ is an isomorphism, the equivalence of (1) and (2) follows.

Next, we show the equivalence of (1) and (3).
We have the following diagram:
$$\xymatrix{
\cdots \ar[r]^{\pi} &M \ar[r]^{\pi} \ar[d]^{\pi^3} &M \ar[r]^{\pi} \ar[d]^{\pi^2} &M \ar[d]^{\pi} \\
\cdots \ar[r]^{\id} &M \ar[r]^{\id} \ar[d] &M \ar[r]^{\id} \ar[d] &M\ar[d] \\
\cdots \ar[r] &M\otimes_{A_{\bs}}^{\Lbb} \underline{A/\pi^3} \ar[r] &M\otimes_{A_{\bs}}^{\Lbb} \underline{A/\pi^2} \ar[r] &M\otimes_{A_{\bs}}^{\Lbb} \underline{A/\pi},
}
$$
where the vertical sequences are fiber sequences.
Therefore, we have the following fiber sequence:
$$R\varprojlim(\cdots \overset{\pi}{\to}M \overset{\pi}{\to}M) \to M \to R\varprojlim_{n} (M\otimes_{A_{\bs}}^{\Lbb} \underline{A/\pi^n}).$$
The equivalence of (1) and (3) follows from it. 
\end{proof}

\begin{lemma}\label{H0 vanish}
Let $M$ be a $\pi$-adically complete object of $\Dcal(A_{\bs})$.
If $M \otimes_{A_{\bs}}^{\Lbb} \underline{A/\pi}$ belongs to $\Dcal^{\leq -2}(A_{\bs})$, then we have $M \in \Dcal^{\leq -1}(A_{\bs})$.
\end{lemma}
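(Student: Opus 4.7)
The plan is to reduce the problem to showing $H^0(M)=0$, which follows by showing $\pi$ acts as an isomorphism on $H^0(M)$ and then exploiting $\pi$-adic completeness.

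First, since $\pi \in A$ is a non-zero-divisor, the sequence $0 \to \underline{A} \xrightarrow{\pi} \underline{A} \to \underline{A/\pi} \to 0$ is short exact in $\Mod_{A_{\bs}}^{\cond}$, so the corresponding cofiber sequence in $\Dcal(A_{\bs})$ identifies $\underline{A/\pi}$ with $\cofib(\underline{A} \xrightarrow{\pi} \underline{A})$. Tensoring with $M$ over $A_{\bs}$, and using that $\underline{A}$ is the unit object of $\Dcal(A_{\bs})$, I obtain a cofiber sequence
\[ M \xrightarrow{\pi} M \to M \otimes_{A_{\bs}}^{\Lbb} \underline{A/\pi}. \]
The hypothesis $M \otimes_{A_{\bs}}^{\Lbb} \underline{A/\pi} \in \Dcal^{\leq -2}(A_{\bs})$ gives $H^{-1}$ and $H^0$ of the right-hand term equal to zero, so the induced long exact sequence in cohomology forces multiplication by $\pi$ to be an isomorphism on $H^0(M)$.

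Next, by Proposition \ref{drvd cplt}(2), the condensed $A_{\bs}$-module $H^0(M)$ (viewed in $\Dcal(A_{\bs})$ in degree $0$) is $\pi$-adically complete, i.e.\ $R\varprojlim(\cdots \xrightarrow{\pi} H^0(M) \xrightarrow{\pi} H^0(M)) \simeq 0$. Using the fiber sequence
\[ R\varprojlim(\cdots \xrightarrow{\pi} H^0(M) \xrightarrow{\pi} H^0(M)) \to \prod_{\Nbb} H^0(M) \to \prod_{\Nbb} H^0(M) \]
from the proof of Proposition \ref{drvd cplt} (where the right-hand map is $(x_n) \mapsto (x_n - \pi x_{n+1})$), and the exactness of direct products in $\Mod_{A_{\bs}}^{\cond}$ (Theorem \ref{gro}), I identify $H^0$ of the left-hand term with the kernel of this map. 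Since $\pi$ acts as an isomorphism on $H^0(M)$, any sequence in the kernel satisfies $x_n = \pi x_{n+1}$ and is therefore uniquely determined by $x_0$, yielding an identification of this kernel with $H^0(M)$ itself. Vanishing of the $R\varprojlim$ then gives $H^0(M) = 0$, so $M \in \Dcal^{\leq -1}(A_{\bs})$.

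There is no real obstacle here; the key conceptual point is to identify $M \otimes_{A_{\bs}}^{\Lbb} \underline{A/\pi}$ with $\cofib(\pi \colon M \to M)$, after which the argument is a direct condensed analogue of the classical fact that a $\pi$-adically complete module on which $\pi$ acts invertibly is zero.
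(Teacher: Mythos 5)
Your overall strategy is correct and genuinely different from the paper's. The paper first checks by induction that $M \otimes_{A_{\bs}}^{\Lbb} \underline{A/\pi^n} \in \Dcal^{\leq -2}(A_{\bs})$ for all $n\geq 1$, then writes $M \simeq R\varprojlim_n (M\otimes_{A_{\bs}}^{\Lbb} \underline{A/\pi^n})$ via Proposition \ref{drvd cplt}(3), and concludes from the fact that $R\varprojlim_{n\in\Nbb}$ has cohomological dimension $1$ in $\Mod_{A_{\bs}}^{\cond}$. You instead go through Proposition \ref{drvd cplt}(2): each cohomology object of $M$ is itself $\pi$-adically complete, and a complete static module on which $\pi$ acts invertibly must vanish. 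Both the identification of $M \otimes_{A_{\bs}}^{\Lbb} \underline{A/\pi}$ with $\cofib(\pi\colon M \to M)$ and your kernel computation for the map $(x_n)\mapsto (x_n-\pi x_{n+1})$ are correct; your route is arguably the more classical-looking one, while the paper's generalizes more readily (it is really proving a bound on all of $M$ at once from bounds on the reductions mod $\pi^n$).

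There is, however, a gap in your opening reduction: proving $H^0(M)=0$ does not yield $M\in\Dcal^{\leq -1}(A_{\bs})$, because the lemma imposes no upper bound on $M$ and you must also kill $H^j(M)$ for every $j\geq 1$. The repair costs nothing: for any $j\geq 0$ the long exact sequence of $M\xrightarrow{\pi}M\to M\otimes_{A_{\bs}}^{\Lbb}\underline{A/\pi}$ gives
\begin{equation*}
H^{j-1}(M\otimes_{A_{\bs}}^{\Lbb}\underline{A/\pi})\to H^j(M)\xrightarrow{\pi} H^j(M)\to H^j(M\otimes_{A_{\bs}}^{\Lbb}\underline{A/\pi}),
\end{equation*}
and both outer terms vanish since they sit in cohomological degree $\geq -1$, so $\pi$ acts invertibly on $H^j(M)$ for every $j\geq 0$; Proposition \ref{drvd cplt}(2) and your completeness argument then apply verbatim to each such $H^j(M)$. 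You should state the reduction as ``it suffices to show $H^j(M)=0$ for all $j\geq 0$'' and run the argument for general $j$; as written, the claim that the problem reduces to $H^0(M)=0$ is not justified.
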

\begin{proof}
We can check $M \otimes_A^{\Lbb} A/\pi^n\in\Dcal^{\leq -2}(A)$ for $n \geq 1$ by induction, 
so the lemma follows from the fact that the cohomological dimension of $\displaystyle R\varprojlim_{n \in \Nbb}$ in $\Mod_{A_{\bs}}^{\cond}$ is 1.
\end{proof}

\subsection{Small complete adic rings}
Next, we define small complete adic rings. 
\begin{definition}
Let $A$ be a complete adic ring.
The topological ring $A$ is said to be \textit{small} if there exists an ideal of definition $I \subset A$, a finitely generated $\Zbb$-algebra $R$ and an integral ring map $R \to A/I$.
\end{definition}

\begin{lemma}
Let $A$ be a complete adic ring.
Then the following are equivalent:
\begin{enumerate}
\item
The ring $A$ is small.
\item
There exists a finitely generated $\Zbb$-algebra $R$ and a ring map $R \to A$ such that, for every ideal of definition $I\subset A$, $R\to A/I$ is integral.
\end{enumerate}
\end{lemma}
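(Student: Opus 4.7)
The direction (2) $\Rightarrow$ (1) is immediate: any ideal of definition witnesses smallness, since we may take $I$ to be an ideal of definition and the composite $R \to A \to A/I$ is integral by hypothesis. So the content lies in proving (1) $\Rightarrow$ (2).

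For (1) $\Rightarrow$ (2), suppose we are given an ideal of definition $I_0$, a finitely generated $\Zbb$-algebra $R_0$, and an integral ring map $f\colon R_0 \to A/I_0$. The first step is to replace $R_0$ by a suitable finitely generated $\Zbb$-subalgebra $R$ of $A$ with a lifted map $R \to A$. Concretely, choose generators $r_1,\dots,r_n$ of $R_0$, pick lifts $a_i \in A$ of $f(r_i) \in A/I_0$, and set $R = \Zbb[a_1,\dots,a_n] \subset A$. Then the image of $R$ in $A/I_0$ equals the image of $R_0$, so $A/I_0$ is integral over the image of $R$; hence $R \to A/I_0$ is integral.

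The main step is to check that $R \to A/I$ is integral for every ideal of definition $I$, not just for $I_0$. The key tool is the standard fact that any two ideals of definition of a complete adic ring cofinally interleave: there exists $m \geq 1$ with $I_0^m \subset I$. I would first prove that $R \to A/I_0^m$ is integral for every $m \geq 1$. Given $y \in A/I_0^m$, its image $\bar y \in A/I_0$ satisfies a monic polynomial $g \in R[x]$ with $g(\bar y)=0$, so $g(y) \in I_0/I_0^m$; since this ideal is nilpotent with $(I_0/I_0^m)^m = 0$, the element $y$ satisfies the monic polynomial $g(x)^m$. Then $R \to A/I$ factors as the integral map $R \to A/I_0^m$ followed by the surjection $A/I_0^m \twoheadrightarrow A/I$, and a surjective ring map preserves integrality of the composition, so $R \to A/I$ is integral.

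I don't foresee a real obstacle here: the argument is essentially a nilpotent-thickening trick combined with the fact that ideals of definition in a complete adic ring are all cofinal with one another. The only point that requires any care is making sure the chosen lifts $a_i$ give a map whose image modulo $I_0$ truly reproduces the integral map $R_0 \to A/I_0$, which is handled by choosing $R$ to be generated by lifts of the generators of (the image of) $R_0$.
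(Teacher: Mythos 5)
Your proof is correct and is exactly the "straightforward argument" that the paper omits: lift generators to get $R\subset A$, use the nilpotent-thickening trick to pass from $A/I_0$ to $A/I_0^m$, and use cofinality of ideals of definition plus surjectivity of $A/I_0^m\twoheadrightarrow A/I$ to conclude. Nothing to add.
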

\begin{proof}
It follows from a straightforward argument.
\end{proof}

\begin{example}
The ring of integers of a non-archimedean local field is small.
Moreover $\Ocal_{\Cbb_p}$ is small.
\end{example}
\begin{example}
Let $A$ be a complete adic ring and $I \subset A$ be an ideal of definition, and $p \colon A \to A/I$ be the projection map.
Let $\bar{R} \subset A/I$ be a finitely generated $\Zbb$-subalgebra.
Then $R=p^{-1}(\bar{R})$ is a small complete $IR$-adic ring.
\end{example}

The following proposition is the reason why we introduce small complete adic rings.
\begin{proposition}\label{small}
Let $A$ be a small complete $\pi$-adic ring where $\pi \in A$ is a non-zero-divisor.
Then for every extremally disconnected set $S$, $A_{\bs}[S]$ is $\pi$-adically complete.
In particular, every compact object of $\Dcal(A_{\bs})$ is $\pi$-adically complete. 
\end{proposition}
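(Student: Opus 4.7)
The plan is to combine the explicit presentation of $A_{\bs}[S]$ from Theorem \ref{dir prod} with the smallness hypothesis to write $A_{\bs}[S]$ as a filtered colimit of $\pi$-adically complete pieces, and then to deduce $\pi$-adic completeness of the colimit via comparison with the tower $\{(A/\pi^n A)_{\bs}[S]\}_n$ arising from the $\pi$-adic completeness of $A$. By Theorem \ref{dir prod} applied to the complete affinoid pair $(A, A)$, we have $A_{\bs}[S] \cong \varinjlim_{(R, M)} \prod_J \underline{M}$, a filtered colimit over pairs $(R, M)$ with $R$ a finitely generated $\Zbb$-subalgebra of $A$ and $M$ a quasi-finitely generated $R$-submodule of $A$. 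By smallness and the preceding equivalent characterization, we may choose a finitely generated $\Zbb$-subalgebra $R_0 \subset A$ containing $\pi$ such that $R_0 \to A/\pi^n A$ is integral for every $n \geq 1$; the subsystem of pairs with $R \supseteq R_0$ is then cofinal. For such $(R, M)$ the image $\bar{M}_n := M/(M \cap \pi^n A)$ is a finitely generated module over the noetherian ring $R/\pi^n R$, hence finitely presented. Since $M$ is closed in the prodiscrete ring $A = \varprojlim_n A/\pi^n A$, it is prodiscrete with $M = \varprojlim_n \bar{M}_n$, so $\underline{M} = \varprojlim_n \underline{\bar{M}_n}$ in $\Cond(\Ab)$ and $\prod_J \underline{M} = \varprojlim_n \prod_J \underline{\bar{M}_n}$.

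Next I would verify that each $\prod_J \underline{M}$ is $\pi$-adically complete. Since $\pi^n$ annihilates $\bar{M}_n$, multiplication by $\pi$ on $\prod_J \underline{\bar{M}_n}$ is locally nilpotent; from the Milnor fiber sequence
\begin{equation*}
R\varprojlim\bigl(\cdots \xrightarrow{\pi} \prod_J \underline{\bar{M}_n} \xrightarrow{\pi} \prod_J \underline{\bar{M}_n}\bigr) \to \prod_{\Nbb} \prod_J \underline{\bar{M}_n} \xrightarrow{1 - \pi\sigma} \prod_{\Nbb} \prod_J \underline{\bar{M}_n}
\end{equation*}
the endomorphism $1 - \pi\sigma$ is invertible (its inverse is given by the finite geometric sum $\sum_{k=0}^{n-1} (\pi\sigma)^k$, well-defined since $\pi^n = 0$ on $\bar{M}_n$), so the $R\varprojlim$ vanishes for each $n$. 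Commuting the $n$-limit with the $\pi$-limit then yields $R\varprojlim(\cdots \xrightarrow{\pi} \prod_J \underline{M} \xrightarrow{\pi} \prod_J \underline{M}) = 0$, establishing $\pi$-adic completeness of $\prod_J \underline{M}$.

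To deduce $\pi$-adic completeness of $A_{\bs}[S]$ itself, I would use the Milnor short exact sequence
\begin{equation*}
0 \to \underline{A} \to \prod_{n \geq 1} \underline{A/\pi^n A} \xrightarrow{1 - \sigma} \prod_{n \geq 1} \underline{A/\pi^n A} \to 0
\end{equation*}
in $\Mod_{A_{\bs}}^{\cond}$, whose terms are $A_{\bs}$-complete by smallness and Proposition \ref{integral}. Tensoring with the flat compact projective module $A_{\bs}[S]$ over $A_{\bs}$ preserves exactness; combined with the base-change identification $A_{\bs}[S] \otimes_{A_{\bs}} \underline{A/\pi^n A} \simeq (A/\pi^n A)_{\bs}[S]$, this would give $A_{\bs}[S] \simeq R\varprojlim_n (A/\pi^n A)_{\bs}[S] \simeq R\varprojlim_n (A_{\bs}[S] \otimes_{A_{\bs}}^{\Lbb} \underline{A/\pi^n A})$, hence $\pi$-adic completeness by Proposition \ref{drvd cplt}(3). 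The main obstacle is justifying that $A_{\bs}[S] \otimes_{A_{\bs}} -$ commutes with the countable product $\prod_n$, which is not a formal consequence of compact projectivity; I plan to reduce this, via the decomposition $A_{\bs}[S] = \varinjlim_{(R, M)} \prod_J \underline{M}$ and the Mittag-Leffler property of $\{\bar{M}_n\}$ uniform in $(R, M)$, to the analogous commutation for each prodiscrete piece $\prod_J \underline{M} = \varprojlim_n \prod_J \underline{\bar{M}_n}$. The ``in particular'' statement follows since the class of $\pi$-adically complete objects forms a stable subcategory of $\Dcal(A_{\bs})$ closed under retracts, and every compact object is a retract of a bounded complex of finite direct sums of $A_{\bs}[S]$'s.
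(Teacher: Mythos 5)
Your first half (that each $\prod_J \underline{M}$ is $\pi$-adically complete, because $M=\varprojlim_n \bar{M}_n$ with $\pi^n$ nilpotent on $\bar{M}_n$ and the relevant $R\varprojlim$'s commute) is correct, but it is not where the difficulty lies, and it is never actually used in a load-bearing way afterwards: $\pi$-adically complete objects are closed under limits and retracts but not under the filtered colimit $\varinjlim_{(R,M)}$, so completeness of the pieces does not transfer to $A_{\bs}[S]$. The proposition's entire content is concentrated in the step you defer: identifying $A_{\bs}[S]\otimes_{A_{\bs}}\prod_n \underline{A/\pi^n}$ with $\prod_n (A/\pi^n)_{\bs}[S]$, equivalently showing that the injection $A_{\bs}[S]\hookrightarrow \varprojlim_n (A/\pi^n)_{\bs}[S]$ inside $\prod_I\underline{A}$ is surjective. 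Your proposed reduction does not close this gap: writing $A_{\bs}[S]=\varinjlim_{(R,M)}\prod_J\underline{M}$ and commuting the tensor product past the colimit leaves you needing to interchange a filtered colimit with the countable product $\prod_n$ (concretely, $\varinjlim_{(R,M)}\prod_n(\cdots)\simeq\prod_n\varinjlim_{(R,M)}(\cdots)$), which is false in general, and ``the Mittag-Leffler property of $\{\bar{M}_n\}$ uniform in $(R,M)$'' is not the relevant obstruction --- the transition maps are already surjective; the issue is whether a compatible family $(a_i)_{i\in I}$, with each truncation $(a_i\bmod\pi^n)$ lying in $\prod_I\underline{B_n}$ for some finitely generated $\Zbb$-subalgebra $B_n\subset A/\pi^n$ that varies with $n$, lies in $\prod_I\underline{M}$ for a \emph{single} quasi-finitely generated $R$-submodule $M\subset A$.

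That last point is exactly what the paper proves, and it is the only place smallness genuinely enters: since the generators of each $B_n$ are integral over the fixed finitely generated $\Zbb$-algebra $R$, the $R$-subalgebra they generate is a finitely generated $R$-\emph{module}, and one constructs inductively finitely generated $R$-submodules $M_n\subset A/\pi^n$ with surjective compatible maps $M_n\to M_{n-1}$ containing all the $a_i\bmod\pi^n$; their limit $M$ is quasi-finitely generated over $R$, whence $(a_i)\in A_{\bs}[S]$. (The paper first disposes of higher derived-limit terms by noting $A_{\bs}[S]\otimes^{\Lbb}_{A_{\bs}}\underline{A/\pi^n}\simeq(A/\pi^n)_{\bs}[S]$ has surjective transition maps on sections over any extremally disconnected $T$.) Without this construction, or some equivalent uniform bound replacing the varying $B_n$ by data over a fixed $R$, your argument does not go through; I would recommend replacing the Milnor-sequence plan by this direct element-level comparison. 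Your treatment of the ``in particular'' clause is fine.
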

\begin{proof}
We have to show that $\displaystyle A_{\bs}[S] \to R\varprojlim_n (A_{\bs}[S] \otimes_{A_{\bs}}^{\Lbb} \underline{A/\pi^n})$ is an equivalence.
First, we prove that $\displaystyle R\varprojlim_n (A_{\bs}[S] \otimes_{A_{\bs}}^{\Lbb} \underline{A/\pi^n})$ is concentrated on degree $0$.
We have the following equivalence:
\begin{align*}
&A_{\bs}[S]\otimes_{A_{\bs}}^{\Lbb}\underline{A/\pi^n}\\
\simeq{} &((A_{\disc})_{\bs}[S]\otimes_{(A_{\disc})_{\bs}}^{\Lbb}\underline{A})\otimes_{A_{\bs}}^{\Lbb} \underline{A/\pi^n}\\
\simeq{} &(A_{\disc})_{\bs}[S]\otimes_{(A_{\disc})_{\bs}}^{\Lbb}\underline{A/\pi^n}\\
\simeq{} &(A/\pi^n)_{\bs}[S],
\end{align*}
where the first and third equivalence follows from Construction \ref{construction}.
Therefore, for every extremally disconnected set $T$, we find that $\displaystyle R\varprojlim_n (A_{\bs}[S] \otimes_{A_{\bs}}^{\Lbb} \underline{A/\pi^n})(T)$ is concentrated on degree $0$, 
since the transition maps of the projective system $\{(A_{\bs}[S] \otimes_{A_{\bs}}^{\Lbb} \underline{A/\pi^n})(T)\}_{n}$ are surjective.
Hence, $\displaystyle R\varprojlim_n (A_{\bs}[S] \otimes_{A_{\bs}}^{\Lbb} \underline{A/\pi^n})$ is concentrated on degree $0$.

From the above, it suffices to show that 
$$A_{\bs}[S] \to \varprojlim_n (A_{\bs}[S] \otimes_{A_{\bs}} \underline{A/\pi^n})$$
is an isomorphism.
We take a set $I$ and an isomorphism $\displaystyle \Zbb_{\bs}[S]\cong \prod_{I} \underline{\Zbb}$.
Then by Remark \ref{rem:dirprod}, we have the equivalence
$$\varprojlim_n (A_{\bs}[S] \otimes_{A_{\bs}} \underline{A/\pi^n}) \cong \varprojlim_n (\varinjlim_{B_n \subset A/\pi^n} \prod_I \underline{B_n}),$$
where the colimits taken over all finitely generated $\Zbb$-subalgebra $B_n$ of $A/\pi^n$.
From the above and Theorem \ref{dir prod}, we can regard $A_{\bs}[S]$ and $\displaystyle \varprojlim_n (A_{\bs}[S] \otimes_{A_{\bs}}\underline{A/\pi^n})$ as submodules of $\displaystyle \prod_I \underline{A}$.
The inclusion $\displaystyle A_{\bs}[S] \subset \varprojlim_n (A_{\bs}[S] \otimes_{A_{\bs}}\underline{A/\pi^n})$ is clear, so it is enough to show the opposite inclusion.
We take an extremally disconnected set $T$ and an element $\displaystyle (a_i)_{i \in I} \in (\varprojlim_n (A_{\bs}[S] \otimes_{A_{\bs}}\underline{A/\pi^n}))(T)=\varprojlim_n (\varinjlim_{B_n \subset A/\pi^n} \prod_I \underline{B_n}(T))\subset \prod_{I} \underline{A}(T)$.
Let $B_n$ be a finitely generated $\Zbb$-subalgebra of $A/\pi^n$ such that $a_i \bmod \pi^n \in \underline{B_n}(T)$ for every $i\in I$.
We can take a finitely generated $\Zbb$-algebra $R$ and a ring map $R \to A$ such that, for every ideal of definition $I \subset A$, $R \to A/I$ is integral.
Let us construct inductively a finitely generated $R$-submodule $M_n$ of $A/\pi^n$ satisfying that for every $n \geq 1$, $a_i \bmod \pi^n \in \underline{M_n}(T)$ for every $i\in I$ and $M_n \to M_{n-1}$ is well-defined and surjective.
First, we take generators $x_1,\ldots, x_m$ of $B_n$ as a $\Zbb$-algebra. 
As $x_1,\ldots,x_m$ are integral over $R$, the $R$-subalgebra of $A/\pi^n$ generated by $x_1,\ldots,x_m$ is a finitely generated $R$-module.
We take $y_1,\ldots, y_l \in A/\pi^n$ such that the image of them in $A/\pi^{n-1}$ generates $M_{n-1}$.
Let $M_n^{\prime}$ be the $R$-submodule of $A/\pi^n$ generated by $x_1,\ldots,x_m,y_1,\ldots,y_l$, and let $M_n$ be the preimage of $M_{n-1}$ under the map $M_n^{\prime} \to A/\pi^{n-1}$.
Then $M_n$ satisfies the conditions, where we note that $M_n$ is finitely generated $R$-module since $R$ is noetherian.
Let $M$ be the limit of $(M_n)$.
By the construction, $M$ is quasi-finitely generated over $R$ and $\displaystyle(a_i)_{i\in I} \in \prod_I \underline{M}(T)$.
Therefore, we get $\displaystyle(a_i) \in \varinjlim_{B \subset A^+, M} \prod_I \underline{M}(T) =A_{\bs}[S](T)$, which proves the former part of the proposition.

The latter part follows from \cite[Lemma A.2.1]{Mann22} and the fact that $\hat{\Dcal}_{\pi}(A_{\bs})$ is a full $\infty$-subcategory of $\Dcal(A_{\bs})$ stable under finite colimits and retracts.
\end{proof}

\begin{lemma}\label{filtered colimit}
Let $A$ be a complete $I$-adic ring, where $I \subset A$ is a finitely generated ideal.
We write $A/I$ as a filtered colimit $\displaystyle \varinjlim_{\lambda \in \Lambda} \bar{A_{\lambda}}$ where each $\bar{A_{\lambda}}$ is a finitely generated $\Zbb$-subalgebra of $A/I$.
Let $A_{\lambda}$ be the preimage of $\bar{A_{\lambda}}$ under $A\to A/I$, which is a small complete $I$-adic ring.
Then in $\AnRing$, we have the following equivalence:
$$A_{\bs} \simeq \varinjlim_{\lambda \in \Lambda} (A_{\lambda})_{\bs}.$$
\end{lemma}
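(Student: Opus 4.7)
The plan is to construct the natural comparison map in $\AnRing$ and check it is an equivalence by evaluating functors of measures, using the explicit formula in Theorem \ref{dir prod}.

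First I would verify that each $A_\lambda$ is a legitimate small complete $I$-adic ring and that the inclusion $A_\lambda\hookrightarrow A$ is a morphism of complete affinoid pairs. Since $A/I$ is discrete and $\bar{A_\lambda}\subset A/I$, the subring $A_\lambda = p^{-1}(\bar{A_\lambda})$ is closed in $A$ and contains $I$, so the $I$-adic topology on $A_\lambda$ agrees with the subspace topology from $A$. Completeness of $A_\lambda$ follows because any element of $\varprojlim_n A_\lambda/I^n$ defines an element of $A=\varprojlim_n A/I^n$ whose reduction modulo $I$ lies in $\bar{A_\lambda}$. Each inclusion therefore gives a morphism $(A_\lambda)_{\bs}\to A_{\bs}$ in $\AnRing$ (via $\cAff\hookrightarrow\AnRing$), and these assemble into the natural map
$$f\colon \varinjlim_{\lambda\in\Lambda}(A_\lambda)_{\bs}\lra A_{\bs}.$$

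To prove $f$ is an equivalence, I would use that filtered colimits in $\AnRing$ commute with the functor $(\Acal,\Mcal)\mapsto \Mcal[S]$ into $\Dcal^{\leq 0}(\Cond(\Ab))$ for every extremally disconnected $S$. Since both sides are static and filtered colimits of condensed abelian groups are exact, it suffices to prove that
$$\varinjlim_{\lambda\in\Lambda}(A_\lambda)_{\bs}[S]\lra A_{\bs}[S]$$
is an isomorphism of condensed abelian groups. By Theorem \ref{dir prod}, both sides are filtered colimits indexed by pairs $(B,M)$, where $B$ is a finitely generated $\Zbb$-subalgebra of the relevant ring and $M$ is a quasi-finitely generated $B$-submodule.

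The main step is the comparison of indexing systems. Given $(B,M)$ with $B\subset A$ and $M\subset A$, the image $\bar{B}$ in $A/I$ is finitely generated over $\Zbb$, so sits inside some $\bar{A_\mu}$; enlarging $\mu$, we may also assume the (finitely many) $\bar{B}$-module generators of the image of $M$ in $A/I$ lie in $\bar{A_\mu}$, forcing $M\subset A_\mu$. Conversely, if $B\subset A_\lambda$ and $M$ is a quasi-finitely generated $B$-submodule of $A_\lambda$, then $M$ is closed in $A$ (because $A_\lambda$ is), the inclusion $A_\lambda/I^n\hookrightarrow A/I^n$ identifies the image of $M$ in $A/I^n$ with a finitely generated $B$-module, and the subspace topology on $M$ from $A_\lambda$ coincides with that from $A$, so the associated $\underline{M}$ is the same. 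Thus the double colimit on the left collapses to the colimit on the right, proving the isomorphism.

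I do not anticipate a serious obstacle; the only subtle point is verifying that a quasi-finitely generated $B$-submodule of $A$ can always be enlarged finitely in the $\bar{B}$-generators of its reduction so as to fit inside one $A_\mu$, which is the straightforward bookkeeping indicated above.
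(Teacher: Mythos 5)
Your proof is correct and follows essentially the same route as the paper, whose entire proof of this lemma is the remark that it ``follows from a straightforward computation by using Theorem \ref{dir prod}.'' Your reduction to the functors of measures via commutation of filtered colimits with $(\Acal,\Mcal)\mapsto\Mcal[S]$, followed by the cofinality comparison of the indexing pairs $(B,M)$ on both sides, is precisely that computation carried out in detail.
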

\begin{proof}
It follows from a straightforward computation by using Theorem \ref{dir prod}.
\end{proof}

This lemma is useful because of the following.
For an $\infty$-category $\Ccal$, we denote the full $\infty$-subcategory of compact objects of $\Ccal$ by $\Ccal_{\cpt}$.
\begin{lemma}[{\cite[Lemma 2.7.4]{Mann22}}]\label{limit argument}
Let $\displaystyle \Acal = \varinjlim_{\lambda \in \Lambda} \Acal_{\lambda}$ be a filtered colimit in $\AnRing$.
Then the following map induced by the scalar extension functors is an equivalence:
$$ \varinjlim_{\lambda \in \Lambda} \Dcal(\Acal_{\lambda})_{\cpt} \to \Dcal(\Acal)_{\cpt}.$$
\end{lemma}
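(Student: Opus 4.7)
The plan is to reduce to the standard equivalence $\mathrm{Pr}^L_\omega \simeq \Cat^{\mathrm{perf}}_\infty$ (between compactly generated presentable stable $\infty$-categories with compact-preserving left adjoints and small idempotent-complete stable $\infty$-categories, via passage to compact objects and $\mathrm{Ind}$-completion), under which filtered colimits correspond on both sides. It thus suffices to show $\Dcal(\Acal) \simeq \varinjlim_\lambda \Dcal(\Acal_\lambda)$ in $\mathrm{Pr}^L_\omega$.

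First I would verify that each scalar extension $-\otimes^{\Lbb}_{\Acal_\lambda}\Acal_\mu$ (and each $-\otimes^{\Lbb}_{\Acal_\lambda}\Acal$) preserves compact objects; equivalently, the right adjoint (the forgetful functor) preserves filtered colimits. This is immediate because filtered colimits in $\Dcal$ of any analytic ring are computed on underlying condensed abelian groups, which the forgetful functor leaves intact. Hence the whole diagram lives in $\mathrm{Pr}^L_\omega$.

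Next, I would show that the induced comparison functor
$$F \colon \varinjlim_\lambda \Dcal(\Acal_\lambda)_{\cpt} \to \Dcal(\Acal)_{\cpt}$$
is essentially surjective. The target $\Dcal(\Acal)_{\cpt}$ is the smallest idempotent-complete stable subcategory of $\Dcal(\Acal)$ containing the compact generators $\Acal[S] = \Mcal[S]$ for extremally disconnected $S$, and each such object is the scalar extension of $\Acal_{\lambda_0}[S]$ for any index $\lambda_0$, so lies in the image of $F$. Combined with the idempotent completeness of the colimit on the left (taken in $\Cat^{\mathrm{perf}}_\infty$) and the full faithfulness proved next, this gives essential surjectivity.

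The main obstacle is full faithfulness. For compact $X, Y$ represented over some common $\Acal_{\lambda_0}$, adjunction and compactness of $X$ yield
$$\Map_{\Dcal(\Acal)}(X_\infty, Y_\infty) \simeq \Map_{\Dcal(\Acal_{\lambda_0})}\bigl(X,\ Y \otimes^{\Lbb}_{\Acal_{\lambda_0}} \Acal\bigr)$$
and
$$\varinjlim_{\lambda \ge \lambda_0}\Map_{\Dcal(\Acal_\lambda)}(X_\lambda, Y_\lambda) \simeq \Map_{\Dcal(\Acal_{\lambda_0})}\bigl(X,\ \varinjlim_{\lambda \ge \lambda_0} Y \otimes^{\Lbb}_{\Acal_{\lambda_0}} \Acal_\lambda\bigr).$$
So the crux is the commutation
$$Y \otimes^{\Lbb}_{\Acal_{\lambda_0}} \Acal \simeq \varinjlim_{\lambda \ge \lambda_0} Y \otimes^{\Lbb}_{\Acal_{\lambda_0}} \Acal_\lambda \quad\text{in } \Dcal(\Acal_{\lambda_0}).$$
Both sides are exact in $Y$ and preserve finite colimits and retracts, so I would reduce to the generator $Y = \Acal_{\lambda_0}[S]$; there the left side equals $\Acal[S]$, the right side equals $\varinjlim_\lambda \Acal_\lambda[S]$, and the two agree by the commutation of sifted colimits in $\AnRing$ with the measure functor $\Mcal[S]$ (\cite[Proposition 12.12]{AG}). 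The verification that these filtered colimits, a priori in $\Dcal^{\leq 0}(\Cond(\Ab))$, agree with the ones in $\Dcal(\Acal_{\lambda_0})$ uses that $\Dcal(\Acal_{\lambda_0}) \subset \Dcal(\underline{\Acal_{\lambda_0}})$ is closed under colimits and that the $\Acal_\lambda[S]$ are all $\Acal_{\lambda_0}$-complete.
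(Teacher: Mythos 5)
The paper does not prove this lemma at all --- it simply cites \cite[Lemma 2.7.4]{Mann22} --- so there is no in-paper argument to compare against. Your proof is correct and is essentially the standard argument (and the one underlying Mann's): the diagram lands in $\mathrm{Pr}^L_\omega$ because the forgetful functors preserve filtered colimits (these being computed on underlying condensed abelian groups, with each $\Dcal(\Acal_\lambda,\Mcal_\lambda)\subset\Dcal(\Acal_\lambda)$ closed under colimits), essential surjectivity comes from the compact generators $\Mcal[S]\simeq \Mcal_{\lambda_0}[S]\otimes^{\Lbb}_{(\Acal_{\lambda_0},\Mcal_{\lambda_0})}(\Acal,\Mcal)$ together with idempotent completeness of the colimit, and full faithfulness reduces by compactness and thickness to the identification $\varinjlim_\lambda \Mcal_\lambda[S]\simeq\Mcal[S]$, which is exactly the commutation of sifted colimits in $\AnRing$ with the measures functor quoted in the paper from \cite[Proposition 12.12]{AG}. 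The only cosmetic redundancy is that after invoking $\mathrm{Pr}^L_\omega\simeq\Cat^{\mathrm{perf}}_\infty$ you prove the compact-object statement directly rather than the presentable one; either suffices, and all the delicate points (where the various filtered colimits are computed, and why they agree) are addressed.
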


%%%%%%%%%%%%%%%%%%%%%%%%%%%%%%%%%%%%%%%%%%%%%%%%%%%% section 4 %%%%%%%%%%%%%%%%%%%%%%%%%%%%%%%%%%%%%%%%%%
 
\section{Faithfully flat descent over a complete non-archimedean field}
In this section, we will prove faithfully flat descent for affinoid pairs over a complete non-archimedean field. 

Let $K$ be a complete non-archimedean field, and let $\Ocal_K$ denote the ring of integers of $K$, and $\pi$ denote a pseudo-uniformizer of $K$.
We begin with a review of classical results.
 \begin{definition}
 \begin{enumerate}
 \item
 A \textit{topologically finitely presented $\Ocal_K$-algebra} is a topological $\Ocal_K$-algebra which is a topological quotient of $\Ocal_K\langle T_1, \ldots, T_n \rangle$ for some $n$ by a finitely generated ideal, where $\Ocal_K\langle T_1, \ldots, T_n \rangle$ is equipped with the $\pi$-adic topology.
An \textit{admissible $\Ocal_K$-algebra} is a $\pi$-torsion-free topologically finitely presented $\Ocal_K$-algebra.
 \item
 An \textit{affinoid $K$-algebra} is a topological $K$-algebra which is a quotient topological ring of $K\langle T_1, \ldots, T_n \rangle$ for some $n$ by an ideal.
 \end{enumerate}
 \end{definition}
 
 \begin{remark}
 \begin{enumerate}
 \item
 For a topologically finitely presented $\Ocal_K$-algebra $A$, the topology of $A$ coincides with the $\pi$-adic topology of $A$.
 \item 
 For a topologically finitely presented $\Ocal_K$-algebra $A$, $A[1/\pi]$ naturally becomes an affinoid $K$-algebra.
 \item
Affinoid $K$-algebras are noetherian. 
\item
Topologically finitely presented $\Ocal_K$-algebras are not necessarily noetherian but coherent (see \cite[Proposition 1.3]{FRG1}).
 \end{enumerate}
 \end{remark}
 
 \begin{definition}
 A map $A\to B$ of topologically finitely presented $\Ocal_K$-algebras (resp.\  affinoid $K$-algebras) is flat (resp.\  faithfully flat) if it is flat (resp.\  faithfully flat) as a map of ordinary rings.
 \end{definition}
 
\begin{lemma}[{\cite[Theorem 4.1]{FRG2}}]\label{flattening}
Let $A\to B$ be a faithfully flat map of affinoid $K$-algebras. 
Then there exists a rational covering $\{\Spa(A_i, (A_i)^{\circ}) \to \Spa(A,A^{\circ}) \}$ which satisfies that for each $i$ there exists a faithfully flat map $A_i^{\prime} \to B_i^{\prime}$ of admissible $\Ocal_K$-algebras such that $A_i^{\prime}[1/\pi] \to B_i^{\prime}[1/\pi]$ is isomorphic to $A_i \to B \hotimes_{A} A_i$, where $ \hotimes$ is the completed tensor product.
\end{lemma}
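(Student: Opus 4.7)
The plan is to reduce the statement to the Raynaud--Gruson formal flattening theorem for admissible formal schemes, combined with the well-known dictionary between admissible formal blow-ups and rational coverings of the rigid generic fiber.

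First, I would choose formal models. Pick topologically finitely presented $\Ocal_K$-subalgebras $A_0 \subset A$ and $B_0 \subset B$ with $A_0[1/\pi] = A$ and $B_0[1/\pi] = B$, arranged so that the given map $A \to B$ extends to a ring map $A_0 \to B_0$ (the existence of such compatible models is a standard calculation: lift generators and relations). Replacing $A_0$ and $B_0$ by their quotients by $\pi$-torsion does not change the generic fibers and makes them admissible in the sense of our definition.

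Next, I would apply the formal flattening theorem: there is an admissible formal blow-up $\Spf \widetilde{A_0} \to \Spf A_0$ such that the strict transform $\widetilde{B_0}$ of $B_0 \hotimes_{A_0} \widetilde{A_0}$ (i.e., the quotient of this completed tensor product by its $\pi$-torsion) is flat over $\widetilde{A_0}$. Then I would cover $\Spf \widetilde{A_0}$ by finitely many open affine formal subschemes $\Spf A_i^{\prime}$ over which $\widetilde{B_0}$ is affine, say equal to $\Spf B_i^{\prime}$; each $A_i^{\prime} \to B_i^{\prime}$ is a flat map of admissible $\Ocal_K$-algebras. Passing to the rigid generic fiber via the standard equivalence between admissible formal blow-ups and rational coverings, the $\Spa(A_i, A_i^{\circ})$ with $A_i = A_i^{\prime}[1/\pi]$ form a rational covering of $\Spa(A, A^{\circ})$, and by construction the base change $B \hotimes_A A_i$ is identified with $B_i^{\prime}[1/\pi]$.

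Finally, I would upgrade flatness to faithful flatness. Since $A \to B$ is faithfully flat and $A \to A_i$ is flat, the induced map $\Spa(B_i^{\prime}[1/\pi], B_i^{\prime}[1/\pi]^{\circ}) \to \Spa(A_i, A_i^{\circ})$ is surjective on rank-one points. Combined with the density of rig-points in $\Spf A_i^{\prime}$ and the already established flatness, this forces $\Spec(B_i^{\prime}/\pi) \to \Spec(A_i^{\prime}/\pi)$ to be surjective, which (together with flatness) gives faithful flatness of $A_i^{\prime} \to B_i^{\prime}$.

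The hard part is the flattening step: the Raynaud--Gruson formal flattening theorem is a deep result, and its proof (which uses a careful induction on stratifications together with blow-ups) is precisely the content of \cite{FRG2}. All the other steps are formal once that input is available, so the argument reduces to quoting this theorem and translating it across the formal/rigid dictionary.
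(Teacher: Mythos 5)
Your proposal is correct and matches the paper's treatment: the paper gives no argument of its own here but simply imports this statement as \cite[Theorem 4.1]{FRG2}, i.e.\ the Bosch--L\"utkebohmert/Raynaud--Gruson flattening theorem, which is exactly the input your sketch reduces to. The surrounding steps you describe (compatible formal models, affine charts of the admissible blow-up giving the rational covering, and the upgrade from flat to faithfully flat via surjectivity on the special fibre) are the standard derivation and are consistent with the cited source.
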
 

\begin{lemma}\label{formal model}
Let $A$ be a topologically finitely presented $\Ocal_K$-algebra.
Then we have the following equivalence in $\AnRing$:
$$(A[1/\pi], A[1/\pi]^{\circ})_{\bs} \simeq A_{\bs} \otimes_{{\Ocal_K}_{\bs}}^{\Lbb} (K, \Ocal_K)_{\bs}.$$
\end{lemma}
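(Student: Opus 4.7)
The plan is to apply Lemma \ref{pushout} to the commutative square
$$
\xymatrix{
{\Ocal_K}_{\bs} \ar[r] \ar[d] & A_{\bs} \ar[d] \\
(K, \Ocal_K)_{\bs} \ar[r] & (A[1/\pi], A[1/\pi]^{\circ})_{\bs}
}
$$
in $\AnRing$ arising from the canonical maps of complete affinoid pairs $(\Ocal_K,\Ocal_K) \to (A,A) \to (A[1/\pi], A[1/\pi]^{\circ})$ and $(\Ocal_K,\Ocal_K) \to (K,\Ocal_K) \to (A[1/\pi], A[1/\pi]^{\circ})$. Once the square is shown to be a pushout, the claimed equivalence is immediate.

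For the first hypothesis of Lemma \ref{pushout}, I first observe that $\underline{K} \simeq \underline{\Ocal_K}[1/\pi]$ as condensed $\underline{\Ocal_K}$-modules, since a continuous map from an extremally disconnected (hence compact) $S$ to $K$ has bounded image and so lands in some $\pi^{-N}\Ocal_K$. Because $\pi$ is a non-zero-divisor in $\underline{A}$, the derived base change agrees with the underived one, giving
$$\underline{K}\otimes_{\underline{\Ocal_K}}^{\Lbb}\underline{A} \simeq \underline{A}[1/\pi] \simeq \underline{A[1/\pi]}.$$
The map of complete affinoid pairs $(A,A) \to (A[1/\pi], A[1/\pi]^{\circ})$ induces a map of analytic rings $A_{\bs} \to (A[1/\pi], A[1/\pi]^{\circ})_{\bs}$, so $\underline{A[1/\pi]}$ is already $A_{\bs}$-complete, and consequently $(K,\Ocal_K)_{\bs}\otimes_{{\Ocal_K}_{\bs}}^{\Lbb} A_{\bs} \simeq \underline{A[1/\pi]}$ in $\Dcal(A_{\bs})$.

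For the second hypothesis, I need to show that $M \in \Dcal(\underline{A[1/\pi]})$ is $(A[1/\pi], A[1/\pi]^{\circ})_{\bs}$-complete if and only if it is both $A_{\bs}$-complete and $(K,\Ocal_K)_{\bs}$-complete. The forward direction is immediate from the two maps $A_{\bs}, (K,\Ocal_K)_{\bs} \to (A[1/\pi], A[1/\pi]^{\circ})_{\bs}$ in $\AnRing$. For the converse, Proposition \ref{prop:int} reduces the problem to showing $M$ is $\Zbb[g]_{\bs}$-complete for every $g \in A[1/\pi]^{\circ}$. By the classical description of power-bounded elements in affinoid $K$-algebras, every such $g$ is integral over $\Ocal_K$ and hence over $A$, so satisfies a monic relation $g^n + a_1 g^{n-1} + \cdots + a_n = 0$ with $a_i \in A$. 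Setting $R = \Zbb[a_1,\ldots,a_n]$ and $R' = R[g]$, the inclusion $R \hookrightarrow R'$ is integral. Now: the $A_{\bs}$-completeness of $M$ gives $\Zbb[a_i]_{\bs}$-completeness for each $i$ by Proposition \ref{prop:int} applied to $A_{\disc}$; a second application of Proposition \ref{prop:int} to the discrete ring $R$ upgrades this to $R_{\bs}$-completeness; Proposition \ref{integral} applied to the integral map $R \to R'$ then gives $R'_{\bs}$-completeness; and restriction along the map $\Zbb[g]_{\bs} \to R'_{\bs}$ of analytic rings yields the required $\Zbb[g]_{\bs}$-completeness. The main obstacle is this bookkeeping chain, i.e.\ carefully transferring completeness between various finitely generated $\Zbb$-subalgebras via repeated applications of Propositions \ref{integral} and \ref{prop:int}; all other ingredients are essentially formal.
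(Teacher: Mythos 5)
Your proof follows essentially the same route as the paper: both apply Lemma \ref{pushout} to the same square, identify $\underline{K}\otimes_{\underline{\Ocal_K}}^{\Lbb}\underline{A}$ with $\underline{A[1/\pi]}$ via the filtered colimit $\underline{A}\xrightarrow{\pi}\underline{A}\to\cdots$, and deduce condition (2) from Proposition \ref{prop:int} together with the fact that $A[1/\pi]^{\circ}$ is integral over the image of $A$ (the paper compresses your completeness-transfer chain through Propositions \ref{integral} and \ref{prop:int} into one line, but the content is identical). Two of your stated justifications are, however, false as written, even though the facts you actually use are true: a power-bounded element $g$ of $A[1/\pi]$ is generally \emph{not} integral over $\Ocal_K$ (e.g.\ $T\in K\langle T\rangle^{\circ}$); the correct classical statement, and the one your monic relation actually invokes, is that $A[1/\pi]^{\circ}$ is the integral closure of the image of $A$. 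Likewise, $\pi$ need not be a non-zero-divisor in $A$, since $A$ is only assumed topologically finitely presented rather than admissible; the identification of the derived with the underived base change instead holds because $\underline{K}$, being a filtered colimit of copies of $\underline{\Ocal_K}$, is flat over $\underline{\Ocal_K}$.
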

\begin{proof}
We check the conditions (1) and (2) of Lemma \ref{pushout}. 
As $\underline{A[1/\pi]}$ is $(K, \Ocal_K)_{\bs}$-complete, we get the natural map $\underline{A} \otimes_{{\Ocal_K}_{\bs}}^{\Lbb} (K, \Ocal_K)_{\bs} \to \underline{A[1/\pi]}$, and we want to prove that it is an equivalence.
Since $\underline{A} \otimes_{\underline{\Ocal_K}}^{\Lbb} \underline{K}$ is equivalent to the colimit of the system in $\Dcal(\underline{\Ocal_K})$
$$\underline{A} \overset{\pi}{\to}\underline{A} \overset{\pi}{\to}\underline{A} \to \cdots,$$
$\underline{A} \otimes_{\underline{\Ocal_K}}^{\Lbb} \underline{K}$ is equivalent to $\underline{A[1/\pi]}$, and it is $(K, \Ocal_K)_{\bs}$-complete.
Therefore, we have $\underline{A[1/\pi]} \simeq \underline{A} \otimes_{\underline{\Ocal_K}}^{\Lbb} \underline{K} \simeq \underline{A} \otimes_{{\Ocal_K}_{\bs}}^{\Lbb} (K, \Ocal_K)_{\bs}$, which proves (1).
Since $A[1/\pi]^{\circ}$ is the integral closure of $A$ in $A[1/\pi]$, (2) follows from Proposition \ref{prop:int}.
\end{proof}

\begin{lemma}
Let $A$ be a topologically finitely presented $\Ocal_K$-algebra, and $M$ be a perfect $A$-module with the $\pi$-adic topology. 
Then $\underline{M}$ is quasi-isomorphic to a bounded complex of finite projective $\underline{A}$-modules.
\end{lemma}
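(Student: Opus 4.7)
The plan is to lift a finite projective resolution of $M$ over $A$ to a resolution of $\underline{M}$ by finite projective condensed $\underline{A}$-modules, and then verify that the lift remains exact.

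Since $M$ is a perfect $A$-module, there exists a bounded exact augmented complex of $A$-modules
\[
0 \to P^{-d} \to P^{-d+1} \to \cdots \to P^{0} \to M \to 0,
\]
where each $P^{i}$ is a finite projective $A$-module. Each $P^{i}$ is an $A$-linear direct summand of some $A^{n_{i}}$; since the splitting is $A$-linear, it is automatically continuous once $A^{n_{i}}$ and $P^{i}$ are given the $\pi$-adic topology. Hence $\underline{P^{i}}$ is a direct summand of $\underline{A}^{n_{i}}$, so a finite projective $\underline{A}$-module. It therefore suffices to verify that the induced augmented complex $\underline{P^{\bullet}} \to \underline{M}$ of condensed $\underline{A}$-modules is exact.

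Using the coherence of $A$ (which holds because $A$ is topologically finitely presented over $\Ocal_{K}$), I would splice the above resolution into short exact sequences $0 \to K^{i} \to P^{i} \to K^{i+1} \to 0$ of finitely presented $A$-modules. The question then reduces to the following claim: for every short exact sequence $0 \to N^{\prime} \to N \to N^{\prime\prime} \to 0$ of finitely presented $A$-modules equipped with the $\pi$-adic topology on each term, the induced sequence of condensed $\underline{A}$-modules is short exact. Testing on an extremally disconnected set $S$ splits this claim into (a) the subspace topology on $N^{\prime} \subset N$ agrees with the intrinsic $\pi$-adic topology on $N^{\prime}$, and (b) every continuous map $S \to N^{\prime\prime}$ lifts to a continuous map $S \to N$, which follows from the openness of $N \twoheadrightarrow N^{\prime\prime}$, the $\pi$-adic completeness of $N^{\prime\prime}$, and the projectivity of $S$ in profinite sets (lift modulo $\pi^{n}$ and pass to the limit).

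Statement (a) is the main obstacle. It is a form of the Artin--Rees property for finitely presented modules over a topologically finitely presented $\Ocal_{K}$-algebra: there exists $c \geq 0$ such that $\pi^{n} N \cap N^{\prime} \subseteq \pi^{n-c} N^{\prime}$ for all $n \geq c$. Although $A$ is in general non-Noetherian, this property is classical in rigid geometry and is closely related to the strong flattening results of \cite{FRG2}; granted it, combining with the finite projectivity of each $\underline{P^{i}}$ completes the proof.
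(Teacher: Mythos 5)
Your outline is essentially the argument, but where the paper disposes of the whole lemma in one line by invoking \cite[Lemma 3.1]{And21} (exactness of $N\mapsto\underline{N}$ on strict short exact sequences of complete, first-countable topological abelian groups), you reprove that exactness by hand in the special case at hand. The reduction itself is fine: resolving $M$ by finite projectives, noting that the $A$-linear splittings are automatically continuous and that the $\pi$-adic topology on a direct summand agrees with the subspace topology, splicing via coherence, and testing exactness on extremally disconnected $S$. The two inputs you isolate are the right ones, and both are true; but be aware that (a) is not a consequence of the flattening theorems of \cite{FRG2} --- it is the elementary Artin--Rees-type statement of \cite[\S 1]{FRG1} (the same section the paper already cites for coherence of topologically finitely presented $\Ocal_K$-algebras), which also supplies the facts you use implicitly but do not state: that finitely generated modules over such rings are $\pi$-adically separated and complete, and that finitely generated submodules are closed. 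You also need completeness of the \emph{source} $N$ (not just of $N''$) and surjectivity of the transition maps on the kernels $N'/(N'\cap\pi^nN)$ to make the ``lift mod $\pi^n$ and pass to the limit'' step go through; with those in place the Mittag--Leffler argument closes the case. So the proposal is correct modulo citing the right classical facts; what the paper's route buys is that all of this topological bookkeeping is already packaged in \cite[Lemma 3.1]{And21}, while your route makes visible exactly which properties of topologically finitely presented $\Ocal_K$-algebras are being used.
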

\begin{proof}
It easily follows from \cite[Lemma 3.1]{And21}.
\end{proof}  
 
The following is an analogue of Lemma \ref{disc rp}. 
\begin{lemma}\label{rp}
Let $A \to B$ be a flat map of admissible $\Ocal_K$-algebras or a flat map of affinoid $K$-algebras. Then there exists a decomposition 
$$A \to A\langle T_1,\ldots,T_n\rangle \to B$$
such that $B$ is a perfect $A\langle T_1,\ldots,T_n\rangle$-module.
\end{lemma}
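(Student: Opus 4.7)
The plan is to mimic the proof of Lemma \ref{disc rp} in both cases. First, using that $B$ is topologically finitely presented over $\Ocal_K$ (resp.\ over $K$), we can pick finitely many topological generators of $B$ as an $A$-algebra and obtain a continuous surjection $A\langle T_1,\ldots,T_n\rangle \to B$. Since $A\langle T_1,\ldots,T_n\rangle$ is coherent (it is noetherian in the affinoid $K$-case and admissible hence coherent in the $\Ocal_K$-case), the finitely presented module $B$ is pseudo-coherent over $A\langle T_1,\ldots,T_n\rangle$. Thus, by Lemma \ref{perfect}, it suffices to bound the Tor-amplitude of $B$ over $A\langle T_1,\ldots,T_n\rangle$ at each maximal ideal by $[-n,0]$.

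Fix a maximal ideal $\mfrak \subset A\langle T_1,\ldots,T_n\rangle$ and set $\pfrak = \mfrak \cap A$. The key step will be to verify a Nullstellensatz-type statement: $A/\pfrak$ is a field. In the affinoid $K$-algebra case, this is the classical Nullstellensatz for affinoid algebras. In the admissible $\Ocal_K$-algebra case, observe that any element of $\pi A\langle T_1,\ldots,T_n\rangle$ is topologically nilpotent, so $1+\pi A\langle T_1,\ldots,T_n\rangle$ consists of units; hence $\pi \in \mfrak$. Then the residue field $k = A\langle T_1,\ldots,T_n\rangle/\mfrak$ is a quotient of $(A/\pi)[T_1,\ldots,T_n]$, which is a finitely generated $\Ocal_K/\pi$-algebra. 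Since $\mfrak \cap \Ocal_K$ is a maximal ideal containing $\pi$, it equals $\mfrak_K$, so $k$ is a finitely generated algebra over $\kappa = \Ocal_K/\mfrak_K$, hence a finite extension of $\kappa$ by Zariski's lemma. The finitely generated $\kappa$-algebra $A/\pfrak$ embeds into $k$ (hence is algebraic over $\kappa$), so is a finite domain over $\kappa$, and therefore a field.

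Now $\pi \in \pfrak$, so $A/\pfrak$ carries the discrete topology, and consequently $(A/\pfrak)\langle T_1,\ldots,T_n\rangle = (A/\pfrak)[T_1,\ldots,T_n]$ in the $\Ocal_K$-case; in the affinoid $K$-case, $(A/\pfrak)\langle T_1,\ldots,T_n\rangle$ is the Tate algebra over a field. Either way, this ring has global dimension $n$. Using flatness of $B$ over $A$, we have $B \otimes^{\Lbb}_A A/\pfrak \simeq B/\pfrak B$, and then
\[
B \otimes^{\Lbb}_{A\langle T_1,\ldots,T_n\rangle} k \simeq (B/\pfrak B)\otimes^{\Lbb}_{(A/\pfrak)\langle T_1,\ldots,T_n\rangle} k
\]
has Tor-amplitude in $[-n,0]$, exactly as in the proof of Lemma \ref{disc rp}. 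Applying Lemma \ref{perfect} yields the perfectness of $B$ over $A\langle T_1,\ldots,T_n\rangle$.

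The main obstacle is the Nullstellensatz step in the admissible $\Ocal_K$-case, which requires identifying the maximal ideals of $A\langle T_1,\ldots,T_n\rangle$ and showing that their contractions to $A$ are maximal; this is where one must carefully use that $\pi$ lies in the Jacobson radical and invoke the classical Zariski lemma for finitely generated algebras over the residue field $\kappa$. Once this algebraic input is in place, the rest of the argument is a direct parallel of the proof of Lemma \ref{disc rp}, and no genuinely new idea is needed.
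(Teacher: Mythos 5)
Your proof is correct and follows essentially the same route as the paper: take a surjection $A\langle T_1,\ldots,T_n\rangle \to B$, use coherence to get pseudo-coherence, show the contraction of any maximal ideal $\mfrak$ to $A$ is maximal, bound the Tor-amplitude by the global dimension $n$ of the polynomial (resp.\ Tate) ring over the residue field, and conclude via Lemma \ref{perfect}. The only cosmetic difference is that you establish maximality of $\mfrak\cap A$ by a direct appeal to Zariski's lemma for the residue field, whereas the paper cites the Jacobson property of $A/\pi$; these are interchangeable.
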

\begin{proof}
We will prove it when $A \to B$ is a flat map of topologically finitely presented $\Ocal_K$-algebras. 
We can also prove the other case by almost the same way.
We take a surjection $A\langle T_1,\ldots,T_n\rangle \to B$, and we will show that $B$ is a perfect $A\langle T_1,\ldots,T_n\rangle$-module.
We take any maximal ideal $\mfrak$ of $A\langle T_1,\ldots,T_n\rangle$. Since $\pi$ is topologically nilpotent, we have $\pi \in \mfrak$.
Since the ring $(A/\pi)/{\sqrt{(0)}}$ is finitely generated over the residue field of $\Ocal_K$, $A/\pi$ is a Jacobson ring.
Therefore, the prime ideal $\nfrak = \mfrak \cap A$ of $A$ is a maximal ideal, and the global dimension of $A/\nfrak[T_1,\ldots,T_n]$ is equal to $n$.
Since $A,B$ are $\pi$-torsion-free, we have the following equivalence:
\begin{align*}
& B \otimes_{A\langle T_1,\ldots,T_n\rangle}^{\Lbb} A/\nfrak[T_1,\ldots,T_n]\\
\simeq{} &(B \otimes_{A\langle T_1,\ldots,T_n\rangle}^{\Lbb} A/\pi[T_1,\ldots,T_n])\otimes_{A/\pi[T_1,\ldots,T_n]}^{\Lbb} A/\nfrak[T_1,\ldots,T_n]\\
\simeq{} &B/\pi \otimes_{A/\pi[T_1,\ldots,T_n]}^{\Lbb} (A/\pi[T_1,\ldots,T_n]\otimes_{A/\pi}^{\Lbb} A/\nfrak) \\
\simeq{} &B/\pi \otimes_{A/\pi}^{\Lbb} A/\nfrak \\
\simeq{} &B/{\nfrak B}.
\end{align*}
Therefore, we get the following equivalence:
\begin{align*}
&B \otimes_{A\langle T_1,\ldots,T_n\rangle}^{\Lbb} A\langle T_1,\ldots,T_n\rangle/\mfrak \\
\simeq{} & (B \otimes_{A\langle T_1,\ldots,T_n\rangle}^{\Lbb} A/\nfrak[T_1,\ldots,T_n]) \otimes_{A/\nfrak[T_1,\ldots,T_n]}^{\Lbb} A\langle T_1,\ldots,T_n\rangle/\mfrak \\
\simeq{} &B/{\nfrak B} \otimes_{A/\nfrak[T_1,\ldots,T_n]}^{\Lbb} A\langle T_1,\ldots,T_n\rangle/\mfrak.
\end{align*}
Therefore, we have $H^i(B \otimes_{A\langle T_1,\ldots,T_n\rangle}^{\Lbb} A\langle T_1,\ldots,T_n\rangle/\mfrak)=0$ for all $i \notin [-n, 0]$.
Since $A\langle T_1,\ldots,T_n\rangle$ is coherent, $B$ is a pseudo-coherent $A\langle T_1,\ldots,T_n\rangle$-module, 
we get the claim by Lemma \ref{perfect}.
\end{proof}

\begin{corollary}\label{flat bc}
Let $A\to B$ and $A \to C$ be flat maps of admissible $\Ocal_K$-algebras.
Then we have the following equivalence:
$$(B \hotimes_A C)_{\bs} \simeq B_{\bs} \otimes_{A_{\bs}}^{\Lbb} C_{\bs}.$$
Moreover, a similar statement holds true for affinoid $K$-algebras.
\end{corollary}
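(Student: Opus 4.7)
The plan is to apply Lemma~\ref{rp} and reduce the corollary to two elementary cases. Any flat map $A \to B$ of admissible $\Ocal_K$-algebras factors as $A \to A' := A\langle T_1, \ldots, T_n\rangle \to B$ with $B$ perfect over $A'$. Since pushouts in $\AnRing$ are associative and $A' \hotimes_A C = C\langle T_1, \ldots, T_n\rangle$, it suffices to verify the corollary separately for the two steps of this factorization, with $C$ replaced by $C' := C\langle T_1, \ldots, T_n\rangle$ in the second step.

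For the polynomial step, Lemma~\ref{poly bs} identifies $A'_{\bs} \simeq A_{\bs} \otimes_{\Zbb_{\bs}}^{\Lbb} \Zbb[T_1, \ldots, T_n]_{\bs}$. Since $\Zbb_{\bs} \to \Zbb[T]_{\bs}$ is steady by Lemma~\ref{poly steady}, and steadiness is preserved under composition and base change, Proposition~\ref{steady bc} yields
$$ A'_{\bs} \otimes_{A_{\bs}}^{\Lbb} C_{\bs} \simeq C_{\bs} \otimes_{\Zbb_{\bs}}^{\Lbb} \Zbb[T_1, \ldots, T_n]_{\bs} \simeq C\langle T_1, \ldots, T_n\rangle_{\bs} = (A' \hotimes_A C)_{\bs}. $$

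For the perfect step ($B$ perfect over $A'$), the lemma preceding Lemma~\ref{rp} gives that $\underline{B}$ is quasi-isomorphic to a bounded complex of finite projective $\underline{A'}$-modules. I plan to verify the two hypotheses of Lemma~\ref{pushout} for the candidate pushout $(B \hotimes_{A'} C')_{\bs}$. Hypothesis~(1) --- that the natural map $C'_{\bs} \otimes_{A'_{\bs}}^{\Lbb} B_{\bs} \to (B \hotimes_{A'} C')_{\bs}$ is an equivalence --- reduces via the bounded-complex description to finite direct sums of copies of $\underline{C'}$; since $B$ is finitely presented over $A'$ and $C'$ is $\pi$-adically complete, $B \otimes_{A'} C' = B \hotimes_{A'} C'$, so the derived tensor is identified with $\underline{B \hotimes_{A'} C'}$. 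For hypothesis~(2), the map $A' \to B$ is integral because $B$ is finitely generated as an $A'$-module (Cayley--Hamilton); threading this through Construction~\ref{construction}, which builds the analytic structure on an admissible $\Ocal_K$-algebra from that of its underlying discrete ring, Proposition~\ref{integral} transfers the equivalence between $A'_{\bs}$- and $B_{\bs}$-completeness to this adic setting, whence $B_{\bs}$- and $C'_{\bs}$-completeness combine into $(B \hotimes_{A'} C')_{\bs}$-completeness. The affinoid $K$-algebra case is handled identically using the affinoid version of Lemma~\ref{rp}, or alternatively by passing to admissible $\Ocal_K$-models and inverting $\pi$ via Lemma~\ref{formal model}.

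The main obstacle will be the perfect step, where one must carefully track the distinction between $A'_{\bs}$- and $B_{\bs}$-completeness for admissible $\Ocal_K$-algebras: the analytic structure is defined via discretization in Construction~\ref{construction}, so the integrality of $A' \to B$ must be leveraged through this definitional layer to apply Proposition~\ref{integral} and conclude both the algebraic--completed tensor identification in hypothesis~(1) and the completeness compatibility in hypothesis~(2) of Lemma~\ref{pushout}.
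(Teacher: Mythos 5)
Your proposal is correct and follows essentially the same route as the paper: reduce via Lemma~\ref{rp} to the polynomial case (handled through Lemma~\ref{poly bs}, where the paper also invokes Lemma~\ref{tensor} and where your appeal to steadiness is harmless but unnecessary, pushout associativity in $\AnRing$ sufficing) and the perfect case (handled through Lemma~\ref{pushout}). Your elaboration of the perfect step --- checking hypothesis (1) via the bounded complex of finite projective $\underline{A'}$-modules and hypothesis (2) via integrality and Proposition~\ref{integral} threaded through Construction~\ref{construction} --- is exactly the detail the paper's one-line proof leaves implicit.
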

\begin{proof}
By Lemma \ref{rp}, we may assume that $B$ is equal to $A\langle T_1,\ldots, T_n\rangle$ or $B$ is a perfect $A$-module.
If $B$ is equal to $A\langle T_1,\ldots, T_n\rangle$, then the claim follows from Lemma \ref{tensor} and Lemma \ref{poly bs}.
If $B$ is a perfect $A$-module, then the claim follows from Lemma \ref{pushout}.
\end{proof}

By using Lemma \ref{rp}, we can prove the following analogue of Proposition \ref{steady2}.

\begin{proposition}\label{steady3}
Let $A \to B$ be a flat map of admissible $\Ocal_K$-algebras. 
Then $A_{\bs} \to B_{\bs}$ is steady.
Moreover, a similar statement holds true for affinoid $K$-algebras.
\end{proposition}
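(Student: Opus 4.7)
The plan is to mirror the proof of Proposition~\ref{steady2}, with Lemma~\ref{rp} playing the role of Lemma~\ref{disc rp}. First I would apply Lemma~\ref{rp} to factor the given flat map as
\[
A \lra A\langle T_1, \ldots, T_n\rangle \lra B,
\]
where $B$ is presented as a perfect $A\langle T_1, \ldots, T_n\rangle$-module via the chosen surjection. Since steadiness is closed under composition, it then suffices to check the two pieces separately.

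For the polynomial step, Lemma~\ref{poly bs} gives an equivalence
\[
A\langle T_1, \ldots, T_n\rangle_{\bs} \simeq A_{\bs} \otimes_{\Zbb_{\bs}}^{\Lbb} \Zbb[T_1, \ldots, T_n]_{\bs},
\]
exhibiting $A_{\bs} \to A\langle T_1, \ldots, T_n\rangle_{\bs}$ as a base change of $\Zbb_{\bs} \to \Zbb[T_1, \ldots, T_n]_{\bs}$. The latter is a composition of iterated copies of the steady map $\Zbb_{\bs} \to \Zbb[T]_{\bs}$ from Lemma~\ref{poly steady}, hence steady, and steadiness is preserved by base change.

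For the surjective step $A\langle T_1, \ldots, T_n\rangle_{\bs} \to B_{\bs}$, the surjection presents $B$ as a finite, hence integral, $A\langle T_1, \ldots, T_n\rangle$-algebra, so Proposition~\ref{integral} applies. By the lemma stated just before Lemma~\ref{rp}, $\underline{B}$ is quasi-isomorphic to a bounded complex of finite projective $\underline{A\langle T_1, \ldots, T_n\rangle}$-modules. For any $M \in \Dcal(A\langle T_1, \ldots, T_n\rangle_{\bs})$, the object $M \otimes_{\underline{A\langle T_1, \ldots, T_n\rangle}}^{\Lbb} \underline{B}$ is then both $A\langle T_1, \ldots, T_n\rangle_{\bs}$-complete (as a finite colimit of complete objects) and $B_{\bs}$-complete (by Proposition~\ref{integral}), so it agrees with $M \otimes_{A\langle T_1, \ldots, T_n\rangle_{\bs}}^{\Lbb} B_{\bs}$. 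Now given any map $A\langle T_1, \ldots, T_n\rangle_{\bs} \to (\Ccal, \Mcal)$ in $\AnRing$ and any $M \in \Dcal(\Ccal, \Mcal)$, the same bounded-finite-projective presentation of $\underline{B}$ implies that $M \otimes_{\underline{A\langle T_1, \ldots, T_n\rangle}}^{\Lbb} \underline{B}$ is $(\Ccal, \Mcal)$-complete, which gives steadiness.

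The affinoid $K$-algebra case runs along exactly the same lines; the only adjustment is that one must verify the analog of the lemma preceding Lemma~\ref{rp} in the affinoid setting, namely that for a perfect $A$-module $M$ over an affinoid $K$-algebra $A$, $\underline{M}$ is quasi-isomorphic to a bounded complex of finite projective $\underline{A}$-modules. This is the main technical point, but I expect no real obstacle: affinoid $K$-algebras are noetherian, so a perfect module admits a finite resolution by finitely generated projectives, whose condensifications are direct summands of $\underline{A}^n$ and hence finite projective; exactness after condensification follows from the uniqueness of the Banach topology on finitely generated modules (open mapping theorem). Alternatively, one can reduce the affinoid case to the admissible $\Ocal_K$-algebra case via Lemma~\ref{formal model} and Corollary~\ref{flat bc} by choosing a flat formal model through Lemma~\ref{flattening}.
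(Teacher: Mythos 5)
Your proposal is correct and is essentially the paper's own argument: the paper's proof of this proposition is literally ``the same argument as in the proof of Proposition~\ref{steady2} works well,'' and you have unpacked exactly that argument, substituting Lemma~\ref{rp} for Lemma~\ref{disc rp}, Lemma~\ref{poly bs} for the polynomial step, and the lemma on perfect modules over topologically finitely presented $\Ocal_K$-algebras together with Proposition~\ref{integral} for the surjective step. Your remarks on the affinoid $K$-algebra case (noetherianness plus strictness of maps of finitely generated Banach modules) correctly identify the only point needing separate verification there.
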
 
\begin{proof}
The same argument as in the proof of Proposition \ref{steady2} works well.
\end{proof}

We can prove the following analogue of Corollary \ref{limit}.

\begin{theorem}\label{limit2}
Let $A \to B$ be a flat map of admissible $\Ocal_K$-algebras. 
Then there exists an object $N_{B/A} \in \Dcal(\underline{B})$ satisfying the following conditions:
\begin{itemize}
\item
The object $N_{B/A}$ is $A_{\bs}$-complete and compact as an object of $\Dcal(A_{\bs})$.
\item
We have an equivalence of functors from $\Dcal(A_{\bs})$ to $\Dcal(B_{\bs})$
$$-\otimes_{A_{\bs}}^{\Lbb} B_{\bs} \overset{\sim}{\lra} R\intHom_{\underline{A}}(N_{B/A}, -).$$
\end{itemize}
In particular, the functor
$$- \otimes_{A_{\bs}}^{\Lbb} B_{\bs} \colon \Dcal(A_{\bs}) \to \Dcal(B_{\bs})$$
commutes with small limits.
\end{theorem}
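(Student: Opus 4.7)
The plan is to mirror the proof of Corollary \ref{limit} but using the decomposition provided by Lemma \ref{rp}. Concretely, I would factor $A \to B$ as $A \to A\langle T_1, \ldots, T_n\rangle \to B$ with $B$ a perfect $A\langle T_1, \ldots, T_n\rangle$-module, construct a candidate $N$ on each of the two halves, and then assemble $N_{B/A}$ by tensoring them together.

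For the polynomial step $A \to A\langle T_1, \ldots, T_n\rangle$, I would set
$$N_{A\langle T_1,\ldots,T_n\rangle/A} \;:=\; R\intHom_{\underline{\Zbb}}\bigl(\underline{\Zbb[T_1,\ldots,T_n]},\underline{\Zbb}\bigr)\otimes_{\Zbb_{\bs}}^{\Lbb}A_{\bs}.$$
The factor over $\Zbb_{\bs}$ is $\Zbb_{\bs}$-complete and, by Lemma \ref{eq} combined with Corollary \ref{cpt}, compact (it is $\prod_{\Nbb^n}\underline{\Zbb}$); scalar extension along $\Zbb_{\bs}\to A_{\bs}$ preserves compactness and $A_{\bs}$-completeness, while Lemma \ref{tensor} identifies $\underline{\Zbb[T_1,\ldots,T_n]}\otimes_{\Zbb_{\bs}}^{\Lbb}A_{\bs}$ with $\underline{A\langle T_1,\ldots,T_n\rangle}$, so the resulting object lives naturally in $\Dcal(\underline{A\langle T_1,\ldots,T_n\rangle})$. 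The equivalence
$$-\otimes_{A_{\bs}}^{\Lbb}A\langle T_1,\ldots,T_n\rangle_{\bs} \;\simeq\; R\intHom_{\underline{A}}\bigl(N_{A\langle T_1,\ldots,T_n\rangle/A},-\bigr)$$
then comes from Lemma \ref{poly}: by Lemma \ref{poly bs}, $A\langle T_1,\ldots,T_n\rangle_{\bs}\simeq A_{\bs}\otimes_{\Zbb_{\bs}}^{\Lbb}\Zbb[T_1,\ldots,T_n]_{\bs}$, and since $\Zbb_{\bs}\to \Zbb[T_1,\ldots,T_n]_{\bs}$ is steady by Lemma \ref{poly steady}, Proposition \ref{steady bc} lets me compute the scalar extension via $(-)|_{\Zbb_{\bs}}\otimes_{\Zbb_{\bs}}^{\Lbb}\Zbb[T_1,\ldots,T_n]_{\bs}$; Lemma \ref{poly} rewrites this as an $R\intHom_{\underline{\Zbb}}$, and the tensor/restriction adjunction repackages it as $R\intHom_{\underline{A}}$ against $N_{A\langle T_1,\ldots,T_n\rangle/A}$.

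For the perfect step $A\langle T_1,\ldots,T_n\rangle\to B$, I would invoke the lemma immediately before Lemma \ref{rp}: $\underline{B}$ is quasi-isomorphic to a bounded complex of finite projective $\underline{A\langle T_1,\ldots,T_n\rangle}$-modules. As in the perfect case of the proof of Proposition \ref{steady2}, this yields $-\otimes_{A\langle T_1,\ldots,T_n\rangle_{\bs}}^{\Lbb}B_{\bs}\simeq -\otimes_{\underline{A\langle T_1,\ldots,T_n\rangle}}^{\Lbb}\underline{B}$, and dualisability of such complexes gives the tensor–Hom formula
$$M\otimes_{\underline{A\langle T_1,\ldots,T_n\rangle}}^{\Lbb}\underline{B}\;\simeq\; R\intHom_{\underline{A\langle T_1,\ldots,T_n\rangle}}\bigl(R\intHom_{\underline{A\langle T_1,\ldots,T_n\rangle}}(\underline{B},\underline{A\langle T_1,\ldots,T_n\rangle}),M\bigr).$$
So I can take $N_{B/A\langle T_1,\ldots,T_n\rangle}:=R\intHom_{\underline{A\langle T_1,\ldots,T_n\rangle}}(\underline{B},\underline{A\langle T_1,\ldots,T_n\rangle})$, which is again a bounded complex of finite projective modules, and in particular compact and $A\langle T_1,\ldots,T_n\rangle_{\bs}$-complete.

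Finally I would glue the two halves by setting
$$N_{B/A}\;:=\;N_{A\langle T_1,\ldots,T_n\rangle/A}\otimes_{\underline{A\langle T_1,\ldots,T_n\rangle}}^{\Lbb}N_{B/A\langle T_1,\ldots,T_n\rangle}.$$
Tensoring a compact, $A_{\bs}$-complete object with a perfect complex preserves both properties, and composing the two $R\intHom$-presentations produces the desired equivalence $-\otimes_{A_{\bs}}^{\Lbb}B_{\bs}\simeq R\intHom_{\underline{A}}(N_{B/A},-)$; the limit-preservation statement is then a formal consequence. The main obstacle I foresee is the bookkeeping of module structures: verifying that the $\underline{A\langle T_1,\ldots,T_n\rangle}$-action on $N_{A\langle T_1,\ldots,T_n\rangle/A}$ transported from the natural $\underline{\Zbb[T_1,\ldots,T_n]}$-action via Lemma \ref{tensor} is compatible with the equivalence coming from Lemma \ref{poly} (which a priori lives in $\Dcal(\Zbb[T_1,\ldots,T_n]_{\bs})$), and justifying that the steadiness supplied by Lemma \ref{poly steady} and Proposition \ref{steady bc} cleanly transports that equivalence up to $\Dcal(A\langle T_1,\ldots,T_n\rangle_{\bs})$.
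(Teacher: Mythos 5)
Your proposal is correct and follows essentially the same route as the paper: the paper likewise reduces via Lemma \ref{rp} to the two cases $B = A\langle T_1,\ldots,T_n\rangle$ and $B$ a perfect $A$-module, defines $N_{A\langle T_1,\ldots,T_n\rangle/A} = R\intHom_{\underline{\Zbb}}(\underline{\Zbb[T_1,\ldots,T_n]},\underline{\Zbb})\otimes_{\Zbb_{\bs}}^{\Lbb} A_{\bs}$ in the polynomial case (precisely to get compactness by base change from the compact object over $\Zbb_{\bs}$, since $\prod\underline{A}$ itself need not be compact for general $K$), handles the perfect case via the bounded complex of finite projectives as in Proposition \ref{steady2}/Theorem \ref{key1}, and assembles the general case by the same tensor-of-duals composition. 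The module-structure and steadiness bookkeeping you flag at the end is exactly what the paper's references to Lemma \ref{poly bs}, Lemma \ref{poly steady}, Proposition \ref{steady bc}, and the argument of Corollary \ref{limit} are meant to supply, so there is no gap beyond what the paper itself leaves implicit.
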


\begin{proof}
By the same argument as in the proof of Theorem \ref{key1}, we may assume that $B$ is equal to $A\langle T_1,\ldots ,T_n \rangle$ or $B$ is a perfect $A$-module.
If $B$ is a perfect $A$-module, then the same argument as in Theorem \ref{key1} works well.
We assume $B=A\langle T_1,\ldots ,T_n \rangle$.
We define $N_{A\langle T_1,\ldots ,T_n \rangle/A} = R\intHom_{\underline{\Zbb}}(\Zbb[T_1,\ldots,T_n],\Zbb) \otimes_{\Zbb_{\bs}}^{\Lbb} A_{\bs}$, which is an object of $\Dcal(\underline{A\langle T_1,\ldots ,T_n \rangle})$.
Then we can show that it satisfies the conditions by the same argument as in Corollary \ref{limit}.
\end{proof}

\begin{remark}
The functoriality result as in Remark \ref{functoriality} holds true.
It can be proved by the same way.
\end{remark}

\begin{lemma}\label{semisimplicial}
Let $\Dcal$ be a stable $\infty$-category, and $X_{\bullet}$ be a semisimplicial object of $\Dcal$.
Then for every $n \geq 1$, we have the following fiber sequence:
$$\varinjlim_{[m] \in \Delta_{s,\leq n-1}^{\op}}X_m \to \varinjlim_{[m] \in \Delta_{s,\leq n}^{\op}}X_m \to X_n[n].$$
\end{lemma}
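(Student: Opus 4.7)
The plan is to establish this as a cofiber sequence, which agrees with a fiber sequence since $\Dcal$ is stable. Write $Y_k = \varinjlim_{[m]\in\Delta_{s,\leq k}^{op}} X_m$ and $i\colon \Delta_{s,\leq n-1}^{op} \hookrightarrow \Delta_{s,\leq n}^{op}$ for the inclusion. Using the adjunction $i_! \dashv i^*$ and the fact that colimits commute with left Kan extensions along fully faithful inclusions, $Y_{n-1}$ is canonically identified with $\varinjlim_{\Delta_{s,\leq n}^{op}} i_! i^* X$. Hence $\cofib(Y_{n-1} \to Y_n)$ is the colimit over $\Delta_{s,\leq n}^{op}$ of the pointwise cofiber $C$ of the counit $i_! i^* X \to X$.

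A direct computation using the pointwise formula for the left Kan extension shows that the comma category indexing $(i_! i^* X)([n])$ is empty, since there is no injection $[n] \hookrightarrow [k]$ in $\Delta_s$ when $k < n$. Therefore $C([n]) = X_n$ and $C([k]) = 0$ for $k < n$. Applying the same Kan-extension analysis with $X$ replaced by $j_! X_n$, where $j\colon \{[n]\}\hookrightarrow \Delta_{s,\leq n}^{op}$, yields the same pointwise cofiber $C$. Commuting the colimit with the cofiber and with left Kan extension along $i$, the problem reduces to computing $\cofib\bigl(\varinjlim_{\Delta_{s,\leq n-1}^{op}} i^* j_! X_n \to X_n\bigr)$. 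The functor $i^* j_! X_n$ sends $[k]$ to the coproduct of copies of $X_n$ indexed by the set of injections $[k] \hookrightarrow [n]$; collectively, this is the copower of $X_n$ with the semisimplicial boundary of the standard $n$-simplex, whose geometric realization is the $(n-1)$-sphere. Hence the colimit in question is $X_n \otimes S^{n-1}$.

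In a stable $\infty$-category, the copower $X_n \otimes S^{n-1}$ decomposes as $X_n \oplus X_n[n-1]$ for $n \geq 1$, and the canonical map to $X_n$ induced by the contraction $S^{n-1} \to \ast$ is the projection onto the first summand. Its cofiber is therefore $X_n[n-1][1] = X_n[n]$, completing the argument. The main technical point will be the rigorous identification of the copower $X_n \otimes S^{n-1}$ together with the verification that the structure map is precisely a projection onto the $X_n$-summand; these ultimately reduce to standard manipulations with the tensoring of a stable $\infty$-category by spaces, applied to the pair $(\ast, S^{n-1})$.
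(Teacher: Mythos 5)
Your argument is correct, and it ultimately pivots on the same computation as the paper's proof, namely that the successive quotient of the skeletal filtration is $\cofib(X_n\otimes\partial\Delta^n\to X_n)\simeq\cofib(X_n\otimes S^{n-1}\to X_n)\simeq X_n[n]$. The difference is in how that cofiber presentation is obtained: the paper simply quotes \cite[Remark 4.3.4]{DAG8} for the fiber sequence whose third term is $\cofib(X_n\otimes\partial\Delta^n\to X_n)$, whereas you re-derive it from scratch via the counits of $i_!\dashv i^*$ and $j_!\dashv j^*$. Your route buys a self-contained proof and makes visible where $\partial\Delta^n$ comes from (as $i^*j_!$ of the point), at the cost of length and of one step that deserves an explicit sentence: the claim that $\cofib(i_!i^*X\to X)$ and $\cofib(i_!i^*j_!X_n\to j_!X_n)$ are \emph{the same} functor $C$ does not follow merely from their having the same values objectwise. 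It does hold here because the counit $j_!j^*X\to X$ induces a map between the two cofiber sequences whose induced map on cofibers is an objectwise equivalence --- an equivalence at $[n]$, since $[n]$ receives no nonidentity morphism in $\Delta_{s,\leq n}^{\op}$, so that both latching terms vanish there and the counit $(j_!j^*X)([n])\to X([n])$ is an equivalence, and the zero map elsewhere. With that line added, the remaining identifications (the homotopy colimit of $[k]\mapsto\mathrm{Inj}([k],[n])$ over $\Delta_{s,\leq n-1}^{\op}$ is the nerve of the poset of proper faces of $[n]$, hence $S^{n-1}$, and $X_n\otimes S^{n-1}\simeq X_n\oplus X_n[n-1]$ with the contraction map split, so the cofiber is $X_n[n]$) are correct and match the paper's final step.
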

\begin{proof}
By \cite[Remark 4.3.4]{DAG8}, we have the following fiber sequence:
$$\varinjlim_{[m] \in \Delta_{s,\leq n-1}^{\op}}X_m \to \varinjlim_{[m] \in \Delta_{s,\leq n}^{\op}}X_m \to \cofib(X_n\otimes \partial \Delta^n \to X_n).$$
Therefore, the claim follows from the equivalence $\cofib(X_n\otimes \partial \Delta^n \to X_n) \simeq X_n[n].$
\end{proof}

\begin{theorem}\label{rig dual des}
Let $f \colon A \to B$ be a faithfully flat map of admissible $\Ocal_K$-algebras. 
Let $N_{B/A} \in \Dcal(\underline{B})$ be as in Theorem \ref{limit2} and $N_{B/A} \to \underline{A}$ be the natural map in $\Dcal(A_{\bs})$. 
Then the map $\displaystyle \varinjlim_{[m] \in \Delta_{s,\leq 2}^{\op}} N_{B/A}^{\otimes (m+1)} \to \underline{A}$ has a section, where $N_{B/A}^{\otimes (m+1)}$ is the $(m+1)$-fold derived tensor product of $N_{B/A}$ over $A_{\bs}$.
\end{theorem}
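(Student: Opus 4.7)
The strategy is to reduce to the discrete case by modding out by $\pi^n$, combine Proposition \ref{bdd} with Theorem \ref{colimit} to obtain the desired section in every discrete quotient, and then lift the resulting vanishing back to $A$ via $\pi$-adic completeness and a Milnor-type argument.

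Let $X := \varinjlim_{[m]\in \Delta_{s,\leq 2}^{\op}} N_{B/A}^{\otimes(m+1)}$ and $Y := \fib(X \to \underline A)$ in $\Dcal(A_{\bs})$. Constructing a section of $X \to \underline{A}$ is equivalent to showing that the boundary class in $\Hom_{\Dcal(A_{\bs})}(\underline{A}, Y[1])$ vanishes. Since $\underline{A} = A_{\bs}[\ast]$ is compact projective in $\Mod_{A_{\bs}}^{\cond}$ and evaluation at the point is exact on condensed modules, this amounts to checking $H^{1}(Y(\ast)) = 0$.

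For each $n \geq 1$, set $\bar A_n := A/\pi^n$ and $\bar B_n := B/\pi^n$; then $\bar A_n \to \bar B_n$ is a finitely presented faithfully flat map of discrete rings. Using the functoriality of the $N$-construction (the analogue of Remark \ref{functoriality} in the admissible setting) together with the steadiness of $A_{\bs}\to B_{\bs}$ (Proposition \ref{steady3}), I would identify $N_{B/A}^{\otimes (m+1)} \otimes_{A_{\bs}}^{\Lbb} \underline{\bar A_n} \simeq N_{\bar B_n/\bar A_n}^{\otimes (m+1)}$ and, consequently, $Y \otimes_{A_{\bs}}^{\Lbb}\underline{\bar A_n} \simeq Y^{\bar A_n}$, the analogous fiber for $\bar A_n \to \bar B_n$. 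In this discrete setting, Proposition \ref{bdd} places each $N_{\bar B_n/\bar A_n}^{\otimes(k+1)}$ in cohomological degrees $[0,1]$, so by Lemma \ref{semisimplicial} the associated graded pieces $N_{\bar B_n/\bar A_n}^{\otimes(k+1)}[k]$ of the filtration of $\varinjlim_{\Delta_s^{\op}} N_{\bar B_n/\bar A_n}^{\otimes(m+1)}$ by its $\Delta_{s,\leq k}^{\op}$-truncations lie in degrees $\leq -2$ for $k \geq 3$. Combining this with Theorem \ref{colimit}, which gives $\varinjlim_{\Delta_s^{\op}}N_{\bar B_n/\bar A_n}^{\otimes(m+1)} \simeq \underline{\bar A_n}$, forces $Y^{\bar A_n}$ to be concentrated in cohomological degrees $\leq -1$; the same then holds for $Y^{\bar A_n}(\ast)$ by exactness of evaluation.

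To transport the vanishing back to $A$, I would verify that $X$ and $Y$ are $\pi$-adically complete in $\Dcal(A_{\bs})$. Applying Lemma \ref{rp} to each map $A \to B^{m/A}$ produces a decomposition $A \to A\langle T_1, \ldots, T_r\rangle \to B^{m/A}$ whose second step is perfect; inspection of the proof of Theorem \ref{limit2} then presents $N_{B/A}^{\otimes(m+1)} \simeq N_{B^{m/A}/A}$ as a finite complex built from $\prod_J \underline{A}$ (contributed by the polynomial step) tensored with a finite complex of finite projective $\underline{A\langle T\rangle}$-modules (contributed by the perfect step). Since $\underline{A}$ is $\pi$-adically complete, and this property is stable under products, tensoring with finite complexes of finite projective modules, and finite colimits, both $X$ and $Y$ are $\pi$-adically complete. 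Proposition \ref{drvd cplt}(3) then yields $Y \simeq R\varprojlim_n Y^{\bar A_n}$, so
\[ R\Gamma(\ast, Y) \simeq R\varprojlim_n Y^{\bar A_n}(\ast). \]
The Milnor exact sequence identifies $H^{1}$ of the right-hand side as an extension of $\varprojlim_n H^1(Y^{\bar A_n}(\ast))$ by $\varprojlim^1_n H^0(Y^{\bar A_n}(\ast))$; both groups vanish by the analysis of the previous paragraph, so $H^1(Y(\ast)) = 0$ and the desired section exists.

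The main obstacle I anticipate is the $\pi$-adic completeness of $N_{B/A}^{\otimes(m+1)}$: the Raynaud--Gruson input behind the discrete case of Proposition \ref{bdd} is not directly available in the admissible $\Ocal_K$-algebra setting, so one must instead exploit the explicit construction of $N$ via Lemma \ref{rp} and reduce the claim to the stability of $\pi$-adic completeness under products of copies of $\underline{A}$, tensoring with finite complexes of finite projective modules, and finite colimits.
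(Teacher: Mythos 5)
Your overall architecture runs parallel to the paper's: reduce modulo a power of $\pi$, use Proposition \ref{bdd} together with Lemma \ref{semisimplicial} and Theorem \ref{colimit} to show that the fiber of $\varinjlim_{[m]\in\Delta_{s,\leq 2}^{\op}}N^{\otimes(m+1)}\to\underline{A}$ becomes coconnective enough after the reduction, and then lift the vanishing of the obstruction class back to $A$ using $\pi$-adic completeness and projectivity of $\underline{A}$. The obstruction-theoretic reformulation, the degree count, and the Milnor-sequence bookkeeping are all fine. The problem is concentrated exactly where you anticipated it: the $\pi$-adic completeness of $X=\varinjlim_{[m]\in\Delta_{s,\leq 2}^{\op}}N_{B/A}^{\otimes(m+1)}$, and your proposed resolution does not work.

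The gap is that the building blocks of $N_{B/A}^{\otimes(m+1)}$ are not products $\prod_J\underline{A}$. Inspecting Theorem \ref{limit2}, the contribution of the polynomial step is $N_{A\langle T_1,\ldots,T_r\rangle/A}=R\intHom_{\underline{\Zbb}}(\underline{\Zbb[T_1,\ldots,T_r]},\underline{\Zbb})\otimes_{\Zbb_{\bs}}^{\Lbb}A_{\bs}\simeq\bigl(\prod_J\underline{\Zbb}\bigr)\otimes_{\Zbb_{\bs}}^{\Lbb}A_{\bs}$, and by Theorem \ref{dir prod} this is the \emph{solidification} $\varinjlim_{B\subset A^{\circ},M}\prod_J\underline{M}$ (a filtered colimit of products over quasi-finitely generated submodules $M$), not $\prod_J\underline{A}$; the identity $(\prod_J\underline{\Zbb})\otimes_{\Zbb_{\bs}}^{\Lbb}A_{\bs}\simeq\prod_J\underline{A}$ in Lemma \ref{eq}(3) is only available for $A$ a finitely generated $\Zbb$-algebra (and, by the quasi-finite-generation argument, when $K$ is a local field), since otherwise $\prod_J\underline{A}$ fails to be compact while $N_{B/A}$ must be. While each $\prod_J\underline{M}$ and $\prod_J\underline{A}$ is indeed derived $\pi$-complete, derived $\pi$-completeness is not stable under the filtered colimit over $(B,M)$, so your stability argument does not apply. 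This is precisely the difficulty the paper flags in the introduction ($A_{\bs}[S]\to\varprojlim_n(A/\pi^n)_{\bs}[S]$ need not be an equivalence for a general admissible $\Ocal_K$-algebra) and is the entire reason Section 3 introduces small complete adic rings. The paper's fix is to write $A=\varinjlim_{\lambda}A_{\lambda}$ with each $A_{\lambda}$ small (Lemma \ref{filtered colimit}), descend the compact object $N_{B/A}\to\underline{A}$ to some $\Dcal((A_{\lambda})_{\bs})_{\cpt}$ via Lemma \ref{limit argument}, invoke Proposition \ref{small} there to get $\pi$-adic completeness of the finite colimit $N_2'$, run your mod-$\pi$ argument over $A_{\lambda}/\pi$ (only $n=1$ is needed, combined with Lemma \ref{H0 vanish} in place of the Milnor sequence), and base change the resulting section back to $A$. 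Without this detour --- or some independent proof that compact objects of $\Dcal(A_{\bs})$ are $\pi$-adically complete for arbitrary admissible $\Ocal_K$-algebras, which the paper does not claim --- your argument cannot be completed as written. A secondary, more minor point: the identification $N_{B/A}^{\otimes(m+1)}\otimes_{A_{\bs}}^{\Lbb}\underline{A/\pi^n}\simeq N_{(B/\pi^n)/(A/\pi^n)}^{\otimes(m+1)}$, compatibly with the augmentations to $\underline{A/\pi^n}$, needs more care than an appeal to ``functoriality''; the paper handles this by comparing the two augmented objects in $\Dcal((A_{\lambda}/\pi)_{\bs})_{\cpt}$ and using the compactness/limit argument to make them agree at a finite stage $\lambda$.
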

\begin{proof}
We take a directed system $(A_{\lambda})_{\lambda \in \Lambda}$ of small complete $\pi$-adic subrings of $A$ as in Lemma \ref{filtered colimit}.
Since $N_{B/A}$ is compact as an object of $\Dcal(A_{\bs})$, we can take $\lambda \in \Lambda$ and $N^{\lambda}_{B/A} \to \underline{A_{\lambda}}$ in $\Dcal((A_{\lambda})_{\bs})_{\cpt}$ such that 
$$(N^{\lambda}_{B/A} \to \underline{A_{\lambda}}) \otimes_{(A_{\lambda})_{\bs}}^{\Lbb} A_{\bs} \simeq (N_{B/A} \to \underline{A})$$ in $\Dcal(A_{\bs})$ by Lemma \ref{limit argument}.
By enlarging $\lambda$, if necessary, we may assume that there exists a finitely presented faithfully flat $A_{\lambda}/\pi$-algebra $S_{\lambda}$ such that $S_{\lambda} \otimes_{A_{\lambda}/\pi} A/\pi \cong B/\pi$.
Then, the two maps 
\begin{equation}\label{map1}
(N^{\lambda}_{B/A} \to \underline{A_{\lambda}}) \otimes_{(A_{\lambda})_{\bs}}^{\Lbb} (A_{\lambda}/\pi)_{\bs}
\end{equation}
and 
\begin{equation}\label{map2}
N_{S_{\lambda}/(A_{\lambda}/\pi)} \to \underline{A_{\lambda}/\pi}
\end{equation}
in $\Dcal((A_{\lambda}/\pi)_{\bs})_{\cpt}$ become equivalent after applying $-\otimes_{(A_{\lambda}/\pi)_{\bs}}^{\Lbb} (A/\pi)_{\bs}$.
Therefore, we may assume that two maps (\ref{map1}) and (\ref{map2}) are equivalent by enlarging $\lambda$, if necessary. 
We put $$\displaystyle N_n\coloneqq \varinjlim_{[m] \in \Delta_{s,\leq n}^{\op}}(N^{\lambda}_{B/A})^{\otimes (m+1)}  \otimes_{(A_{\lambda})_{\bs}}^{\Lbb} (A_{\lambda}/\pi)_{\bs}.$$
By Lemma \ref{semisimplicial}, we have a fiber sequence 
$$N_n \to N_{n+1} \to ((N^{\lambda}_{B/A})^{\otimes (n+2)}  \otimes_{(A_{\lambda})_{\bs}}^{\Lbb} (A_{\lambda}/\pi)_{\bs})[n+1].$$
Since $(N^{\lambda}_{B/A})^{\otimes (n+2)}  \otimes_{(A_{\lambda})_{\bs}}^{\Lbb} (A_{\lambda}/\pi)_{\bs}$ lies in $\Dcal^{[0,1]}((A_{\lambda}/\pi)_{\bs})$ by Proposition \ref{bdd},
we get that $\tau^{\geq2-n}(N_n) \to \tau^{\geq2-n}(N_{n+1})$ is an equivalence.
By Theorem \ref{colimit}, we have $\displaystyle \varinjlim_{n} N_n \simeq \underline{A_{\lambda}/\pi}$, so we get an equivalence $\tau^{\geq0} N_2 \simeq \underline{A_{\lambda}/\pi}$.
Since $H^{-1}(\underline{A_{\lambda}/\pi})=0$, we get $\cofib(N_2 \to \underline{A_{\lambda}/\pi}) \in \Dcal^{\leq -2}((A_{\lambda}/\pi)_{\bs})$.
We put $\displaystyle N_2^{\prime} \coloneqq \varinjlim_{[m] \in \Delta_{s,\leq 2}^{\op}}(N^{\lambda}_{B/A})^{\otimes (m+1)}$, which is a compact object of $\Dcal((A_{\lambda})_{\bs})$ and therefore $\pi$-adically complete by Proposition \ref{small}.
We have $$\cofib(N_2^{\prime} \to \underline{A_{\lambda}})\otimes_{(A_{\lambda})_{\bs}}^{\Lbb} (A_{\lambda}/\pi)_{\bs} \simeq \cofib(N_2 \to \underline{A_{\lambda}/\pi}),$$ so we get $\cofib(N_2^{\prime} \to \underline{A_{\lambda}}))\in \Dcal^{\leq -1}((A_{\lambda})_{\bs})$ by Lemma \ref{H0 vanish}.
We have the following fiber sequences:
$$\xymatrix{
N_2^{\prime} \ar[r] &\underline{A_{\lambda}} \ar[r] & \cofib(N_2^{\prime} \to \underline{A_{\lambda}})  \\
N^{\lambda}_{B/A} \ar[r]\ar[u] &\underline{A_{\lambda}} \ar[r]_-{\alpha}\ar[u]_{\id} &\cofib(N^{\lambda}_{B/A} \to \underline{A_{\lambda}}) \ar[u]_{\beta}.
}
$$
By $\cofib(N_2^{\prime} \to \underline{A_{\lambda}}))\in \Dcal^{\leq -1}((A_{\lambda})_{\bs})$, $\beta\circ\alpha$ is zero, where we note that $\underline{A_{\lambda}}$ is a projective object in $\Mod_{(A_{\lambda})_{\bs}}^{\cond}$, so $N_2^{\prime} \to \underline{A_{\lambda}}$ has a section.
Therefore, $$\displaystyle (\varinjlim_{[m] \in \Delta_{s,\leq 2}^{\op}} N_{B/A}^{\otimes (m+1)} \to \underline{A}) \simeq (N_2^{\prime} \to \underline{A_{\lambda}}) \otimes_{(A_{\lambda})_{\bs}}^{\Lbb} A_{\bs}$$ also has a section.
\end{proof}

\begin{remark}
By the similar argument as in \cite[Proposition 3.20]{Mat16}, we can deduce from Theorem \ref{rig dual des} that the ind-object $\displaystyle \{\varinjlim_{[m] \in \Delta_{s,\leq n}^{\op}} N_{B/A}^{\otimes (m+1)}\}_n$ of $\Dcal(A_{\bs})$ is a constant ind-object which converges to $\underline{A}$, but we will not use it, so we omit the details.
\end{remark}

\begin{corollary}\label{cons2}
Let $f \colon A \to B$ be a faithfully flat map of admissible $\Ocal_K$-algebras. 
Then, the functor
$$- \otimes_{A_{\bs}}^{\Lbb} B_{\bs} \colon \Dcal(A_{\bs}) \to \Dcal(B_{\bs})$$
is conservative.
\end{corollary}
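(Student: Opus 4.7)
The plan is to mimic the proof of Corollary \ref{cons} in the discrete case, but replacing the full colimit over $\Delta_s^{\op}$ by the truncated colimit over $\Delta_{s,\leq 2}^{\op}$, and using the splitting supplied by Theorem \ref{rig dual des} in place of a genuine equivalence.

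Concretely, suppose $M \in \Dcal(A_{\bs})$ satisfies $M \otimes_{A_{\bs}}^{\Lbb} B_{\bs} \simeq 0$; the goal is to show $M \simeq 0$. First I would observe that, by iterating the equivalence $- \otimes_{A_{\bs}}^{\Lbb} B_{\bs} \simeq R\intHom_{\underline{A}}(N_{B/A}, -)$ from Theorem \ref{limit2} exactly as in the proof of Theorem \ref{colimit}, one obtains for every $m \geq 0$ a natural equivalence
$$R\intHom_{\underline{A}}(N_{B/A}^{\otimes(m+1)}, M) \simeq R\intHom_{\underline{A}}\bigl(N_{B/A}^{\otimes m}, M \otimes_{A_{\bs}}^{\Lbb} B_{\bs}\bigr) \simeq 0.$$

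Since $\Delta_{s,\leq 2}^{\op}$ is a finite category, the finite limit
$$R\intHom_{\underline{A}}\Bigl(\varinjlim_{[m] \in \Delta_{s,\leq 2}^{\op}} N_{B/A}^{\otimes(m+1)},\, M\Bigr) \simeq \varprojlim_{[m] \in \Delta_{s,\leq 2}} R\intHom_{\underline{A}}\bigl(N_{B/A}^{\otimes(m+1)}, M\bigr)$$
vanishes. By Theorem \ref{rig dual des}, the natural map $\varinjlim_{[m] \in \Delta_{s,\leq 2}^{\op}} N_{B/A}^{\otimes(m+1)} \to \underline{A}$ admits a section, so $\underline{A}$ is a retract of the source. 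Applying $R\intHom_{\underline{A}}(-, M)$ exhibits $M \simeq R\intHom_{\underline{A}}(\underline{A}, M)$ as a retract of the zero object, giving $M \simeq 0$.

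There is no serious obstacle beyond marshalling these ingredients in the right order; the content has already been packaged in Theorems \ref{limit2} and \ref{rig dual des}. The only subtle point to double-check is that the retract argument works at the level of $\infty$-categories (not merely on homotopy categories), but this is automatic since the section from Theorem \ref{rig dual des} is a morphism in $\Dcal(A_{\bs})$ and $R\intHom_{\underline{A}}(-, M)$ is an $\infty$-functor, so it sends retract diagrams to retract diagrams.
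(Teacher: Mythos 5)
Your proof is correct and is essentially identical to the paper's own argument: the paper likewise uses the section from Theorem \ref{rig dual des} to exhibit $M \simeq R\intHom_{\underline{A}}(\underline{A},M)$ as a retract of $\varprojlim_{[m] \in \Delta_{s,\leq 2}}R\intHom_{\underline{A}}(N_{B/A}^{\otimes (m+1)}, M)$, and kills each term by the same iterated tensor-hom identification borrowed from Corollary \ref{cons}. No discrepancies to report.
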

\begin{proof}
We take an object $M \in \Dcal(A_{\bs})$ such that $M \otimes_{A_{\bs}}^{\Lbb} B_{\bs} \simeq 0$. We want to show that $M$ is equivalent to $0$.
Since $M \simeq R\intHom_{\underline{A}}(\underline{A},M)$ is a retract of 
$$R\intHom_{\underline{A}}(\varinjlim_{[m] \in \Delta_{s,\leq 2}^{\op}} N_{B/A}^{\otimes (m+1)}, M)\simeq\varprojlim_{[m] \in \Delta_{s,\leq 2}}R\intHom_{\underline{A}}(N_{B/A}^{\otimes (m+1)}, M),$$
it is enough to show that $R\intHom_{\underline{A}}(N_{B/A}^{\otimes (m+1)}, M)\simeq 0$, which can be shown by the same way as in Corollary \ref{cons}.
\end{proof}

\begin{remark}
If $K$ is a non-archimedean local field, then we can prove the equivalence $N_{B/A} \simeq R\intHom_{\underline{A}}(\underline{B}, \underline{A})$ by the same argument as in the proof of Theorem \ref{key1}.
Therefore, the proofs become easier by using the results in \cite{Mat22}.
\end{remark}

From Theorem \ref{rig dual des}, we get our main theorems.

\begin{theorem}\label{intmain}
Let $f \colon A \to B$ be a faithfully flat map of admissible $\Ocal_K$-algebras. 
Let $B^{n/A}$ denote the $(n+1)$-fold completed tensor product of $B$ over $A$.
Then we have an equivalence of $\infty$-categories 
$$\Dcal(A_{\bs}) \overset{\sim}{\lra} \varprojlim_{[n] \in \Delta} \Dcal((B^{n/A})_{\bs}). $$
\end{theorem}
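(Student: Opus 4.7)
The plan is to reduce Theorem \ref{intmain} to the general descent criterion of Lemma \ref{general des}, applied to the morphism $A_{\bs} \to B_{\bs}$ in $\AnRing$. All three hypotheses of that lemma have essentially been set up in the preceding development. Hypothesis (1) (steadiness of $A_{\bs} \to B_{\bs}$) is exactly Proposition \ref{steady3}. Hypothesis (2) (the functor $- \otimes_{A_{\bs}}^{\Lbb} B_{\bs}$ commutes with small limits) is part of the content of Theorem \ref{limit2}: since this functor is equivalent to $R\intHom_{\underline{A}}(N_{B/A}, -)$ for the compact dualizing object $N_{B/A}$, and since internal Hom preserves all small limits in its second variable, the claim is immediate. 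Hypothesis (3) (conservativity) is Corollary \ref{cons2}.

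Applying Lemma \ref{general des} then produces an equivalence
\[
\Dcal(A_{\bs}) \overset{\sim}{\lra} \varprojlim_{[n] \in \Delta} \Dcal\bigl((B_{\bs})^{n/A_{\bs}}\bigr),
\]
where $(B_{\bs})^{\bullet/A_{\bs}}$ denotes the \v{C}ech nerve of $A_{\bs} \to B_{\bs}$ computed in $\AnRing$. It remains to identify this nerve with the cosimplicial diagram $n \mapsto (B^{n/A})_{\bs}$ arising from the completed tensor products. I would do this by induction on $n$, using Corollary \ref{flat bc}. The base case $n=0$ is tautological. For the inductive step, observe that $A \to B^{n/A}$ is flat (flatness of admissible $\Ocal_K$-algebras is preserved under completed tensor products, since these realize ordinary tensor products modulo $\pi$ after adic completion), so Corollary \ref{flat bc} applies to the two flat maps $A \to B^{n/A}$ and $A \to B$ to yield
\[
(B^{n/A})_{\bs} \otimes_{A_{\bs}}^{\Lbb} B_{\bs} \simeq (B^{n/A} \hotimes_A B)_{\bs} = (B^{(n+1)/A})_{\bs}.
\]
The compatibility with face and degeneracy maps follows from naturality of Corollary \ref{flat bc}, completing the identification.

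I do not expect a serious obstacle in executing this plan: the hard technical work has already been done. The substantive content lies upstream, in the construction of the compact dualizing object $N_{B/A}$ (Theorem \ref{limit2}) and in the descendability statement Theorem \ref{rig dual des}, whose proof required the passage through small complete $\pi$-adic subrings and the $\pi$-adic completeness machinery of Section 3. Given these inputs, Theorem \ref{intmain} is a formal consequence, and the only genuinely new step is the inductive identification of the \v{C}ech nerve, which is routine. If any subtlety arises, it will be in verifying that the \v{C}ech nerve in $\AnRing$ — a priori a colimit in an $\infty$-category — is computed by iterated pushouts along $A_{\bs} \to B_{\bs}$, but this is already built into the formulation of Lemma \ref{general des} and was handled in the cited monadicity argument.
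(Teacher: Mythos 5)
Your proposal is correct and matches the paper's proof, which is a one-line citation of exactly the same ingredients: Lemma \ref{general des} with steadiness from Proposition \ref{steady3}, limit-preservation from Theorem \ref{limit2}, conservativity from Corollary \ref{cons2}, and the identification of the \v{C}ech nerve via Corollary \ref{flat bc}. Your explicit inductive identification of the nerve (including the flatness of $A\to B^{n/A}$ needed to invoke Corollary \ref{flat bc}) only spells out details the paper leaves implicit.
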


\begin{proof}
It follows from Lemma \ref{general des}, Corollary \ref{flat bc}, Proposition \ref{steady3}, Theorem \ref{limit2}, and Corollary \ref{cons2}.
\end{proof}

\begin{theorem}\label{main theorem}
Let $f \colon A \to B$ be a faithfully flat map of affinoid $K$-algebras. 
Let $B^{n/A}$ denote the $(n+1)$-fold completed tensor product of $B$ over $A$.
Then we have an equivalence of $\infty$-categories 
$$\Dcal((A,A^{\circ})_{\bs}) \overset{\sim}{\lra} \varprojlim_{[n] \in \Delta} \Dcal((B^{n/A}, (B^{n/A})^{\circ})_{\bs}). $$
\end{theorem}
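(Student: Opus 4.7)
The plan is to apply Lemma \ref{general des} to the map $(A, A^\circ)_{\bs} \to (B, B^\circ)_{\bs}$ and verify its three hypotheses, following the strategy used for admissible $\Ocal_K$-algebras in Theorem \ref{intmain}. Steadiness is already Proposition \ref{steady3}. For commutation with small limits, I would observe that the proof of Theorem \ref{limit2} transfers essentially verbatim to flat maps of affinoid $K$-algebras, since all of its key inputs (Lemma \ref{rp}, Proposition \ref{steady3}, Lemma \ref{poly bs}) are available in this setting. By the same reduction to the two cases $B = A\langle T_1, \ldots, T_n\rangle$ and $B$ perfect over $A$, one obtains a compact object $N_{(B, B^\circ)/(A, A^\circ)} \in \Dcal(\underline{B})$ representing the scalar extension as $R\intHom_{\underline{A}}(N_{(B, B^\circ)/(A, A^\circ)}, -)$, which automatically commutes with small limits.

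For conservativity, I would reduce to the admissible formal model case via a rational covering and the analytic sheaf property. Let $M \in \Dcal((A, A^\circ)_{\bs})$ with $M \otimes_{(A, A^\circ)_{\bs}}^{\Lbb} (B, B^\circ)_{\bs} \simeq 0$. By Lemma \ref{flattening}, choose a rational covering $\{U_i = \Spa(A_i, A_i^\circ)\}$ of $\Spa(A, A^\circ)$ together with faithfully flat maps $A_i' \to B_i'$ of admissible $\Ocal_K$-algebras such that $A_i'[1/\pi] \cong A_i$ and $B_i'[1/\pi] \cong B_i := B \hotimes_A A_i$. Set $M_i := M \otimes_{(A,A^\circ)_{\bs}}^{\Lbb} (A_i, A_i^\circ)_{\bs}$. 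Applying Lemma \ref{formal model} to $A_i'$ and to $B_i'$ shows that the square
$$\xymatrix{(A_i')_{\bs} \ar[r] \ar[d] & (B_i')_{\bs} \ar[d] \\ (A_i, A_i^\circ)_{\bs} \ar[r] & (B_i, B_i^\circ)_{\bs}}$$
is a pushout in $\AnRing$, obtained by base change along $\Ocal_{K,\bs} \to (K, \Ocal_K)_{\bs}$. Consequently, viewing $M_i$ as an object of $\Dcal((A_i')_{\bs})$ through the fully faithful forgetful functor, we have
$$M_i \otimes_{(A_i')_{\bs}}^{\Lbb} (B_i')_{\bs} \simeq M_i \otimes_{(A_i, A_i^\circ)_{\bs}}^{\Lbb} (B_i, B_i^\circ)_{\bs} \simeq 0.$$
Corollary \ref{cons2} applied to the admissible faithfully flat map $A_i' \to B_i'$ then forces $M_i \simeq 0$, and Theorem \ref{andes} allows us to conclude $M \simeq 0$.

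The main obstacle I expect is the pushout identification above and the accompanying completeness bookkeeping: one must verify that applying the forgetful functor to $M_i$ and then extending scalars along $(A_i')_{\bs} \to (B_i')_{\bs}$ really does recover $M_i \otimes_{(A_i, A_i^\circ)_{\bs}}^{\Lbb} (B_i, B_i^\circ)_{\bs}$. This boils down to the pushout property together with the fact that $M_i$ is already $(A_i, A_i^\circ)_{\bs}$-complete, hence in particular $(K, \Ocal_K)_{\bs}$-complete, so the tensor products line up correctly. Once this is settled, the remaining work is routine: invoke Corollary \ref{cons2} locally on each $U_i$ and then glue via Theorem \ref{andes} to finish.
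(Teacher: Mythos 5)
Your proposal is correct and follows essentially the same route as the paper: the paper likewise localizes via Lemma \ref{flattening} and Theorem \ref{andes} to assume a formal model $A' \to B'$ exists, identifies $(B,B^{\circ})_{\bs} \simeq (A,A^{\circ})_{\bs}\otimes_{A'_{\bs}}^{\Lbb} B'_{\bs}$ by Lemma \ref{formal model}, and then checks the three hypotheses of Lemma \ref{general des} exactly as in Theorem \ref{intmain}. The only (immaterial) difference is that you invoke the localization only for the conservativity check, handling steadiness and limit-commutation directly on the generic fibre via Proposition \ref{steady3} and the affinoid-$K$-algebra analogue of Theorem \ref{limit2}, an analogue the paper itself relies on in Section 5.
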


\begin{proof}
By Theorem \ref{andes} and Lemma \ref{flattening}, we may assume that there exists a faithfully flat map $A^{\prime} \to B^{\prime}$ of admissible $\Ocal_K$-algebras such that $A^{\prime}[1/\pi] \to B^{\prime}[1/\pi]$ is isomorphic to $A \to B$. 
Noting that there is an equivalence $(B, B^{\circ})_{\bs} \simeq (A, A^{\circ})_{\bs} \otimes_{A^{\prime}_{\bs}}^{\Lbb} B^{\prime}_{\bs}$ by Lemma \ref{formal model}, we can easily check the conditions of  Lemma \ref{general des} as in Theorem \ref{intmain}.
\end{proof} 

%%%%%%%%%%%%%%%%%%%%%%%%%%%%%%%%%%%%%%%%%%%%%%%% section 5 %%%%%%%%%%%%%%%%%%%%%%%%%%%%%%%%%%%%%%%%%%%%%%%%%%%%%%%%%%
 
\section{Descent of pseudo-coherent complexes and perfect complexes}
In this section, we will recover descent for pseudo-coherent complexes and perfect complexes from the main theorem by the method used in \cite[Section 5]{And21}. 
Let us begin by briefly recalling the notion used in \cite[Section 5]{And21}.
\begin{definition}
Let $A$ be a discrete ring. 
\begin{enumerate}
\item
An object of $\Dcal(A)$ is called a \textit{pseudo-coherent complex} if the following are satisfied:
\begin{itemize}
\item
$M$ is bounded above.
\item
For each integer $i$, the functor $\Ext_{A}^0(M,-) \colon \Dcal^{\geq i}(A) \to \Ab$ commutes with small filtered colimits.
\end{itemize}
We denote the full $\infty$-subcategory of pseudo-coherent complexes in $\Dcal(A)$ by $\PCoh(A)$.
In addition, for every integer $n$, we denote the full $\infty$-subcategory of pseudo-coherent complexes in $\Dcal^{\leq n}(A)$ by $\PCoh^{\leq n}(A)$.
\item
An object of $\Dcal(A)$ is called a \textit{perfect complex} if it is quasi-isomorphic to a complex of the form 
$$ 0 \to P_{n+m} \to \cdots \to P_{n+1} \to P_{n} \to 0,$$
where $P_{i}$ are finite projective $A$-modules.
We denote the full $\infty$-subcategory of perfect complexes in $\Dcal(A)$ by $\Perf(A)$.
In addition, for every pair of integers $a \leq b$, we denote the full $\infty$-subcategory of perfect complexes with Tor-amplitude in $[a,b]$ by $\Perf^{[a,b]}(A)$.
\end{enumerate}
\end{definition}

\begin{remark}
The category $\FP(A)$ of finite projective $A$-modules is equivalent to $\Perf^{[0,0]}(A)$.
\end{remark}

\begin{remark}[{\cite[Proposition 5.14]{And21}}]
An object of $\Dcal(A)$ is a pseudo-coherent complex if and only if it is quasi-isomorphic to a complex of the form 
$$ \cdots \to P_{n+m} \to \cdots \to P_{n+1} \to P_{n} \to 0,$$
where $P_{i}$ are finite projective $A$-modules.
\end{remark}

Let $(\Acal, \Mcal)$ be a (0-truncated) analytic ring.  

\begin{definition}[{\cite[Definition 5.13]{And21}}]
An object $M \in \Dcal(\Acal, \Mcal)$ is called a \textit{pseudo-coherent complex} if the following are satisfied:
\begin{itemize}
\item
$M$ is bounded above.
\item
For each integer $i$, the functor $\Ext_{\Acal}^0(M,-) \colon \Dcal^{\geq i}(\Acal, \Mcal) \to \Ab$ commutes with small filtered colimits.
\end{itemize}
We denote the full $\infty$-subcategory of pseudo-coherent complexes in $\Dcal(\Acal, \Mcal)$ by $\PCoh(\Acal, \Mcal)$.
\end{definition}

\begin{lemma}\label{pcoh0}
Let $M$ be an object of $\PCoh(\Acal, \Mcal)$. 
Then for each integer $i$, the functor $R\Hom_{\underline{A}}(M,-) \colon \Dcal^{\geq i}(\Acal, \Mcal) \to \Dcal(\Ab)$ commutes with small filtered colimits.
\end{lemma}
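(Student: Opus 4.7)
The plan is to deduce this from the hypothesis that $M$ is pseudo-coherent, which gives the commutation of $\Ext^0_{\Acal}(M,-)$ with filtered colimits on $\Dcal^{\geq k}(\Acal,\Mcal)$ for every integer $k$, by reducing the $R\Hom$ statement to a family of cohomology-level statements for each $\Ext^j$ and then reshuffling via a shift.

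First, observe that $\Mod_{(\Acal,\Mcal)}^{\cond}$ is stable under all colimits in $\Mod_{\Acal}^{\cond}$, and the latter is an AB5 abelian category by Theorem \ref{gro}; hence filtered colimits are exact in $\Mod_{(\Acal,\Mcal)}^{\cond}$, and consequently filtered colimits in $\Dcal(\Acal,\Mcal)$ commute with cohomology functors $H^j$, and likewise in $\Dcal(\Ab)$. Thus, given a filtered system $\{N_\alpha\}$ in $\Dcal^{\geq i}(\Acal,\Mcal)$, the natural comparison map
\[
R\Hom_{\Acal}\!\bigl(M, \colim_\alpha N_\alpha\bigr) \lra \colim_\alpha R\Hom_{\Acal}(M, N_\alpha)
\]
is an equivalence in $\Dcal(\Ab)$ if and only if it induces an isomorphism on $H^j$ for every $j \in \Zbb$, that is, if the maps
\[
\Ext^j_{\Acal}\!\bigl(M, \colim_\alpha N_\alpha\bigr) \lra \colim_\alpha \Ext^j_{\Acal}(M, N_\alpha)
\]
are all isomorphisms of abelian groups.

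For this last point, use the identification $\Ext^j_{\Acal}(M,-) = \Ext^0_{\Acal}(M,(-)[j])$. The shift functor $(-)[j]$ preserves filtered colimits and carries $\Dcal^{\geq i}(\Acal,\Mcal)$ into $\Dcal^{\geq i-j}(\Acal,\Mcal)$. Applying the defining property of pseudo-coherence to $M$ with the integer $i - j$ in place of $i$, we see that $\Ext^0_{\Acal}(M, -)$ commutes with small filtered colimits on $\Dcal^{\geq i-j}(\Acal,\Mcal)$, which yields the required isomorphism on $\Ext^j$ and completes the proof.

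I do not expect a genuine obstacle here: the argument is a purely formal bootstrap from the $\Ext^0$-hypothesis to $R\Hom$ via shifts, relying only on the invertibility of the shift, the compatibility of filtered colimits with cohomology in AB5-derived settings, and the fact that equivalences in $\Dcal(\Ab)$ can be tested on all cohomology groups.
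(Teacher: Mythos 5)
Your argument is correct and is precisely the formal bootstrap the paper has in mind when it says the lemma ``easily follows from the definition'': test the comparison map on cohomology (legitimate since filtered colimits are exact and hence commute with $H^j$), identify $\Ext^j_{\Acal}(M,-)$ with $\Ext^0_{\Acal}(M,(-)[j])$, and invoke the defining property of pseudo-coherence with $i-j$ in place of $i$. The only blemish is that the natural comparison map runs $\colim_\alpha R\Hom_{\Acal}(M,N_\alpha) \to R\Hom_{\Acal}(M,\colim_\alpha N_\alpha)$, the opposite direction from the one you wrote, but this does not affect the argument.
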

\begin{proof}
It easily follows from the definition.
\end{proof}

\begin{lemma}\label{pcoh}
An object $M \in \Dcal(\Acal, \Mcal)$ is a pseudo-coherent complex if and only if $M$ is quasi-isomorphic to a bounded above complex with terms of the form $\Mcal[S]$ for various extremally disconnected sets $S$.
\end{lemma}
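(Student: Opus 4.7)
The plan is to prove the two directions separately, in the spirit of the discrete analog \cite[Proposition 5.14]{And21}.

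For the sufficiency direction, suppose $M \simeq P^\bullet$ with $P^\bullet$ a bounded above complex whose terms are of the form $\Mcal[S_k]$. The bounded-above cohomology condition on $M$ is immediate. For the filtered-colimit condition, the key observation is that each $\Mcal[S_k]$ is a compact projective object of $\Mod_{(\Acal,\Mcal)}^{\cond}$, so $P^\bullet$ is $K$-projective and $\Ext^0(M,N)$ can be computed as $H^0$ of the Hom-complex $\Hom^\bullet(P^\bullet,N)$ on the nose. For $N \in \Dcal^{\geq i}$ represented in degrees $\geq i$ and $P^\bullet$ concentrated in degrees $\leq n_0$, only the indices $i \leq k \leq n_0$ contribute to $H^0$, so the computation reduces to a finite limit of terms $\Hom(\Mcal[S_k], N^k) = N^k(S_k)$, each of which commutes with filtered colimits by compactness of $\Mcal[S_k]$. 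Since finite limits commute with filtered colimits in $\Ab$, the conclusion follows.

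For the necessity direction, suppose $M$ is pseudo-coherent with $M \in \Dcal^{\leq n}(\Acal,\Mcal)$. The plan is to build $P^\bullet$ by descending induction on cohomological degree. Concretely, set $M_0 := M$, and inductively produce, for $k = n, n-1, \dots$, a map $\Mcal[S_k][-k] \to M_{n-k}$ whose induced map on $H^k$ is surjective, together with a cofiber $M_{n-k+1} := \cofib(\Mcal[S_k][-k] \to M_{n-k}) \in \Dcal^{\leq k-1}(\Acal,\Mcal)$. The cofiber remains pseudo-coherent, since $\Mcal[S_k][-k]$ is pseudo-coherent by the already-proven sufficiency and pseudo-coherence is stable under cofiber sequences. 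The resulting maps assemble into a bounded above complex $P^\bullet$ with $P^k = \Mcal[S_k]$ and a quasi-isomorphism $P^\bullet \to M$.

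The main obstacle is producing, at each inductive step, a \emph{single} $\Mcal[S_k]$ surjecting onto $H^k(M_{n-k})$, as opposed to an arbitrary direct sum. The strategy is: fix any surjection $\bigoplus_{i \in I} \Mcal[T_i] \twoheadrightarrow H^k(M_{n-k})$ (which exists since the family $\{\Mcal[T]\}$ generates $\Mod_{(\Acal,\Mcal)}^{\cond}$), and express $H^k(M_{n-k})$ as the filtered colimit of the images $\mathrm{im}_J$ of $\bigoplus_{j \in J} \Mcal[T_j] \to H^k(M_{n-k})$ over finite subsets $J \subset I$. Apply Lemma \ref{pcoh0} (equivalently, pseudo-coherence of $M_{n-k}$) to this colimit inside $\Dcal^{\geq k}(\Acal,\Mcal)$ after the shift $[-k]$: the universal truncation map $M_{n-k} \to H^k(M_{n-k})[-k]$ factors through some $\mathrm{im}_{J_0}[-k]$ with $J_0$ finite, and chasing on $H^k$ shows the inclusion $\mathrm{im}_{J_0} \hookrightarrow H^k(M_{n-k})$ is forced to be an isomorphism. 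Since finite disjoint unions of extremally disconnected sets are extremally disconnected and $\Mcal$ preserves finite coproducts, $\bigoplus_{j \in J_0} \Mcal[T_j] \cong \Mcal[\sqcup_{j \in J_0} T_j]$ supplies the desired single-term cover, and the lift to $M_{n-k}$ exists by projectivity of this object.
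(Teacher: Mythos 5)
Your proof is correct, but it takes a different route from the paper: the paper's entire proof consists of observing that $\bigoplus_{i=1}^n \Mcal[S_i] \cong \Mcal[\coprod_{i=1}^n S_i]$ with $\coprod_{i=1}^n S_i$ again extremally disconnected, and then citing \cite[Proposition 5.14]{And21}, which already gives the characterization of pseudo-coherent objects by bounded above complexes whose terms are (finite sums of) the generators. You instead reprove that cited result from scratch, and both of your directions are sound. The sufficiency argument via $K$-projectivity of a bounded above complex of compact projectives is fine, though the cleanest packaging is to note that $\Ext^0(P^\bullet,-)$ on $\Dcal^{\geq i}$ agrees with $\Ext^0(\sigma^{\geq i-1}P^\bullet,-)$ for the stupid truncation, which is a compact object. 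The necessity argument is the standard step-by-step resolution, and your finiteness trick --- writing $H^k(M_{n-k})$ as the filtered colimit of images over finite subfamilies of generators and using pseudo-coherence of $M_{n-k}$ itself to split off a finite stage, so that the splitting forces $\mathrm{im}_{J_0}=H^k(M_{n-k})$ --- is exactly the mechanism that makes a \emph{single} $\Mcal[S_k]$ suffice; it is the same finite-coproduct observation the paper uses, just deployed inside the induction rather than as a post-processing of the cited statement. The only step you gloss over is the assembly of the tower of cofiber sequences into an honest complex quasi-isomorphic to $M$; this is the standard construction of projective resolutions (the connectivity of the successive cofibers $M_{n-k}$ tends to $-\infty$, so the finite stages converge to $M$), and does not constitute a gap. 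What the paper's approach buys is brevity; what yours buys is a self-contained argument that does not depend on unwinding the precise form of the terms in \cite[Proposition 5.14]{And21}.
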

\begin{proof}
We note that $\displaystyle\bigoplus_{i=1}^n \Mcal[S_i]$ is isomorphic to $\displaystyle\Mcal[\coprod_{i=1}^n S_i]$ for extremally disconnected sets $S_i$, and that $\displaystyle\coprod_{i=1}^n S_i$ is an extremally disconnected set.
Then the lemma follows from \cite[Proposition 5.14]{And21}.
\end{proof}

\begin{definition}[{\cite[Definition 5.26]{And21}}]
Let $M$ be an object of $\Dcal(\Acal, \Mcal)$.
Then the object $R\intHom_{\Acal}(M, \Acal) \in \Dcal(\Acal, \Mcal)$ is called the \textit{dual} of $M$ and denoted by $M^{\lor}$.
\end{definition}

\begin{definition}[{\cite[Definition 5.30]{And21}}]
An object $M\in \Dcal(\Acal, \Mcal)$ is said to be \textit{nuclear} if for any extremally disconnected set $S$, the natural map 
$$(\Mcal[S]^{\lor}\otimes_{(\Acal, \Mcal)}^{\Lbb} M)(\ast) \to M(S)$$
is an equivalence in $\Dcal(\Ab)$.
We denote the full $\infty$-subcategory of nuclear objects in $\Dcal(\Acal, \Mcal)$ by $\Dcal(\Acal, \Mcal)_{\nc}$.
\end{definition}

Next, we define a functor which associates $\underline{A}$-modules to $A$-modules.
 
\begin{definition}[{\cite[Definition 5.8, Theorem 5.9]{And21}}]
Let $(A,A^+)$ be an analytic complete affinoid pair.
Then the \textit{condensification functor} is the fully faithful functor $\Cond_{(A,A^+)} \colon \Dcal(A) \to \Dcal((A,A^+)_{\bs})$ given by $$M \mapsto \dCond_{A_{\disc}}(M) \otimes_{(A_{\disc}, A^+_{\disc})_{\bs}}^{\Lbb} (A,A^+)_{\bs}.$$
An object of $\Dcal((A,A^+)_{\bs})$ is said to be discrete if it lies in the essential image of $\Cond_{(A,A^+)}$.
\end{definition}
 
\begin{lemma}[{\cite[Lemma 5.10]{And21}}]\label{functorial}
Let $(A,A^+) \to (B,B^+)$ be a map of analytic complete affinoid pairs.
Then the following diagram naturally commutes:
$$\xymatrix@C=60pt{
\Dcal(A) \ar[r]^-{\Cond_{(A,A^+)}} \ar[d]^{-\otimes_{A}^{\Lbb}B} &\Dcal((A,A^+)_{\bs}) \ar[d]^{-\otimes_{(A,A^+)_{\bs}}^{\Lbb}(B,B^+)_{\bs}}\\
\Dcal(B) \ar[r]^-{\Cond_{(B,B^+)}} &\Dcal((B,B^+)_{\bs}) 
.}
$$
\end{lemma}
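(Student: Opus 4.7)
The plan is to unfold both sides of the square according to the definition of $\Cond$, and reduce the assertion to a base-change compatibility of the discrete condensification functor $\dCond$ along the underlying map of discrete rings $A_{\disc} \to B_{\disc}$.

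First, by transitivity of scalar extension along the composite $(A_{\disc}, A^+_{\disc})_{\bs} \to (A,A^+)_{\bs} \to (B,B^+)_{\bs}$, and then factoring this composite through $(B_{\disc}, B^+_{\disc})_{\bs}$, one can rewrite
\[
\Cond_{(A,A^+)}(M) \otimes_{(A,A^+)_{\bs}}^{\Lbb} (B,B^+)_{\bs} \simeq \bigl(\dCond_{A_{\disc}}(M) \otimes_{(A_{\disc}, A^+_{\disc})_{\bs}}^{\Lbb} (B_{\disc}, B^+_{\disc})_{\bs}\bigr) \otimes_{(B_{\disc}, B^+_{\disc})_{\bs}}^{\Lbb} (B,B^+)_{\bs}.
\]
Since $\Cond_{(B,B^+)}(M\otimes_A^{\Lbb} B) = \dCond_{B_{\disc}}(M\otimes_A^{\Lbb} B)\otimes_{(B_{\disc},B^+_{\disc})_{\bs}}^{\Lbb}(B,B^+)_{\bs}$ by definition, it suffices to produce a natural equivalence
\[
\dCond_{A_{\disc}}(M) \otimes_{(A_{\disc}, A^+_{\disc})_{\bs}}^{\Lbb} (B_{\disc}, B^+_{\disc})_{\bs} \simeq \dCond_{B_{\disc}}(M \otimes_A^{\Lbb} B),
\]
functorially in $M \in \Dcal(A)$ and in the map of pairs $(A,A^+)\to (B,B^+)$.

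To establish this, I would choose a free resolution $P^{\bullet}\to M$ with each $P^i = A^{\oplus J_i}$ discrete, so that $\dCond_{A_{\disc}}(M)$ is represented by $\underline{P^{\bullet}}$ and $M\otimes_A^{\Lbb} B$ is represented by the complex of free $B$-modules $P^{\bullet}\otimes_A B$. Both sides of the desired equivalence are then computed termwise, reducing everything to the identification
\[
\underline{A^{\oplus J}} \otimes_{(A_{\disc}, A^+_{\disc})_{\bs}}^{\Lbb} (B_{\disc}, B^+_{\disc})_{\bs} \simeq \underline{B^{\oplus J}}
\]
for arbitrary sets $J$. Since $A^{\oplus J}$ carries the discrete topology, a continuous map from a profinite set has finite image, so $\underline{A^{\oplus J}}$ is the filtered colimit of its finite sub-coproducts $\underline{A^{\oplus F}}=\bigoplus_F \underline{A}$; commuting this colimit past the scalar extension reduces the statement to the tautological case $J=\{\ast\}$, where it records that the map of analytic rings sends the underlying condensed $A$-module $\underline{A}$ to $\underline{B}$.

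The main obstacle is essentially bookkeeping: verifying that the identifications at each stage are compatible with the obvious natural transformations, so that we obtain a genuine commutative square of functors rather than a pointwise equivalence. This is handled by building the natural transformation on the level of resolutions (and checking its independence from the choice of resolution via standard comparison arguments), and by noting that the construction of $\Cond$ from $\dCond$ via tensor product with $(A,A^+)_{\bs}$ is manifestly functorial in the pair, which propagates the naturality in $(A,A^+) \to (B,B^+)$ through every step.
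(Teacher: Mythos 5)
The paper does not prove this lemma at all: it is quoted verbatim from \cite[Lemma 5.10]{And21}, so there is no in-text argument to compare against. Your direct verification is essentially correct and is a reasonable reconstruction of what such a proof must look like. The reduction via transitivity of scalar extension to the discrete pairs $(A_{\disc},A^+_{\disc})\to(B_{\disc},B^+_{\disc})$ is valid (the intermediate map of analytic rings exists by Propositions \ref{integral} and \ref{prop:int}), and the key termwise identification is sound: $\underline{A^{\oplus J}}\cong\bigoplus_J\underline{A}$ is a projective object of $\Mod^{\cond}_{(A_{\disc},A^+_{\disc})_{\bs}}$, and the scalar extension sends $(A_{\disc},A^+_{\disc})_{\bs}[\ast]=\underline{A}$ to $(B_{\disc},B^+_{\disc})_{\bs}[\ast]=\underline{B}$ by adjunction, whence $\bigoplus_J\underline{A}\mapsto\bigoplus_J\underline{B}\cong\underline{B^{\oplus J}}$.

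Two points deserve more care than you give them. First, your resolution argument as written only handles bounded-above $M$: for unbounded objects of $\Dcal(A)$ a termwise-projective complex need not compute the derived tensor product, so you should either pass to K-projective resolutions or, better, observe that all four functors in the square preserve small colimits (for $\dCond$ this follows because $\underline{(-)}$ commutes with filtered colimits of discrete modules) and reduce to the compact generator $A$, where the statement is the single identity $\underline{A}\otimes^{\Lbb}\simeq\underline{B}$ above. Second, ``checking independence from the choice of resolution'' is not how one produces a commuting square of $\infty$-functors; the clean route is to note that each composite is a left adjoint and that the square of right adjoints --- the forgetful functor $\Dcal((B,B^+)_{\bs})\to\Dcal((A,A^+)_{\bs})$ followed by $M\mapsto M(\ast)$, versus $M\mapsto M(\ast)$ followed by restriction of scalars along $A\to B$ --- commutes on the nose. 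Either fix is standard, so I would classify these as presentational gaps rather than mathematical errors.
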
 

\begin{lemma}[{\cite[Theorem 5.9, Proposition 5.22, Lemma 5.45, Lemma 5.46, Lemma 5.47, Theorem 5.50]{And21}}]\label{andmain}
Let $(A, A^+)$ be a sheafy analytic complete affinoid pair. Then the functor $$\Cond_{(A,A^+)} \colon \Dcal(A) \to \Dcal((A,A^+)_{\bs})$$ induces the following equivalences
\begin{itemize}
\item
$\PCoh(A) \overset{\sim}{\lra} \PCoh((A,A^+)_{\bs})_{\nc}.$
\item
$\Perf(A) \overset{\sim}{\lra} \PCoh((A,A^+)_{\bs})_{\nc, \cpt}$,
\end{itemize}
where $\PCoh((A,A^+)_{\bs})_{\nc}$ is the full $\infty$-subcategory consisting of nuclear objects of $\Dcal((A,A^+)_{\bs})$ in $\PCoh((A,A^+)_{\bs})$, and $\PCoh((A,A^+)_{\bs})_{\nc, \cpt}$ is the full $\infty$-subcategory consisting of nuclear compact objects of $\Dcal((A,A^+)_{\bs})$ in $\PCoh((A,A^+)_{\bs})$.
Moreover, for an object $M\in \PCoh(A)$ and each $i\in \Zbb$, $H^i(M)=0$ if and only if $H^i(\Cond_{(A,A^+)}(M))=0$.
\end{lemma}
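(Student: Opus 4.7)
The plan is to prove each assertion by leveraging fully faithfulness of $\Cond_{(A,A^+)}$ (which is built into its definition) and identifying the essential image. Since the condensification is given by $\dCond_{A_{\disc}}(-) \otimes^{\Lbb}_{(A_{\disc},A^+_{\disc})_{\bs}} (A,A^+)_{\bs}$, it commutes with colimits and tensor products, so the work reduces to (i) showing $\Cond_{(A,A^+)}$ sends $\PCoh(A)$ into $\PCoh((A,A^+)_{\bs})_{\nc}$, (ii) showing essential surjectivity onto nuclear pseudo-coherent complexes, and (iii) identifying the compact ones.

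For (i), I would start from the characterization of pseudo-coherent complexes in $\Dcal(A)$ as bounded-above complexes of finite free $A$-modules and apply $\Cond_{(A,A^+)}$ term by term. A finite free module $A^n$ condenses to $\underline{A}^n \cong \Mcal[\{1,\ldots,n\}]$ (using the normalization $\Mcal[\ast] \cong \underline{A}$), so by Lemma \ref{pcoh} the image is pseudo-coherent in $\Dcal((A,A^+)_{\bs})$. For nuclearity, it suffices to check it on $\underline{A}$: the computation
\[
(\Mcal[S]^{\lor} \otimes^{\Lbb}_{(A,A^+)_{\bs}} \underline{A})(\ast) \simeq \Mcal[S]^{\lor}(\ast) \simeq R\Hom_{\underline{A}}(\Mcal[S],\underline{A}) \simeq \underline{A}(S)
\]
is immediate, and nuclearity is preserved under finite sums, retracts, shifts, and, thanks to Lemma \ref{pcoh0} controlling Ext with colimits, under the totalizations that produce pseudo-coherent complexes.

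For (ii), I would show essential surjectivity by the following inductive extraction argument: given $N \in \PCoh((A,A^+)_{\bs})_{\nc}$ concentrated in degrees $\leq n$, use Lemma \ref{pcoh} to choose a map $\Mcal[S] \to N$ whose image generates $H^n(N)$; the nuclearity condition forces $H^n(N)(\ast)$ to be finitely generated over $A$, allowing the map to be replaced by $\underline{A^k} \to N$ hitting $H^n$ onto a discrete $A$-module. Condensifying that discrete module and taking the fiber produces a new nuclear pseudo-coherent complex of strictly lower amplitude, and iterating (plus using that $\Cond_{(A,A^+)}$ commutes with bounded-above totalizations) recovers $N$ as the condensification of its sections. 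The perfect-complex case follows because $\Perf(A)$ is precisely $\PCoh(A) \cap \Dcal^{\mathrm{perf}}(A)$, and a nuclear pseudo-coherent complex is compact in $\Dcal((A,A^+)_{\bs})$ if and only if its sections have bounded Tor-amplitude: compactness forces $R\Hom_{\underline{A}}(N,-)$ to commute with filtered colimits in each degree, which combined with Lemma \ref{pcoh0} and pseudo-coherence forces finite length. The final assertion about vanishing of $H^i$ follows since $\Cond_{(A,A^+)}(M)(\ast) \simeq M$ naturally, so taking cohomology commutes with condensification on the pseudo-coherent subcategory.

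The hardest step is essential surjectivity in (ii), specifically the claim that nuclearity forces $H^n(N)$ to be discrete (in the sense of being in the image of $\dCond$) and finitely generated over $A$. The nuclearity condition only directly controls sections at extremally disconnected $S$ via the dual-tensor map, so translating this into genuine finite generation in $\Dcal(A)$ requires using sheafiness of $(A,A^+)$ to globalize: this is where the hypothesis that the affinoid is sheafy is essential, since otherwise one cannot interpret $N(\ast)$ as an honest $A$-module in $\PCoh(A)$. Once this step is secured, the induction and the perfect-complex characterization are formal.
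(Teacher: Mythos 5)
First, note that the paper itself does not prove this lemma: it is imported wholesale from Andreychev's work (the bracketed citation to {\cite[Theorem 5.9, Proposition 5.22, Lemmas 5.45--5.47, Theorem 5.50]{And21}} is the entire ``proof''), so your proposal must be judged as a reconstruction of those results. It has the right overall shape --- full faithfulness is definitional, so the content is the identification of the essential image --- but the step you yourself flag as the hardest is exactly the one you do not prove. The claim that ``the nuclearity condition forces $H^n(N)(\ast)$ to be finitely generated over $A$'' \emph{is} the mathematical content of the lemma: one has to show that a nuclear (trace-class) map out of $\Mcal[S]\cong\varinjlim_{B,M}\prod_J\underline{M}$ (Theorem \ref{dir prod}) into a pseudo-coherent object factors, on top cohomology, through a finite free module, using the explicit computation of $\Mcal[S]^{\lor}$ as a direct sum (cf.\ Lemma \ref{eq}); this occupies several pages of \cite{And21}. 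Attributing the difficulty to ``interpreting $N(\ast)$ as an honest $A$-module'' misses the point: $N(\ast)$ is a complex of $A$-modules for trivial reasons, and what is genuinely hard is its degreewise finite generation. Asserting this and moving on leaves the induction without a base, so essential surjectivity --- and hence the theorem --- is not established.

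Two further steps are wrong or incomplete as written. (a) Your argument for nuclearity of $\Cond_{(A,A^+)}(M)$ is backwards: since $\underline{\Zbb}[S]$ is compact projective, nuclear objects form a full subcategory closed under \emph{colimits}, retracts and shifts, whereas a bounded-above complex of finite free modules is the \emph{inverse} limit of its brutal truncations, so ``preserved under the totalizations that produce pseudo-coherent complexes'' is not a valid step. The easy fix is to observe that $\Cond_{(A,A^+)}$ preserves all colimits and every object of $\Dcal(A)$ is a filtered colimit of perfect complexes, whose condensifications are finite complexes of retracts of $\underline{A}^n\cong\Mcal[\{1,\dots,n\}]$ and hence nuclear; thus the whole essential image is nuclear. (b) For the final assertion, the identity $\Cond_{(A,A^+)}(M)(\ast)\simeq M$ together with $t$-exactness of $R\Gamma(\ast,-)$ only yields $H^i(\Cond_{(A,A^+)}(M))=0\Rightarrow H^i(M)=0$; the converse requires knowing that $\Cond_{(A,A^+)}$ is $t$-exact on $\PCoh(A)$, i.e.\ that $H^i(\Cond_{(A,A^+)}(M))$ is the condensed module associated to the finitely generated module $H^i(M)$ with its canonical topology. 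That is \cite[Proposition 5.22]{And21}, a genuine input about the tensor product $-\otimes^{\Lbb}_{(A_{\disc},A^+_{\disc})_{\bs}}(A,A^+)_{\bs}$ not creating cohomology on such objects, and it does not follow formally from your displayed identity.
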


The following is an analogue of \cite[Proposition 5.38]{And21}.

\begin{lemma}\label{rhom}
Let $K$ be a complete non-archimedean field and $A \to B$ be a flat map of affinoid $K$-algebras.
Then for every extremally disconnected set $S$ and every object $M \in \Dcal((A, A^+)_{\bs})$, we have an equivalence
\begin{align*}
&R\intHom_{\underline{A}}((A, A^+)_{\bs}[S], M) \otimes_{(A, A^+)_{\bs}}^{\Lbb} (B,B^+)_{\bs} \\
\simeq{} &R\intHom_{\underline{B}}((B,B^+)_{\bs}[S], M \otimes_{(A, A^+)_{\bs}}^{\Lbb} (B,B^+)_{\bs}).
\end{align*}
\end{lemma}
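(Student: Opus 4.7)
The plan is to construct a natural base change map and prove it is an equivalence by reducing to a check on a family of generators of $\Dcal((A, A^+)_{\bs})$.

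First I would construct the comparison map $\phi_M$ from the LHS to the RHS of the claimed equivalence. Using that the left adjoint $T \mapsto (-,-)_{\bs}[T]$ commutes with the left-adjoint scalar extension, one has $(A, A^+)_{\bs}[S] \otimes_{(A, A^+)_{\bs}}^{\Lbb} (B, B^+)_{\bs} \simeq (B, B^+)_{\bs}[S]$, and combining this identification with the standard base change morphism for internal Hom gives $\phi_M$. To show $\phi_M$ is an equivalence, I would observe that both sides, as functors of $M$, commute with small colimits: the LHS because $(A, A^+)_{\bs}[S]$ is compact in $\Dcal((A, A^+)_{\bs})$ by Corollary \ref{cpt} and scalar extension is a left adjoint, and the RHS for the analogous reason with $(B, B^+)_{\bs}[S]$. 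It therefore suffices to check that $\phi_M$ is an equivalence on a generating family of $\Dcal((A, A^+)_{\bs})$; I would take $M = (A, A^+)_{\bs}[T]$ for extremally disconnected sets $T$, in which case both sides are concentrated in degree $0$ by the projectivity of $(A, A^+)_{\bs}[T]$.

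In this reduced case the LHS becomes $\intHom_{\underline{A}}((A, A^+)_{\bs}[S], (A, A^+)_{\bs}[T]) \otimes_{(A, A^+)_{\bs}}^{\Lbb} (B, B^+)_{\bs}$ and the RHS becomes $\intHom_{\underline{B}}((B, B^+)_{\bs}[S], (B, B^+)_{\bs}[T])$. Using the explicit description from Theorem \ref{dir prod} of these modules as filtered colimits of infinite products indexed by finitely generated $\Zbb$-subalgebras of $A^+$ (resp.\ $B^+$) and their quasi-finitely generated submodules of $A$ (resp.\ $B$), together with the flatness of $A \to B$ to commute base change past these colimits and products, one can match the two expressions. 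The main obstacle will be carrying out this explicit matching carefully, in particular tracking how the quasi-finitely generated submodules of $A$ pass to those of $B$ under the flat map.

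An alternative route that I expect to be more tractable is to invoke the affinoid analogue of Theorem \ref{limit2}, giving an equivalence $- \otimes_{(A, A^+)_{\bs}}^{\Lbb} (B, B^+)_{\bs} \simeq R\intHom_{\underline{A}}(N, -)$ for a compact $N \in \Dcal(\underline{B})$. Iterated tensor--Hom adjunctions then rewrite both sides of the desired equivalence in the form $R\intHom_{\underline{A}}(X, M)$, reducing the lemma to the identification $(A, A^+)_{\bs}[S] \otimes_{\underline{A}}^{\Lbb} N \simeq (B, B^+)_{\bs}[S] \otimes_{\underline{B}}^{\Lbb} N$, which follows from the equivalence $(A, A^+)_{\bs}[S] \otimes_{(A, A^+)_{\bs}}^{\Lbb} (B, B^+)_{\bs} \simeq (B, B^+)_{\bs}[S]$ together with the $\underline{B}$-module structure on $N$.
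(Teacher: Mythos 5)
Your second, ``alternative'' route is exactly the paper's proof: it takes $N_{B/A}$ from (the affinoid analogue of) Theorem \ref{limit2} and rewrites both sides by iterated tensor--Hom adjunction, the only point needing care being the final identification, which the paper handles by observing that $M\otimes_{(A,A^+)_{\bs}}^{\Lbb}(B,B^+)_{\bs}=R\intHom_{\underline{A}}(N_{B/A},M)$ is $(B,B^+)_{\bs}$-complete, so that mapping out of $(A,A^+)_{\bs}[S]$ over $\underline{A}$ agrees with mapping out of $(B,B^+)_{\bs}[S]$ over $\underline{B}$. Your first route (reduction to generators and explicit matching via Theorem \ref{dir prod}) is left incomplete and is unnecessary; just run the second one.
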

\begin{proof}
We take $N_{B/A} \in \Dcal(\underline{B})$ as in Theorem \ref{limit2}.
Then the lemma follows from the following computation:
\begin{align*}
&R\intHom_{\underline{A}}((A, A^+)_{\bs}[S], M) \otimes_{(A, A^+)_{\bs}}^{\Lbb} (B,B^+)_{\bs} \\
\simeq{} &R\intHom_{\underline{A}}(N_{B/A}, R\intHom_{\underline{A}}((A, A^+)_{\bs}[S], M)) \\
\simeq{} &R\intHom_{\underline{A}}(N_{B/A}\otimes_{(A,A^+)_{\bs}}^{\Lbb} (A, A^+)_{\bs}[S], M) \\
\simeq{} &R\intHom_{\underline{A}}((A, A^+)_{\bs}[S], R\intHom_{\underline{A}}(N_{B/A}, M)) \\
\simeq{} &R\intHom_{\underline{A}}((A, A^+)_{\bs}[S], M \otimes_{(A, A^+)_{\bs}}^{\Lbb} (B,B^+)_{\bs}) \\
\simeq{} &R\intHom_{\underline{B}}((B,B^+)_{\bs}[S], M \otimes_{(A, A^+)_{\bs}}^{\Lbb} (B,B^+)_{\bs}).
\end{align*} 
\end{proof}

The following is an analogue of \cite[Proposition 5.39]{And21}.

\begin{lemma}\label{tordim}
Let $K$ be a complete non-archimedean field and $A \to B$ be a flat map of affinoid $K$-algebras.
Then there exists a non-negative integer $k$ such that for every integer $i$ and every $M \in \Dcal^{\geq i}((A,A^{\circ})_{\bs})$, $M\otimes_{(A,A^{\circ})_{\bs}}^{\Lbb} (B,B^{\circ})_{\bs}$ lies in $\Dcal^{\geq i-k}((A,A^{\circ})_{\bs})$.
\end{lemma}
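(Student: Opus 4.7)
The plan is to factor the given flat map $f\colon A\to B$ via Lemma~\ref{rp} as
\[ A\xrightarrow{g} C:= A\langle T_1,\ldots,T_n\rangle\xrightarrow{h} B, \]
where $h$ exhibits $B$ as a perfect $C$-module, and to bound separately the cohomological amplitude of the base-change functor along each of $g$ and $h$. Composing these two bounds will yield the required uniform constant $k$.

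For the polynomial step $g$, Lemma~\ref{poly bs} gives
\[ (C,\, A^\circ\langle T_1,\ldots,T_n\rangle)_{\bs} \simeq (A, A^\circ)_{\bs} \otimes_{\Zbb_{\bs}}^{\Lbb} \Zbb[T_1,\ldots,T_n]_{\bs}, \]
and since $C^\circ$ is integral over $A^\circ\langle T_1,\ldots,T_n\rangle$ in $C$ (a standard property of Tate algebras), Proposition~\ref{prop:int} identifies this analytic ring with $(C, C^\circ)_{\bs}$. Combined with the steadiness of $\Zbb_{\bs}\to \Zbb[T_1,\ldots,T_n]_{\bs}$ (Lemma~\ref{poly steady}) and Proposition~\ref{steady bc}, the functor $-\otimes_{(A,A^\circ)_{\bs}}^{\Lbb}(C,C^\circ)_{\bs}$ is identified with $-\otimes_{\Zbb_{\bs}}^{\Lbb}\Zbb[T_1,\ldots,T_n]_{\bs}$, which by Lemma~\ref{poly} together with Lemma~\ref{eq} is equivalent to $R\intHom_{\underline{\Zbb}}(\prod_J \underline{\Zbb},-)$ for a suitable set $J$. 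Since $\prod_J \underline{\Zbb}$ is a projective object of $\Mod_{\Zbb_{\bs}}^{\cond}$ concentrated in degree zero (Corollary~\ref{cpt}), this functor is $t$-exact and in particular preserves every $\Dcal^{\geq i}$.

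For the perfect step $h$, $\underline{B}$ is quasi-isomorphic to a bounded complex of finite projective $\underline{C}$-modules concentrated in some range $[-k,0]$, by the affinoid analog of the boundedness lemma stated just before Lemma~\ref{rp} (which itself rests on \cite[Lemma~3.1]{And21}). Steadiness of $(C, C^\circ)_{\bs}\to (B, B^\circ)_{\bs}$ from Proposition~\ref{steady3}, together with the argument used in the proof of Proposition~\ref{steady2}, then identifies $-\otimes_{(C,C^\circ)_{\bs}}^{\Lbb}(B,B^\circ)_{\bs}$ with $-\otimes_{\underline{C}}^{\Lbb}\underline{B}$; this functor sends $\Dcal^{\geq i}$ into $\Dcal^{\geq i-k}$. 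Composing with the $t$-exactness from the polynomial step proves the lemma with this same $k$.

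The main obstacle I foresee is the identification $(C, A^\circ\langle T_1,\ldots,T_n\rangle)_{\bs}\simeq (C, C^\circ)_{\bs}$: via Proposition~\ref{prop:int} this reduces to verifying that every power-bounded element of $C$ is integral over a finitely generated $\Zbb$-subalgebra of $A^\circ\langle T_1,\ldots,T_n\rangle$, a point that requires some care with the ring of integral elements of a Tate algebra. Once that identification is in place, the remainder of the argument is a routine combination of the steadiness and boundedness results already established in the paper.
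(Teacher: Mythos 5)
Your proof is correct and follows essentially the same route as the paper's: factor the map through $A\langle T_1,\ldots,T_n\rangle$ via Lemma \ref{rp}, deduce $t$-exactness of the polynomial base change from the fact that $R\intHom_{\underline{\Zbb}}(\underline{\Zbb[T_1,\ldots,T_n]},\underline{\Zbb})$ is a compact projective $\Zbb_{\bs}$-module (Lemma \ref{poly bs}, Lemma \ref{poly}), and bound the perfect step by a bounded complex of finite projective modules. The identification $(A\langle T_1,\ldots,T_n\rangle, A^{\circ}\langle T_1,\ldots,T_n\rangle)_{\bs}\simeq (A\langle T_1,\ldots,T_n\rangle, A\langle T_1,\ldots,T_n\rangle^{\circ})_{\bs}$ that you flag is indeed used silently in the paper; it follows from Propositions \ref{integral} and \ref{prop:int} because $A\langle T_1,\ldots,T_n\rangle^{\circ}$ is integral over $A^{\circ}\langle T_1,\ldots,T_n\rangle$ for an affinoid $K$-algebra $A$.
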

\begin{proof}
By Lemma \ref{rp}, we may assume that $B$ is equal to $A\langle T \rangle$ or $B$ is a perfect $A$-module (if $B$ is a perfect $A$-module then $B$ could not be flat over $A$, but it causes no problem).
The latter case is trivial, so we may assume that $B=A\langle T \rangle$.
In this case, the claim follows from Lemma \ref{poly bs}, Lemma \ref{poly} and that $R\intHom_{\underline{\Zbb}}(\underline{\Zbb[T]}, \underline{\Zbb})$ is a compact projective $\Zbb_{\bs}$-module.
\end{proof}

By Lemma \ref{andmain}, in order to prove descent for perfect complexes and pseudo-coherent complexes, it is enough to prove descent for nuclear objects, pseudo-coherent objects, and compact objects.
\begin{theorem}\label{somedes}
Let $K$ be a complete non-archimedean field and $A \to B$ be a faithfully flat map of affinoid $K$-algebras. 
Let $B^{n/A}$ denote the $(n+1)$-fold completed tensor product of $B$ over $A$.
Then we have the following equivalences of $\infty$-categories.
\begin{enumerate}
\item
$\displaystyle\PCoh((A,A^{\circ})_{\bs}) \overset{\sim}{\lra} \varprojlim_{[n] \in \Delta} \PCoh((B^{n/A}, (B^{n/A})^{\circ})_{\bs})$.
\item
$\displaystyle\Dcal((A,A^{\circ})_{\bs})_{\nc} \overset{\sim}{\lra} \varprojlim_{[n] \in \Delta} \Dcal((B^{n/A}, (B^{n/A})^{\circ})_{\bs})_{\nc}$.
\item
$\displaystyle\Dcal((A,A^{\circ})_{\bs})_{\cpt} \overset{\sim}{\lra} \varprojlim_{[n] \in \Delta} \Dcal((B^{n/A}, (B^{n/A})^{\circ})_{\bs})_{\cpt}$.
\end{enumerate}
\end{theorem}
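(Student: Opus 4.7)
My plan is to deduce Theorem~\ref{somedes} from Theorem~\ref{main theorem} by showing, for each of the three properties in question, that scalar extension along $(A,A^{\circ})_{\bs}\to(B,B^{\circ})_{\bs}$ both \emph{preserves} and \emph{reflects} the property. Combined with the main theorem, this restricts the descent equivalence to an equivalence on each sub-$\infty$-category: given a compatible system in the right-hand side, the main theorem lifts it to some $M\in\Dcal((A,A^{\circ})_{\bs})$ whose pullback lies in the subcategory by construction, and reflection forces $M$ itself into the subcategory.

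For part~(1), preservation of pseudo-coherence is immediate from Lemma~\ref{pcoh}: a pseudo-coherent complex is quasi-isomorphic to a bounded-above complex of objects $(A,A^{\circ})_{\bs}[S]$, and scalar extension sends this to a bounded-above complex of $(B,B^{\circ})_{\bs}[S]$'s. Reflection has two ingredients: boundedness above of $M$ from boundedness above of its pullback, which follows from the uniform Tor-amplitude bound of Lemma~\ref{tordim} combined with conservativity (Corollary~\ref{cons2}) applied one cohomology degree at a time through a Postnikov-truncation argument; and the filtered-colimit-commutation property of $\Ext^{0}(M,-)$ on positive truncations, transferred between the two sides via Lemma~\ref{rhom} applied to the $\Mcal[S]$-generators appearing in a pseudo-coherent presentation, together with conservativity. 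Part~(2) is similar: nuclearity is characterized by the natural map $(\Mcal[S]^{\vee}\otimes^{\Lbb}M)(\ast)\to M(S)$ being an equivalence, and both preservation and reflection follow directly from Lemma~\ref{rhom}, the symmetric monoidal compatibilities of scalar extension, and conservativity.

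For part~(3), preservation of compactness is straightforward: the forgetful functor $\Dcal((B,B^{\circ})_{\bs})\to\Dcal((A,A^{\circ})_{\bs})$ preserves filtered colimits (it factors through the underlying condensed module categories, where this is clear), and therefore its left adjoint, scalar extension, preserves compact objects. Reflection is the delicate step, and my plan is to invoke Theorem~\ref{rig dual des}: the section $\underline{A}\to\varinjlim_{[m]\in\Delta_{s,\le 2}^{\op}} N_{B/A}^{\otimes(m+1)}$ that it provides, upon applying $R\intHom_{\underline{A}}(-,M)$, presents $M$ as a retract in $\Dcal((A,A^{\circ})_{\bs})$ of the finite limit $\varprojlim_{[m]\in\Delta_{s,\le 2}} M\otimes^{\Lbb}_{(A,A^{\circ})_{\bs}}(B^{m/A},(B^{m/A})^{\circ})_{\bs}$. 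It then remains to argue that this finite limit is compact in $\Dcal((A,A^{\circ})_{\bs})$: compactness of $M\otimes^{\Lbb}_{(A,A^{\circ})_{\bs}}(B,B^{\circ})_{\bs}$ propagates along the Čech nerve by preservation, and the commutation with filtered colimits on the $A$-side is checked after pullback via Lemma~\ref{rhom} and descended using conservativity; since finite limits and retracts of compact objects in a stable $\infty$-category are compact, the conclusion follows.

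The principal obstacle I anticipate is the reflection step in part~(3): compactness is a categorical finiteness condition not tested pointwise on extremally disconnected sets, and verifying it requires combining the finite-stage retract of Theorem~\ref{rig dual des} with a careful interchange of the filtered colimit and the finite Čech limit mediated by Lemma~\ref{rhom}. Parts~(1) and~(2), while requiring the truncation and conservativity arguments above, are relatively routine given the structural results (Lemmas~\ref{rhom} and~\ref{tordim} together with Corollary~\ref{cons2}) already established.
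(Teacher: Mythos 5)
Your overall strategy---combining Theorem~\ref{main theorem} with the claim that scalar extension both preserves and reflects each property---is the paper's, your part~(2) matches the paper (which runs the argument of \cite[Theorem 5.42]{And21} with Lemma~\ref{rhom} in place of \cite[Proposition 5.38]{And21}), and the preservation directions are fine. But the reflection arguments have genuine gaps, the first of which is structural: Theorem~\ref{rig dual des}, Corollary~\ref{cons2} and Theorem~\ref{limit2} are stated for faithfully flat maps of \emph{admissible $\Ocal_K$-algebras}, not of affinoid $K$-algebras, so none of them can be invoked for $A\to B$ as you do. The paper first uses Lemma~\ref{flattening} together with the analytic locality of pseudo-coherence and compactness (\cite[Theorems 5.41, 5.44]{And21}, via Theorem~\ref{andes}) to reduce to the case where $A\to B$ has a formal model $A'\to B'$, and only then base-changes the section of Theorem~\ref{rig dual des} along $A'_{\bs}\to(A,A^{\circ})_{\bs}$ to get a retraction of $N\to\varprojlim_{[m]\in\Delta_{s,\leq 2}}N\otimes^{\Lbb}_{(A,A^{\circ})_{\bs}}(B^{m/A},(B^{m/A})^{\circ})_{\bs}$ functorial in $N$. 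This reduction is entirely absent from your proposal.

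The second gap is in part~(3): you apply the retract in the \emph{source} variable, writing $M$ as a retract of $\varprojlim_{[m]\in\Delta_{s,\leq 2}}M\otimes^{\Lbb}(B^{m/A},(B^{m/A})^{\circ})_{\bs}$ and asserting this finite limit is compact in $\Dcal((A,A^{\circ})_{\bs})$. That would require each term, known to be compact only in $\Dcal((B^{m/A},(B^{m/A})^{\circ})_{\bs})$, to stay compact after restriction of scalars to $(A,A^{\circ})_{\bs}$; the forgetful functor does not preserve compactness here (already for $K\to K\langle T\rangle$ the unit pulls back to $\underline{K\langle T\rangle}$, which is compact over $K\langle T\rangle_{\bs}$ but not over $(K,\Ocal_K)_{\bs}$). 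The paper avoids this by applying the functorial retraction in the \emph{target} variable: $R\Hom_{\underline{A}}(M,N)$ is a retract of $\varprojlim_{[m]\leq 2}R\Hom_{\underline{B^{m/A}}}\bigl(M\otimes^{\Lbb}(B^{m/A})_{\bs},\,N\otimes^{\Lbb}(B^{m/A})_{\bs}\bigr)$, where only compactness (resp.\ pseudo-coherence together with the uniform bound of Lemma~\ref{tordim}) over $B^{m/A}$ is needed and finite limits commute with filtered colimits. The same device handles the point you gloss over in part~(1): reflecting boundedness above does not follow from Lemma~\ref{tordim} plus conservativity ``degree by degree,'' because conservativity only detects nonvanishing and the Tor-amplitude spread of width $k>0$ lets the top cohomology of $M$ contribute to $M\otimes^{\Lbb}(B,B^{\circ})_{\bs}$ in a whole window of degrees where cancellation is not excluded; as a retract of a finite limit of bounded-above complexes, $M$ is bounded above for free. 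Your conservativity-plus-Lemma~\ref{rhom} transfer of the filtered-colimit condition can be made to work (one needs the internal-Hom base-change formula for general $M$, via $N_{B/A}$, not just for $\Mcal[S]$), but it rests on the same inputs after the formal-model reduction and does not repair the two gaps above.
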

\begin{proof}
The same argument as in \cite[Theorem 5.42]{And21} works well to prove (2) by using Lemma \ref{rhom}  instead of \cite[Proposition 5.38]{And21}.

By Lemma \ref{flattening} and \cite[Theorem 5.41, Theorem 5.44]{And21}, in order to prove (1) and (3), we may assume that there exists a faithfully flat map $A^{\prime} \to B^{\prime}$ of admissible $\Ocal_K$-algebras such that $A^{\prime}[1/\pi] \to B^{\prime}[1/\pi]$ is isomorphic to $A \to B$.
Let $N_{B^{\prime}/A^{\prime}}$ be as in Theorem \ref{limit2}.
By Theorem \ref{rig dual des}, we have a retraction $\displaystyle\varprojlim_{[m] \in \Delta_{s,\leq 2}} (M \otimes_{(A,A^{\circ})_{\bs}}^{\Lbb} (B^{\bullet/A}, (B^{\bullet/A})^{\circ})_{\bs}) \to M$ of the natural map $\displaystyle M \to \varprojlim_{[m] \in \Delta_{s,\leq n}} (M \otimes_{(A,A^{\circ})_{\bs}}^{\Lbb} (B^{\bullet/A}, (B^{\bullet/A})^{\circ})_{\bs})$ which is functorial in $M \in \Dcal((A,A^{\circ})_{\bs})$.

We prove (1). 
It is enough to show the following:
\begin{itemize}
\item
For $M \in \PCoh((A,A^{\circ})_{\bs})$, $M \otimes_{(A,A^{\circ})_{\bs}}^{\Lbb} (B^{m/A},(B^{m/A})^{\circ})_{\bs}$ is pseudo-coherent for every non-negative integer $m$.
\item
For $M \in \Dcal((A,A^{\circ})_{\bs})$, if $M \otimes_{(A,A^{\circ})_{\bs}}^{\Lbb} (B^{m/A},(B^{m/A})^{\circ})_{\bs}$ is pseudo-coherent for every non-negative integer $m$, then $M$ is pseudo-coherent.
\end{itemize}
The former follows from Lemma \ref{pcoh}. Let $i$ be a non-negative integer and $J \to \Dcal^{\geq i}((A,A^{\circ})_{\bs});\; j \mapsto M_j$ be any small filtered diagram. 
We will show that the map 
\begin{align}
\varinjlim_J(R\Hom_{\underline{A}}(M,M_j))\to R\Hom_{\underline{A}}(M, \varinjlim_J M_j)
\label{eq4}
\end{align}
is an equivalence.
It is a retract of the map
\begin{align*}
&\varinjlim_J(R\Hom_{\underline{A}}(M,\varprojlim_{[m] \in \Delta_{s,\leq n}}(M_j\otimes_{(A,A^{\circ})_{\bs}}^{\Lbb} (B^{m/A}, (B^{m/A})^{\circ})_{\bs})))\\
\to &R\Hom_{\underline{A}}(M, \varprojlim_{[m] \in \Delta_{s,\leq n}}((\varinjlim_J M_j)\otimes_{(A,A^{\circ})_{\bs}}^{\Lbb} (B^{m/A}, (B^{m/A})^{\circ})_{\bs}))).
\end{align*}
Therefore, it is enough to show that the above map is an equivalence.
It follows from the following computation:
\begin{align*}
&\varinjlim_J(R\Hom_{\underline{A}}(M,\varprojlim_{[m] \in \Delta_{s,\leq n}}(M_j\otimes_{(A,A^{\circ})_{\bs}}^{\Lbb} (B^{m/A}, (B^{m/A})^{\circ})_{\bs})))\\
\simeq{} &\varprojlim_{[m] \in \Delta_{s,\leq n}}\varinjlim_J(R\Hom_{\underline{A}}(M,M_j\otimes_{(A,A^{\circ})_{\bs}}^{\Lbb} (B^{m/A}, (B^{m/A})^{\circ})_{\bs}))\\
\simeq{} &\varprojlim_{[m] \in \Delta_{s,\leq n}}\varinjlim_J(R\Hom_{\underline{B^{m/A}}}(M\otimes_{(A,A^{\circ})_{\bs}}^{\Lbb} (B^{m/A}, (B^{m/A})^{\circ})_{\bs},M_j\otimes_{(A,A^{\circ})_{\bs}}^{\Lbb} (B^{m/A}, (B^{m/A})^{\circ})_{\bs}))\\
\simeq{} &\varprojlim_{[m] \in \Delta_{s,\leq n}}(R\Hom_{\underline{B^{m/A}}}(M\otimes_{(A,A^{\circ})_{\bs}}^{\Lbb} (B^{m/A}, (B^{m/A})^{\circ})_{\bs},\\
&\qquad\qquad\qquad\qquad \varinjlim_J(M_j\otimes_{(A,A^{\circ})_{\bs}}^{\Lbb} (B^{m/A}, (B^{m/A})^{\circ})_{\bs})))\\
\simeq{} & \varprojlim_{[m] \in \Delta_{s,\leq n}}(R\Hom_{\underline{A}}(M,\varinjlim_J(M_j\otimes_{(A,A^{\circ})_{\bs}}^{\Lbb} (B^{m/A}, (B^{m/A})^{\circ})_{\bs})))\\
\simeq{} & \varprojlim_{[m] \in \Delta_{s,\leq n}}(R\Hom_{\underline{A}}(M,(\varinjlim_J M_j)\otimes_{(A,A^{\circ})_{\bs}}^{\Lbb} (B^{m/A}, (B^{m/A})^{\circ})_{\bs}))\\
\simeq{} & R\Hom_{\underline{A}}(M, \varprojlim_{[m] \in \Delta_{s,\leq n}}((\varinjlim_J M_j)\otimes_{(A,A^{\circ})_{\bs}}^{\Lbb} (B^{m/A}, (B^{m/A})^{\circ})_{\bs})),
\end{align*}
where the first equivalence follows from the commutativity of finite limits and filtered colimits, and the third equivalence follows from Lemma \ref{pcoh0}, pseudo-coherence of $M\otimes_{(A,A^{\circ})_{\bs}}^{\Lbb} (B^{m/A}, (B^{m/A})^{\circ})_{\bs}$, and the fact that $\{M_j\otimes_{(A,A^{\circ})_{\bs}}^{\Lbb} (B^{m/A}, (B^{m/A})^{\circ})_{\bs}\}_j$ is uniformly bounded below by Lemma \ref{tordim}.

Finally we prove (3). 
It is enough to show the following:
\begin{itemize}
\item
For $M \in \Dcal((A,A^{\circ})_{\bs})_{\cpt}$, $M \otimes_{(A,A^{\circ})_{\bs}}^{\Lbb} (B^{m/A},(B^{m/A})^{\circ})_{\bs}$ is compact for every non-negative integer $m$.
\item
For $M \in \Dcal((A,A^{\circ})_{\bs})$, if $M \otimes_{(A,A^{\circ})_{\bs}}^{\Lbb} (B^{m/A},(B^{m/A})^{\circ})_{\bs}$ is compact for every non-negative integer $m$, then $M$ is compact.
\end{itemize}
The former can be proven by the same argument as in \cite[Theorem 5.41]{And21}, and the latter can be proven by the same argument of the latter part of the proof of (1).
\end{proof}

\begin{theorem}
Let $K$ be a complete non-archimedean field and $A \to B$ be a faithfully flat map of affinoid $K$-algebras. 
Let $B^{n/A}$ denote the $(n+1)$-fold completed tensor product of $B$ over $A$.
Then we have the following equivalence of $\infty$-categories:
$$\Ccal(A) \overset{\sim}{\lra} \varprojlim_{[n] \in \Delta} \Ccal(B^{n/A}),$$
where $\Ccal$ is one of the following $\infty$-categories: $\PCoh$, $\PCoh^{\leq m}$, $\Perf$, $\Perf^{[a,b]}$, and $\FP$.
\end{theorem}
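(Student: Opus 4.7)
The plan is to deduce all five cases by combining Theorem \ref{somedes} with the equivalences of Lemma \ref{andmain} and standard classical faithfully flat descent of cohomology/Tor-amplitude.

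For $\Ccal=\PCoh$ and $\Ccal=\Perf$, since each $B^{n/A}$ is an affinoid $K$-algebra (hence sheafy), Lemma \ref{andmain} identifies
\[
\PCoh(A)\simeq \PCoh((A,A^{\circ})_{\bs})_{\nc},\qquad \Perf(A)\simeq \PCoh((A,A^{\circ})_{\bs})_{\nc,\cpt},
\]
and similarly for each $B^{n/A}$. These identifications are compatible with the base change functors by Lemma \ref{functorial}. Theorem \ref{somedes} then gives descent for pseudo-coherent, nuclear, and compact objects inside $\Dcal((A,A^{\circ})_{\bs})$ respectively, and intersecting these descent statements yields the desired equivalence for $\PCoh$ and $\Perf$.

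For $\Ccal=\PCoh^{\leq m}$, it suffices to show that, given $M\in\PCoh(A)$ whose base change to each $B^{n/A}$ lies in $\PCoh^{\leq m}$, we already have $M\in\PCoh^{\leq m}(A)$. Since $A\to B$ is flat as an ordinary ring map, $H^i(M\otimes_A^{\Lbb} B)\cong H^i(M)\otimes_A B$, and by classical faithful flatness the vanishing $H^i(M)\otimes_A B=0$ for $i>m$ implies $H^i(M)=0$ for $i>m$. Combined with the $\PCoh$ case this yields the $\PCoh^{\leq m}$ case.

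For $\Ccal=\Perf^{[a,b]}$, given $M\in\Perf(A)$ descending from objects of Tor-amplitude $[a,b]$ on each $B^{n/A}$, one checks Tor-amplitude by taking an arbitrary $A$-module $N$, noting $M\otimes_A^{\Lbb}N\otimes_A B\simeq (M\otimes_A^{\Lbb}B)\otimes_B^{\Lbb}(N\otimes_A B)\in\Dcal^{[a,b]}(B)$ (using flatness of $B$ over $A$ for the factor $N\otimes_A B=N\otimes_A^{\Lbb} B$), and then faithful flatness again forces $M\otimes_A^{\Lbb}N\in\Dcal^{[a,b]}(A)$. The case $\Ccal=\FP$ is the special case $[a,b]=[0,0]$. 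No step in this proof is expected to be difficult: all the real work is already hidden inside Theorem \ref{somedes} (descent of nuclear, pseudo-coherent, and compact objects in the condensed world) and Lemma \ref{andmain} (the dictionary between the classical and condensed worlds); the remaining arguments are routine applications of classical faithfully flat descent to read off bounds on cohomology and Tor-amplitude.
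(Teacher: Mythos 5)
Your proposal is correct and follows essentially the same route as the paper: reduce $\PCoh$ and $\Perf$ to the condensed descent statements of Theorem \ref{somedes} via the dictionary of Lemma \ref{andmain} (and Lemma \ref{functorial}), then handle $\PCoh^{\leq m}$, $\Perf^{[a,b]}$, and $\FP=\Perf^{[0,0]}$ by checking that cohomological bounds and Tor-amplitude can be detected after a classical faithfully flat base change. The only cosmetic difference is that the paper also explicitly invokes Theorem \ref{main theorem} (descent for the full categories $\Dcal((-,-)_{\bs})$) when intersecting the subcategory descent statements, which your ``intersecting'' step uses implicitly.
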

\begin{proof}
The descent for $\PCoh$ and $\Perf$ follows from Theorem \ref{main theorem}, Lemma \ref{andmain}, Theorem \ref{somedes}.
To prove the descent for $\PCoh^{\leq m}$, it is enough to show that for $M \in \PCoh(A)$ if $M \otimes_A^{\Lbb} B$ belongs to $\PCoh^{\leq m}(B)$ then $M$ also belongs to $\PCoh^{\leq m}(A)$, where we implicitly use Lemma \ref{functorial} and Lemma \ref{andmain}.
This follows from the fact that $A \to B$ is faithfully flat.
Finally, to prove the descent for $\Perf^{[a,b]}$, it is enough to show that for $M \in \Perf(A)$ if $M \otimes_A^{\Lbb} B$ belongs to $\Perf^{[a,b]}(B)$ then $M$ also belongs to $\Perf^{[a,b]}(A)$. 
Since $A\to B$ is faithfully flat, we have that for $N\in \Dcal(A)$ and $i\in \Zbb$, $H^i(N)=0$ is equivalent to $H^i(N\otimes_A^{\Lbb}B)=0$. 
The claim easily follows from it.
\end{proof}

%%%%%%%%%%%%%%%%%%%%%%%%%%%%%%%%%%%%%%%%%%%%%%%%%%%%%%%%%%%%%%%%%%%%%%%%%%%%%%%%%%%%%%%%%%
\nocite{HTT}
\nocite{FRG1}
\bibliographystyle{my_amsalpha}
\bibliography{bibliography}
\end{document}